%
%
%
%
%
%
%
%
%
\documentclass[11pt]{amsart}
\usepackage{amsmath}
\usepackage{amssymb}
\usepackage{amsthm}
\usepackage{amscd}
\usepackage{amsxtra}
\usepackage{verbatim}
\usepackage{xcolor}
\usepackage{color}
\usepackage{enumerate}
\usepackage{mathrsfs}
\usepackage{stmaryrd}

\usepackage{url,hyperref}
\allowdisplaybreaks

\numberwithin{equation}{section}

\definecolor{dblue}{rgb}{0,0,0.45}
\definecolor{red}{rgb}{0.7,0,0}

 \RequirePackage{geometry}
 \geometry{twoside,
 paperwidth=210mm,
 paperheight=297mm,
 textheight=622pt,
 textwidth=468pt,
 centering,
 headheight=50pt,
 headsep=12pt,
 footskip=18pt,
 footnotesep=24pt plus 2pt minus 12pt,
 columnsep=2pc
 }

\newtheorem{theorem}{Theorem}[section]
\newtheorem{lemma}[theorem]{Lemma}
\newtheorem*{lemma*}{Lemma}
\newtheorem{corollary}[theorem]{Corollary}
\newtheorem{proposition}[theorem]{Proposition}

\theoremstyle{definition}

\newtheorem{remark}[theorem]{Remark}
\newtheorem{definition}[theorem]{Definition}
\newtheorem{example}[theorem]{Example}

\theoremstyle{remark}

\newcommand{\cD}{{\mathcal D}}

\newcommand{\cI}{{\mathcal I}}
\newcommand{\cL}{{\mathcal L}}

\newcommand{\ve}{\varepsilon}


\newcommand{\cE}{\mathcal{E}}

\newcommand{\ImUnit}{\mathsf{i}}


\newcommand{\SmoothFunc}[3]{C^{#1}({#2},{#3})}
\newcommand{\HolBesSp}[1]{\mathcal{C}^{#1}}

\newcommand{\drivers}[2]{{\mathcal{X}_{#1}^{#2}}}
\newcommand{\sols}[3]{{\mathcal{D}_{#1}^{#2,#3}}}

\newcommand{\FourierTrans}{\mathcal{F}}

\newcommand{\FourierCoeff}[1]{\hat{#1}}
\newcommand{\FourierBase}[1][]{\mathbf{e}_{#1}}
\newcommand{\DyaPartOfUnit}[1][]{\rho_{#1}}
\newcommand{\LPBlock}[1][]{\triangle_{#1}}

\newcommand{\LaplaceOp}{\Delta}


\DeclareMathOperator{\supp}{supp}


\newcommand{\const}[1][]{K_{#1}} 



\newcommand{\reso}{\varodot}
\newcommand{\rpara}{\varolessthan}
\newcommand{\lpara}{\varogreaterthan}
\DeclareMathOperator{\com}{com}
\newcommand{\comC}{C}



\newcommand{\RealNum}{\mathbf{R}}
\newcommand{\Integers}{\mathbf{Z}}

\newcommand{\Torus}{\mathbf{T}}


%

\begin{document}

\title{
Paracontrolled quasi-geostrophic equation with space-time white noise
}
\author{ Yuzuru Inahama and Yoshihiro Sawano
}
\maketitle

\begin{abstract}
We study the stochastic dissipative quasi-geostrophic equation 
with space-time white noise on the two-dimensional torus.
This equation is highly singular and 
basically ill-posed in its original form. 
The main objective of the present paper is to formulate and
solve this equation locally in time
in the framework of paracontrolled calculus
when the differential order of the main term,
the fractional Laplacian, is larger than $7/4$.
No renormalization has to be done for this model.
\\
\\
Keywords:~Stochastic partial differential equation: Paracontrolled calculus:
Quasi-geostrophic equation: white noise:
\\
Mathematics Subject Classification (2010):~60H15, 35Q86, 60H40.
\end{abstract}

\section{Introduction}

\subsection{Setting and main result}
The dissipative 2D quasi-geostrophic equation (QGE for short) is a partial differential equation
(PDE) which describes geophysical fluid dynamics on a two-dimensional space
(see \cite{pe} for example).
It has the fractional Laplacian $(-\LaplaceOp)^{\theta /2}$, 
$0< \theta \le 2$,
as its main term. 
It is used as a model in oceanography and meteorology to
describe large-scale, 
stratified oceanic and atmospheric circulation exhibiting geostrophic balance.
The solution $u$ and its ``rotated Riesz transform"
$R^{\perp} u$ represent the potential temparature 
and the velocity field, respectively.
This type of QGE has been intensively studied (see \cite{CV, CMT, CW, CC, ju, KNV, mm} among many others)
and so has its stochastic counterpart (see \cite{HGH, lrz, rss, ZZ2} among others).

In this paper we study the following
stochastic dissipative QGE with {\it additive} space-time
white noise $\xi$ on the two-dimensional torus $\Torus^2={\bf R}^2/ {\bf Z}^2$
with a given initial condition:
\begin{equation}\label{eq:180520}
\partial_t u 
= - (-\LaplaceOp)^{\theta /2} u + R^{\perp} u \cdot \nabla u +\xi,
\qquad
t >0, \,\, x \in \Torus^2.
\end{equation}
Here, {\rm (i)} $\nabla =(\partial_1, \partial_2)$ is the usual gradient on $\Torus^2$,
{\rm (ii)} $R^{\perp} = (R_2, -R_1)$ with $R_j$ being the $j$th Riesz transform 
on $\Torus^2$ ($j=1,2$),
{\rm (iii)} the dot stands for the standard inner product on ${\bf R}^2$,
{\rm (iv)} $\xi =\xi (t,x)$ is a (generalized) 
centered Gaussian random field
 associated with 
$L^2 ({\bf R} \times \Torus^2)$ whose covariance is heuristically given by 
${\mathbb E} [\xi (t,x) \xi (s,y)]= \delta (t-s) \delta (x-y)$, where $\delta$ 
stands for Dirac's delta function at $0$.

Although stochastic QGE \eqref{eq:180520} may look natural and innocent at first glance, 
it does not even make sense in the classical sense 
since the regularity of $\xi$ is quite bad.
Let us take a quick look at this ill-definedness in the best case ($\theta =2$).
If the nonlinear term $R^{\perp} u \cdot \nabla u$ were absent, 
the solution of \eqref{eq:180520} would be an Ornstein--Uhlenbeck process.
It is a well-known Gaussian process and its space regularity at a fixed time is 
 $-\kappa$ for every small $\kappa >0$.
Therefore, it is natural to guess that the
regularity of $u (t, \cdot)$ is $-\kappa$ at best.
Then, the regularities of 
$R^{\perp} u (t, \cdot)$ and $\nabla u (t, \cdot)$ are 
$-\kappa$ and $-1-\kappa$ at best, respectively,
and their (inner) product therefore cannot be defined.

Recently, three theories were invented (see \cite{GubinelliImkellerPerkowski2015, Hai, Kup})
and it became possible to study very singular stochastic PDEs of this kind.
Now, studying singular stochastic PDEs is certainly one of the
cutting edges in both probability theory and PDE theory. 
In the present paper we will use
the so-called Gubinelli--Imkeller--Perkowski's paracontrolled calculus
\cite{GubinelliImkellerPerkowski2015} to solve \eqref{eq:180520}
locally in time when $7/4 <\theta \le 2$.

Before stating our main result (Theorem \ref{thm_2017013154042} below), 
let us fix some notation.
We denote by $\HolBesSp{\alpha} = B^{\alpha}_{\infty \infty}$
the Besov-H\"older space on $\Torus^2$ of regularity $\alpha \in \RealNum$.
We set $q := 2 -\frac{5}{2\theta}$ for $11/6 < \theta \le 2$ 
and $q := 5 -\frac{8}{\theta}- \frac{2(4 \theta -7)}{10^{100}\theta}$ 
for $7/4 < \theta \le 11/6$.
The value of $q$ is a little bit messy 
but is not very important. 
Let 
 $S=\frac{3}{2}\theta -2 - \theta q+ (\theta -1)\kappa'$
be
the regularity of the initial condition
$u(0,\cdot)$.
One should note that
$S$
is smaller than that of the stationary Ornstein--Uhlenbeck process $X$
(i.e., the stationary solution of $\partial_t X= -(-\Delta)^{\theta /2} X -X + \xi$).

For example, if $\theta =2$ then 
the Besov space in Theorem \ref{thm_2017013154042} is $\HolBesSp{-\frac12 +\kappa'}$.
Hence, the initial condition can be a distribution of regularity approximately $-1/2$.
(The smaller $\theta$ is, the less regular everything becomes.)

Let $\chi \colon {\mathbf R}^2 \to {\mathbf R}$
be a smooth and even
function with compact support such that $\chi (0)=1$
and write $\chi^{\ve} (x)= \chi (\ve x)$ for $0<\ve <1$.
Mollify $\xi$ using $\chi^{\ve}$ and denote it by $\xi^{\ve}$. 
See \eqref{eq:180522-1} (and Remark \ref{re:180524}) for the precise 
definition of $\xi^{\ve}$.
Then, $\xi^{\ve}$ is a smooth noise and converges to $\xi$ as $\ve \searrow 0$
in an appropriate sense.
Since $\xi^{\ve}$ is smooth, the following QGE driven by $\xi^{\ve}$ 
is well-posed (up to the explosion time) in the classical mild sense:
\begin{equation}\label{eq:rcgl}
\partial_t u^{\ve}
= - (-\LaplaceOp)^{\theta /2} u^{\ve} + R^{\perp} u^{\ve}\cdot \nabla u^{\ve} +\xi^{\ve},
\qquad
t >0, x \in \Torus^2.
\end{equation}
Due to the structure of QGE, 
no renormalization is needed for \eqref{eq:rcgl}.
This phenomenon 
is rare among singular stochastic PDEs, but has already happened
in \cite{ZZ}
for the three-dimensional Navier-Stokes equation with space-time white noise
and also in \cite{gp} for the stochastic Burgers equation.

Now we are in the position to state our main result. 
The proof of the following theorem is immediate from Propositions \ref{pr:180215-1},
\ref{pr.20171116} and Theorem \ref{pr:180220}:
\begin{theorem}\label{thm_2017013154042}
Let $7/4 < \theta \le 2$ and then take sufficiently small $\kappa' >0$ for this $\theta$.
Assume that $u_0\in\HolBesSp{\frac{3}{2}\theta -2 - \theta q+ (\theta -1)\kappa'}$.
 Then, for every $0<\epsilon<1$,
 there exist a random time $T_\ast^\epsilon\in(0,1]$
 and a unique process $u^\epsilon$ defined up to time $T_\ast^\epsilon$ such that
	\begin{itemize}
		\item	$u^\epsilon$ solves \eqref{eq:rcgl} on $[0,T_\ast^\epsilon]$
		 with the initial condition $u_0$,
		\item	$T_\ast^\epsilon$ converges to some a.s.~positive random time $T_\ast$ in probability
as $\ve \downarrow 0$,
		\item	$u^\epsilon$ converges to some 
		$\HolBesSp{\frac{3}{2}\theta -2 - \theta q+ (\theta -1)\kappa'}$-valued		process $u$ defined on $[0,T_\ast)$ in the following sense:
				\begin{align*}
					\lim_{\varepsilon\searrow 0}
\left(
					\sup\limits_{0\leq s\leq T_\ast/2}
						\|u^\epsilon_s-u_s\|_{\HolBesSp{\frac{3}{2}\theta -2 - \theta q+ (\theta -1)\kappa' }}
\right)
					=
						0
				\end{align*}
				in probability
as $\ve \downarrow 0$,.
  Here, we understand
  $
   \sup\limits_{0\leq s\leq T_\ast/2}
   \|u^\epsilon_s-u_s\|_{\HolBesSp{\frac{3}{2}\theta -2 - \theta q+ (\theta -1)\kappa' }}
   =
   \infty
  $
  on the event $\{T_\ast^\epsilon<T_\ast/2\}$.
				Furthermore, $u$ is independent of the choice of $\chi$.
	\end{itemize}
\end{theorem}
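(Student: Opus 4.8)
The plan is to obtain Theorem \ref{thm_2017013154042} as a short assembly of three inputs: the probabilistic construction of the enhanced noise, the deterministic local well-posedness of the paracontrolled formulation of \eqref{eq:180520}, and the identification of the paracontrolled solution with the classical mild solution when the noise is smooth. These are exactly Proposition \ref{pr:180215-1}, Theorem \ref{pr:180220} and Proposition \ref{pr.20171116}, and the theorem follows by combining them.

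First I would isolate the finite family of stochastic data $\mathbb{X}$ --- built from the stationary solution $X$ of $\partial_t X = -(-\LaplaceOp)^{\theta/2}X - X + \xi$ together with the iterated paraproducts and resonance products entering the paracontrolled ansatz for \eqref{eq:180520} --- living in a suitable space of drivers $\mathcal{X}$ (a finite product of parabolic Besov spaces). The role of Proposition \ref{pr:180215-1} is that the data $\mathbb{X}^{\varepsilon}$ assembled from the mollified noise $\xi^{\varepsilon}$ converges in probability in $\mathcal{X}$, as $\varepsilon\searrow 0$, to a limit $\mathbb{X}$ that does not depend on $\chi$. The reason no renormalization is required --- a feature shared with \cite{ZZ} and \cite{gp} --- is that the potentially divergent resonance term $R^{\perp}X\reso\nabla X$ is in fact finite, because the antisymmetry of the operator $v\mapsto R^{\perp}v\cdot\nabla v$ annihilates the diagonal contribution, so the naive product already has a limit and no constant has to be subtracted.

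Second, on the deterministic side I would set up the paracontrolled fixed point: write the solution as $X$ plus a remainder, postulate that the remainder is paracontrolled by $X$ with a prescribed Gubinelli derivative (iterating this ansatz as the low regularity near $\theta=7/4$ forces, this regime being part of what the auxiliary exponent $q$ is tuned to handle), rewrite \eqref{eq:180520} as a fixed-point equation for the pair (remainder, derivative) in a space of paracontrolled distributions with regularity index $\tfrac{3}{2}\theta-2-\theta q+(\theta-1)\kappa'$ dictated by the Schauder estimates for $e^{-t(-\LaplaceOp)^{\theta/2}}$, and run a contraction on a short time interval whose length depends on $\|\mathbb{X}\|_{\mathcal{X}}$ and $\|u_0\|$. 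Theorem \ref{pr:180220} packages the outcome: for every $\mathbb{X}\in\mathcal{X}$ and every $u_0$ in the stated Besov space there is a maximal time $T_\ast(\mathbb{X},u_0)>0$ and a unique paracontrolled solution, the paracontrolled norm of the solution explodes as $t\uparrow T_\ast$ when $T_\ast<\infty$, and the map $(\mathbb{X},u_0)\mapsto(\text{solution},T_\ast)$ is continuous (locally Lipschitz on bounded sets). Feeding the convergence $\mathbb{X}^{\varepsilon}\to\mathbb{X}$ of Step~1 into this continuity, the random times $T_\ast^{\varepsilon}:=T_\ast(\mathbb{X}^{\varepsilon},u_0)$ converge in probability to $T_\ast:=T_\ast(\mathbb{X},u_0)$, which is a.s.\ positive, and the paracontrolled solution driven by $\mathbb{X}^{\varepsilon}$ converges in probability to the one driven by $\mathbb{X}$, uniformly on $[0,T_\ast/2]$ in $\HolBesSp{\tfrac{3}{2}\theta-2-\theta q+(\theta-1)\kappa'}$, with the value $+\infty$ on $\{T_\ast^{\varepsilon}<T_\ast/2\}$ exactly as stated; this last point is the routine bookkeeping that turns convergence of maximal times plus local convergence into the displayed limit via the blow-up alternative.

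Finally, Proposition \ref{pr.20171116} identifies the abstract object produced from $\mathbb{X}^{\varepsilon}$ with the genuine solution of \eqref{eq:rcgl}: since $\xi^{\varepsilon}$ is smooth, the classical mild solution of \eqref{eq:rcgl} is itself paracontrolled by $X^{\varepsilon}$ with the correct Gubinelli derivative, hence equals the paracontrolled solution driven by $\mathbb{X}^{\varepsilon}$ by the uniqueness in Theorem \ref{pr:180220}. Combining the three ingredients yields all bullet points of Theorem \ref{thm_2017013154042}, and the independence of $u$ from $\chi$ is inherited from that of $\mathbb{X}$ in Proposition \ref{pr:180215-1}. I expect the genuine difficulty to lie not in this deduction but upstream: in proving the stochastic bounds of Proposition \ref{pr:180215-1} (including the antisymmetry cancellation that removes the need for renormalization) and in closing the fixed point of Theorem \ref{pr:180220} in the narrow window $7/4<\theta\le 2$, where each paraproduct and resonance estimate must be balanced against the smoothing of the fractional Laplacian. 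Within the assembly itself, the only point that needs care is checking that the topology in which $\mathbb{X}^{\varepsilon}\to\mathbb{X}$ is precisely the one on which the solution map of Theorem \ref{pr:180220} is continuous.
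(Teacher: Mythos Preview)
Your assembly is exactly the one the paper uses: the theorem is stated there as an immediate consequence of Propositions~\ref{pr:180215-1}, \ref{pr.20171116} and Theorem~\ref{pr:180220}, and your elaboration of how the three pieces fit together (continuity of the solution map in the driver, convergence of the enhanced noise, identification with the classical mild solution for smooth $\xi^\varepsilon$) is faithful to what those results actually say.

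There is, however, a systematic swap of labels in your write-up. In the paper, Proposition~\ref{pr:180215-1} is the \emph{deterministic} local well-posedness and continuity of the solution map $(v_0,w_0,\mathbf{X})\mapsto (v,w)$ on $\sols{T_\ast}{\kappa}{\kappa'}$, while Theorem~\ref{pr:180220} is the \emph{probabilistic} statement that $\mathbf{X}^\varepsilon\to\mathbf{X}$ in $L^p(\Omega;\drivers{T}{\kappa})$ with $\mathbf{X}$ independent of $\chi$. You have attributed each of these roles to the other. Also, the paper's fixed point is set up for a pair $(v,w)\in\sols{T}{\kappa}{\kappa'}$ (with $u=X+Y+v+w$) rather than a single ``remainder plus Gubinelli derivative'' as you describe, though this is a matter of presentation rather than substance. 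Once the labels are corrected, your deduction matches the paper's.
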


Before we go further,
we make clarifying remarks.

\begin{remark}
The restriction $\theta >7/4$ does not seem essential.
A quick and heuristic computation shows that QGE with space-time 
white noise is subcritical in the sense of \cite{Hai} if and only if $\theta >4/3$.
So, while the case $\theta \le 4/3$ looks hopeless,
there is a possibility that the case $4/3< \theta \le 7/4$ can be solved.
However, as soon as $\theta$ gets smaller than $7/4$, 
computations get much more complicated.
For example, more ``symbols" are needed to define drivers.
Moreover, a higher order version of paracontrolled calculus 
developed in \cite{bb} is probably needed, too.
For these reasons we restrict ourselves to the case $7/4 <\theta \le 2$,
although the case of smaller $\theta$ still looks quite interesting. 
\end{remark}

\begin{remark}
Nevertheless,
the postulate $\theta>7/4$ is necessary in the proof
of Theorem \ref{thm_2017013154042}.
In fact,
$\theta>7/4$ is used to prove
(\ref{eq:20200113-1});
see Lemma \ref{lem_20171219}.
Lemma \ref{lem_20171219} is in turn used
for the proof of Proposition \ref{prop_20161003143917},
which is one of the components of Proposition \ref{pr:180213-1}.
Finally 
the proof of Proposition \ref{pr:180215-1} uses Proposition \ref{pr:180213-1}.
\end{remark}

\begin{remark}
Although the time $T_*$ is random,
the solution is unique since we are using the fixed point theorem.
\end{remark}

As far as the authors know, 
the only paper which deals with a singular stochastic PDE 
with the fractional Laplacian $(-\Delta)^{\theta/2}$ is 
Gubinelli--Imkeller--Perkowski \cite{GubinelliImkellerPerkowski2015},
in which paracontrolled calculus was invented.
They studied a Burgers-like equation on $\Torus$ with $\theta > 5/3$.
(It is almost certain that
two other theories for singular stochastic PDE,
Hairer's regularity structure theory \cite{Hai} and 
Kupiainen's renormalization group approach \cite{Kup}, 
also work very well for singular stochastic PDEs with the fractional Laplacian. 
At the moment, however, there seems to be no literature which 
actually elaborates to solve QGE with white noise or 
other concrete examples of singular stochastic PDEs with fractional Laplacian 
with these theories.)

In this paper we slightly refine the argument in 
\cite{GubinelliImkellerPerkowski2015} in the following way:
First, we will use Mourrat--Weber's method \cite{mw} 
to formulate our paracontrolled QGE.
In their work, a solution is defined to be a fixed point 
of a certain integration map
in a Besov space-valued function space consisting of 
functions in time which may explode near $t=0$.
A merit of doing so is that we become able to treat a relatively bad initial condition.
In turn, it allows us to use
a stationary Ornstein--Uhlenbeck process, 
even though
the Ornstein--Uhlenbeck process in the original problem (in the mild form)
starts at $0$.
Thanks to the stationarity, enhancing white noise becomes less cumbersome
and can be done with a fractional generalization of 
Gubinelli--Perkowski's enhancing method developed in \cite{gp}.

Before we go further,
we explain how to use the notation $\lesssim$ and $\gtrsim$.
Let $A,B \ge 0$.
Then $A \lesssim B$ and $B \gtrsim A$ mean
that there exists a constant $C>0$
such that $A \le C B$,
where $C$
does not depend on the parameters
or the functions
of importance.
The symbol $A \sim B$ means
that $A \lesssim B$ and $B \lesssim A$
happen simultaneously,
while $A \simeq B$ means that there exists
a constant $C>0$ such that $A=C B$.
When we need to emphasize or
keep in mind that
the constant $C$
depends on the parameters
$\alpha,\beta,\gamma$ etc:
Instead of $A\lesssim B$,
we write
$A\lesssim_{\alpha,\beta,\gamma,\ldots}B$.
Instead of
$A \sim B$,
we write
$A \sim_{\alpha,\beta,\gamma,\ldots}B$.
We also write $A \vee B=\max\{A,B\}$ and $A \wedge B=\min\{A,B\}$.

The organization of this paper is as follows:
Section 2 is a preliminary section. 
We recall basic results on Besov spaces and paradifferential calculus on the torus.
The main aim is to study the heat semigroup associated to the 
fractional Laplacian and give a Schauder-type estimate for this semigroup.
In Section 3 we give a bird's eye view 
of our proof of the main theorem in a heuristic way.
In Section 4 
we formulate paracontrolled QGE for $\theta \in (7/4, 2]$, following the method in \cite{mw}.
In the first half we define a driver of this equation.
In the latter half we introduce a Besov space-valued function space in time
in which solutions live.
As usual for the mild formulation, 
a solution of our paracontrolled QGE is defined to be a fixed point
of a suitable integration map.
In Section 5 we estimate this integration map.
This is the most important part in solving the equation.
Using these estimates, we prove the local well-posedness of 
our paracontrolled QGE for $\theta \in (7/4, 2]$ in Section 6. 
In Section 7, we prove that for a nice driver a solution in the paracontrolled sense 
coincides with a solution in the classical mild sense.
So far, everything is deterministic.
Section 8 is the probabilistic part and devoted to 
enhancing the space-time white noise $\xi$ to a stationary
random driver of paracontrolled QGE
(Theorem \ref{pr:180220}). 
In Subsections 8.1 
we generalize the enhancing method in \cite{gp}
to the fractional case in a rather general setting.
In Subsections 8.2--8.7 we then prove Theorem \ref{pr:180220} with this method.


\section{Preliminaries 
}

In this section
%
%
 we introduce the Besov-H\"older space 
$\HolBesSp{\alpha} = B^{\alpha}_{\infty \infty}$, 
$\alpha \in \RealNum$,
and paradifferential calculus on the $d$-dimensional torus $\Torus^d$.
Unlike the counterpart in most of the 
preceding works on paracontrolled calculus,
we study the heat semigroup generated by the fractional Laplacian.
In particular, a fractional version of the Schauder estimate is provided.

\subsection{Besov spaces and paraproduct}

We introduce Besov spaces over $\Torus^d={\mathbf R}^d/{\mathbf Z}^d$
and recall their basic properties.
Let $\cD =\cD(\Torus^d,\RealNum)$ be
the space of all smooth $\RealNum$-valued functions on $\Torus^d$.
We denote by $\cD'$ the dual of $\cD$, 
that is, the space of Schwartz distributions in $\Torus^d$.

We set $\FourierBase[k](x)=e^{2\pi\ImUnit k\cdot x}$ for every $k\in\Integers^d$ and $x\in\Torus^d$.
The Fourier transform $\FourierTrans{f}$ for $f\in\cD$ is defined by
$
	\FourierTrans f(k)
	=
		\int_{\Torus^d}
			\FourierBase[-k](x)
			f(x)\,
			dx
$
and its inverse $\FourierTrans^{-1}g$ for a rapidly decreasing sequences $\{g(k)\}_{k\in\Integers^d}$ is defined by
$
	\FourierTrans^{-1} g
	=
		\sum\limits_{k\in\Integers^d}
			g(k)
			\FourierBase[k]
$.
For a rapidly decreasing smooth function $\phi:{\mathbf R}^d \to {\mathbf C}$, we set
$
	\phi(D)f
	=
		\FourierTrans^{-1}\phi\FourierTrans f
	=
		\sum\limits_{k\in\Integers^d}
			\phi(k)
			\FourierCoeff{f}(k)
			\FourierBase[k]
$.

We denote by $\{\DyaPartOfUnit[m]\}_{m=-1}^\infty$ a dyadic partition of unity, that is,
it satisfies the following:
(1) each $\DyaPartOfUnit[m]\colon\RealNum^d \to[0,1]$ is radial and smooth,
(2) $
		\supp(\DyaPartOfUnit[-1])
		\subset
			B(0,\frac{4}{3})
	$,
	$
		\supp(\DyaPartOfUnit[0])
		\subset
			B(0,\frac{8}{3})\setminus B(0,\frac{3}{4})
	$,
(3) $\DyaPartOfUnit[m](\cdot)=\DyaPartOfUnit[0](2^{-m}\cdot)$ for $m\geq 0$,
(4) $
		\sum\limits_{m=-1}^\infty
			\DyaPartOfUnit[m](\cdot)
		=
			1
	$.
Here $B(0,r)=\{x\in\RealNum^d ;|x|<r\}$.
Such a function $\rho$
can be constructed with ease.
In fact, choose $\rho_{-1} \in C^\infty$ 
so that $\chi_{B(0,\frac{1}{2})} \le \rho_{-1} \le \chi_{B(0,1)}$.
If we let
$\rho_m(x)=\rho_{-1}(2^{-m-1}x)-\rho_{-1}(2^{-m}x)$,
then $\rho_0$ satisfies (2).
Finally if we let 
$\rho_m(x)=\rho_0(2^{-m}x)$ for any $m \in {\mathbb N}$
and $x \in {\mathbb R}^d$,
then we obtain (3) and (4).

The Littlewood-Paley blocks 
(or the Littlewood-Paley operator)
$\{\LPBlock[m]\}_{m=-1}^\infty$ is defined by $\LPBlock[m]=\DyaPartOfUnit[m](D)$.
As usual we set $S_j f = \sum\limits_{m \le j-1} \LPBlock[m] f$.
We note that
\begin{equation}\label{eq:180329-1}
\sharp\{k \in {\mathbf Z}^d\,:\,\rho_j(k)>0\}={\rm O}(2^{j d})
\end{equation}
for each $j \in {\mathbf N}_0=\{0,1,\ldots\}$.

We are ready to define the Besov space 
$B^{\alpha}_{pq} 
= B^{\alpha}_{pq} (\Torus^d)$
for $\alpha\in\RealNum$, $1 \le p, q \le \infty$.
For $f \in \cD'$,
the $B^{\alpha}_{pq}$-norm $\|f\|_{B^{\alpha}_{pq}}$ is defined by
\begin{eqnarray*}
	\|f\|_{B^{\alpha}_{pq} }
	=
\begin{cases}
			\left(
\sum\limits_{m=-1}^\infty(2^{m \alpha}\|\LPBlock[m] f\|_{L^p(\Torus^d)})^q
\right)^{\frac1q}			
			 & \mbox{(if $1 \le q <\infty$)},\\
		\sup\limits_{m\geq -1}
			2^{m\alpha}
			\|\LPBlock[m] f\|_{L^p(\Torus^d)} & \mbox{(if $q =\infty$)}.
\end{cases}
			\end{eqnarray*}
We set $B^{\alpha}_{pq} = \{ f \in \cD^{\prime} ; \|f\|_{B^{\alpha}_{pq} }<\infty \}$.	
See \cite{Sawano18,Triebel-text-83}
for more about this function space.
For simplicity, we will write
$\HolBesSp{\alpha}
=\HolBesSp{\alpha}(\Torus^d) = B^{\alpha}_{\infty \infty}(\Torus^d)$
and sometimes call it the H\"{o}lder-Zygmund class of order $\alpha$.
We note that
$\displaystyle
{\mathcal D}'=\bigcup_{\alpha \in {\mathbf R}}\HolBesSp{\alpha}.
$

A couple of helpful remarks may be in order.
\begin{remark}\label{rem:171027-1}
\
\begin{enumerate}
\item
Let $f \in \cD'$.
By using the Young inequality
one can verify that the following norm 
is equivalent to the original one:
\[
\|f\|_{B^{\alpha}_{p q}*}
=
\left(
\sum\limits_{m=-1}^\infty(2^{m \alpha}\| \LPBlock[m]{}^2 f\|_{L^p(\Torus^d)})^q
\right)^{\frac1q}
\]
for $\alpha \in \RealNum$ and $1 \le p,q \le \infty$. 
See \cite[p. 286]{Triebel-text-83}.
\item
Our space $B^{\alpha}_{p q}$ may be different from the popular space 
${\mathcal B}^{\alpha}_{p q}$
which is defined to be the closure
of ${\mathcal D}$ in $B^{\alpha}_{p q}$.
However it causes no troubles, anyway.
Some prefer to use the notation
$b^\alpha_{p q}$
to denote the Besov space
${\mathcal B}^\alpha_{p q}$.
\end{enumerate}
\end{remark}

In this paper we will mainly use 
 $\HolBesSp{\alpha} = B^{\alpha}_{\infty \infty}$.
The following inequalities are frequently used results on these Banach spaces:
\begin{proposition}
\label{prop_20160928054620}
	We have the following:
	\begin{enumerate}
		\item[$(1)$]
Let $\alpha \le \beta$,
and let $f \in \HolBesSp{\beta}$.
Then $f \in \HolBesSp{\alpha}$
and $
					\|f\|_{\HolBesSp{\alpha}}
					\lesssim
						\|f\|_{\HolBesSp{\beta}}$.
		\item[$(2)$]	
Let $\alpha_1,\alpha_2\in\RealNum$
and $0\le \nu \le 1$. Then
				$
					\|f\|_{\HolBesSp{(1-\nu)\alpha_1+\nu\alpha_2}}
					\leq
						\|f\|_{\HolBesSp{\alpha_1}}^{1-\nu}
						\|f\|_{\HolBesSp{\alpha_2}}^\nu
				$
for all $f \in \HolBesSp{\alpha_1} \cap \HolBesSp{\alpha_2}$.
	\end{enumerate}
\end{proposition}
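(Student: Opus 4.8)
The plan is to prove the two stated inequalities about the H\"older--Zygmund spaces $\HolBesSp{\alpha} = B^{\alpha}_{\infty\infty}$ directly from the definition of the norm, which for $p = q = \infty$ reads $\|f\|_{\HolBesSp{\alpha}} = \sup_{m \geq -1} 2^{m\alpha}\|\LPBlock[m] f\|_{L^{\infty}(\Torus^d)}$. Both statements are purely about the interplay between the weight $2^{m\alpha}$ and the Littlewood--Paley blocks $\LPBlock[m] f$, so no further structure (paraproducts, Riesz transforms, the fractional Laplacian) is needed.

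\emph{Part (1): embedding $\HolBesSp{\beta} \hookrightarrow \HolBesSp{\alpha}$ for $\alpha \leq \beta$.} For each $m \geq -1$ we have $2^{m\alpha} = 2^{m(\alpha - \beta)} \cdot 2^{m\beta}$, and since $\alpha - \beta \leq 0$ and $m \geq -1$, the factor $2^{m(\alpha-\beta)}$ is bounded (by $2^{\beta - \alpha}$, coming from $m = -1$; for $m \geq 0$ it is $\leq 1$). Hence $2^{m\alpha}\|\LPBlock[m] f\|_{L^{\infty}} \leq 2^{\beta - \alpha} \cdot 2^{m\beta}\|\LPBlock[m] f\|_{L^{\infty}} \leq 2^{\beta-\alpha}\|f\|_{\HolBesSp{\beta}}$. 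Taking the supremum over $m$ gives $\|f\|_{\HolBesSp{\alpha}} \lesssim \|f\|_{\HolBesSp{\beta}}$, and in particular $f \in \HolBesSp{\alpha}$ whenever $f \in \HolBesSp{\beta}$.

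\emph{Part (2): the logarithmic convexity / interpolation inequality.} Fix $\alpha_1, \alpha_2 \in \RealNum$, $0 \leq \nu \leq 1$, and set $\alpha = (1-\nu)\alpha_1 + \nu\alpha_2$. For each block, I would split the weight multiplicatively as $2^{m\alpha} = \big(2^{m\alpha_1}\big)^{1-\nu}\big(2^{m\alpha_2}\big)^{\nu}$ and write $\|\LPBlock[m] f\|_{L^{\infty}} = \|\LPBlock[m] f\|_{L^{\infty}}^{1-\nu}\|\LPBlock[m] f\|_{L^{\infty}}^{\nu}$, so that
\[
2^{m\alpha}\|\LPBlock[m] f\|_{L^{\infty}}
= \Big(2^{m\alpha_1}\|\LPBlock[m] f\|_{L^{\infty}}\Big)^{1-\nu}\Big(2^{m\alpha_2}\|\LPBlock[m] f\|_{L^{\infty}}\Big)^{\nu}
\leq \|f\|_{\HolBesSp{\alpha_1}}^{1-\nu}\|f\|_{\HolBesSp{\alpha_2}}^{\nu},
\]
where the last step bounds each of the two bracketed quantities by the corresponding supremum over $m$. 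Taking the supremum over $m \geq -1$ on the left yields the claim. (The cases $\nu = 0$ and $\nu = 1$ are trivial, and when $f \notin \HolBesSp{\alpha_1} \cap \HolBesSp{\alpha_2}$ there is nothing to prove; the displayed manipulation only uses nonnegativity of the quantities involved, so no issue arises from vanishing blocks once one adopts the convention $0^0 = 1$ or simply treats those terms as $0$.)

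There is no real obstacle here: the only mild point of care is the contribution of the block $m = -1$ in Part (1), where $2^{m(\alpha-\beta)} = 2^{\beta-\alpha} \geq 1$ rather than $\leq 1$, so one genuinely needs the $\lesssim$ (an implied constant $2^{\beta-\alpha}$) rather than a bare $\leq$; for $m \geq 0$ the inequality holds with constant $1$. Everything else is a one-line application of elementary properties of suprema and of the elementary inequality $a^{1-\nu}b^{\nu} \leq \sup\{a,\dots\}^{1-\nu}\sup\{b,\dots\}^{\nu}$. One could alternatively cite \cite{Triebel-text-83} for both facts, but the self-contained argument above is shorter than locating the reference.
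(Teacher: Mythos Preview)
Your proof is correct and follows essentially the same approach as the paper: for (1) the paper also uses $2^{j\alpha}\lesssim 2^{j\beta}$ for $j\ge -1$, and for (2) it uses the same multiplicative splitting $2^{m\alpha}\|\cdot\|=(2^{m\alpha_1}\|\cdot\|)^{1-\nu}(2^{m\alpha_2}\|\cdot\|)^{\nu}$ (the paper happens to write it with the equivalent norm involving $\LPBlock[m]^2$ from Remark~\ref{rem:171027-1}, but your use of $\LPBlock[m]$ works identically and is in fact slightly more direct).
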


\begin{proof}
\
\begin{enumerate}
\item
This is trivial from the definition of 
two norms
$\|f\|_{\HolBesSp{\alpha}}$
and
$\|f\|_{\HolBesSp{\beta}}$;
simply compare $2^{m\alpha}$ and $2^{m\beta}$
in their definitions.
See also \cite[p. 47]{Triebel-text-83}
or \cite[Proposition 2.3]{Sawano18}.
\item
This is also trivial since
$
\sup\limits_{m \ge -1}2^{(1-\nu)\alpha_1+\nu \alpha_2}|a_m|
\le
\left(\sup\limits_{m \ge -1}2^{\alpha_1}|a_m|\right)^{1-\nu}
\left(\sup\limits_{m \ge -1}2^{\alpha_2}|a_m|\right)^{\nu}.
$
\end{enumerate}
\end{proof}

Proposition \ref{prop_20160928054620}(1) 
and
(2)
are referred to 
as the embedding inequality
and 
the interpolation inequality,
respectively.
We use
the fact that $2^{j\alpha} \lesssim 2^{j\beta}$ for $j \ge -1$
to prove the embedding inequality,
while we use
$2^{m \alpha}\|\LPBlock[m] {}^2 f\|_{L^\infty(\Torus^d)}
=
(2^{m \alpha_1}\| \LPBlock[m]{}^2 f\|_{L^\infty(\Torus^d)})^{1-\nu}
\cdot
(2^{m \alpha_2}\|\LPBlock[m]{}^2 f\|_{L^\infty(\Torus^d)})^{\nu}
$
to prove the interpolation inequality.

We recall that the derivative stands for the smoothness order
of functions.

\begin{lemma}\label{lem:180122-1}
Let $u \in \HolBesSp{\alpha+1}$ with $\alpha \in {\mathbf R}$.
Then $\|\partial_j u\|_{\HolBesSp{\alpha}} \lesssim \|u\|_{\HolBesSp{\alpha+1}}$
for all $j=1,2,\ldots,d$.
\end{lemma}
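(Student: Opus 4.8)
The plan is to pass to Littlewood--Paley blocks and reduce the claim to a uniform Bernstein-type estimate for the action of $\partial_j$ on a single block. Since $\partial_j$ and $\LPBlock[m]$ are both Fourier multipliers on $\cD'$ (with $\partial_j\FourierBase[k]=2\pi\ImUnit k_j\FourierBase[k]$ and $\LPBlock[m]\FourierBase[k]=\DyaPartOfUnit[m](k)\FourierBase[k]$), they commute, so $\LPBlock[m]\partial_j u=\partial_j\LPBlock[m] u$. Hence it suffices to establish
\begin{equation*}
\|\partial_j\LPBlock[m] u\|_{L^\infty(\Torus^d)}\lesssim 2^m\,\|\LPBlock[m] u\|_{L^\infty(\Torus^d)},\qquad m\ge -1,
\end{equation*}
with implied constant independent of $m$ and $u$. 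Granting this and multiplying by $2^{m\alpha}$ before taking the supremum,
\begin{equation*}
\|\partial_j u\|_{\HolBesSp{\alpha}}=\sup_{m\ge -1}2^{m\alpha}\|\partial_j\LPBlock[m] u\|_{L^\infty(\Torus^d)}\lesssim \sup_{m\ge -1}2^{m(\alpha+1)}\|\LPBlock[m] u\|_{L^\infty(\Torus^d)}=\|u\|_{\HolBesSp{\alpha+1}},
\end{equation*}
which is the assertion.

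For the block estimate I would fix once and for all a radial, smooth, compactly supported $\tilde\rho_0$ equal to $1$ on $B(0,\tfrac83)\setminus B(0,\tfrac34)\supset\supp\DyaPartOfUnit[0]$, together with a similarly fattened $\tilde\rho_{-1}$ satisfying $\tilde\rho_{-1}\equiv 1$ on $\supp\DyaPartOfUnit[-1]$, and set $\tilde\rho_m(\cdot):=\tilde\rho_0(2^{-m}\cdot)$ for $m\ge 0$. Then $\tilde\rho_m(k)\DyaPartOfUnit[m](k)=\DyaPartOfUnit[m](k)$ for all $k\in\Integers^d$, i.e.\ $\tilde\rho_m(D)\LPBlock[m]=\LPBlock[m]$, so that $\partial_j\LPBlock[m] u=\bigl(\partial_j\tilde\rho_m(D)\bigr)\LPBlock[m] u$. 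The operator $\partial_j\tilde\rho_m(D)$ acts on $\Torus^d$ as convolution with the periodization of $\Psi_m:=\FourierTrans_{\RealNum^d}^{-1}(2\pi\ImUnit k_j\,\tilde\rho_m(k))$; since $\Psi_m(x)=2^{m(d+1)}\Psi_0(2^m x)$ one has $\|\Psi_m\|_{L^1(\RealNum^d)}=2^m\|\Psi_0\|_{L^1(\RealNum^d)}$, and the $L^1(\Torus^d)$-norm of its periodization is at most $\|\Psi_m\|_{L^1(\RealNum^d)}$. Young's inequality then yields $\|\partial_j\LPBlock[m] u\|_{L^\infty}\le\|\Psi_m\|_{L^1}\,\|\LPBlock[m] u\|_{L^\infty}\lesssim 2^m\|\LPBlock[m] u\|_{L^\infty}$, the case $m=-1$ contributing only a harmless fixed constant.

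The argument is entirely routine; the only point deserving a line of care is the uniform-in-$m$ bound $\|\Psi_m\|_{L^1(\RealNum^d)}\lesssim 2^m$, which is precisely the statement that dyadically localizing at frequency scale $2^m$ and then differentiating once costs one power of $2^m$. This is the (fractional-free) Bernstein inequality on the torus, and it can alternatively be quoted directly from \cite{Triebel-text-83} or \cite{GubinelliImkellerPerkowski2015}; I therefore do not expect any genuine obstacle in this lemma.
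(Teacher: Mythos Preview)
Your proof is correct and follows essentially the same approach as the paper: reduce to a Bernstein-type estimate on a single block by inserting a fattened multiplier that is identically $1$ on the support of $\DyaPartOfUnit[m]$, then use scaling and Young's inequality. The only cosmetic difference is that the paper uses $\LPBlock[m-1]+\LPBlock[m]+\LPBlock[m+1]$ as the fattening (so that $2^{-m}\partial_j(\LPBlock[m-1]+\LPBlock[m]+\LPBlock[m+1])=\tau(2^{-m}D)$ for a fixed $\tau\in C^\infty_{\mathrm c}$) rather than your custom $\tilde\rho_m$, but the argument is otherwise identical.
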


\begin{proof}
The proof being simple, we recall it.
We write the left-hand side out in full:
\begin{align*}
	\|\partial_j u\|_{\HolBesSp{\alpha}}
	=
		\sup\limits_{m\geq -1}
			2^{m\alpha}
			\|\partial_j \LPBlock[m]u\|_{L^\infty(\Torus^d)}
	=
		\sup\limits_{m\geq -1}
			2^{m\alpha+m}
			\|2^{-m}\partial_j (\LPBlock[m-1]+\LPBlock[m]+\LPBlock[m+1])\LPBlock[m]u\|_{L^\infty(\Torus^d)}.
\end{align*}
We observe that $2^{-m}\partial_j(\LPBlock[m-1]+\LPBlock[m]+\LPBlock[m+1])=\tau(2^{-m}D)$
for some $\tau \in C^\infty_{\rm c}$.
As a result, we obtain
\[
	\|\partial_j u\|_{\HolBesSp{\alpha}}
	\lesssim
		\sup\limits_{m\geq -1}
			2^{m\alpha+m}
			\| \LPBlock[m]u\|_{L^\infty(\Torus^d)}
			=
	\|u\|_{\HolBesSp{\alpha+1}},
\]
as required.
\end{proof}

\begin{proposition}\label{prop:180209-1}
Let $\alpha>0$.
Then $\HolBesSp{\alpha} \subset L^\infty$.
\end{proposition}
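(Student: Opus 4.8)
The statement to prove is that $\HolBesSp{\alpha} \subset L^\infty$ for $\alpha > 0$. The plan is to write an arbitrary $f \in \HolBesSp{\alpha}$ as its Littlewood--Paley decomposition $f = \sum_{m \ge -1} \LPBlock[m] f$, which converges in $\cD'$, and then control the $L^\infty$ norm by summing the $L^\infty$ norms of the individual blocks. Concretely, I would estimate
\[
\|f\|_{L^\infty}
\le
\sum_{m \ge -1}
\|\LPBlock[m] f\|_{L^\infty(\Torus^d)}
=
\sum_{m \ge -1}
2^{-m\alpha}
\cdot
2^{m\alpha}
\|\LPBlock[m] f\|_{L^\infty(\Torus^d)}
\le
\Bigl(
\sum_{m \ge -1}
2^{-m\alpha}
\Bigr)
\|f\|_{\HolBesSp{\alpha}},
\]
where in the last step I use the definition of the $\HolBesSp{\alpha} = B^\alpha_{\infty\infty}$-norm, namely $2^{m\alpha}\|\LPBlock[m] f\|_{L^\infty} \le \|f\|_{\HolBesSp{\alpha}}$ for every $m$. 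The geometric series $\sum_{m \ge -1} 2^{-m\alpha}$ converges precisely because $\alpha > 0$, and its sum is an absolute constant depending only on $\alpha$, so $\|f\|_{L^\infty} \lesssim \|f\|_{\HolBesSp{\alpha}}$.

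The one point that needs a word of care is the justification that $\sum_{m} \LPBlock[m] f$ converges in $L^\infty$ (and hence defines a bona fide bounded function agreeing with $f$ as a distribution), rather than merely in $\cD'$. But this is immediate from the computation above: the partial sums form a Cauchy sequence in $L^\infty$ since the tail $\sum_{m \ge N} \|\LPBlock[m] f\|_{L^\infty} \le \|f\|_{\HolBesSp{\alpha}} \sum_{m \ge N} 2^{-m\alpha} \to 0$ as $N \to \infty$. Completeness of $L^\infty$ then gives a limit $g \in L^\infty$, and since convergence in $L^\infty$ implies convergence in $\cD'$, we have $g = f$ in $\cD'$.

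There is essentially no obstacle here; the only substantive ingredient is the convergence of the geometric series, which is exactly where the hypothesis $\alpha > 0$ enters (and shows why the borderline case $\alpha = 0$ fails). If one wished, one could instead invoke the standard Besov embedding $B^\alpha_{\infty\infty} \hookrightarrow B^0_{\infty 1} \hookrightarrow L^\infty$ for $\alpha > 0$, but the direct summation above is shorter and self-contained given the definitions already recalled in this section.
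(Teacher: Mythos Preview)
Your proof is correct and follows exactly the same approach as the paper's proof, which simply asserts that $f=\sum_{j}\LPBlock[j]f$ converges in $L^\infty$. You have supplied the details the paper omits---the geometric-series bound and the Cauchy argument---but the underlying idea is identical.
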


\begin{proof}
Let $f \in\HolBesSp{\alpha}$.
Simply observe
$
f=\sum\limits_{j=0}^\infty \LPBlock[j]f
$
in $L^\infty$.
\end{proof}

Now we introduce the paraproduct 
and the resonant product.
For every $f\in\HolBesSp{\alpha}$, $g\in\HolBesSp{\beta}$, 
we define the paraproduct by
\[
		f\lpara g
		=
			\sum\limits_{m_1\geq m_2+2}
				\LPBlock[m_1]f
				\LPBlock[m_2]g, \quad
		f\rpara g
		=
			\sum\limits_{m_1+2\leq m_2}
				\LPBlock[m_1]f
				\LPBlock[m_2]g,
	\]
and the resonant product by
\[
	f\reso g
	=
		\sum\limits_{|m_1-m_2|\leq 1}
			\LPBlock[m_1]f
			\LPBlock[m_2]g.
\]
Observe that
$fg =f\lpara g+ f\reso g+ f\rpara g$
at least formally.
We have to establish that these definitions make sense.
Here in Proposition \ref{prop_20160926062106}
we collect the cases where these definitions are justified.

The following are basic properties of the paraproduct and the resonant.
\begin{proposition}[Paraproduct and resonant estimate]
\label{prop_20160926062106}
Let $\alpha,\beta \in \RealNum$.
	\begin{enumerate}
		\item[$(1)$]
For all $f \in L^\infty(\Torus^d)$ and $g \in \HolBesSp{\beta}$
$
					\|f\rpara g\|_{\HolBesSp{\beta}}
					\lesssim
						\|f\|_{L^\infty(\Torus^d)}
						\|g\|_{\HolBesSp{\beta}}.
$
		\item[$(2)$]
If $\alpha <0$,
then
$\|f\rpara g\|_{\HolBesSp{\alpha+\beta}}
					\lesssim
						\|f\|_{\HolBesSp{\alpha}}
						\|g\|_{\HolBesSp{\beta}}$
for $f \in \HolBesSp{\alpha}$ and $g \in \HolBesSp{\beta}$.
		\item[$(3)$]	
Assume $\alpha+\beta>0$.
Then
$					\|f\reso g\|_{\HolBesSp{\alpha+\beta}}
					\lesssim
						\|f\|_{\HolBesSp{\alpha}}
						\|g\|_{\HolBesSp{\beta}}
$
for $f \in \HolBesSp{\alpha}$ and $g \in \HolBesSp{\beta}$.
	\end{enumerate}
\end{proposition}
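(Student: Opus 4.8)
The plan is to prove the three bounds by the standard Littlewood--Paley dissection already set up in the excerpt, exploiting the frequency support of the summands in each of $\lpara$, $\rpara$, $\reso$ and the characterisation of Besov norms via $\|\LPBlock[m]\cdot\|_{L^\infty}$ (together with the equivalent norm in Remark \ref{rem:171027-1}(1), which will let me absorb one extra Littlewood--Paley factor). Throughout I will use two elementary facts: (a) if a distribution $h$ has Fourier support in an annulus $\{|k|\sim 2^{m}\}$ then $\|h\|_{\HolBesSp{\gamma}}\sim\sup_m 2^{m\gamma}\|\LPBlock[m]h\|_{L^\infty}$ is controlled by $2^{m\gamma}\|h\|_{L^\infty}$ on that single scale; and (b) if $h$ has Fourier support in a \emph{ball} $\{|k|\lesssim 2^{m}\}$ then only the blocks $\LPBlock[j]h$ with $j\lesssim m$ are nonzero, so a geometric summation is available when the relevant exponent is negative.

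For item (1): in $f\rpara g=\sum_{m_1+2\le m_2}\LPBlock[m_1]f\,\LPBlock[m_2]g$, the summand at fixed $m_2$ has, after summing over $m_1\le m_2-2$, Fourier support in an annulus of size $2^{m_2}$ (the low-frequency factor $S_{m_2-1}f$ does not move the annulus of $\LPBlock[m_2]g$). Hence $\LPBlock[j](f\rpara g)$ is a finite sum of terms $S_{m_2-1}f\cdot\LPBlock[m_2]g$ with $m_2\sim j$, and $\|S_{m_2-1}f\|_{L^\infty}\le\sum_{m\le m_2-2}\|\LPBlock[m]f\|_{L^\infty}\lesssim\|f\|_{L^\infty}$ by the triangle inequality and Proposition \ref{prop:180209-1}-type reasoning (here simply that $f\in L^\infty$). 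Multiplying by $2^{j\beta}$ and taking $\|\LPBlock[m_2]g\|_{L^\infty}$ against the $\HolBesSp{\beta}$-norm of $g$ gives the claim. For item (2) the same frequency bookkeeping applies, but now $S_{m_2-1}f$ is estimated differently: $\|S_{m_2-1}f\|_{L^\infty}\le\sum_{m\le m_2-2}\|\LPBlock[m]f\|_{L^\infty}\lesssim\sum_{m\le m_2-2}2^{-m\alpha}\|f\|_{\HolBesSp{\alpha}}$, and since $\alpha<0$ this geometric series sums to $\lesssim 2^{-m_2\alpha}\|f\|_{\HolBesSp{\alpha}}$ (this is exactly where the sign hypothesis is used). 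Then $2^{j(\alpha+\beta)}\|\LPBlock[j](f\rpara g)\|_{L^\infty}\lesssim \sup_{m_2\sim j}2^{m_2\alpha}\cdot 2^{m_2\beta}\|\LPBlock[m_2]g\|_{L^\infty}\cdot\|f\|_{\HolBesSp{\alpha}}\lesssim\|f\|_{\HolBesSp{\alpha}}\|g\|_{\HolBesSp{\beta}}$.

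For item (3): in $f\reso g=\sum_{|m_1-m_2|\le 1}\LPBlock[m_1]f\,\LPBlock[m_2]g$, each summand has Fourier support in a \emph{ball} of radius $\sim 2^{m_1}$ (two annuli at comparable scales add up to a ball), so $\LPBlock[j](f\reso g)=\sum_{m_1\gtrsim j}\LPBlock[j]\big(\sum_{|m_1-m_2|\le1}\LPBlock[m_1]f\,\LPBlock[m_2]g\big)$. Using $\|\LPBlock[j]\|_{L^\infty\to L^\infty}\lesssim 1$ and $\|\LPBlock[m_1]f\,\LPBlock[m_2]g\|_{L^\infty}\le\|\LPBlock[m_1]f\|_{L^\infty}\|\LPBlock[m_2]g\|_{L^\infty}\lesssim 2^{-m_1\alpha}2^{-m_2\beta}\|f\|_{\HolBesSp{\alpha}}\|g\|_{\HolBesSp{\beta}}$ and $m_2\sim m_1$, I get
\[
2^{j(\alpha+\beta)}\|\LPBlock[j](f\reso g)\|_{L^\infty}
\lesssim
\sum_{m_1\gtrsim j}2^{(j-m_1)(\alpha+\beta)}\|f\|_{\HolBesSp{\alpha}}\|g\|_{\HolBesSp{\beta}},
\]
and since $\alpha+\beta>0$ the sum over $m_1\ge j-O(1)$ is a convergent geometric series bounded independently of $j$. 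Taking the supremum over $j$ finishes it.

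The only mildly delicate point — and the one I would flag as the main obstacle if one wants a genuinely rigorous write-up rather than a sketch — is the bookkeeping that lets one replace ``$\LPBlock[j]$ of a dyadic block'' by ``the block itself at a comparable scale'': one must check that $\LPBlock[j]$ applied to a product supported in an annulus (resp.\ ball) at scale $2^{m}$ vanishes unless $j\sim m$ (resp.\ $j\lesssim m$), which is immediate from property (2)--(3) of the partition of unity but needs the explicit support constants $\tfrac34,\tfrac83$ to be tracked; and relatedly, to state (3) cleanly one should use the equivalent norm $\|\cdot\|_{\HolBesSp{\alpha}*}$ of Remark \ref{rem:171027-1}(1) so that the extra $\LPBlock[j]$ in $\LPBlock[j]\LPBlock[m_1]$ costs nothing. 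None of this requires any new idea beyond what is recorded in the excerpt, so the proof is essentially a careful version of the argument above.
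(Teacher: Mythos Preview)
Your direct Littlewood--Paley argument is the standard proof and is correct in spirit, but one step in part (1) is written incorrectly: the chain $\|S_{m_2-1}f\|_{L^\infty}\le\sum_{m\le m_2-2}\|\LPBlock[m]f\|_{L^\infty}\lesssim\|f\|_{L^\infty}$ fails at the second inequality, since for $f\in L^\infty$ one only has $\|\LPBlock[m]f\|_{L^\infty}\lesssim\|f\|_{L^\infty}$ for each individual $m$, so the triangle-inequality sum is of order $m_2\|f\|_{L^\infty}$, not $O(1)$. The fix is immediate: skip the triangle inequality and use directly that $S_{m_2-1}$ is convolution with a kernel whose $L^1(\RealNum^d)$ norm is bounded uniformly in $m_2$ (cf.\ Proposition~\ref{prop:170822-1}), whence $\|S_{m_2-1}f\|_{L^\infty}\lesssim\|f\|_{L^\infty}$. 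Parts (2) and (3) are fine as you wrote them.

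The paper, however, does not carry out this computation at all: it simply invokes the known $\RealNum^d$ version of these estimates (Bony's paraproduct bounds, as recorded in \cite{GubinelliImkellerPerkowski2015}) and transplants them to the torus via the identification $B^s_{pq}(\Torus^d)=\cD'(\Torus^d)\cap B^s_{pq}(\RealNum^d)$ with identical norms. Your route is self-contained and makes the role of the sign hypotheses ($\alpha<0$ in (2), $\alpha+\beta>0$ in (3)) visible through the geometric summations; the paper's route is a one-line reduction that defers the actual analysis to the cited reference.
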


\begin{proof}
The corresponding assertions to the ones over ${\mathbf R}^d$ are
well known.
Since
$\HolBesSp{\alpha}(\Torus^d)
=\HolBesSp{\alpha}({\mathbf R}^d) \cap \cD'(\Torus^d)$,
we can readily transplant these results over ${\mathbf R}^d$
into the ones over $\Torus^d$.
\end{proof}

A couple of remarks may be in order.
\begin{remark}\label{rem:180114-1}
Let $\alpha,\beta\in\RealNum$,
and let
$f \in \HolBesSp{\alpha}$ and $g \in \HolBesSp{\beta}$.
\begin{enumerate}
\item
We need $\alpha+\beta>0$
for
Proposition \ref{prop_20160926062106}(3).
This assumption allows
us not to take into account
the \lq \lq so called"
moment condition
as in
\cite{Triebel-text-97}.
\item
Let $\varepsilon>0$.
In view of Proposition \ref{prop_20160928054620}(1) and Proposition \ref{prop_20160926062106}(2)
the definition
$f\rpara g$
makes sense as an element in
$\HolBesSp{\min(\alpha,-\varepsilon)+\beta}$.
As a result,
the element
$f\rpara g$ and hence
$f\lpara g$ make
sense
for any $f,g \in {\mathcal D}'$.
\end{enumerate}
\end{remark}

We convert our observation to the form we use below.
\begin{corollary}\label{cor:180131-1}
Let 
$\alpha,\beta \in {\mathbf R}$, and let
$f \in \HolBesSp{\alpha}$, 
$g \in \HolBesSp{\beta}$
and
$h \in L^\infty(\Torus^d)$.
\begin{enumerate}
\item[$(1)$]
$
					\|f\lpara h\|_{\HolBesSp{\alpha}}
					\lesssim
						\|f\|_{\HolBesSp{\alpha}}
						\|h\|_{L^\infty(\Torus^d)},
$
\item[$(2)$]
Assume $\beta>0$.
Then
$
					\|f\lpara g\|_{\HolBesSp{\alpha}}
					\lesssim
						\|f\|_{\HolBesSp{\alpha}}
						\|g\|_{\HolBesSp{\beta}}.
$
\item[$(3)$]
Assume $\alpha+\beta>0$ and $\alpha\beta\ne 0$.
Then
$\|f g\|_{\HolBesSp{\min(\alpha,\beta)}} \lesssim
						\|f\|_{\HolBesSp{\alpha}}
						\|g\|_{\HolBesSp{\beta}}$.
\end{enumerate}
\end{corollary}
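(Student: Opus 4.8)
The plan is to prove all three parts of Corollary~\ref{cor:180131-1} by assembling the paraproduct/resonant decomposition $fg = f\lpara g + f\reso g + f\rpara g$ together with Proposition~\ref{prop_20160926062106}, the embedding inequality Proposition~\ref{prop_20160928054620}(1), and Proposition~\ref{prop:180209-1}. For part~(1), the term $f\lpara h$ is the low-high paraproduct, which is symmetric to the high-low paraproduct $h\rpara f$ appearing in Proposition~\ref{prop_20160926062106}(1). Concretely, note that $f\lpara h = \sum_{m_1\ge m_2+2}\LPBlock[m_1]f\,\LPBlock[m_2]h = h\rpara f$ by the very definitions of $\lpara$ and $\rpara$ (just swap the roles of the two indices). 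Hence Proposition~\ref{prop_20160926062106}(1) applied with the pair $(h,f)$ in place of $(f,g)$ gives $\|f\lpara h\|_{\HolBesSp{\alpha}} = \|h\rpara f\|_{\HolBesSp{\alpha}}\lesssim \|h\|_{L^\infty(\Torus^d)}\|f\|_{\HolBesSp{\alpha}}$, which is the claim.

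For part~(2), since $\beta>0$ Proposition~\ref{prop:180209-1} yields $\HolBesSp{\beta}\subset L^\infty$ with $\|g\|_{L^\infty(\Torus^d)}\lesssim\|g\|_{\HolBesSp{\beta}}$. Combining this embedding with part~(1) applied to $h=g$ gives $\|f\lpara g\|_{\HolBesSp{\alpha}}\lesssim\|f\|_{\HolBesSp{\alpha}}\|g\|_{L^\infty(\Torus^d)}\lesssim\|f\|_{\HolBesSp{\alpha}}\|g\|_{\HolBesSp{\beta}}$, as required.

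For part~(3), start from $fg = f\lpara g + f\reso g + f\rpara g$ and bound each of the three pieces in $\HolBesSp{\min(\alpha,\beta)}$. By symmetry we may estimate $f\lpara g$ and $g\lpara f$ (the latter equals $f\rpara g$), so it suffices to treat the paraproduct $f\lpara g$ and the resonant term $f\reso g$. First suppose $\alpha,\beta>0$ (possible since $\alpha\beta\ne 0$ and $\alpha+\beta>0$ rule out one being nonpositive only in the mixed-sign case, handled below). Then by part~(2), $\|f\lpara g\|_{\HolBesSp{\alpha}}\lesssim\|f\|_{\HolBesSp{\alpha}}\|g\|_{\HolBesSp{\beta}}$, and symmetrically $\|f\rpara g\|_{\HolBesSp{\beta}}\lesssim\|g\|_{\HolBesSp{\beta}}\|f\|_{\HolBesSp{\alpha}}$; for the resonant term, $\alpha+\beta>0$ lets us invoke Proposition~\ref{prop_20160926062106}(3) to get $\|f\reso g\|_{\HolBesSp{\alpha+\beta}}\lesssim\|f\|_{\HolBesSp{\alpha}}\|g\|_{\HolBesSp{\beta}}$, and then the embedding Proposition~\ref{prop_20160928054620}(1) (since $\alpha+\beta\ge\min(\alpha,\beta)$) brings this down to $\HolBesSp{\min(\alpha,\beta)}$. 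Summing the three contributions and using $\HolBesSp{\alpha},\HolBesSp{\beta}\subset\HolBesSp{\min(\alpha,\beta)}$ gives the bound. In the remaining case where exactly one of $\alpha,\beta$ is negative, say $\alpha<0<\beta$ (so $\min(\alpha,\beta)=\alpha$), use Proposition~\ref{prop_20160926062106}(2) to get $\|f\rpara g\|_{\HolBesSp{\alpha+\beta}}\lesssim\|f\|_{\HolBesSp{\alpha}}\|g\|_{\HolBesSp{\beta}}$ (note $f\rpara g$ is the low-high piece with $f$ in the low frequency, matching the hypothesis $\alpha<0$ of that statement), use part~(2) for $f\lpara g\in\HolBesSp{\alpha}$ (valid since $\beta>0$), and use Proposition~\ref{prop_20160926062106}(3) plus embedding for $f\reso g$; again all three land in $\HolBesSp{\alpha}=\HolBesSp{\min(\alpha,\beta)}$ after the embedding inequality.

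The only subtlety — and the one step to be careful about rather than a genuine obstacle — is bookkeeping which of $f\lpara g$, $f\rpara g$ equals which of the low-high / high-low pieces, so that the sign hypotheses in Proposition~\ref{prop_20160926062106}(1) and (2) are applied to the correct frequency configuration; once the identity $f\lpara g = g\rpara f$ is kept in mind, everything reduces to the three named estimates plus the embedding and $L^\infty$-embedding inequalities, with no new analysis required.
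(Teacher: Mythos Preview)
Your proof is correct and follows essentially the same approach as the paper's own proof: part~(1) by swapping roles in Proposition~\ref{prop_20160926062106}(1) via the identity $f\lpara h = h\rpara f$, part~(2) by combining the embedding $\HolBesSp{\beta}\subset L^\infty$ with part~(1), and part~(3) by the paraproduct decomposition and case analysis on the signs of $\alpha,\beta$. Your version is more explicit in the bookkeeping, but the ingredients and structure match the paper exactly.
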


\begin{proof}\
\begin{enumerate}
\item
Simply swap the role of $f$ and $g$ 
in
Proposition \ref{prop_20160926062106}(1).
\item
Combine
${\mathcal C}^\beta \subset L^\infty$
and
Proposition \ref{prop_20160926062106}(1).
\item
If $\min(\alpha,\beta)<0$,
then use
Proposition \ref{prop_20160926062106}(2) and (3)
as well as
Proposition \ref{prop:180209-1} and
Corollary \ref{cor:180131-1}(2).
If $\min(\alpha,\beta)>0$,
then use
Proposition \ref{prop_20160926062106}(1) and (3)
as well as
Proposition \ref{prop:180209-1} and
Corollary \ref{cor:180131-1}(1).
\end{enumerate}
\end{proof}

Now we record an identity on the partial derivative $\partial_j$ on the torus.
\begin{lemma}\label{lem:161117-106}
Whenever
$f,g \in {\mathcal D}'$
and
$l=1,2,\ldots,d$,
$
\partial_l (f \rpara g)
=
(\partial_l f)\rpara g
+
f\rpara(\partial_l g)
$.
\end{lemma}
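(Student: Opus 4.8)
The plan is to reduce the claimed identity $\partial_l(f\rpara g)=(\partial_l f)\rpara g+f\rpara(\partial_l g)$ to a block-by-block computation, exploiting that both sides are built from finitely many Littlewood--Paley pieces at each dyadic level and that $\partial_l$ commutes with each $\LPBlock[m]$ in the obvious way. Since the paraproduct $f\rpara g=\sum_{m_1+2\le m_2}\LPBlock[m_1]f\,\LPBlock[m_2]g=\sum_{m_2\ge 1}(S_{m_2-1}f)\,\LPBlock[m_2]g$, I would first rewrite $f\rpara g$ in the $S_j$-form, so that each summand is a genuine product of two smooth functions on $\Torus^d$ (the Fourier support of $S_{m_2-1}f$ being finite and that of $\LPBlock[m_2]g$ being compactly supported in frequency), and hence the ordinary Leibniz rule $\partial_l(S_{m_2-1}f\cdot\LPBlock[m_2]g)=(\partial_l S_{m_2-1}f)\LPBlock[m_2]g+S_{m_2-1}f\,(\partial_l\LPBlock[m_2]g)$ applies termwise.

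Next I would justify interchanging $\partial_l$ with the infinite sum over $m_2$. Here the point is that, as noted in Remark~\ref{rem:180114-1}(2), for arbitrary $f,g\in\cD'$ the series defining $f\rpara g$ converges in some $\HolBesSp{\gamma}$; moreover $\partial_l f,\partial_l g\in\cD'$ as well, so the two series on the right-hand side also converge in suitable H\"older--Besov spaces by the same remark together with Lemma~\ref{lem:180122-1}. Since $\partial_l\colon\HolBesSp{\gamma}\to\HolBesSp{\gamma-1}$ is continuous (Lemma~\ref{lem:180122-1}), it commutes with the $\cD'$-convergent partial sums, so $\partial_l(f\rpara g)=\sum_{m_2\ge 1}\partial_l\big(S_{m_2-1}f\,\LPBlock[m_2]g\big)$. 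Then I apply the termwise Leibniz rule from the previous step and regroup: $\sum_{m_2}(\partial_l S_{m_2-1}f)\LPBlock[m_2]g=\sum_{m_2}(S_{m_2-1}\partial_l f)\LPBlock[m_2]g=(\partial_l f)\rpara g$ using that $\partial_l$ commutes with each $\LPBlock[m]$ and hence with $S_j$, and similarly $\sum_{m_2}S_{m_2-1}f\,(\partial_l\LPBlock[m_2]g)=\sum_{m_2}S_{m_2-1}f\,\LPBlock[m_2](\partial_l g)=f\rpara(\partial_l g)$.

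The main obstacle is purely a matter of bookkeeping about convergence: one must be careful that the regroupings of the doubly-indexed sum are legitimate, i.e.\ that each of the three series ($\partial_l(f\rpara g)$, $(\partial_l f)\rpara g$, $f\rpara(\partial_l g)$) converges in its own right in an appropriate $\HolBesSp{\bullet}$, so that the manipulation is not merely formal. This is handled exactly as in Remark~\ref{rem:180114-1}(2): choose $\varepsilon>0$ and realize $f\rpara g\in\HolBesSp{\min(\alpha,-\varepsilon)+\beta}$ when $f\in\HolBesSp\alpha$, $g\in\HolBesSp\beta$, and pass to the derivative with a loss of one degree. Once the convergence is pinned down, the identity drops out of the Leibniz rule at each dyadic block, since $\partial_l$ acts as the Fourier multiplier $2\pi\ImUnit k_l$ and hence commutes with every Fourier multiplier $\DyaPartOfUnit[m](D)$. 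I would present the argument in the $S_j$-notation throughout, as it makes the termwise products manifestly classical and keeps the Leibniz step transparent.
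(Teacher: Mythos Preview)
Your argument is correct. The paper actually states this lemma without proof, treating it as a routine identity for paraproducts. Your approach---rewriting $f\rpara g=\sum_{m_2\ge 1}(S_{m_2-1}f)\,\LPBlock[m_2]g$, applying the classical Leibniz rule termwise (both factors being trigonometric polynomials on $\Torus^d$, since each Littlewood--Paley block has finite Fourier support), and then using that $\partial_l$ commutes with the Fourier multipliers $\LPBlock[m]$ and $S_j$---is precisely the standard way to supply the details. One minor remark: rather than arguing convergence of the partial sums in a specific $\HolBesSp{\gamma}$ (which can be delicate for the $\infty\infty$-scale since $B^\gamma_{\infty\infty}$ is not separable), it suffices to note that the series converges in $\cD'$ and that $\partial_l\colon\cD'\to\cD'$ is continuous; this already justifies the interchange of $\partial_l$ with the infinite sum, after which the identity of the three $\cD'$-limits follows from the termwise computation.
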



Concerning the paraproduct and the resonant product,
the following trilinear form is significant:
	Define the map $\comC$ by
	\begin{align}\label{eq:180106-4}
		\comC (f,g,h)
		=
			(f\rpara g)\reso h-f(g\reso h)
	\end{align}
	for $f,g,h\in\cD$.
We prove the following estimate:
\begin{proposition}[Commutator estimates]
\label{prop_comm_20160919055939}
	Let 
$\alpha, \beta, \gamma\in\RealNum$ satisfy 
$0<\alpha<1$, 
$\beta+\gamma<0$ 
and $\alpha+\beta+\gamma>0$.
	Then $\comC$ uniquely extends to a continuous trilinear map from
	$
		\HolBesSp{\alpha}
		\times\HolBesSp{\beta}
		\times\HolBesSp{\gamma}
	$
	to
	$\HolBesSp{\alpha+\beta+\gamma}$
and the map satisfies
$
\|\comC (f,g,h)\|_{\HolBesSp{\alpha+\beta+\gamma}}
\lesssim
\|f\|_{\HolBesSp{\alpha}}
\|g\|_{\HolBesSp{\beta}}
\|h\|_{\HolBesSp{\gamma}}
$
for all
$f\in \HolBesSp{\alpha}$,
$g\in \HolBesSp{\beta}$
and
$h\in \HolBesSp{\gamma}$.
\end{proposition}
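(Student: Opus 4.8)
A quick route is to transplant the estimate from $\mathbf{R}^d$, exactly as in the proof of Proposition~\ref{prop_20160926062106}: since $\HolBesSp{\alpha}(\Torus^d)=\HolBesSp{\alpha}(\mathbf{R}^d)\cap\cD'(\Torus^d)$ and the dyadic blocks $\LPBlock[m]$---hence $\rpara$, $\reso$, and $\comC$---act on a periodic distribution as the restrictions of their $\mathbf{R}^d$ counterparts, the inequality follows from the classical commutator (paraproduct--resonant) lemma on $\mathbf{R}^d$; see \cite{GubinelliImkellerPerkowski2015}. I would nevertheless prefer to give a self-contained Littlewood--Paley proof on $\Torus^d$. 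It relies only on three elementary facts, all insensitive to periodicity: the block estimate $\|\LPBlock[m]u\|_{L^\infty(\Torus^d)}\lesssim 2^{-m\sigma}\|u\|_{\HolBesSp{\sigma}}$; the Bernstein inequality $\|\nabla\LPBlock[m]u\|_{L^\infty(\Torus^d)}\lesssim 2^m\|\LPBlock[m]u\|_{L^\infty(\Torus^d)}$; and the commutator bound $\|\LPBlock[j](\phi\psi)-\phi\,\LPBlock[j]\psi\|_{L^\infty(\Torus^d)}\lesssim 2^{-j}\|\nabla\phi\|_{L^\infty(\Torus^d)}\|\psi\|_{L^\infty(\Torus^d)}$ (a standard consequence of writing $\LPBlock[j]$ as a rescaled convolution and Taylor-expanding).

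The first step would be a reduction to smooth arguments. For $f,g,h\in\cD$ the trilinear form $\comC$ is given literally by \eqref{eq:180106-4}; granting the a priori bound $\|\comC(f,g,h)\|_{\HolBesSp{\alpha+\beta+\gamma}}\lesssim\|f\|_{\HolBesSp{\alpha}}\|g\|_{\HolBesSp{\beta}}\|h\|_{\HolBesSp{\gamma}}$ for such triples, I would approximate a general $(f,g,h)$ by $(S_Nf,S_Ng,S_Nh)\in\cD^3$ and choose $\alpha'<\alpha$, $\beta'<\beta$, $\gamma'<\gamma$ still satisfying the three hypotheses; since $S_Nf\to f$ in $\HolBesSp{\alpha'}$ and likewise for $g,h$, the a priori estimate with the primed exponents shows that $\comC(S_Nf,S_Ng,S_Nh)$ is Cauchy in $\HolBesSp{\alpha'+\beta'+\gamma'}$, while it is bounded in $\HolBesSp{\alpha+\beta+\gamma}$; hence it converges in $\HolBesSp{\alpha+\beta+\gamma}$, which produces the (unique) continuous extension and carries over the bound.

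For the a priori bound I would write $f\rpara g=\sum_i (S_{i-1}f)\LPBlock[i]g$ and note that the $i$-th summand has frequencies in a fixed dilate of the annulus $\{|\zeta|\sim 2^i\}$; expanding $(f\rpara g)\reso h$ accordingly, then splitting each piece $\LPBlock[j]\big((S_{i-1}f)\LPBlock[i]g\big)$ as $(S_{i-1}f)\LPBlock[j]\LPBlock[i]g$ plus a commutator, and further writing $S_{i-1}f=f-(f-S_{i-1}f)$, the ``diagonal'' contribution collapses---after summing in $i$, using $\sum_{|i-j|\le 1}\LPBlock[j]\LPBlock[i]g=\LPBlock[j]g$---exactly to $f\,(g\reso h)$. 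Therefore $\comC(f,g,h)$ is the sum of only two remainders: a commutator term built from $\LPBlock[j]\big((S_{i-1}f)\LPBlock[i]g\big)-(S_{i-1}f)\LPBlock[j]\LPBlock[i]g$, and a ``high-frequency tail'' term built from $(f-S_{i-1}f)\LPBlock[i]g=\sum_{m\ge i-1}\LPBlock[m]f\,\LPBlock[i]g$. Each remainder is a series $\sum_i w_i$ with $w_i$ spectrally supported in a ball $\{|\zeta|\lesssim 2^i\}$ and $\|w_i\|_{L^\infty(\Torus^d)}\lesssim 2^{-i(\alpha+\beta+\gamma)}\|f\|_{\HolBesSp{\alpha}}\|g\|_{\HolBesSp{\beta}}\|h\|_{\HolBesSp{\gamma}}$, so the standard reconstruction lemma---which is where $\alpha+\beta+\gamma>0$ is used---gives $\comC(f,g,h)\in\HolBesSp{\alpha+\beta+\gamma}$ with the asserted norm.

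It then remains to verify the decay of $\|w_i\|_{L^\infty(\Torus^d)}$ for the two remainders. For the commutator term it comes from the commutator bound above together with $\|\nabla S_{i-1}f\|_{L^\infty(\Torus^d)}\lesssim\sum_{m\le i-2}2^{m(1-\alpha)}\|f\|_{\HolBesSp{\alpha}}\lesssim 2^{i(1-\alpha)}\|f\|_{\HolBesSp{\alpha}}$, the hypothesis $\alpha<1$ being exactly what makes this geometric sum dominated by its top term, while the frequencies of $g$ and $h$ involved are all comparable to $2^i$ and contribute the remaining factors $2^{-i\beta}$ and $2^{-i\gamma}$. For the tail term, re-indexing by the dominant frequency $m\ge i-1$ of $f-S_{i-1}f$, the $m$-th contribution has $L^\infty$-norm bounded by $2^{-m\alpha}\|f\|_{\HolBesSp{\alpha}}\sum_{i\le m+1}2^{-i(\beta+\gamma)}\|g\|_{\HolBesSp{\beta}}\|h\|_{\HolBesSp{\gamma}}\lesssim 2^{-m(\alpha+\beta+\gamma)}\|f\|_{\HolBesSp{\alpha}}\|g\|_{\HolBesSp{\beta}}\|h\|_{\HolBesSp{\gamma}}$, where $\beta+\gamma<0$ lets the inner geometric sum be dominated by $2^{-m(\beta+\gamma)}$, and its frequencies lie in a ball of radius $\sim 2^m$. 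I expect the only genuinely delicate point to be this frequency bookkeeping---tracking which dyadic index runs over which annulus so that the cancellation producing $f\,(g\reso h)$ is exact; once that is set up, the rest is the routine summation of dyadic pieces.
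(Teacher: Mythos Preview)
Your proposal is correct. Your opening paragraph is precisely the paper's own proof: invoke \cite[Lemma~2.4]{GubinelliImkellerPerkowski2015} on $\mathbf{R}^d$ and transplant to the torus via the identification $\HolBesSp{\alpha}(\Torus^d)=\HolBesSp{\alpha}(\mathbf{R}^d)\cap\cD'(\Torus^d)$ with equivalent norms---the paper stops there.

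Everything after that is extra: a self-contained Littlewood--Paley argument carried out directly on $\Torus^d$. This is essentially the proof of \cite[Lemma~2.4]{GubinelliImkellerPerkowski2015} itself, so you are not taking a genuinely different route but rather unpacking the cited lemma. The decomposition into commutator remainder (handled via $\alpha<1$) and high-frequency tail (handled via $\beta+\gamma<0$), followed by reconstruction (using $\alpha+\beta+\gamma>0$), is the standard structure and your bookkeeping is sound. The only caveat is the density step: $S_Nf\to f$ holds in $\HolBesSp{\alpha'}$ for $\alpha'<\alpha$ but generally not in $\HolBesSp{\alpha}$ itself, which you correctly anticipate by passing to primed exponents; the uniform boundedness in $\HolBesSp{\alpha+\beta+\gamma}$ then upgrades the limit. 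What you gain over the paper is self-containment; what the paper gains is brevity.
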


\begin{proof}
We invoke
\cite[Lemma 2.4]{GubinelliImkellerPerkowski2015},
where a counterpart to ${\mathbf R}^d$ is shown.
Since
$B^s_{p q}(\Torus^d)=\cD' \cap B^s_{p q}({\mathbf R}^d)$
with the identical norms.
Thus
the desired estimate follows from
\cite[Lemma 2.4]{GubinelliImkellerPerkowski2015}.
\end{proof}


Next we provide two lemmas on the Riesz transform $R_l$ on the torus.
Denote by $R_l$ the $l$th Riesz transform
for $l=1,2,\ldots,d$, which is defined by
\begin{equation}\label{eq:Riesz}
R_l h
=
\sqrt{-1}
\sum\limits_{k \in {\mathbf Z}^d \setminus \{0\}}
\frac{k_l}{|k|}\hat{h}(k) {\bf e}_k
\quad (h \in {\mathcal D}').
\end{equation}

\begin{lemma}\label{lm.20171227}
Let $u \in \HolBesSp{\alpha}$ with $\alpha \in {\mathbf R}$.
Then $\|R_l u\|_{\HolBesSp{\alpha}} \lesssim \|u\|_{\HolBesSp{\alpha}}$
for all $l=1,2,\ldots,d$.
\end{lemma}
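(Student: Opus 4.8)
The plan is to treat $R_l$ as a Fourier multiplier with the degree-zero symbol $m_l(k)=\sqrt{-1}\,k_l/|k|$ (with $m_l(0):=0$), which is smooth on $\RealNum^d\setminus\{0\}$, and to bound each Littlewood--Paley block of $R_l u$ in the $L^\infty$-norm, exactly in the spirit of the proof of Lemma~\ref{lem:180122-1}: first rewrite $\LPBlock[m]R_l u$ as a rescaled smooth Fourier multiplier applied to a fattened block of $u$, then invoke Young's inequality with a uniform bound on the kernel.

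For $m\ge 0$ the support of $\DyaPartOfUnit[m]$ avoids the origin, so by homogeneity of $m_l$ we may write $\DyaPartOfUnit[m](k)\,m_l(k)=\sigma_l(2^{-m}k)$ with $\sigma_l(\zeta):=\DyaPartOfUnit[0](\zeta)\,\sqrt{-1}\,\zeta_l/|\zeta|\in C^\infty_{\rm c}(\RealNum^d)$. Since $\DyaPartOfUnit[m-1]+\DyaPartOfUnit[m]+\DyaPartOfUnit[m+1]\equiv 1$ on $\supp\DyaPartOfUnit[m]$, this gives
\[
\LPBlock[m]R_l u
=
\sigma_l(2^{-m}D)\,(\LPBlock[m-1]+\LPBlock[m]+\LPBlock[m+1])u .
\]
Then, just as in Lemma~\ref{lem:180122-1}, Young's inequality applies because the $L^1(\Torus^d)$-norm of the convolution kernel $\FourierTrans^{-1}[\sigma_l(2^{-m}\cdot)]$ is bounded uniformly in $m$ — by Poisson summation it is dominated by $\|\FourierTrans^{-1}_{\RealNum^d}\sigma_l\|_{L^1(\RealNum^d)}<\infty$, the naive estimate $\sum_k|\sigma_l(2^{-m}k)|$ being useless here. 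Hence $\|\LPBlock[m]R_l u\|_{L^\infty(\Torus^d)}\lesssim\sum_{j=m-1}^{m+1}\|\LPBlock[j]u\|_{L^\infty(\Torus^d)}$, and multiplying by $2^{m\alpha}$ and absorbing the harmless factors $2^{\pm\alpha}$ yields $\sup_{m\ge 0}2^{m\alpha}\|\LPBlock[m]R_l u\|_{L^\infty(\Torus^d)}\lesssim\|u\|_{\HolBesSp{\alpha}}$.

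It remains to treat the block $m=-1$, where $m_l$ is not even continuous; this is the only place needing a separate (but trivial) argument. Only finitely many frequencies $k\in\Integers^d$ with $\DyaPartOfUnit[-1](k)>0$ occur, and $\DyaPartOfUnit[1]$ vanishes on $\supp\DyaPartOfUnit[-1]$, so $\LPBlock[-1]R_l u=\DyaPartOfUnit[-1](D)\,m_l(D)\,(\LPBlock[-1]+\LPBlock[0])u$; since $|m_l(k)|\le 1$ on this finite set, $\DyaPartOfUnit[-1](D)m_l(D)$ has a trigonometric-polynomial kernel of finite (hence uniformly bounded) $L^1(\Torus^d)$-norm. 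Thus $\|\LPBlock[-1]R_l u\|_{L^\infty(\Torus^d)}\lesssim\|\LPBlock[-1]u\|_{L^\infty(\Torus^d)}+\|\LPBlock[0]u\|_{L^\infty(\Torus^d)}\lesssim\|u\|_{\HolBesSp{\alpha}}$, and taking the supremum over $m\ge -1$ of $2^{m\alpha}\|\LPBlock[m]R_l u\|_{L^\infty(\Torus^d)}$ completes the proof. I do not expect a genuine obstacle; the only subtlety is the non-smoothness of $m_l$ at the origin, which is confined to the lowest block and is harmless there because only finitely many integer frequencies contribute.
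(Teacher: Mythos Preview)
Your proof is correct and follows essentially the same approach as the paper: both rewrite $R_l\LPBlock[m]$ as $\tau_l(2^{-m}D)$ for a fixed $\tau_l\in C^\infty_{\rm c}$ (using the degree-zero homogeneity of the Riesz symbol) and then invoke an $L^1$-kernel bound, exactly parallel to Lemma~\ref{lem:180122-1}. The only cosmetic difference is in handling the lowest block: the paper removes it by noting $1\in\ker(R_l)$ and working with $(1-\rho_{-1}(D))u$, whereas you treat $m=-1$ directly via the finitely many contributing frequencies --- both are equally valid and equally trivial.
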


\begin{proof}
Since $1 \in \ker(R_l)$, we may handle
$(1-\rho_{-1}(D))u$ instead of $u$ itself.
The proof being simple once again, we recall it.
We write the left-hand side out in full:
\begin{align*}
	\|R_l (1-\rho_{-1}(D)) u\|_{\HolBesSp{\alpha}}
	&=
		\sup\limits_{m\geq -1}
			2^{m\alpha}
			\|R_l (1-\rho_{-1}(D))\LPBlock[m]u\|_{L^\infty(\Torus^d)}\\
	&=
		\sup\limits_{m\geq -1}
			2^{m\alpha}
			\|R_l (1-\rho_{-1}(D))(\LPBlock[m-1]+\LPBlock[m]+\LPBlock[m+1])\LPBlock[m]u\|_{L^\infty(\Torus^d)}\\
	&\le
		\sup\limits_{m\geq -1}
			2^{m\alpha}
			\|R_l (\LPBlock[m-1]+\LPBlock[m]+\LPBlock[m+1])\LPBlock[m]u\|_{L^\infty(\Torus^d)}.
\end{align*}
We observe that $R_l(\LPBlock[m-1]+\LPBlock[m]+\LPBlock[m+1])=\tau_l(2^{-m}D)$
for some $\tau_l \in C^\infty_{\rm c}$ if $m \gg 1$.
As a result, we obtain
\[
	\|R_l(1-\rho_{-1}(D)) u\|_{\HolBesSp{\alpha}}
	\lesssim
		\sup\limits_{m\geq -1}
			2^{m\alpha}
			\| \LPBlock[m]u\|_{L^\infty(\Torus^d)}
			=
	\|u\|_{\HolBesSp{\alpha}},
\]
as required.
\end{proof}

If we consider the commutator,
the postulate on $\alpha$ is loosened.
In fact,
we needed $\alpha<0$
in Proposition \ref{prop_20160926062106}(2),
while
Lemma \ref{lem:161117-107}
requires $\alpha<1$.

\begin{lemma}\label{lem:161117-107}
Let $\alpha<1$ and $\beta \in {\mathbf R}$,
and let
$f \in {\mathcal C}^\alpha$ 
and $g \in {\mathcal C}^\beta$.
Then
$
R_l(f \rpara g)-f \rpara R_l g
\in {\mathcal C}^{\alpha+\beta},
$
and it satisfies
$\|R_l(f \rpara g)-f \rpara R_l g\|_{{\mathcal C}^{\alpha+\beta}}
\lesssim
\|f\|_{{\mathcal C}^\alpha}
\|g\|_{{\mathcal C}^\beta}.
$
\end{lemma}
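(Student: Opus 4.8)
The plan is to reduce the statement to the commutator estimate for the Riesz transform on $\RealNum^d$, which is classical, via the identity $\HolBesSp{s}(\Torus^d)=\cD'(\Torus^d)\cap\HolBesSp{s}(\RealNum^d)$ and the fact that $R_l$ on $\Torus^d$ is (up to the zero mode) the restriction of a Fourier multiplier of order $0$. Concretely, since $1\in\ker R_l$, I would first replace $f$ by $(1-\DyaPartOfUnit[-1](D))f$ and $g$ by $(1-\DyaPartOfUnit[-1](D))g$ at the appropriate places, so that all the Littlewood--Paley blocks occurring carry frequency $\gtrsim 1$; this is the same reduction used in the proof of Lemma \ref{lm.20171227}. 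After that reduction, on each dyadic annulus the multiplier $k\mapsto \sqrt{-1}\,k_l/|k|$ agrees with a fixed smooth compactly supported symbol rescaled by $2^{-m}$, exactly as in that proof.

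Next I would unwind the paraproduct. Writing $R_l(f\rpara g)-f\rpara R_l g=\sum_{m_1+2\le m_2}\big(\LPBlock[m_1]f\big)\big(R_l-$ ``$R_l$ acting on the $m_2$-block only''$\big)\LPBlock[m_2]g$ is not literally right because $R_l$ does not commute with $\LPBlock[m_1]$-multiplication, so the clean way is: the $j$-th Littlewood--Paley block of $f\rpara g$ only sees $m_2$ with $|m_2-j|\le 1$ and $m_1\le m_2-2$, hence $\LPBlock[j](f\rpara g)=\LPBlock[j]\big(S_{j-1}f\cdot \LPBlock[j']g\big)$-type terms with $j'$ near $j$. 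Then $\LPBlock[j]\big(R_l(f\rpara g)\big)-\LPBlock[j]\big(f\rpara R_l g\big)$ becomes a sum of terms of the form $\big[\LPBlock[j]R_l,\,S_{j-1}f\big]\LPBlock[j']g$, i.e. a genuine commutator between the pseudodifferential operator $\LPBlock[j]R_l$ (of order $0$, with symbol supported where $|\xi|\sim 2^j$) and multiplication by the low-frequency, Lipschitz-controlled function $S_{j-1}f$. This is the standard paraproduct-commutator structure, and I would estimate it by a first-order Taylor expansion of the kernel of $\LPBlock[j]R_l$: writing $\LPBlock[j]R_l h(x)=\int K_j(x-y)h(y)\,dy$ with $\|K_j\|_{L^1}\lesssim 1$ and $\|\,|\cdot|K_j\|_{L^1}\lesssim 2^{-j}$, one gets $\big\|[\LPBlock[j]R_l,S_{j-1}f]\LPBlock[j']g\big\|_{L^\infty}\lesssim 2^{-j}\|\nabla S_{j-1}f\|_{L^\infty}\|\LPBlock[j']g\|_{L^\infty}\lesssim 2^{-j}\cdot 2^{j(1-\alpha)}\|f\|_{\HolBesSp{\alpha}}\cdot 2^{-j'\beta}\|g\|_{\HolBesSp{\beta}}$, using $\alpha<1$ precisely to sum $\|\nabla S_{j-1}f\|_{L^\infty}\lesssim\sum_{m\le j-2}2^{m}\|\LPBlock[m]f\|_{L^\infty}\lesssim 2^{j(1-\alpha)}\|f\|_{\HolBesSp{\alpha}}$. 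Collecting, the $j$-th block is $\lesssim 2^{-j(\alpha+\beta)}\|f\|_{\HolBesSp{\alpha}}\|g\|_{\HolBesSp{\beta}}$, which is the $\HolBesSp{\alpha+\beta}$ bound; and since the $m_1$-blocks with $|m_1-m_2|\le 1$ or $m_1\ge m_2-1$ have already been subtracted off in forming $\rpara$, no resonant or high--high interaction survives.

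The main obstacle, or rather the main thing to be careful about, is the passage between $\Torus^d$ and $\RealNum^d$: the symbol $k_l/|k|$ is defined only on $\Integers^d\setminus\{0\}$, whereas the $\RealNum^d$ commutator lemma of \cite{GubinelliImkellerPerkowski2015} (or the corresponding statement in \cite{Triebel-text-83}) is phrased for a genuine smooth symbol on $\RealNum^d\setminus\{0\}$ that is homogeneous of degree $0$. So I would first observe that after deleting the zero mode there is a smooth $0$-homogeneous extension $\sigma(\xi)=\xi_l/|\xi|$ of the multiplier to $\RealNum^d\setminus B(0,1/2)$, times a cutoff, and that $R_l(1-\DyaPartOfUnit[-1](D))$ on $\Torus^d$ coincides with the periodization of $\sigma(D)$ composed with $(1-\DyaPartOfUnit[-1](D))$; then the torus estimate follows from the $\RealNum^d$ one by $\HolBesSp{s}(\Torus^d)=\cD'(\Torus^d)\cap\HolBesSp{s}(\RealNum^d)$, exactly as in the proofs of Proposition \ref{prop_20160926062106} and Proposition \ref{prop_comm_20160919055939}. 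Given that reduction, the remaining estimate is the routine paraproduct-commutator bound just sketched; uniqueness of the continuous extension from $\cD\times\cD$ follows from density together with the derived bound, as usual.
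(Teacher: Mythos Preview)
Your proposal is correct and follows essentially the same approach as the paper: write the $j$-th Littlewood--Paley block of the commutator via the convolution kernel $K_j$ of $R_l\LPBlock[j]$, use the mean value theorem to extract $S_jf(y)-S_jf(x)$, bound $\|\nabla S_jf\|_{L^\infty}\lesssim 2^{j(1-\alpha)}\|f\|_{\HolBesSp{\alpha}}$ via $\alpha<1$, and use the uniform bound $\int 2^j|y|\,|K_j(y)|\,dy\lesssim 1$ coming from the scaling of the symbol. The paper does this directly on the torus (invoking the $\RealNum^d$ kernel of $R_l\LPBlock[j]$ after discarding low frequencies), so your initial reduction to $\RealNum^d$ via $\HolBesSp{s}(\Torus^d)=\cD'\cap\HolBesSp{s}(\RealNum^d)$ is an extra step the paper skips, but the substance of the argument is the same.
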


\begin{proof}
Let $j \ge 5$ and $l=1,2\ldots,n$. We disregard the lower frequency terms
because we can incorporate them later easily.
We observe that 
$R_l \LPBlock[j]$
has a kernel $K_j =K_{j,l}$, that is,
\begin{align*}
\displaystyle 
 \LPBlock[j] (R_l(f \rpara g)-f \rpara R_l g)
&=
\int_{{\mathbf R}^d}K_{j}(x-y)(S_jf(y)-S_jf(x)) \LPBlock[j] g(y)\,dy
\end{align*}
and
${\mathcal F}K_j \in C^\infty$ satisfies the scaling relation:
$
{\mathcal F}K_5(2^{-j+5}\cdot)={\mathcal F}K_{j}.
$
By the mean value theorem, we have
\[
|S_jf(y)-S_jf(x)|
\le 
\|\nabla S_j f\|_{L^\infty(\Torus^d)}|x-y|
=
2^{-j}\|\nabla S_j f\|_{L^\infty(\Torus^d)}
\cdot
2^j|x-y|.
\]
Since $\alpha<1$, we have
\[
\|\nabla S_j f\|_{L^\infty(\Torus^d)}
\le
\sum\limits_{m=-1}^{j+2}
\|\nabla S_j \LPBlock[m] f\|_{L^\infty(\Torus^d)}
\lesssim
\sum\limits_{m=-1}^{j+2}
2^{m}\| \LPBlock[m] f\|_{L^\infty(\Torus^d)}
\lesssim
2^{j(1-\alpha)}
\|f\|_{{\mathcal C}^{\alpha}}.
\]
Since
\[
\int_{{\mathbf R}^d}2^j|x-y|\cdot|K_j(x-y)|\,dy=
\int_{{\mathbf R}^d}2^5|y|\cdot|K_5(y)|\,dy<\infty
\quad (x \in {\mathbf R}^d),
\]
we have
\begin{align*}
\| \LPBlock[j](R_l(f \rpara g)-f \rpara R_l g)\|_{L^\infty(\Torus^d)}
\lesssim
2^{-j(\alpha+\beta)}\|f\|_{{\mathcal C}^\alpha}\|g\|_{{\mathcal C}^\beta}.
\end{align*}
\end{proof}

\subsection{Fractional heat semigroup} 

In this subsection we study effects of the heat semigroup
generated by the fractional Laplacian
$(- \LaplaceOp)^{\theta /2}$ for $0 <\theta \le 2$.

For later considerations,
we need the following simple fact deduced from the Minkowski inequality:
\begin{proposition}\label{prop:170822-1}
Let $1 \le p \le \infty$,
and let $f \in L^1({\mathbf R}^d)$ and $g \in L^p(\Torus^d)$.
Then
\[
\|f*g\|_{L^p(\Torus^d)} \le \|f\|_{L^1({\mathbf R}^d)}\|g\|_{L^p(\Torus^d)}.
\]
\end{proposition}

Although the multiplier of $(-\Delta)^{\theta/2}$ has singularity at $\xi=0$,
as the following lemma shows,
this singularity does not affect
the integrability of the Fourier transform of $\exp(-|2\pi\cdot|^\theta)$
 so much.
\begin{proposition}\label{prop:170822-2}
Let $\theta>0$.
The function
\begin{equation}\label{eq:171026-1}
x \in {\mathbf R}^d
\mapsto
K^\dagger(x)=\int_{{\mathbf R}^d}\exp(-|2\pi\xi|^\theta)\exp(2\pi\sqrt{-1}x \cdot \xi)\,d\xi
\in {\mathbf C}
\end{equation}
is an integrable function.
\end{proposition}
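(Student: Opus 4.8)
The plan is to show that $K^\dagger$, the inverse Fourier transform of $\xi\mapsto\exp(-|2\pi\xi|^\theta)$, is integrable on $\RealNum^d$ by combining an $L^\infty$ bound near the origin with a decay estimate at infinity. First I would record that $\exp(-|2\pi\xi|^\theta)$ is a Schwartz function (it is smooth, since $\theta$ may be non-integer but the exponential decay kills all derivatives, and all its derivatives decay faster than any polynomial), so $K^\dagger$ is itself a smooth bounded function; in particular $\int_{|x|\le 1}|K^\dagger(x)|\,dx<\infty$ is immediate. The work is therefore entirely about the decay of $K^\dagger(x)$ as $|x|\to\infty$, and the goal is a bound of the form $|K^\dagger(x)|\lesssim |x|^{-d-\eta}$ for some $\eta>0$, which suffices for integrability on $\{|x|>1\}$.

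For the decay at infinity I would integrate by parts. Write $K^\dagger(x)=\int_{\RealNum^d}\exp(-|2\pi\xi|^\theta)\FourierBase[x](\xi)\,d\xi$ with the usual character, and use that $(2\pi\ImUnit x_j)^{-1}\partial_{\xi_j}\FourierBase[x](\xi)=\FourierBase[x](\xi)$ to transfer derivatives onto the symbol. Doing this $N$ times in a chosen coordinate direction (say the one maximizing $|x_j|$, so that $|x_j|\gtrsim|x|$) gives $|K^\dagger(x)|\lesssim |x|^{-N}\int_{\RealNum^d}|\partial_{\xi_j}^N \exp(-|2\pi\xi|^\theta)|\,d\xi$. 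The only subtlety is that $|2\pi\xi|^\theta$ is not smooth at $\xi=0$ when $\theta\notin 2\NaturalNum$, so $\partial_{\xi_j}^N\exp(-|2\pi\xi|^\theta)$ has an integrable singularity at the origin rather than being continuous there; one checks by induction that near $0$ this derivative is $\mathrm{O}(|\xi|^{\theta-N})$, which is integrable near $0$ precisely when $N<\theta+d$, and away from the origin it decays exponentially. Hence for any integer $N$ with $d< N<d+\theta$ — such an $N$ exists as soon as $\theta>0$, e.g. $N=d+1$ works whenever $\theta>1$, and in general one may need to be slightly more careful, so I would instead argue more robustly below — the integral converges and yields $|K^\dagger(x)|\lesssim|x|^{-N}$ with $N>d$, giving the result.

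To avoid the constraint $N<d+\theta$ being too tight for small $\theta$, a cleaner variant is to split the $\xi$-integral into $|\xi|\le 1/|x|$ and $|\xi|\ge 1/|x|$ after the integration by parts, or simply to interpolate: on the low-frequency part $\exp(-|2\pi\xi|^\theta)$ is bounded and one gains from the small volume, while on the high-frequency part one integrates by parts only in regions where the symbol is smooth. In fact the simplest fully rigorous route is: for $|\xi|\le 1$ estimate $\int_{|\xi|\le1}1\,d\xi<\infty$ directly (contributing an $\mathrm{O}(1)$ term to $K^\dagger$, hence irrelevant once multiplied against the oscillatory factor only matters for boundedness, which we already have), and for the tail $|\xi|\ge 1$, where the symbol $\exp(-|2\pi\xi|^\theta)$ is genuinely smooth with all derivatives decaying exponentially, integrate by parts $N=d+1$ times to get $|x|^{-d-1}$ decay. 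Combining the near-origin boundedness of $K^\dagger$ with this $|x|^{-d-1}$ tail bound gives $K^\dagger\in L^1(\RealNum^d)$.

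The main obstacle is the lack of smoothness of $\xi\mapsto|2\pi\xi|^\theta$ at the origin when $\theta$ is not an even integer; everything else is a routine non-stationary-phase integration by parts. I would handle this by the high/low frequency splitting just described, so that all integrations by parts occur in the region $|\xi|\ge 1$ where no singularity is present, and the low-frequency region is dispatched by the trivial volume bound together with the already-established global boundedness of $K^\dagger$.
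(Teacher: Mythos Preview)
Your ``cleaner variant'' has a genuine gap: after splitting into $|\xi|\le 1$ and $|\xi|\ge 1$, the high-frequency piece is indeed Schwartz (with a smooth cutoff) and hence integrable, but the low-frequency contribution $\int_{|\xi|\lesssim 1}e^{-|2\pi\xi|^\theta}e^{2\pi i x\cdot\xi}\,d\xi$ is only shown to be \emph{bounded}, and a bounded function plus an integrable one is not integrable. The entire difficulty of the proposition lies in the decay of this low-frequency part, which your splitting isolates but does not address. Your first approach---integrate by parts $N$ times, using $\partial^N_{\xi_j}e^{-|2\pi\xi|^\theta}=O(|\xi|^{\theta-N})$ near the origin---is correct in principle and does yield $|K^\dagger(x)|\lesssim|x|^{-N}$, but it requires an integer $N$ with $d<N<d+\theta$; such an $N$ exists only for $\theta>1$, whereas the proposition is stated (via the standing hypothesis of the subsection) for all $0<\theta\le 2$. (Incidentally, your opening claim that $e^{-|2\pi\xi|^\theta}$ is Schwartz is false for $\theta\notin 2\mathbf{N}$, as you yourself note two sentences later; boundedness of $K^\dagger$ follows simply from $e^{-|2\pi\xi|^\theta}\in L^1$.)

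The paper covers the full range $0<\theta\le 2$ by a dyadic decomposition combined with the subtraction $e^{-|2\pi\xi|^\theta}=1+(e^{-|2\pi\xi|^\theta}-1)$ at low frequencies. The ``$1$'' pieces telescope to the Fourier transform of a fixed smooth bump, hence a Schwartz function. On each low-frequency annulus $|\xi|\sim u^{-1}$ (with $u\ge 1$) the remainder $e^{-|2\pi\xi|^\theta}-1$ is smooth and of size $O(u^{-\theta})$, so one may differentiate as many times as one likes; interpolating the resulting estimates between consecutive integers and summing over dyadic scales yields decay of order $(1+|x|)^{-d-\theta/2}$ for the low-frequency sum, which is integrable for every $\theta>0$. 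The idea you are missing is precisely this subtraction of the constant term, which trades the singularity of $|\xi|^\theta$ at the origin for an extra factor $|\xi|^\theta$ and, together with the dyadic localization, effectively permits a ``fractional'' number of integrations by parts.
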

This fact is somewhat well known.
However,
since we will need to use the proof
of Proposition \ref{prop:170822-2}
in the proof of Proposition \ref{prop_20160927051159},
we supply the proof.
See \cite{Press} as well as \cite{BBKRSV}
for example.
\begin{proof}
Let $u>0$.
We set
\[
K^\dagger_u(x)=\int_{{\mathbf R}^d}(\rho_{-1}(u\xi)-\rho_{-1}(2u\xi))
\exp(-|2\pi\xi|^\theta)\exp(2\pi\sqrt{-1}x \cdot \xi)\,d\xi
\quad (x \in {\mathbf R}^d).
\]
We decompose
\[
K^\dagger=\sum\limits_{j=-\infty}^\infty K^\dagger_{2^{-j}}.
\]
We claim that we have
an important bound for $K^\dagger_u$:
\[
|K^\dagger_u(x)| \lesssim u^{-d}
\quad (x \in {\mathbf R}^d).
\]
In fact,
\[
|K^\dagger_u(x)|
\le\int_{{\mathbf R}^d}
|\rho_{-1}(u\xi)-\rho_{-1}(2u\xi)|\,d\xi
={\rm O}(u^{-d}).
\]
Let $u<1$.
Then we have 
\[
|\partial^\alpha[(\rho_{-1}(u\xi)-\rho_{-1}(2u\xi))\exp(-|2\pi\xi|^\theta)]|
\lesssim u^{L}\chi_{\xi\sim u^{-1}}(\xi)
\]
for all $\alpha \in {\mathbf N}_0{}^d$ and $L \in {\mathbf N}$.
As a consequence
\[
(|x|^{d-1}+|x|^{d+1})|K^\dagger_u(x)| \lesssim u^{L-d}
\]
for all $u \in (0,1]$ and $L \in {\mathbf N}$.
Consequently,
\[
\|K^\dagger_u\|_{L^1({\mathbf R}^d)}
\lesssim u^{L-d}
\]
for any $L \in {\mathbb N}$,
where the implicit constant depends on $L$.
As a consequence,
\[
\left\|\sum\limits_{j=0}^{\infty} K^\dagger_{2^{-j}}\right\|_{L^1({\mathbf R}^d)}
\lesssim
\sum\limits_{j=0}^\infty 2^{-j(L-d)}\sim 1.
\]
Let $u>1$.
Fix $x \in {\mathbf R}^d$.
We write
\begin{align*}
L^\dagger_u(x)
&=\int_{{\mathbf R}^d}(\rho_{-1}(u\xi)-\rho_{-1}(2u\xi))
[\exp(-|2\pi\xi|^\theta)-1]\exp(2\pi\sqrt{-1}x \cdot \xi)\,d\xi\\
M^\dagger_u(x)
&=\int_{{\mathbf R}^d}(\rho_{-1}(u\xi)-\rho_{-1}(2u\xi))
\exp(2\pi\sqrt{-1}x \cdot \xi)\,d\xi.
\end{align*}
Then we decompose
$
K^\dagger_u(x)
=
L^\dagger_u(x)+M^\dagger_u(x).
$
We remark that
\begin{equation}\label{eq:180120-1}
\left\|\sum\limits_{j=-\infty}^0 M^\dagger_{2^{-j}}\right\|_{L^1({\mathbf R}^d)}
\lesssim 1
\end{equation}
by the use of the Fourier transform,
which maps ${\mathcal S}({\mathbf R}^d)$ to itself continuously.
Next,
keeping in mind that $\theta>0$ and that
\[
\exp(-|2\pi\xi|^\theta)-1=\sum\limits_{k=1}^\infty \frac{(-1)^k}{k!}|2\pi \xi|^{k\theta},
\]
we observe
\[
|\partial^\alpha[
(\rho_{-1}(u\xi)-\rho_{-1}(u\xi))
(\exp(-|2\pi\xi|^\theta)-1)]|
\lesssim_\alpha
\chi_{\xi \sim u^{-1}}(\xi)u^{-\theta+|\alpha|}.
\]
Thus,
\begin{equation}\label{eq:171026-5}
|x|^{N}|L^\dagger_u(x)|
\lesssim u^{-\theta+N}
\int_{\xi \sim u^{-1}}\,d\xi
\sim
u^{-d-\theta+N}
\end{equation}
for any integer $N$.
As a result,
\begin{equation}\label{eq:180117-1}
\left|\sum\limits_{j=-\infty}^0 L^\dagger_{2^{-j}}(x)\right|
\lesssim 1.
\end{equation}
Meanwhile,
(\ref{eq:171026-5}) can be interpolated and we have
\[
|x|^{d+\frac{\theta}{2}}|L^\dagger_u(x)|
\lesssim
u^{-\frac{\theta}{2}}.
\]
Recall that $\theta>0$.
We consider the case where $u=2^{-j}$.
If we add this estimate over $j$,
then we have
\[
|x|^{d+\frac{\theta}{2}}
\left|\sum\limits_{j=-\infty}^0 L^\dagger_{2^{-j}}(x)\right|
\lesssim 1.
\]
If we combine (\ref{eq:180117-1}) with this estimate,
then we obtain
\[
(1+|x|^{d+\frac{\theta}{2}})
\left|\sum\limits_{j=-\infty}^0 L^\dagger_{2^{-j}}(x)\right|
\lesssim 1,
\]
together with (\ref{eq:180120-1})
proving the integrability of $K^\dagger$.
\end{proof}

We also need the following analogy to Proposition
\ref{prop:170822-1}:
\begin{proposition}\label{prop:170822-3}
Let $\theta>0$.
Then the function
\[
x \in {\mathbf R}^d \mapsto
\int_{{\mathbf R}^d}|\xi|^{-\theta}(\exp(-|2\pi\xi|^{\theta})-1)\exp(-2\pi\sqrt{-1} x \cdot \xi)\,d\xi
\in {\mathbf R}
\]
is integrable.
\end{proposition}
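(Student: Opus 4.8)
The plan is to treat the function in question, call it $N^\dagger(x) = \int_{{\mathbf R}^d}|\xi|^{-\theta}(\exp(-|2\pi\xi|^{\theta})-1)\exp(-2\pi\sqrt{-1} x \cdot \xi)\,d\xi$, exactly as was done for $K^\dagger$ in Proposition \ref{prop:170822-2}: decompose dyadically in frequency, bound each dyadic block in $L^1$, and sum. Write $N^\dagger = \sum_{j=-\infty}^\infty N^\dagger_{2^{-j}}$, where $N^\dagger_u(x) = \int_{{\mathbf R}^d}(\rho_{-1}(u\xi)-\rho_{-1}(2u\xi))|\xi|^{-\theta}(\exp(-|2\pi\xi|^{\theta})-1)\exp(-2\pi\sqrt{-1}x\cdot\xi)\,d\xi$. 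The point is that the only new feature relative to $K^\dagger$ is the factor $|\xi|^{-\theta}$, which is large near the origin, so the delicate part is now the \emph{low-frequency} regime $u \to \infty$ (i.e. $j \to -\infty$), whereas the high-frequency regime $u \le 1$ is unproblematic because $|\xi|^{-\theta}$ is harmless there.

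First I would handle $u > 1$. On the annulus $\xi \sim u^{-1}$ the amplitude $a_u(\xi) := (\rho_{-1}(u\xi)-\rho_{-1}(2u\xi))|\xi|^{-\theta}(\exp(-|2\pi\xi|^{\theta})-1)$ satisfies, using the Taylor expansion $\exp(-|2\pi\xi|^\theta)-1 = \sum_{k\ge 1}\frac{(-1)^k}{k!}|2\pi\xi|^{k\theta}$, the bound $|\partial^\alpha a_u(\xi)| \lesssim_\alpha \chi_{\xi\sim u^{-1}}(\xi)\, u^{|\alpha|}$ (the $|\xi|^{-\theta}$ and the leading $|\xi|^{\theta}$ of the bracket cancel in order, leaving something $O(1)$ on that annulus, and each derivative costs $u^{|\alpha|}$). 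Integrating by parts $N$ times then gives $|x|^N|N^\dagger_u(x)| \lesssim u^{-d+N}$ for every integer $N \ge 0$, and interpolating between two consecutive integer exponents yields $|x|^{d+\theta/2}|N^\dagger_u(x)| \lesssim u^{\theta/2}$ together with $|N^\dagger_u(x)| \lesssim u^{-d}$. Since $\theta>0$, summing over $j \le 0$ (i.e. $u = 2^{-j} \ge 1$) of the bound $(1+|x|^{d+\theta/2})|N^\dagger_{2^{-j}}(x)| \lesssim u^{-d} + u^{-d+ (d+\theta/2)}\cdot u^{-(d+\theta/2)}\cdots$ — more precisely combining $|N^\dagger_u| \lesssim u^{-d}$ for the region $|x|\lesssim 1$ and $|x|^{d+\theta/2}|N^\dagger_u|\lesssim u^{\theta/2}$ for $|x|\gtrsim 1$, and summing the geometric series $\sum_{j\le 0} 2^{j\theta/2}$ — shows $\sum_{j\le 0}N^\dagger_{2^{-j}}$ is integrable on ${\mathbf R}^d$. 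This is where I expect to pause and get the bookkeeping exactly right, since it is the only place the extra singular factor interacts with the tail sum; the mechanism is the same as in the proof of Proposition \ref{prop:170822-2}, just with the roles of ``large $u$'' and ``small $u$'' adjusted.

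Next, for $u \le 1$ (i.e. $j \ge 0$) the factor $|\xi|^{-\theta}$ is bounded above by a constant on $\supp(\rho_{-1}(u\cdot)-\rho_{-1}(2u\cdot))$, so exactly as in the proof of Proposition \ref{prop:170822-2} one gets $|\partial^\alpha a_u(\xi)| \lesssim u^L \chi_{\xi\sim u^{-1}}(\xi)$ for every $\alpha$ and every $L \in {\mathbf N}$ (the Schwartz decay of $\exp(-|2\pi\xi|^\theta)$ on $|\xi|\sim u^{-1} \ge 1$ provides arbitrarily many powers of $u$), whence $(|x|^{d-1}+|x|^{d+1})|N^\dagger_u(x)| \lesssim u^{L-d}$ and therefore $\|N^\dagger_u\|_{L^1({\mathbf R}^d)} \lesssim u^{L-d}$. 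Choosing $L > d$ and summing $\sum_{j\ge 0} 2^{-j(L-d)} \sim 1$ gives integrability of $\sum_{j\ge 0}N^\dagger_{2^{-j}}$. Adding the two halves finishes the proof. The main obstacle, as noted, is purely the $u\to\infty$ estimate: one must verify that the $|\xi|^{-\theta}$ blow-up near $\xi=0$ is exactly compensated by the vanishing of $\exp(-|2\pi\xi|^\theta)-1 = O(|\xi|^\theta)$, so that the dyadic blocks still decay geometrically in $j$ — this is the reason the hypothesis $\theta>0$ is both necessary and sufficient, and it is the only subtle point in an otherwise routine repetition of the previous argument.
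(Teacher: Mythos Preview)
Your argument has errors in both frequency regimes, and the low-frequency one is a genuine gap.

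\textbf{High frequency ($u\le 1$).} The claim that $|\partial^\alpha a_u(\xi)|\lesssim u^L$ for every $L$ is false: unlike in Proposition~\ref{prop:170822-2}, the symbol here is $\exp(-|2\pi\xi|^\theta)-1$, which tends to $-1$ (not $0$) as $|\xi|\to\infty$, so there is no Schwartz decay. On $|\xi|\sim u^{-1}$ the amplitude is only $O(u^{\theta+|\alpha|})$, coming from $|\xi|^{-\theta}\sim u^\theta$. This still gives $\|N^\dagger_u\|_{L^1({\mathbf R}^d)}\lesssim u^\theta$, and $\sum_{j\ge 0}2^{-j\theta}<\infty$ since $\theta>0$, so the conclusion survives; this is exactly what the paper does for the high-frequency piece.

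\textbf{Low frequency ($u>1$).} Here the strategy itself fails. Your bound $|\partial^\alpha a_u|\lesssim u^{|\alpha|}$ is correct, but it only yields $|x|^N|N^\dagger_u(x)|\lesssim u^{N-d}$ and hence $\|N^\dagger_u\|_{L^1}\lesssim 1$ --- a constant, not summable over $j\le 0$. Your claimed geometric series $\sum_{j\le 0}2^{j\theta/2}$ has the wrong sign in the exponent: with $u=2^{-j}$ the tail bound is $u^{\theta/2}=2^{-j\theta/2}$, and $\sum_{j\le 0}2^{-j\theta/2}$ diverges. Indeed, since $|\xi|^{-\theta}(\exp(-|2\pi\xi|^\theta)-1)\to -(2\pi)^\theta$ as $\xi\to 0$, each dyadic block is essentially $-(2\pi)^\theta$ times an annular cutoff at scale $u^{-1}$, whose Fourier transform has $L^1$ norm bounded \emph{below} by a positive constant independent of $u$.

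The fix --- and what the paper means by ``similar to Proposition~\ref{prop:170822-2}'' --- is to split off that constant first: write
\[
|\xi|^{-\theta}(\exp(-|2\pi\xi|^\theta)-1)=-(2\pi)^\theta+\sum_{k\ge 2}\frac{(-1)^k}{k!}(2\pi)^{k\theta}|\xi|^{(k-1)\theta}.
\]
The constant part contributes, after summing the dyadic cutoffs, the Fourier transform of a Schwartz function (the analogue of $M^\dagger_u$). The remainder is $O(|\xi|^\theta)$ near the origin, so its dyadic blocks satisfy $|x|^N|\,\cdot\,|\lesssim u^{N-d-\theta}$, giving the summable decay $u^{-\theta/2}$ after interpolation --- exactly the $L^\dagger_u$ mechanism from Proposition~\ref{prop:170822-2}. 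Without this subtraction, the cancellation between $|\xi|^{-\theta}$ and the vanishing of $\exp(-|2\pi\xi|^\theta)-1$ leaves you with $O(1)$ pieces, and the dyadic sum does not converge in $L^1$.
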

Although we have 
\[
\exp(-2\pi\sqrt{-1} x \cdot \xi)
\]
in the integrand,
the right-hand side is the Fourier transform
of an even function.\\
So, the function is real valued.
\begin{proof}
Since
\begin{align*}
\lefteqn{
\int_{{\mathbf R}^d}|\xi|^{-\theta}(\exp(-|2\pi\xi|^{\theta})-1)\exp(-2\pi\sqrt{-1} x \cdot \xi)\,d\xi
}\\
&\simeq_d
\int_0^1
\left(
\int_{{\mathbf R}^d} \exp(-2t|\pi\xi|^{\theta}-2\pi\sqrt{-1} x \cdot \xi)\,d\xi
\right)dt\\
&\simeq_d
\int_0^1
t^{-\frac{d}{\theta}}
\left(
\int_{{\mathbf R}^d} \exp(-2|\pi\xi|^{\theta}-2\pi\sqrt{-1} t^{-\frac{1}{\theta}}x \cdot \xi)\,d\xi
\right)dt\\
&=
\int_0^1
t^{-\frac{d}{\theta}}K^\dagger(t^{-\frac{1}{\theta}}x)dt,
\end{align*}
the function is integrable thanks to 
Proposition \ref{prop:170822-2}.
\end{proof}


\begin{lemma}\label{lm:171102-1}
Let $T>0$.
Assume 
$1 \le p,q \le \infty$,
$s \in {\mathbf R}$, 
$0<\theta \le 2$ and $\delta \ge 0$.
Then 
for all $f \in B^s_{p q}$ and $0<t \le T$,
$
\|e^{-t(-\Delta)^{\theta/2}}f\|_{B^{s+\theta\delta}_{p q}}
\lesssim_{p,q,s,T} t^{-\delta}
\|f\|_{B^s_{p q}}
$.
\end{lemma}
\begin{proof}
We assume $q<\infty$;
otherwise we can readily modify the proof below.
The case where $\delta=0$ is clear
in view of Proposition \ref{prop:170822-2} since the function
$e^{-t(-\Delta)^{\theta/2}}f$ can be written as
$e^{-t(-\Delta)^{\theta/2}}f=K_t*f$,
where $\|K_t\|_{L^1({\mathbf R}^d)} \lesssim 1$.
Let us suppose $\delta>0$.
Based on Remark \ref{rem:171027-1},
it suffices to show
\[
\|e^{-t(-\Delta)^{\theta/2}}f\|_{B^{s+\theta\delta}_{p q}}
\sim
\left(
\sum\limits_{j=-1}^\infty(2^{j(s+\delta\theta)}\|e^{-t(-\Delta)^{\theta/2}} \LPBlock[j]{}^2 f\|_{L^p(\Torus^d)})^q
\right)^{\frac1q}
\lesssim\,t^{-\delta}
\|f\|_{B^s_{p q} }.
\]
We notice
\[
e^{-t(-\Delta)^{\theta/2}} \LPBlock[j] {}^2 f
=c_d
{\mathcal F}^{-1}
\left(
e^{-t|2\pi\cdot|^{\theta}}\rho_0(2^{-j}\cdot)
\right)* \LPBlock[j] f
\]
and hence thanks to Proposition \ref{prop:170822-1}
\[
\|e^{-t(-\Delta)^{\theta/2}} \LPBlock[j] {}^2 f\|_{L^p(\Torus^d)}
\le
c_d
\left\|{\mathcal F}^{-1}
\left(
e^{-t|2\pi\cdot|^{\theta}}\rho_0(2^{-j}\cdot)
\right)\right\|_{L^1({\mathbf R}^d)}
\|\LPBlock[j] f\|_{L^p(\Torus^d)}.
\]
Here we denoted by $c_d$ some unimportant positive constants,
whose precise value is irrelevant in the proof.
Since
\[
{\mathcal F}^{-1}
\left(
e^{-t|2\pi\cdot|^{\theta}}\rho_0(2^{-j}\cdot)
\right)(x)
=
t^{-d/\theta}
{\mathcal F}^{-1}
\left(
e^{-|2\pi\cdot|^{\theta}}\rho_0(2^{-j}t^{-1/\theta}\cdot)
\right)\left(\frac{x}{t^{1/\theta}}\right)
\quad (x\in {\mathbf R}^d),
\]
we have
\[
\left\|{\mathcal F}^{-1}
\left(
e^{-t|2\pi\cdot|^{\theta}}\rho_0(2^{-j}\cdot)
\right)\right\|_{L^1({\mathbf R}^d)}
=
\left\|
{\mathcal F}^{-1}
\left(
e^{-|2\pi\cdot|^{\theta}}\rho_0(2^{-j}t^{-1/\theta}\cdot)
\right)
\right\|_{L^1({\mathbf R}^d)}.
\]
Note that
$
\|{\mathcal F}^{-1}F\|_{L^1({\mathbf R}^d)}
\lesssim_d
\|\Delta^{d}F\|_{L^1({\mathbf R}^d)}
+
\|F\|_{L^1({\mathbf R}^d)}
$
for any $F \in C^\infty_{\rm c}({\mathbb R}^d)$.
Hence
\[
\left\|{\mathcal F}^{-1}
\left(
e^{-t|2\pi\cdot|^{\theta}}\rho_0(2^{-j}\cdot)
\right)\right\|_{L^1({\mathbf R}^d)}
\lesssim_d
\|\Delta^{d}e^{-|2\pi\cdot|^{\theta}}\rho_0(2^{-j}t^{-1/\theta}\cdot)\|_{L^1({\mathbf R}^d)}
+
\|e^{-|2\pi\cdot|^{\theta}}\rho_0(2^{-j}t^{-1/\theta}\cdot)\|_{L^1({\mathbf R}^d)}.
\]
Let $x \in {\mathbb R}^d$.
Since
$
\rho_0(x) \ne 0
$
implies
$\frac34 \le |x| \le \frac83$,
we have
\[
e^{-|2\pi x|^{\theta}}\rho_0(2^{-j}t^{-1/\theta}x)
\le
e^{-\left|\frac34 \cdot 2^{j+1}t^{1/\theta}x\right|^\theta}
\rho_0(2^{-j}t^{-1/\theta}x).
\]
Taking the $L^1$-norm,
we have
\[
\|e^{-|2\pi\cdot|^{\theta}}\rho_0(2^{-j}t^{-1/\theta}\cdot)\|_{L^1({\mathbf R}^d)}
\le
e^{-\left|\frac34 \cdot 2^{j+1}t^{1/\theta}x\right|^\theta}
\|\rho_0(2^{-j}t^{-1/\theta}\cdot)\|_{L^1({\mathbf R}^d)}
\lesssim(2^{j}t^{1/\theta})^{-N}
\]
for any $N \in {\mathbf N}$.
Meanwhile,
by the Leibniz rule,
\[
\|\Delta^{d}e^{-|2\pi\cdot|^{\theta}}\rho_0(2^{-j}t^{-1/\theta}\cdot)\|_{L^1({\mathbf R}^d)}
\lesssim_d
\sum_{|\alpha|+|\beta|=d}
\|\partial^\alpha[e^{-\frac12|2\pi\cdot|^{\theta}}]\partial^\beta[\rho_0(2^{-j}t^{-1/\theta}\cdot)]\|_{L^1({\mathbf R}^d)}.
\]
Since
\[
|\partial^\alpha[e^{-|2\pi x|^{\theta}}]|
\lesssim_{\alpha,\theta}
e^{-\frac12|2\pi x|^{\theta}},
\]
we have
\[
|\partial^\alpha[e^{-|2\pi x|^{\theta}}]\partial^\beta[\rho_0(2^{-j}t^{-1/\theta}x)|
\lesssim
e^{-\left|\frac34 \cdot 2^{j+1}t^{1/\theta}x\right|^\theta}
\rho_0(2^{-j}t^{-1/\theta}x).
\]
Taking the $L^1$-norm,
we have
\[
\|\Delta^{d}e^{-|2\pi\cdot|^{\theta}}\rho_0(2^{-j}t^{-1/\theta}\cdot)\|_{L^1({\mathbf R}^d)}
\lesssim(2^{j}t^{1/\theta})^{-N}
\]
as before.

Thus, if $2^{j}t^{1/\theta} \ge 1$,
then we obtain
\[
\left\|{\mathcal F}^{-1}
\left(
e^{-t|2\pi\cdot|^{\theta}}\rho_0(2^{-j}\cdot)
\right)\right\|_{L^1({\mathbf R}^d)}
\lesssim(2^{j}t^{1/\theta})^{-N}
\]
for any $N \in {\mathbf N}$.

Likewise, if we start from
\[
{\mathcal F}^{-1}
\left(
e^{-t|2\pi\cdot|^{\theta}}\rho_0(2^{-j}\cdot)
\right)(x)
=
2^{jn}
{\mathcal F}^{-1}
\left(
e^{-|2^{j+1}\pi t^{1/\theta}\cdot|^{\theta}}\rho_0
\right)(2^j x),
\]
then we obtain
\[
\left\|{\mathcal F}^{-1}
\left(
e^{-t|2\pi\cdot|^{\theta}}\rho_0(2^{-j}\cdot)
\right)\right\|_{L^1({\mathbf R}^d)}
\lesssim
1,
\]
whenever $2^{j}t^{1/\theta} \le 1$. 
Thus, since $\delta>0$,
it follows that
\begin{align*}
\lefteqn{
\left(
\sum\limits_{j=-1}^\infty(2^{j(s+\delta\theta)}\|e^{-t(-\Delta)^{\theta/2}}\LPBlock[j]{}^2 f\|_{L^p(\Torus^d)})^q
\right)^{\frac1q}
}\\
&\lesssim
\|f\|_{B^s_{p q}}
\left(
\sum\limits_{j=-1}^\infty
2^{j\delta\theta q}
\min(1,(2^{j}t^{1/\theta})^{-N})^q
\right)^{\frac1q}\\
&= t^{-\delta}
\|f\|_{B^s_{p q}}
\left(
\sum\limits_{j=-1}^\infty
(2^{j}t^{1/\theta})^{\theta \delta q}
\min(1,(2^{j}t^{1/\theta})^{-N})^q
\right)^{\frac1q}\\
&\le t^{-\delta}
\|f\|_{B^s_{p q}}
\left(
\sum\limits_{j=-\infty}^\infty
(2^{j}t^{1/\theta})^{\theta \delta q}
\min(1,(2^{j}t^{1/\theta})^{-N})^q
\right)^{\frac1q}\\
&\sim t^{-\delta}
\|f\|_{B^s_{p q}},
\end{align*}
as was to be shown.
\end{proof}

\begin{lemma}
\label{lm:171102-2}
Let $T>0$.
Asume $1 \le p,q \le \infty$, $s \in {\mathbf R}$, $0<\theta \le 2$ and $0 \le \delta \le 1$.
Then
for all $f \in B^s_{p q}$ and $0<t \le T$,
$
\|(e^{-t(-\Delta)^{\theta/2}}-1)f\|_{B^{s-\theta\delta}_{p q}}
\lesssim
t^\delta\|f\|_{B^{s}_{p q}}.
$
\end{lemma}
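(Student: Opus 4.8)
The plan is to reduce this estimate to the Schauder estimate of Lemma~\ref{lm:171102-1} by a standard interpolation/telescoping argument at the level of Littlewood--Paley blocks. Writing $P_t = e^{-t(-\Delta)^{\theta/2}}$, the case $\delta=0$ is trivial since $\|(P_t-1)f\|_{B^s_{pq}} \lesssim \|P_t f\|_{B^s_{pq}} + \|f\|_{B^s_{pq}} \lesssim \|f\|_{B^s_{pq}}$ by the $\delta=0$ case of Lemma~\ref{lm:171102-1}. For $\delta=1$ one uses the identity
\[
(P_t - 1)f = -\int_0^t (-\Delta)^{\theta/2} P_r f\, dr,
\]
so that, after localizing with $\LPBlock[j]{}^2$ and using $\|(-\Delta)^{\theta/2}P_r \LPBlock[j]{}^2 f\|_{L^p} \lesssim 2^{j\theta}(2^j r^{1/\theta})^{-N}\|\LPBlock[j]f\|_{L^p}$ (the same multiplier computation as in Lemma~\ref{lm:171102-1}, now with the extra symbol $|2\pi\xi|^\theta$, which only improves decay), one gets $\|(P_t-1)\LPBlock[j]{}^2 f\|_{L^p} \lesssim 2^{j\theta} t \min(1,(2^jt^{1/\theta})^{-N})\|\LPBlock[j]f\|_{L^p}$; combining this with the trivial bound $\|(P_t-1)\LPBlock[j]{}^2 f\|_{L^p}\lesssim \|\LPBlock[j]f\|_{L^p}$ and summing $2^{j(s-\theta)q}(\cdots)$ over $j$ as in the proof of Lemma~\ref{lm:171102-1} yields the factor $t$.

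A cleaner route that avoids treating two endpoints separately is to work blockwise from the start. For each $j$,
\[
\|(P_t-1)\LPBlock[j]{}^2 f\|_{L^p(\Torus^d)} \le \big\|{\mathcal F}^{-1}\big((e^{-t|2\pi\cdot|^\theta}-1)\varphi(2^{-j}\cdot)\big)\big\|_{L^1({\mathbf R}^d)}\,\|\LPBlock[j]f\|_{L^p(\Torus^d)},
\]
using Proposition~\ref{prop:170822-1}. By the scaling substitution $\xi \mapsto t^{-1/\theta}\xi$ exactly as in Lemma~\ref{lm:171102-1}, the $L^1$ norm of this kernel equals $\big\|{\mathcal F}^{-1}\big((e^{-|2\pi\cdot|^\theta}-1)\varphi(2^{-j}t^{-1/\theta}\cdot)\big)\big\|_{L^1}$. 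One then shows this quantity is $\lesssim \min\big(1,(2^jt^{1/\theta})^{\theta\delta}\big)$ — equivalently, writing $a = 2^jt^{1/\theta}$, a bound $\lesssim \min(1,a^\theta)$ combined with the uniform bound $\lesssim 1$ and interpolation gives the $\delta$-power for $0\le\delta\le 1$. The bound by $1$ when $a \le 1$ comes from the argument already in Lemma~\ref{lm:171102-1} (the regularized symbol $(e^{-|2\pi\cdot|^\theta}-1)\varphi$ has bounded Schwartz seminorms uniformly as the support shrinks); the bound by $a^\theta$ when $a\le 1$ is obtained from the Taylor expansion $e^{-|2\pi\xi|^\theta}-1 = \sum_{k\ge 1}\frac{(-1)^k}{k!}|2\pi\xi|^{k\theta}$, which on $\supp\varphi(2^{-j}t^{-1/\theta}\cdot)$ (i.e.\ $|\xi|\sim a^{-1}\ge 1$ after the rescaling) contributes a factor $a^{-\theta}$ relative to... wait — here one should instead perform the scaling so the relevant frequencies are $O(1)$, and then the extra factor $|2\pi\xi|^\theta$ sitting in front is $O(1)$ while the overall prefactor picks up $t^{\ldots}$; in any case the essential input is that $|e^{-t|2\pi\xi|^\theta}-1|\lesssim (t|\xi|^\theta)^\delta$ pointwise for $0\le\delta\le1$, together with matching derivative bounds, which is exactly Proposition~\ref{prop:170822-3}-type reasoning. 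For the large-$a$ regime $a \ge 1$, the $-1$ term contributes a compactly Fourier-supported away-from-origin piece whose inverse Fourier transform has $L^1$ norm $O(1)$ uniformly (this is where one uses that $\varphi$ is supported in an annulus, so there is no singularity at $\xi=0$ and the $-1$ just gives a smooth bump), while the $e^{-t|2\pi\cdot|^\theta}$ term is handled exactly as the $2^jt^{1/\theta}\ge1$ case of Lemma~\ref{lm:171102-1} and is $\lesssim a^{-N}$; either way the product with $a^{\theta\delta}$ in the sum is summable.

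Once the blockwise bound $\|(P_t-1)\LPBlock[j]{}^2 f\|_{L^p} \lesssim \min(1,(2^jt^{1/\theta})^{\theta\delta})\|\LPBlock[j]f\|_{L^p}$ is in hand, the endgame mirrors Lemma~\ref{lm:171102-1} verbatim: by Remark~\ref{rem:171027-1} it suffices to estimate $\big(\sum_{j\ge-1}(2^{j(s-\theta\delta)}\|(P_t-1)\LPBlock[j]{}^2 f\|_{L^p})^q\big)^{1/q}$, and pulling out $\|f\|_{B^s_{pq}}$ leaves
\[
\Big(\sum_{j\ge-1} 2^{-j\theta\delta q}\min\big(1,(2^jt^{1/\theta})^{\theta\delta}\big)^q\Big)^{1/q}
= t^{\delta}\Big(\sum_{j\ge-1}(2^jt^{1/\theta})^{-\theta\delta q}\min\big(1,(2^jt^{1/\theta})^{\theta\delta}\big)^q\Big)^{1/q}
\lesssim t^\delta,
\]
since the function $a\mapsto a^{-\theta\delta}\min(1,a^{\theta\delta})$ is bounded and summable over the dyadic scale $a = 2^jt^{1/\theta}$ (it equals $1$ for $a\le1$ and $a^{-\theta\delta}$ for $a\ge1$, and extending the sum to $j\in\Integers$ only enlarges it by a constant as $T$ is bounded). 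The case $q=\infty$ is the obvious supremum analogue. The main obstacle — really the only nonroutine point — is establishing the blockwise kernel bound with the correct power $(2^jt^{1/\theta})^{\theta\delta}$ for the full range $\delta\in[0,1]$; this is where the pointwise inequality $|e^{-r}-1|\le r^\delta$ for $r\ge0,\ \delta\in[0,1]$ (or interpolation between the $\delta=0$ and $\delta=1$ kernel estimates, the latter via $\partial_t P_t = -(-\Delta)^{\theta/2}P_t$) does the work, and one must be slightly careful near $\xi=0$, handled exactly as in the proof of Proposition~\ref{prop:170822-2}.
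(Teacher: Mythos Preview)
Your blockwise reduction matches the paper's approach, but the final summation step in your ``cleaner route'' contains a genuine error. You claim
\[
\Big(\sum_{j\ge -1}(2^jt^{1/\theta})^{-\theta\delta q}\min\big(1,(2^jt^{1/\theta})^{\theta\delta}\big)^q\Big)^{1/q} \lesssim 1
\]
on the grounds that $a\mapsto a^{-\theta\delta}\min(1,a^{\theta\delta})$ is ``summable over the dyadic scale.'' It is not: this function equals $1$ for every $a\le 1$, and for small $t$ there are roughly $\theta^{-1}\log_2(1/t)$ indices $j\ge -1$ with $2^jt^{1/\theta}\le 1$, so the displayed sum is of order $\log(1/t)$, not uniformly bounded. (Extending to $j\in\Integers$ makes it outright divergent, not ``larger by a constant.'') The step that went wrong is the ``pulling out $\|f\|_{B^s_{pq}}$'': the only valid inequality of this shape is
\[
\Big(\sum_j\big(c_j\cdot 2^{js}\|\LPBlock[j]f\|_{L^p}\big)^q\Big)^{1/q} \le \big(\sup_j c_j\big)\,\|f\|_{B^s_{pq}},
\]
which leaves a \emph{supremum}, not a sum. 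Here $c_j = 2^{-j\theta\delta}\min(1,(2^jt^{1/\theta})^{\theta\delta})$ equals exactly $t^\delta$ when $2^jt^{1/\theta}\le 1$ and is $\le t^\delta$ when $2^jt^{1/\theta}\ge 1$, so $\sup_j c_j\le t^\delta$ and the lemma follows immediately. In Lemma~\ref{lm:171102-1} the analogous weight $a^{\theta\delta}\min(1,a^{-N})$ \emph{is} summable over dyadic $a$, which is why the ``mimicking'' you propose works there but fails here.

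A second repair: retain the uninterpolated blockwise bound $\lesssim\min(1,a^\theta)$ (rather than the weaker $\min(1,a^{\theta\delta})$); then the small-$a$ contribution to the weight is $a^{\theta(1-\delta)}$, which is genuinely summable for $0<\delta<1$. The endpoints must then be treated separately --- exactly as the paper does, and exactly as your own first paragraph does for $\delta=1$ via $(P_t-1)f=-\int_0^t(-\Delta)^{\theta/2}P_rf\,dr$. That first-paragraph argument is correct and is arguably the cleanest route; the general $\delta$ then follows by interpolation between $\delta=0$ and $\delta=1$.
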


\begin{proof}
If $\delta=0$,
then simply use the fact that the integral kernel
of $e^{-t(-\Delta)^{\theta/2}}$ is integrable;
see Proposition \ref{prop:170822-2}.
So, we need to consider the opposite endpoint case;
suppose $\delta=1$.
The matter is reduced to investigating
\begin{align}\label{eq:161117-1}
\left\|
{\mathcal F}^{-1}[(e^{-t|2\pi\cdot|^\theta}-1)\varphi(2^{-j}\cdot)]
\right\|_{L^1({\mathbf R}^d)}
\end{align}
for $0<t \le T$ and $j \in {\mathbf N}$.
If $2^j t^{1/\theta} \ge 1$,
then we go through the same argument as above to have
\[
\left\|
{\mathcal F}^{-1}[(e^{-t|2\pi\cdot|^\theta}-1)\varphi(2^{-j}\cdot)]
\right\|_{L^1({\mathbf R}^d)}
=
\left\|
{\mathcal F}^{-1}[(e^{-2^{j\theta+1}t|\pi\cdot|^\theta}-1)\varphi]
\right\|_{L^1({\mathbf R}^d)}
=
{\rm O}((2^j t^{1/\theta})^{-N})
\]
for all $N \in {\mathbf N}$.

If $t^{1/\theta} 2^j \le 1$, we need to handle 
(\ref{eq:161117-1}) more carefully.
As before, we use
\[
\left\|
{\mathcal F}^{-1}[(e^{-t|2\pi\cdot|^\theta}-1)\varphi(2^{-j}\cdot)]
\right\|_{L^1({\mathbf R}^d)}
=
\left\|
{\mathcal F}^{-1}[(e^{-|2\pi\cdot|^\theta}-1)\varphi(2^{-j}t^{-1/\theta}\cdot)]
\right\|_{L^1({\mathbf R}^d)}.
\]
Note that
\[
|\nabla^l[e^{-|2\pi\xi|^\theta}-1]|
=
{\rm O}(|\xi|^{\theta-l})
\quad (|\xi| \to 0)
\]
for all $l \in {\mathbf N} \cup \{0\}$.
Thus,
\[
\|\nabla^l[e^{-|2\pi\xi|^\theta}-1]\|_{L^1(B(8r) \setminus B(r))}
={\rm O}(r^{\theta+d-l}).
\]
Hence
\begin{equation}\label{eq:161117-2}
|x|^{d+1}|{\mathcal F}^{-1}[(e^{-|2\pi\cdot|^\theta}-1)\varphi(2^{-j}t^{-1/\theta}\cdot)](x)|
\lesssim
(2^{j}t^{1/\theta})^{\theta-1}
\end{equation}
and
\begin{equation}\label{eq:161117-3}
|x|^{d}|{\mathcal F}^{-1}[(e^{-|2\pi\cdot|^\theta}-1)\varphi(2^{-j}t^{-1/\theta}\cdot)](x)|
\lesssim
(2^{j}t^{1/\theta})^{\theta}.
\end{equation}
Interpolating between
(\ref{eq:161117-2}) and (\ref{eq:161117-3}),
we obtain
\begin{equation}\label{eq:161117-4}
|x|^{d+\theta/2}|{\mathcal F}^{-1}[(e^{-|2\pi\cdot|^\theta}-1)\varphi(2^{-j}t^{-1/\theta}\cdot)](x)|
\lesssim
(2^{j}t^{1/\theta})^{\theta/2}.
\end{equation}
Meanwhile,
\[
|{\mathcal F}^{-1}[(e^{-|2\pi\cdot|^\theta}-1)\varphi(2^{-j}t^{-1/\theta}\cdot)](x)|
\lesssim
\|(e^{-|2\pi\cdot|^\theta}-1)\varphi(2^{-j}t^{-1/\theta}\cdot)\|_{L^1({\mathbf R}^d)}
\lesssim
(2^{j}t^{1/\theta})^{\theta+d}.
\]
Thus,
\[
|{\mathcal F}^{-1}[(e^{-|2\pi\cdot|^\theta}-1)\varphi(2^{-j}t^{-1/\theta}\cdot)](x)|
\lesssim
\min(
(2^{j}t^{1/\theta})^{\theta+d},(2^{j}t^{1/\theta})^{\theta/2}|x|^{-d-\theta/2}).
\]
As a result,
\[
\left\|
{\mathcal F}^{-1}[(e^{-t|2\pi\cdot|^\theta}-1)\varphi(2^{-j}\cdot)]
\right\|_{L^1({\mathbf R}^d)}
\lesssim
(2^{j}t^{1/\theta})^{\theta}.
\]
Thus, if we mimic the proof of Lemma \ref{lm:171102-1},
we obtain the desired result.
\end{proof}

\begin{lemma}\label{lm:171102-3}
Let $T>0$.
Assume 
$1 \le p,q \le \infty$, 
$s \in {\mathbf R}$, 
$0 \le \theta \le 2$,
$0 \le \delta \le 1$
and
$\eta \in [0,\infty)$.
Then
$\|e^{-t_2(-\Delta)^{\theta/2}}f-e^{-t_1(-\Delta)^{\theta/2}}f\|_{B^{s}_{p q}}
\lesssim|t_2-t_1|^{\delta}t_1{}^{-\eta}
\|f\|_{B^{s+\theta\delta-\theta\eta}_{p q}}$
for all $f \in B^{s+\theta\delta-\theta\eta}_{p q}$
and
$0 \le t_1 \le t_2 \le T$.
\end{lemma}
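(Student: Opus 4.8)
The plan is to derive this from the semigroup property together with the two preceding lemmas, with essentially no new work. Abbreviate $L:=(-\Delta)^{\theta/2}$ and use the factorization
\[
e^{-t_2 L}f-e^{-t_1 L}f=\bigl(e^{-(t_2-t_1)L}-1\bigr)\,e^{-t_1 L}f .
\]
Before exploiting it I would dispose of the degenerate cases: if $t_1=t_2$ the left-hand side vanishes and there is nothing to prove; if $t_1=0$ and $\eta>0$ the right-hand side is $+\infty$; and if $t_1=0$ with $\eta=0$ the statement is literally Lemma~\ref{lm:171102-2} (note $B^{s+\theta\delta-\theta\eta}_{pq}=B^{s+\theta\delta}_{pq}$ then). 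So it suffices to treat $0<t_1<t_2\le T$, and then also $0<t_2-t_1\le T$.

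For such $t_1,t_2$, first apply Lemma~\ref{lm:171102-1} with regularity index $s+\theta\delta-\theta\eta$ and exponent $\eta\ge 0$ (legitimate since $0<t_1\le T$ and $0<\theta\le 2$), which gives
\[
\|e^{-t_1 L}f\|_{B^{s+\theta\delta}_{pq}}\lesssim t_1^{-\eta}\,\|f\|_{B^{s+\theta\delta-\theta\eta}_{pq}} .
\]
Then apply Lemma~\ref{lm:171102-2} with regularity index $s+\theta\delta$, exponent $\delta\in[0,1]$, and time parameter $t_2-t_1\in(0,T]$, to the function $g:=e^{-t_1 L}f$:
\[
\bigl\|\bigl(e^{-(t_2-t_1)L}-1\bigr)g\bigr\|_{B^{s}_{pq}}\lesssim (t_2-t_1)^{\delta}\,\|g\|_{B^{s+\theta\delta}_{pq}} .
\]
Substituting the first estimate into the second, via the factorization above, yields
\[
\|e^{-t_2 L}f-e^{-t_1 L}f\|_{B^{s}_{pq}}\lesssim (t_2-t_1)^{\delta}\,t_1^{-\eta}\,\|f\|_{B^{s+\theta\delta-\theta\eta}_{pq}},
\]
with implicit constant depending only on $T,\theta,\delta,\eta,p,q$, which is the assertion.

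There is no genuine obstacle here; the whole proof is the chaining above, and the only thing that needs checking is that the exponent bookkeeping closes — the intermediate space $B^{s+\theta\delta}_{pq}$ is exactly the target of the smoothing estimate from Lemma~\ref{lm:171102-1} and exactly the source required so that the "$e^{-tL}-1$" estimate of Lemma~\ref{lm:171102-2} lands in $B^{s}_{pq}$ — together with the elementary observation that all time parameters remain in $(0,T]$ and that the standing hypotheses $0\le\delta\le1$, $\eta\ge0$ are precisely those demanded by the two lemmas. The only case genuinely outside the scope of Lemmas~\ref{lm:171102-1}--\ref{lm:171102-2} is $\theta=0$, which I would settle directly: then $e^{-t_2 L}-e^{-t_1 L}=(e^{-t_2}-e^{-t_1})\,\mathrm{Id}$ and $B^{s+\theta\delta-\theta\eta}_{pq}=B^{s}_{pq}$, while $|e^{-t_2}-e^{-t_1}|\lesssim_T(t_2-t_1)^{\delta}$ for $0\le\delta\le1$ and $e^{-t_1}\lesssim_{T,\eta}t_1^{-\eta}$ on $(0,T]$, so the bound again follows.
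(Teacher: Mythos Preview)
Your proof is correct and follows essentially the same route as the paper: factor $e^{-t_2L}-e^{-t_1L}=(e^{-(t_2-t_1)L}-1)e^{-t_1L}$, then chain Lemma~\ref{lm:171102-2} and Lemma~\ref{lm:171102-1} through the intermediate space $B^{s+\theta\delta}_{pq}$. You are slightly more careful than the paper in that you explicitly dispose of the boundary cases $t_1=0$ and $\theta=0$, which lie outside the stated hypotheses $0<t\le T$ and $0<\theta\le2$ of the two auxiliary lemmas; the paper's proof silently ignores these.
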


\begin{proof}
Combining Lemmas \ref{lm:171102-1} and \ref{lm:171102-2},
we obtain
\begin{align*}
\|e^{-t_2(-\Delta)^{\theta/2}}f-e^{-t_1(-\Delta)^{\theta/2}}f\|_{B^{s}_{p q}}
&=
\|e^{-(t_2-t_1)(-\Delta)^{\theta/2}}e^{-t_1(-\Delta)^{\theta/2}}f
-e^{-t_1(-\Delta)^{\theta/2}}f\|_{B^{s}_{p q}}\\
&\lesssim|t_2-t_1|^{\delta}
\|e^{-t_1(-\Delta)^{\theta/2}}f\|_{B^{s+\theta\delta}_{p q}}\\
&\lesssim|t_2-t_1|^{\delta}t_1{}^{-\eta}
\|f\|_{B^{s+\theta\delta-\theta\eta}_{p q}}.
\end{align*}
\end{proof}

In Lemmas \ref{lm:171102-1} and \ref{lm:171102-2}
we stated our results in terms of the Besov space
$B^s_{p q}$ with $1 \le p,q \le \infty$ and $s \in {\bf R}$.
Here we record the results which we actually use. 
We present results on smoothing effects of semigroup 
$\{e^{-t(-\Delta)^{\frac{\theta}{2}}}\}_{t\geq 0}$
on H\"{o}lder-Besov spaces.
\begin{proposition}[Effects of heat semigroup]\label{prop_20160927051055}
	Let $\alpha\in\RealNum$ and $0<\theta \le 2$.
	\begin{enumerate}
		\item[$(1)$]	Let $\delta \ge 0$. 
Then for all $f \in \HolBesSp{\alpha}$
				$
					\|e^{-t (-\LaplaceOp)^{\theta/2}} f\|_{\HolBesSp{\alpha+\theta\delta}}
					\lesssim
						t^{-\delta}
						\|f\|_{\HolBesSp{\alpha}}
				$
				uniformly in $t>0$.
		\item[$(2)$]
		Let $\delta\in[0,1]$. Then 
for all $f \in \HolBesSp{\alpha}$
				$
\|(e^{-t (-\LaplaceOp)^{\theta/2}}-1)f\|_{\HolBesSp{\alpha-\delta\theta}}
					\lesssim
						t^{\delta}
						\|f\|_{\HolBesSp{\alpha}}
				$
				uniformly in $t>0$.
				 	\end{enumerate}
\end{proposition}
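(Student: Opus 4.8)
The plan is to obtain both assertions as the $p=q=\infty$, $s=\alpha$ specializations of the Besov-space smoothing estimates already established, since $\HolBesSp{\beta}=B^{\beta}_{\infty\infty}(\Torus^d)$ by definition. For part $(1)$ I would invoke Lemma \ref{lm:171102-1} with $p=q=\infty$, $s=\alpha$ and the given $\delta\ge 0$; this yields $\|e^{-t(-\LaplaceOp)^{\theta/2}}f\|_{B^{\alpha+\theta\delta}_{\infty\infty}}\lesssim t^{-\delta}\|f\|_{B^{\alpha}_{\infty\infty}}$, which is exactly the claimed inequality in $\HolBesSp{}$-norms. For part $(2)$ I would likewise invoke Lemma \ref{lm:171102-2} with $p=q=\infty$, $s=\alpha$ and $\delta\in[0,1]$, which gives $\|(e^{-t(-\LaplaceOp)^{\theta/2}}-1)f\|_{B^{\alpha-\theta\delta}_{\infty\infty}}\lesssim t^{\delta}\|f\|_{B^{\alpha}_{\infty\infty}}$.

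The only point that needs a word concerns the range of $t$. Lemmas \ref{lm:171102-1} and \ref{lm:171102-2} are stated for $0<t\le T$ with the implicit constant allowed to depend on the fixed horizon $T$, and here the uniformity ``in $t>0$'' is to be read in the same sense, i.e.\ uniformly over $t$ in any fixed bounded interval. This is all that is ever used later, since the paracontrolled QGE is solved only up to a random time $T_\ast\le 1$. (Note that the semigroup leaves the constant-in-$x$ Fourier mode unchanged, so for large $t$ there is no genuine $t^{-\delta}$ gain; but this regime never enters the analysis.) With this understood, nothing beyond the two cited lemmas is required.

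I do not expect any real obstacle at this step: it is a bookkeeping specialization, and the only thing to track is which power of $t$ is produced, which is read off directly from Lemmas \ref{lm:171102-1}--\ref{lm:171102-2}. The substantive work lies upstream, namely in the integrability of the convolution kernel of $e^{-t(-\LaplaceOp)^{\theta/2}}$ despite the singularity of the multiplier $|\xi|^{\theta}$ at the origin (Propositions \ref{prop:170822-2} and \ref{prop:170822-3}), and in the frequency-localized $L^1$-bounds for $\mathcal{F}^{-1}\big(e^{-t|2\pi\cdot|^{\theta}}\varphi(2^{-j}\cdot)\big)$ and for its difference with $1$, obtained by exploiting the scaling $t\mapsto 2^{j}t^{1/\theta}$ together with interpolation between the $\theta$- and $\theta/2$-rates of decay. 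Once those estimates are in place, as they are above, the present proposition follows immediately.
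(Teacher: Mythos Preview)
Your proposal is correct and matches the paper's approach exactly: the paper presents Proposition~\ref{prop_20160927051055} without a separate proof, introducing it with the sentence ``In Lemmas \ref{lm:171102-1} and \ref{lm:171102-2} we stated our results in terms of the Besov space $B^s_{pq}$ \ldots\ Here we record the results which we actually use,'' so it is precisely the $p=q=\infty$ specialization you describe. Your remark on the $t$-range is a valid and useful clarification that the paper leaves implicit.
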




%

Let ${\mathcal A}$ be an operator acting on ${\mathcal D}'$
and let $f \in {\mathcal D}'$.
		Define the commutator $[{\mathcal A}, f\rpara]$
generated by paraproduct and ${\mathcal A}$ by
		\begin{align*}
			[{\mathcal A},f\rpara]g
			=
				{\mathcal A}(f\rpara g)
				-
				f\rpara {\mathcal A}g
		\end{align*}
for $g \in {\mathcal D}'$ whenever this is well defined.
	\begin{proposition}[Commutators generated by paraproduct and heat semigroup]
		\label{prop_20160927051159}
		We let $\alpha<1$, $\beta\in\RealNum$, 
and let
		$0<\theta \le 2$ and $a<1$.
		Then
$$\|
				[e^{-t (-\LaplaceOp)^{\theta/2}},f\rpara]g
			\|_{\HolBesSp{\alpha+\beta-a\theta}}
			\lesssim
				t^a
				\|f\|_{\HolBesSp{\alpha}}
				\|g\|_{\HolBesSp{\beta}}
		$$
for $f \in \HolBesSp{\alpha}$ and $g \in \HolBesSp{\beta}$
		uniformly over $t>0$.
	\end{proposition}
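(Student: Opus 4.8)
The plan is to use a Littlewood--Paley/dyadic block analysis of the commutator, decomposing each factor via the usual Bony paraproduct structure and exploiting the fact that $f\rpara g$ only couples low-frequency blocks of $f$ with a fixed dyadic block of $g$. Write $f\rpara g=\sum_{j}S_{j-1}f\,\LPBlock[j]g$ (up to the standard shift in the summation indices implicit in our definition), so that
\[
[e^{-t(-\LaplaceOp)^{\theta/2}},f\rpara]g
=\sum_{j}\Bigl(e^{-t(-\LaplaceOp)^{\theta/2}}\bigl(S_{j-1}f\,\LPBlock[j]g\bigr)-S_{j-1}f\cdot e^{-t(-\LaplaceOp)^{\theta/2}}\LPBlock[j]g\Bigr).
\]
Because $\LPBlock[j]g$ has frequency support in an annulus of size $\sim 2^{j}$ and $S_{j-1}f$ has frequency support in a ball of radius $\sim 2^{j}$, the $j$-th summand is spectrally localized in a ball of radius $\sim 2^{j}$, so it suffices to estimate its $L^{\infty}(\Torus^{d})$ norm by a constant times $t^{a}2^{-j(\alpha+\beta-a\theta)}\|f\|_{\HolBesSp{\alpha}}\|g\|_{\HolBesSp{\beta}}$ and then sum.

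The key step is a kernel estimate for the "commutator with multiplication by a slowly varying function". Writing $P_t:=e^{-t(-\LaplaceOp)^{\theta/2}}$ with convolution kernel $K_t$, which by Proposition~\ref{prop:170822-2} (and its rescaling used in Lemma~\ref{lm:171102-1}) satisfies $\|K_t\|_{L^1}\lesssim 1$, one has
\[
\bigl(P_t(\varphi\, h)-\varphi\, P_t h\bigr)(x)=\int_{\RealNum^d}K_t(x-y)\bigl(\varphi(y)-\varphi(x)\bigr)h(y)\,dy ,
\]
and one inserts $\varphi=S_{j-1}f$, $h=\LPBlock[j]g$. The mean value theorem gives $|\varphi(y)-\varphi(x)|\le \|\nabla S_{j-1}f\|_{L^\infty}|x-y|\lesssim 2^{j(1-\alpha)}\|f\|_{\HolBesSp{\alpha}}\,|x-y|$ (this is exactly the Bernstein-type bound used in the proof of Lemma~\ref{lem:161117-107}, and it is here that $\alpha<1$ enters). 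Combining with $\|\LPBlock[j]g\|_{L^\infty}\lesssim 2^{-j\beta}\|g\|_{\HolBesSp{\beta}}$ and the weighted $L^1$ bound on $K_t$, namely $\int |x-y|\,|K_t(x-y)|\,dy\lesssim t^{1/\theta}$ (obtained from the scaling $K_t(x)=t^{-d/\theta}K^\dagger(t^{-1/\theta}x)$ and the integrability of $|x|\,K^\dagger(x)$, which in turn follows from the decay bounds for $K^\dagger$ produced in the proof of Proposition~\ref{prop:170822-2}), one gets for the $j$-th term a bound $\lesssim t^{1/\theta}2^{j(1-\alpha-\beta)}\|f\|_{\HolBesSp{\alpha}}\|g\|_{\HolBesSp{\beta}}$.

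This "one-derivative" estimate must be interpolated against the trivial $L^1$-bound on $K_t$, which yields the crude bound $\lesssim 2^{-j(\alpha+\beta)}\|f\|_{\HolBesSp{\alpha}}\|g\|_{\HolBesSp{\beta}}$ for the $j$-th term (using only $\|S_{j-1}f\|_{L^\infty}\lesssim 2^{-j\min(\alpha,0)}\|f\|_{\HolBesSp{\alpha}}$, or more simply $\lesssim\|f\|_{\HolBesSp{\alpha}}$ after discarding low blocks harmlessly — one keeps only $2^{-j\beta}$ from $g$). Taking the geometric mean with exponent $a\theta\in(\,?\,,\theta)$, i.e. using $\min\bigl(2^{-j(\alpha+\beta)},\,t^{1/\theta}2^{j(1-\alpha-\beta)}\bigr)\lesssim t^{a}2^{-j(\alpha+\beta-a\theta)}$ for $a<1$ (split according to whether $2^{j}t^{1/\theta}\lessgtr 1$, exactly as in Lemma~\ref{lm:171102-1}), gives the desired $2^{-j(\alpha+\beta-a\theta)}t^{a}$ decay for each block. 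Summing over $j\ge -1$ in the $\HolBesSp{\alpha+\beta-a\theta}$ norm (which costs only a harmless factor since each summand is spectrally localized at scale $2^{j}$) completes the argument; the low-frequency terms and the index shifts in $S_j$ versus $S_{j-1}$ are absorbed by routine modifications. I expect the main obstacle to be bookkeeping the kernel scaling carefully enough to extract the weighted $L^1$ bound $\int|x-y|\,|K_t(x-y)|\,dy\lesssim t^{1/\theta}$ uniformly for $0<t\le T$ from Proposition~\ref{prop:170822-2}; everything else is a standard paraproduct-commutator interpolation once that scaling input is in hand.
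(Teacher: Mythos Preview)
Your kernel-plus-mean-value-theorem skeleton is the same as the paper's, but the interpolation step has a genuine gap. The two bounds you propose to interpolate are (i) $t^{1/\theta}2^{j(1-\alpha-\beta)}$ from the first moment of the global kernel $K_t$, and (ii) the crude bound $2^{-j(\alpha+\beta)}$. The inequality $\min\bigl(2^{-j(\alpha+\beta)},\,t^{1/\theta}2^{j(1-\alpha-\beta)}\bigr)\lesssim t^{a}2^{-j(\alpha+\beta-a\theta)}$ that you assert ``for $a<1$'' actually holds only for $a\theta\le 1$: in the low-frequency regime $2^{j}t^{1/\theta}\le 1$ the minimum equals $t^{1/\theta}2^{j(1-\alpha-\beta)}$, and comparing with the target gives $(2^{j}t^{1/\theta})^{1-a\theta}\le 1$, which fails when $a\theta>1$. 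Since $\theta$ may be as large as $2$, your argument only reaches $a\le 1/2$ in the worst case, not the full range $a<1$. (There is also a secondary issue: for $0<\alpha<1$ one has only $\|S_{j-1}f\|_{L^\infty}\lesssim\|f\|_{\HolBesSp{\alpha}}$, so the crude bound is $2^{-j\beta}$, not $2^{-j(\alpha+\beta)}$; and for $\theta\le 1$ the first moment $\int|y|\,|K_t(y)|\,dy$ need not even be finite, since the stable kernel decays only like $|y|^{-d-\theta}$.)

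The paper closes this gap by localizing the kernel at scale $2^{j}$ before estimating. Writing the commutator with the localized kernel $K_{j,t}^{\dagger}$ of $P_t(\LPBlock[j-1]+\LPBlock[j]+\LPBlock[j+1])$, one may replace $P_t$ by $P_t-1$ (the identity commutes with $f\rpara$) and exploit $|e^{-t|2\pi\xi|^{\theta}}-1|\lesssim t|\xi|^{\theta}$ on $\{|\xi|\sim 2^{j}\}$ to obtain $\|K_{j,t,-}^{\dagger}\|_{L^{1}}\lesssim 2^{j\theta}t$ in the regime $2^{j}t^{1/\theta}\le 1$. This gains a full power of $t$ (not just $t^{1/\theta}$), and together with the fact that the localized kernel concentrates at spatial scale $2^{-j}$ (so the mean-value step costs $2^{-j}\|\nabla S_jf\|_{L^\infty}\lesssim 2^{-j\alpha}\|f\|_{\HolBesSp{\alpha}}$) yields the endpoint bound $2^{-j(\alpha+\beta-\theta)}t$, from which $2^{-j(\alpha+\beta-a\theta)}t^{a}$ follows for every $a<1$ since $(2^{j\theta}t)^{1-a}\le 1$ there. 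The high-frequency regime is then handled by the exponential decay $e^{-c\,2^{j\theta}t}$ of the localized kernel.
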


\begin{proof}
Let $j \ge 5$. We disregard the lower frequency terms
because we can incorporate them later easily.
Let 
\[
K_{j,t}^\dagger(x)
=\int_{{\mathbf R}^d}\exp(-t|2\pi\xi|^\theta)(\rho_{j-1}(\xi)+\rho_{j}(\xi)+\rho_{j+1}(\xi))\exp(2\pi\sqrt{-1}x \cdot \xi)\,d\xi
\]
for $j \in {\mathbf N}$ and $x \in {\mathbf R}^d$.
We observe that 
$e^{-t (-\LaplaceOp)^{\theta/2}}\LPBlock[j-1]
+ e^{-t (-\LaplaceOp)^{\theta/2}} \LPBlock[j]
+e^{-t (-\LaplaceOp)^{\theta/2}} \LPBlock[j+1]$
has a kernel $K_{j,t}^\dagger$, that is,
for some suitable constant $c_d$,
\begin{align*}
\displaystyle \sum\limits_{k=j-1}^{j+1}
\LPBlock[k]
(e^{-t (-\LaplaceOp)^{\theta/2}}(f \rpara g)-f \rpara e^{-t (-\LaplaceOp)^{\theta/2}} g)
&=c_d
\int_{{\mathbf R}^d}K_{j,t}^\dagger(x-y)(S_jf(y)-S_jf(x)) \LPBlock[j] g(y)\,dy.
\end{align*}
Let $x,y \in {\mathbf R}^d$.
By the mean value theorem, we have
\[
|S_jf(y)-S_jf(x)|
\le 
\|\nabla S_j f\|_{L^\infty(\Torus^d)}|x-y|
=
2^{-j}\|\nabla S_j f\|_{L^\infty(\Torus^d)}
\cdot
2^j|x-y|.
\]
We notice
\[
K_{j,t}^\dagger(x)
=
\frac{1}{t^{d/\theta}}\int_{{\mathbf R}^d}\exp(-|2\pi\xi|^\theta)
(\rho_{j+1}(t^{-1/\theta}\xi)+\rho_{j}(t^{-1/\theta}\xi)+\rho_{j-1}(t^{-1/\theta}\xi))
\exp(2\pi\sqrt{-1} t^{-1/\theta}x \cdot \xi)\,d\xi.
\]
We set
\begin{align*}
K_{j,t,-}^\dagger(x)&:=
K_{j,t}^\dagger(x)
-
\frac{1}{t^{d/\theta}}
\sum\limits_{l=-1}^1\int_{{\mathbf R}^d}\rho_{-j+l}(t^{-1/\theta}\xi)
\exp(2\pi\sqrt{-1}t^{-1/\theta}x \cdot \xi)\,d\xi.
\end{align*}
If $2^{-j}t^{-1/\theta} \ge 1$,
then for all $N=0,1,\ldots$
\begin{align*}
|K_{j,t,-}^\dagger(x)|
&\lesssim_N
|t^{-1/\theta}x|^{-N}
(2^{-j}t^{-1/\theta})^{-d-\theta+N}
\end{align*}
from (\ref{eq:171026-5}).
In particular,
by considering the case of $N(N-d-1)=0$,
\begin{align*}
|K_{j,t,-}^\dagger(x)|
&\lesssim
t^{-d/\theta}
(2^{-j}t^{-1/\theta})^{-d-\theta}
\min(1,|t^{-1/\theta}x|^{-d-1}
(2^{-j}t^{-1/\theta})^{d+1}).
\end{align*}
Consequently,
\begin{align*}
\|K_{j,t,-}^\dagger\|_{L^1({\mathbf R}^d)}
&\lesssim
t^{-d/\theta}
	\int_{{\mathbf R}^d}
(2^{-j}t^{-1/\theta})^{-d-\theta}
\min(1,|t^{-1/\theta}x|^{-d-1}
(2^{-j}t^{-1/\theta})^{d+1})\,dx\\
&=
t^{-d/\theta}
	\int_{{\mathbf R}^d}
(2^{-j}t^{-1/\theta})^{-d-\theta}
\min(1,|2^{j}x|^{-d-1})\,dx\\
&=2^{j\theta}
t
	\int_{{\mathbf R}^d}
\min(1,|x|^{-d-1})\,dx\\
&\simeq 2^{j\theta}t.
\end{align*}
Thus we have
\begin{align*}
\lefteqn{
\|( \LPBlock[j -1] + \LPBlock[j] + \LPBlock[j+1])
(e^{-t (-\LaplaceOp)^{\theta/2}}(f \rpara g)-f \rpara e^{-t (-\LaplaceOp)^{\theta/2}}g)\|_{L^\infty(\Torus^d)}
}\\
&=
\|
( \LPBlock[j -1] + \LPBlock[j] + \LPBlock[j+1])
((e^{-t (-\LaplaceOp)^{\theta/2}}-1)(f \rpara g)
 -f \rpara (e^{-t (-\LaplaceOp)^{\theta/2}}-1) g)\|_{L^\infty(\Torus^d)}\\
&\lesssim
2^{-j(\alpha+\beta-\theta)}t\|2^{j\alpha}\nabla S_j f\|_{L^\infty(\Torus^d)}\|g\|_{{\mathcal C}^\beta}\\
&\lesssim
2^{-j(\alpha+\beta-\theta)}t\|f\|_{{\mathcal C}^\alpha}\|g\|_{{\mathcal C}^\beta}\\
&\le
2^{-j(\alpha+\beta-a\theta)}t^a\|f\|_{{\mathcal C}^\alpha}\|g\|_{{\mathcal C}^\beta}.
\end{align*}
Here for the last inequality, we used $a<1$.
If $2^{-j}t^{-1/\theta} \le 1$,
then the situation is simpler;
we simply use
\[
|K^\dagger_{j,t}(x)| \le 2^{j d}\exp(-2^{j\theta}t)(1+|2^j x|)^{-d-1}
\quad (x \in {\mathbf R}^d).
\]
\end{proof}

Let $0<\theta \le 2$.
We set 
$P^{\theta/2}_t=e^{-t ((-\LaplaceOp)^{\theta/2} +1)}= e^{-t} e^{-t (-\LaplaceOp)^{\theta/2}}$.
If $f$ is given by the Fourier series
\[
f=\sum\limits_{k \in {\mathbf Z}^d} a_k {\bf e}_k,
\]
then
\[
P^{\theta/2}_t f
=\sum\limits_{k \in {\mathbf Z}^d}e^{-t \{(2\pi |k|)^{\theta}+1\}} a_k {\bf e}_k.
\]

\begin{remark}
Since $e^{-t} \le 1$,
the results on the semigroup in this subsection
(Lemmas \ref{lm:171102-1}, \ref{lm:171102-2}, \ref{lm:171102-3},
Propositions \ref{prop_20160927051055}, \ref{prop_20160927051159}) 
still hold with possibly different positive constants 
even if $e^{-t (-\LaplaceOp)^{\theta/2}}$ is replaced by $P^{\theta/2}_t$.
\end{remark}

\subsection{Besov space-valued function spaces and
the fractional version of the Schauder estimate}
In this subsection we consider $\HolBesSp{\alpha}$-valued functions in time
and prove the fractional version of the Schauder estimate,
which will play a key role in solving our QGE equation.

We introduce several function spaces as follows:
\begin{definition}\label{def_20171101}
Let $T>0$, $0 < \theta \le 2$, $\alpha \in \RealNum$, $\eta \ge 0$ and $\delta \in (0,1]$.
\begin{itemize}
\item $\SmoothFunc{}{(0,T]}{\HolBesSp{\alpha}}$
is defined to the space of all continuous functions
defined on $(0,T]$
which assume its value in $\HolBesSp{\alpha}$.
	\item	$C_T\HolBesSp{\alpha}$ is the space of all continuous functions 
$u \in \SmoothFunc{}{(0,T]}{\HolBesSp{\alpha}}$
for which the norm
			\begin{align*}
				\|u\|_{C_T\HolBesSp{\alpha}}
				=
					\sup\limits_{0< t\leq T}
						\|u_t\|_{\HolBesSp{\alpha}}
			\end{align*}
is finite.
	\item	$C_T^\delta\HolBesSp{\alpha}$ is the subspace of all $\delta$-H\"older continuous functions
$u \in \SmoothFunc{}{(0,T]}{\HolBesSp{\alpha}}$ for which the seminorm 
			\begin{align*}
				\|u\|_{C_T^\delta\HolBesSp{\alpha}}
				=
					\sup\limits_{0< s<t\leq T}
						\frac{\|u_t-u_s\|_{\HolBesSp{\alpha}}}{(t-s)^\delta}
			\end{align*}
is finite.
Given $u \in C_T^\delta\HolBesSp{\alpha}$,
define $u_0 \in \HolBesSp{\alpha}$
uniquely by continuity.
	\item	$
				\cE_T^\eta\HolBesSp{\alpha}
				=
					\{
						u\in \SmoothFunc{}{(0,T]}{\HolBesSp{\alpha}};
						\|u\|_{\cE_T^\eta\HolBesSp{\alpha}}<\infty
					\}
			$,
			where
			\begin{align*}
				\|u\|_{\cE_T^\eta\HolBesSp{\alpha}}
				=
					\sup\limits_{0<t\leq T}
						t^\eta\|u_t\|_{\HolBesSp{\alpha}}.
			\end{align*}
	\item	$
				\cE_T^{\eta,\delta}\HolBesSp{\alpha}
				=
					\{
						u\in \SmoothFunc{}{(0,T]}{\HolBesSp{\alpha}};
						\|u\|_{\cE_T^{\eta,\delta}\HolBesSp{\alpha}}<\infty
					\}
			$,
			where
			\begin{align*}
				\|u\|_{\cE_T^{\eta,\delta}\HolBesSp{\alpha}}
				=
					\sup\limits_{0<s<t\leq T}
						s^\eta
						\frac{\|u_t-u_s\|_{\HolBesSp{\alpha}}}{(t-s)^\delta}.
			\end{align*}
	\item	
We define
$
				\cL_T^{\alpha,\delta}
				=
					C_T\HolBesSp{\alpha}
					\cap
					C_T^\delta\HolBesSp{\alpha- \theta\delta}
			$,
where the norm of the left-hand side is given
by the intersection norm.
	\item	
We define
$
				\cL_T^{\eta,\alpha,\delta}
				=
					\cE_T^\eta\HolBesSp{\alpha}
					\cap
					\cE_T^{\eta,\delta}\HolBesSp{\alpha- \theta \delta}
					\cap
					C_T\HolBesSp{\alpha- \theta \eta}
			$,
where the norm of the left-hand side is given
by the intersection norm.
\end{itemize}
\end{definition}

Lemmas \ref{lm:171102-1}
and
\ref{lm:171102-3}
correspond to
$
				\cE_T^\eta\HolBesSp{\alpha}$
and
$
				\cE_T^{\eta,\delta}\HolBesSp{\alpha}$,
respectively.

\begin{remark}
	We introduced the norms on the spaces $\cE_T^\eta\HolBesSp{\alpha}$ and $\cE_T^{\eta,\delta}\HolBesSp{\alpha}$
	in order to control explosion at $t=0$.
	The definitions of $\cL_T^{\alpha,\delta}$ and $\cL_T^{\eta,\alpha,\delta}$
	 look natural from the viewpoint of the time-space scaling of 
	the fractional heat operator $\partial_t +( - \LaplaceOp)^{\theta/2}$.
	 Note that $\cL_T^{\alpha,\delta}$ and $\cL_T^{\eta,\alpha,\delta}$
	 actually 
	 depend on $\theta$, while the other spaces in Definition \ref{def_20171101} do not.
	 \end{remark}

To understand the structure of the above function spaces,
we prove the following embedding properties:
\begin{proposition}\label{rem_20171114-2}
Let $T>0$.
Let 
$\alpha \in {\mathbf R}$,
$0<\theta \le 2$,
$\eta>0$
and
$0<\delta'\le \delta<\infty$.
	\begin{enumerate}
		\item[$(1)$]
We have
$					\cL_T^{\alpha,\delta}
=
{\mathcal C}_T\HolBesSp{\alpha}
\cap
{\mathcal C}_T^\delta\HolBesSp{\alpha- \theta\delta}
					\subset
					\cL_T^{\alpha,\delta'}
=
{\mathcal C}_T\HolBesSp{\alpha}
\cap
{\mathcal C}_T^{\delta'}\HolBesSp{\alpha- \theta\delta'}.
$
		\item[$(2)$]
We have
$\cL_T^{\eta,\alpha,\delta}
=
{\mathcal E}_T^\eta {\mathcal C}^{\alpha}
\cap 
{\mathcal E}_T^{\eta,\delta}{\mathcal C}^{\alpha-\theta\delta}
\cap
C_T{\mathcal C}^{\alpha-\theta\eta}
					\subset
					\cL_T^{\eta,\alpha,\delta'}
=
{\mathcal E}_T^\eta {\mathcal C}^{\alpha}
\cap 
{\mathcal E}_T^{\eta,\delta'}{\mathcal C}^{\alpha-\theta\delta'}
\cap
C_T{\mathcal C}^{\alpha-\theta\eta}.
$
		\item[$(3)$]
Let $\gamma \in [\alpha-\theta\eta,\alpha]$.
Then $v_t\in\HolBesSp{\gamma}$ and
$
					\|v_t\|_{\HolBesSp{\gamma}}
					\lesssim
						t^{-\frac{\gamma-\alpha+\theta\eta}{\theta}}
						\|v\|_{\cL_T^{\eta,\alpha,\delta}}
$
for every $v\in\cL_T^{\eta,\alpha,\delta}$
and every $0<t\leq T$.
	\end{enumerate}
\end{proposition}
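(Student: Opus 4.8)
The plan is to prove the three embeddings essentially by unwinding the definitions in Definition \ref{def_20171101} and applying the elementary estimates that are already available, principally the embedding and interpolation inequalities (Proposition \ref{prop_20160928054620}) together with the semigroup bounds recorded in Proposition \ref{prop_20160927051055}. None of the three parts should require a genuinely new idea; the only mild subtlety is bookkeeping the weights $t^\eta$ and the exponents $\alpha-\theta\delta$ correctly.

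For part (1), the claim is that H\"older continuity of a higher order forces H\"older continuity of a lower order on a bounded time interval. First I would note the trivial inclusion ${\mathcal C}_T\HolBesSp{\alpha}\subset{\mathcal C}_T\HolBesSp{\alpha}$ and then compare ${\mathcal C}_T^\delta\HolBesSp{\alpha-\theta\delta}$ with ${\mathcal C}_T^{\delta'}\HolBesSp{\alpha-\theta\delta'}$. Fix $u$ with finite $\cL_T^{\alpha,\delta}$-norm and $0<s<t\le T$. One writes
\[
\|u_t-u_s\|_{\HolBesSp{\alpha-\theta\delta'}}
\le
\|u_t-u_s\|_{\HolBesSp{\alpha-\theta\delta}}^{\delta'/\delta}
\|u_t-u_s\|_{\HolBesSp{\alpha}}^{1-\delta'/\delta},
\]
using the interpolation inequality Proposition \ref{prop_20160928054620}(2) with the convex combination $\alpha-\theta\delta' = (1-\nu)\alpha+\nu(\alpha-\theta\delta)$, i.e. $\nu=\delta'/\delta\in(0,1]$. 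The first factor is $\lesssim(|t-s|^\delta\|u\|_{\cL_T^{\alpha,\delta}})^{\delta'/\delta}=|t-s|^{\delta'}\|u\|_{\cL_T^{\alpha,\delta}}^{\delta'/\delta}$, and the second factor is $\lesssim(2\|u\|_{C_T\HolBesSp{\alpha}})^{1-\delta'/\delta}$; multiplying and dividing by $|t-s|^{\delta'}$ and taking the supremum gives the bound on $\|u\|_{C_T^{\delta'}\HolBesSp{\alpha-\theta\delta'}}$. (The endpoint $\delta'=\delta$ is trivial.)

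For part (2), the argument is the same weighted interpolation: the $\cE_T^\eta$ and $C_T$ pieces are common to both sides, so only $\cE_T^{\eta,\delta}\HolBesSp{\alpha-\theta\delta}\subset\cE_T^{\eta,\delta'}\HolBesSp{\alpha-\theta\delta'}$ needs checking, and one runs exactly the interpolation display above but carries the weight $s^\eta$ along, which factors through harmlessly. For part (3), fix $v\in\cL_T^{\eta,\alpha,\delta}$ and $\gamma\in[\alpha-\theta\eta,\alpha]$, and interpolate $\HolBesSp{\gamma}$ between $\HolBesSp{\alpha-\theta\eta}$ and $\HolBesSp{\alpha}$ with weight $\nu$ determined by $\gamma=(1-\nu)(\alpha-\theta\eta)+\nu\alpha$, i.e. $\nu=(\gamma-\alpha+\theta\eta)/(\theta\eta)$; then
\[
\|v_t\|_{\HolBesSp{\gamma}}
\le
\|v_t\|_{\HolBesSp{\alpha-\theta\eta}}^{1-\nu}
\|v_t\|_{\HolBesSp{\alpha}}^{\nu}
\lesssim
\|v\|_{C_T\HolBesSp{\alpha-\theta\eta}}^{1-\nu}
\bigl(t^{-\eta}\|v\|_{\cE_T^\eta\HolBesSp{\alpha}}\bigr)^{\nu}
\lesssim
t^{-\eta\nu}\|v\|_{\cL_T^{\eta,\alpha,\delta}},
\]
and since $\eta\nu=(\gamma-\alpha+\theta\eta)/\theta$ this is precisely the asserted estimate, so in particular $v_t\in\HolBesSp{\gamma}$. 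The only point requiring a moment's care is the degenerate case $\eta=0$ (excluded by hypothesis $\eta>0$) or $\gamma$ at an endpoint, where $\nu\in\{0,1\}$ and the statement reduces to the defining norms directly. I do not anticipate any real obstacle; the main thing is to state the interpolation weights $\nu$ correctly so the powers of $t$ come out as written.
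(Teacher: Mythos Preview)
Your proposal is correct and follows essentially the same approach as the paper: all three parts reduce to the interpolation inequality of Proposition \ref{prop_20160928054620}(2), with the interpolation parameters $\nu=\delta'/\delta$ in (1)--(2) and $\nu=(\gamma-\alpha+\theta\eta)/(\theta\eta)$ in (3), exactly as you wrote. One small remark on your phrasing in (2): the inclusion $\cE_T^{\eta,\delta}\HolBesSp{\alpha-\theta\delta}\subset\cE_T^{\eta,\delta'}\HolBesSp{\alpha-\theta\delta'}$ is not true on its own --- you also need the $\cE_T^\eta\HolBesSp{\alpha}$ norm to control the $\HolBesSp{\alpha}$ endpoint via the triangle inequality $\|v_t-v_s\|_{\HolBesSp{\alpha}}\le 2s^{-\eta}\|v\|_{\cE_T^\eta\HolBesSp{\alpha}}$, after which the two weights $s^{-\eta}$ indeed combine as you say; the paper states this step explicitly, and it is implicit in your sketch since you already handled the analogous triangle-inequality bound in (1). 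The mention of Proposition \ref{prop_20160927051055} in your preamble is unnecessary --- only the interpolation inequality is used.
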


\begin{proof}
In principle,
the proof hinges on the interpolation;
see Proposition \ref{prop_20160928054620}(2).
The proof of each statement in
Proposition \ref{rem_20171114-2}
is made up of two steps:
the first step is to deduce endpoint inequalities.
We consider the case $\lq \lq \delta'=0"$ and $\delta'=\delta$
in
(1) and (2).
In (3)
we consider the cases
$\gamma=\alpha-\theta\delta$ and $\gamma=\alpha$.
The
second step is to interpolate between them.

Based on the general outline,
we provide a self-contained proof.
\begin{enumerate}
\item
It suffices to show
$
{\mathcal C}_T\HolBesSp{\alpha}
\cap
{\mathcal E}_T^\delta\HolBesSp{\alpha- \theta\delta}
					\subset
{\mathcal E}_T^{\delta'}\HolBesSp{\alpha- \theta\delta'}.
$
				Set $\nu=\delta'/\delta \in (0,1)$, 
so that
$\alpha-\theta\delta'=(\alpha- \theta\delta)\nu+\alpha(1-\nu)$.
Let $0<s<t \le T$.
				For every $v\in\cL_T^{\alpha,\delta}$, we have
				\begin{align*}
					\|v_t-v_s\|_{\HolBesSp{\alpha- \theta\delta'}}
					&\le
						\|v_t-v_s\|_{\HolBesSp{\alpha- \theta\delta}}^\nu
						\|v_t-v_s\|_{\HolBesSp{\alpha}}^{1-\nu}
\end{align*}
thanks to Proposition \ref{prop_20160928054620}(2).
By the definition of the norm
of $\HolBesSp{\alpha- \theta\delta'}$,
we have
\begin{align*}
					\|v_t-v_s\|_{\HolBesSp{\alpha- \theta\delta'}}
					&\le
						\{(t-s)^\delta
						\|v\|_{C_T^\delta\HolBesSp{\alpha- \theta\delta}}\}^\nu
						\|v\|_{C_T\HolBesSp{\alpha}}^{1-\nu}\\
					&\le
						(t-s)^{\delta'}
						\|v\|_{\cL_T^{\alpha,\delta}}.
				\end{align*}
\item
It suffices to show
$
\cL_T^{\eta,\alpha,\delta}
=
{\mathcal E}_T^\eta {\mathcal C}^{\alpha}
\cap 
{\mathcal E}_T^{\eta,\delta}{\mathcal C}^{\alpha-\theta\delta}
\cap
C_T{\mathcal C}^{\alpha-\theta\eta}
					\subset
{\mathcal E}_T^{\eta,\delta'}{\mathcal C}^{\alpha-\theta\delta'}.
$
Fix $v\in\cL_T^{\eta,\alpha,\delta}$ 
				 and $0<s<t\leq T$.
We list two inequalities which we interpolate between.
				\begin{gather*}
					\|v_t-v_s\|_{\HolBesSp{\alpha- \theta\delta}}
					\leq
						s^{-\eta}(t-s)^\delta
						\|v\|_{\cE_T^{\eta,\delta}\HolBesSp{\alpha -\theta\delta}},\\
					\|v_t-v_s\|_{\HolBesSp{\alpha}}
					\leq
						\|v_t\|_{\HolBesSp{\alpha}}
						+\|v_s\|_{\HolBesSp{\alpha}}
					\leq
						t^{-\eta}\|v\|_{\cE_T^\eta\HolBesSp{\alpha}}
						+s^{-\eta}\|v\|_{\cE_T^\eta\HolBesSp{\alpha}}
					\leq
						2s^{-\eta}\|v\|_{\cE_T^\eta\HolBesSp{\alpha}}.
				\end{gather*}
				Hence, for $0<\nu=\delta'/\delta<1$,
thanks to Proposition \ref{prop_20160928054620}(2) we have
				\begin{align*}
					\|v_t-v_s\|_{\HolBesSp{\alpha- \theta\delta'}}
					&\leq
						\|v_t-v_s\|_{\HolBesSp{\alpha- \theta\delta}}^\nu
						\|v_t-v_s\|_{\HolBesSp{\alpha}}^{1-\nu}\\
					&\lesssim
						\{
							s^{-\eta}
							(t-s)^\delta
							\|v\|_{\cE_T^{\eta,\delta}\HolBesSp{\alpha -\theta\delta}}
						\}^\nu
						\{
							s^{-\eta}
							\|v\|_{\cE_T^\eta\HolBesSp{\alpha}}
						\}^{1-\nu}\\
					&\lesssim
						s^{-\eta}
						(t-s)^{\delta'}
						\|v\|_{\cL_T^{\eta,\alpha,\delta}},
				\end{align*}
as required.
\item
Let
$v \in \cL_T^{\eta,\alpha,\delta}$.
Since
$v \in {\mathcal E}_T^\delta\HolBesSp{\alpha-\theta\delta}$,
we have
$\|v_t\|_{\HolBesSp{\alpha-\theta\delta}}
\le t^{-\delta}\|v\|_{C_T{\mathcal C}^{\alpha-\theta\eta}}$.
Since
$v \in {\mathcal E}^\eta_T\HolBesSp{\alpha}$,
we have
$\|v_t\|_{\HolBesSp{\alpha}} \le t^{-\eta}\|v\|_{ {\mathcal E}^\eta_T\HolBesSp{\alpha}}$.
				Take $\nu \in [0,1]$ such that
				$
					\gamma
					=
						(\alpha-\theta\eta)(1-\nu)
						+\alpha\nu
				$
				and use Proposition \ref{prop_20160928054620}(2)
once again to obtain
				\begin{align*}
					\|v_t\|_{\HolBesSp{\gamma}}
					\leq
						\|v_t\|_{\HolBesSp{\alpha- \theta\eta}}^{1-\nu}
						\|v_t\|_{\HolBesSp{\alpha}}^\nu
					\leq
						\|v\|_{C_T\HolBesSp{\alpha- \theta\eta}}^{1-\nu}
						\{t^{-\eta}\|v\|_{\cE_T^\eta\HolBesSp{\alpha}}\}^\nu
					\leq
						t^{-\eta\nu}
						\|v\|_{\cL_T^{\eta,\alpha,\delta}}.
				\end{align*}
				Noting that
				$
					\eta\nu
					=
						\frac{\gamma-(\alpha-\theta\eta)}{\theta}
				$,
				we obtain the desired result.
\end{enumerate}
\end{proof}

In the course of the proof of (2) we proved:
\begin{corollary}\label{cor:180131-2}
Let $T>0$.
Let 
$\alpha \in {\mathbf R}$,
$0<\theta \le 2$,
$\eta>0$
and
$0<\delta'\le \delta<\infty$.
Then
for every $v\in\cL_T^{\eta,\alpha,\delta}$,
and $0<s \le t \le T$,
$
					\|v_t-v_s\|_{\HolBesSp{\alpha- \theta\delta'}}
					\lesssim
						s^{-\eta}
						(t-s)^{\delta'}
						\|v\|_{\cL_T^{\eta,\alpha,\delta}}
$.
\end{corollary}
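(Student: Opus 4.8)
The statement is exactly the intermediate estimate that appears inside the proof of Proposition \ref{rem_20171114-2}(2), so the plan is simply to extract that computation and package it as a stand-alone bound. First I would dispose of the degenerate situations. If $s=t$ both sides vanish, so we may assume $0<s<t\le T$. If $\delta'=\delta$, the claimed inequality is nothing but the definition of the seminorm $\|\cdot\|_{\cE_T^{\eta,\delta}\HolBesSp{\alpha-\theta\delta}}$ combined with the trivial bound $\|v\|_{\cE_T^{\eta,\delta}\HolBesSp{\alpha-\theta\delta}}\le\|v\|_{\cL_T^{\eta,\alpha,\delta}}$ coming from the intersection norm. So the only real case is $0<\delta'<\delta$.

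In that case I would set $\nu=\delta'/\delta\in(0,1)$ and record the convex-combination identity $\alpha-\theta\delta'=(1-\nu)\alpha+\nu(\alpha-\theta\delta)$, so that the interpolation inequality Proposition \ref{prop_20160928054620}(2), applied to the distribution $v_t-v_s$, yields
\[
\|v_t-v_s\|_{\HolBesSp{\alpha-\theta\delta'}}
\le
\|v_t-v_s\|_{\HolBesSp{\alpha}}^{1-\nu}\,
\|v_t-v_s\|_{\HolBesSp{\alpha-\theta\delta}}^{\nu}.
\]
For the last factor I would invoke $v\in\cE_T^{\eta,\delta}\HolBesSp{\alpha-\theta\delta}$, which gives $\|v_t-v_s\|_{\HolBesSp{\alpha-\theta\delta}}\le s^{-\eta}|t-s|^{\delta}\|v\|_{\cE_T^{\eta,\delta}\HolBesSp{\alpha-\theta\delta}}$; for the remaining factor I would use the triangle inequality together with $v\in\cE_T^\eta\HolBesSp{\alpha}$ and $s\le t$ to obtain $\|v_t-v_s\|_{\HolBesSp{\alpha}}\le(t^{-\eta}+s^{-\eta})\|v\|_{\cE_T^\eta\HolBesSp{\alpha}}\le 2s^{-\eta}\|v\|_{\cE_T^\eta\HolBesSp{\alpha}}$.

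Multiplying the two bounds, the powers of $s^{-\eta}$ recombine as $s^{-\eta(1-\nu)}\cdot s^{-\eta\nu}=s^{-\eta}$, the factor $|t-s|$ appears to the power $\delta\nu=\delta'$, and both of the norms occurring are dominated by $\|v\|_{\cL_T^{\eta,\alpha,\delta}}$ by definition of the intersection norm; this gives the assertion with implicit constant $2^{1-\nu}$ times the one from Proposition \ref{prop_20160928054620}(2). There is essentially no obstacle: the entire content is the exponent bookkeeping $\eta(1-\nu)+\eta\nu=\eta$ and $\delta\nu=\delta'$, which is precisely what makes the three-space intersection defining $\cL_T^{\eta,\alpha,\delta}$ the natural one and what one should keep an eye on when checking the constants.
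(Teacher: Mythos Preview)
Your proof is correct and is essentially identical to the paper's own argument: the corollary is explicitly stated as having been ``proved in the course of the proof of (2)'' of Proposition~\ref{rem_20171114-2}, and your interpolation between the $\cE_T^{\eta,\delta}\HolBesSp{\alpha-\theta\delta}$ bound and the triangle-inequality bound in $\cE_T^\eta\HolBesSp{\alpha}$ with $\nu=\delta'/\delta$ reproduces that computation line by line. Your added handling of the edge cases $s=t$ and $\delta'=\delta$ is a nice bit of tidiness that the paper omits.
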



Now we give a fractional version of the Schauder estimate.
We define 
\begin{equation}\label{eq_20171101}
I[u]_t
						=
							\int_0^t
								P^{\theta /2}_{t-s}
								u_s\,
								ds
\end{equation}
whenever the right-hand side makes sense
(dependency on $\theta$ is suppressed).
We denote by ${\mathcal B}(x,y)$, $x,y>0$ the beta function.
We will repeatedly use the following fact:
\begin{proposition}\label{prop:180131-1}
Let $a_1, a_2 \in [0,1)$ and $t>0$.
Then
$$\displaystyle
 \int_0^t s^{- a_1} 
 (t-s)^{- a_2 } ds=
{\mathcal B}(1-a_1,1-a_2)t^{1 - a_1- a_2}
\simeq
t^{1 - a_1- a_2}.
$$ 
\end{proposition}

\begin{proposition}
[Fractional Schauder estimates]\label{prop_20170804}
		Let $\alpha,\beta,\gamma,\delta\in {\mathbf R}$, $\theta \in (0,2]$,
$\eta\in[0,1)$ and $T \in (0,1]$.
		\begin{enumerate}
			\item[$(1)$]	Let $f \in\HolBesSp{\alpha}$.
Then 
$
						\|P^{\theta/2}_\cdot f
						\|_{\cL_T^{\frac{\beta-\alpha}{\theta},\beta,\delta}}
						\lesssim
							\|f\|_{\HolBesSp{\alpha}}
$
					for every $\alpha<\beta$ and $\delta\in[0,1]$.
			\item[$(2)$]
Assume
\[
\alpha <\beta,\quad
\alpha \le \gamma<\alpha-\theta\eta+\theta, \quad
\gamma \le \beta <\alpha+\theta, \quad
0<\delta\le\frac{\beta-\alpha}{\theta}.
\]
					Then
$
						\|I [u ]\|_{\cL_T^{\frac{\beta-\gamma}{\theta},\beta,\delta}}
						\lesssim
							T^{\frac{\alpha- \theta\eta+ \theta-\gamma}{\theta}}
							\|u\|_{\cE_T^\eta\HolBesSp{\alpha}}
					$
for all					 $u\in\cE_T^\eta\HolBesSp{\alpha}$.
		\end{enumerate}
	\end{proposition}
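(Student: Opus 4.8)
The plan is to prove both parts directly from the semigroup estimates in Proposition \ref{prop_20160927051055} and the Beta-function integral in Proposition \ref{prop:180131-1}, checking each of the three seminorms that make up the target space $\cL_T^{\eta',\beta,\delta}$ (with $\eta'=\frac{\beta-\alpha}{\theta}$ in (1) and $\eta'=\frac{\beta-\gamma}{\theta}$ in (2)) in turn: the $\cE_T^{\eta'}\HolBesSp{\beta}$-norm, the $\cE_T^{\eta',\delta}\HolBesSp{\beta-\theta\delta}$-seminorm, and the $C_T\HolBesSp{\beta-\theta\eta'}$-norm. For part (1) the function is $t\mapsto P^{\theta/2}_t f$. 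The bound on $\|P^{\theta/2}_t f\|_{\HolBesSp{\beta}}$ is immediate from Proposition \ref{prop_20160927051055}(1) with $\delta=\frac{\beta-\alpha}{\theta}\ge 0$, giving the factor $t^{-(\beta-\alpha)/\theta}$, i.e.\ exactly the $\cE_T^{\eta'}$-weight. For the time-increment seminorm I would write $P^{\theta/2}_t f - P^{\theta/2}_s f = (P^{\theta/2}_{t-s}-1)P^{\theta/2}_s f$ and combine Proposition \ref{prop_20160927051055}(2) (losing $\theta\delta$ derivatives and gaining $(t-s)^\delta$) with Proposition \ref{prop_20160927051055}(1) applied to $P^{\theta/2}_s f$ (regularizing from $\HolBesSp{\alpha}$ up to $\HolBesSp{\beta}$ at cost $s^{-(\beta-\alpha)/\theta}$); since $\beta-\theta\delta\le\beta$ this is legitimate. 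The $C_T\HolBesSp{\beta-\theta\eta'}=C_T\HolBesSp{\alpha}$ bound is trivial because $P^{\theta/2}_t$ is bounded on $\HolBesSp{\alpha}$ uniformly in $t\le T\le 1$. One should use the Remark after Proposition \ref{prop_20160927051159} to replace $e^{-t(-\Delta)^{\theta/2}}$ by $P^{\theta/2}_t$ throughout.

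For part (2), set $v=I[u]$, so $v_t=\int_0^t P^{\theta/2}_{t-s}u_s\,ds$ with $\|u_s\|_{\HolBesSp{\alpha}}\le s^{-\eta}\|u\|_{\cE_T^\eta\HolBesSp{\alpha}}$. For the $\cE_T^{\eta'}\HolBesSp{\beta}$-bound, apply Proposition \ref{prop_20160927051055}(1) inside the integral with regularity gain $\theta\delta_0$ where $\delta_0=\frac{\beta-\gamma}{\theta}\in[0,1)$ (here I use $\gamma\le\beta<\alpha+\theta$, hence $0\le\beta-\gamma<\theta$, so $\delta_0\in[0,1)$, and also $\alpha\le\gamma$ so that $\beta-\alpha\ge\beta-\gamma$ — wait, we need the gain from $\alpha$ to $\beta$, which is $\frac{\beta-\alpha}{\theta}$, and this can exceed $1$; so instead regularize from $\HolBesSp{\alpha}$ to $\HolBesSp{\beta}$ using the full exponent $\frac{\beta-\alpha}{\theta}\ge 0$, which Proposition \ref{prop_20160927051055}(1) permits for any $\delta\ge 0$), getting $\|P^{\theta/2}_{t-s}u_s\|_{\HolBesSp{\beta}}\lesssim (t-s)^{-(\beta-\alpha)/\theta}s^{-\eta}\|u\|_{\cE_T^\eta\HolBesSp{\alpha}}$, then integrate: $\int_0^t (t-s)^{-(\beta-\alpha)/\theta}s^{-\eta}\,ds$. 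The exponent $\frac{\beta-\alpha}{\theta}$ is $<1$ precisely by the hypothesis $\beta<\alpha+\theta$, and $\eta<1$, so Proposition \ref{prop:180131-1} applies and yields $t^{1-\eta-(\beta-\alpha)/\theta}$. To extract the required $\cE_T^{\eta'}$-weight $t^{-(\beta-\gamma)/\theta}$ and a power of $T$, write $t^{1-\eta-(\beta-\alpha)/\theta}=t^{-(\beta-\gamma)/\theta}\cdot t^{(\alpha-\theta\eta+\theta-\gamma)/\theta}\le t^{-(\beta-\gamma)/\theta}T^{(\alpha-\theta\eta+\theta-\gamma)/\theta}$, using $\alpha-\theta\eta+\theta-\gamma>0$ (this is the hypothesis $\gamma<\alpha-\theta\eta+\theta$) and $t\le T$. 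This is exactly the claimed factor.

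For the $C_T\HolBesSp{\beta-\theta\eta'}=C_T\HolBesSp{\gamma}$-bound, repeat the same computation but regularize only from $\HolBesSp{\alpha}$ to $\HolBesSp{\gamma}$, at cost $(t-s)^{-(\gamma-\alpha)/\theta}$; here $\frac{\gamma-\alpha}{\theta}\in[0,1)$ because $\alpha\le\gamma<\alpha-\theta\eta+\theta\le\alpha+\theta$, so Proposition \ref{prop:180131-1} again applies and gives $t^{1-\eta-(\gamma-\alpha)/\theta}=t^{(\alpha-\theta\eta+\theta-\gamma)/\theta}\le T^{(\alpha-\theta\eta+\theta-\gamma)/\theta}$. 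The last and most delicate piece is the $\cE_T^{\eta',\delta}\HolBesSp{\beta-\theta\delta}$-seminorm: I need to bound $\|v_t-v_s\|_{\HolBesSp{\beta-\theta\delta}}$ by $s^{-\eta'}|t-s|^\delta\|u\|_{\cE_T^\eta\HolBesSp{\alpha}}$ up to the $T$-factor. Split $v_t-v_s = \int_s^t P^{\theta/2}_{t-r}u_r\,dr + \int_0^s (P^{\theta/2}_{t-r}-P^{\theta/2}_{s-r})u_r\,dr$. For the first integral use Proposition \ref{prop_20160927051055}(1) to go from $\HolBesSp{\alpha}$ to $\HolBesSp{\beta-\theta\delta}$ at cost $(t-r)^{-(\beta-\theta\delta-\alpha)/\theta}$ (the exponent is $<1$ since $\beta<\alpha+\theta$ and $\delta>0$) times $r^{-\eta}$, and integrate over $r\in[s,t]$ via Proposition \ref{prop:180131-1}, which produces the $|t-s|^{1-\eta-(\beta-\theta\delta-\alpha)/\theta}$ factor; since $\delta\le\frac{\beta-\alpha}{\theta}$ we have $1-\eta-\frac{\beta-\theta\delta-\alpha}{\theta}\ge\delta-\eta$, and one then trades the surplus against $s^{-\eta}|t-s|^\delta$ and a power of $T$ (using $s\le t$ on the relevant range, or bounding $r^{-\eta}\le (r-s)^{-\eta}$ type estimates — more carefully, one splits the integral at the midpoint of $[s,t]$). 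For the second integral, write $P^{\theta/2}_{t-r}-P^{\theta/2}_{s-r}=(P^{\theta/2}_{t-s}-1)P^{\theta/2}_{s-r}$ and combine Proposition \ref{prop_20160927051055}(2) (gaining $|t-s|^\delta$ at a cost of $\theta\delta$ derivatives, so mapping $\HolBesSp{\beta}\to\HolBesSp{\beta-\theta\delta}$) with Proposition \ref{prop_20160927051055}(1) applied to $P^{\theta/2}_{s-r}u_r$ (regularizing $\HolBesSp{\alpha}\to\HolBesSp{\beta}$ at cost $(s-r)^{-(\beta-\alpha)/\theta}$), then integrate $(s-r)^{-(\beta-\alpha)/\theta}r^{-\eta}$ over $r\in[0,s]$, again via Proposition \ref{prop:180131-1}, giving $s^{1-\eta-(\beta-\alpha)/\theta}$, and factor this as $s^{-\eta'}$ times a nonnegative power of $s\le T$. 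The bookkeeping of exponents here — verifying that every power that appears is either the desired $s^{-\eta'}|t-s|^\delta$ or a positive power of $T$, and that each integral exponent stays in $[0,1)$ — is the main obstacle; all of it follows from the four displayed inequalities in the hypothesis (that $\beta-\alpha<\theta$, $\gamma-\alpha<\theta-\theta\eta$, $\gamma\le\beta$, $\theta\delta\le\beta-\alpha$) together with $\eta<1$ and $T\le 1$, but it requires care, particularly the splitting of the first integral to handle the competing singularities at $r=s$ and the weight $r^{-\eta}$.
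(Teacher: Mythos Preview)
Your strategy is correct and matches the paper for Part~(1) and for all but one piece of Part~(2): the $\cE_T^{\eta'}\HolBesSp{\beta}$ and $C_T\HolBesSp{\gamma}$ bounds, and the second integral $\int_0^s(P^{\theta/2}_{t-s}-1)P^{\theta/2}_{s-r}u_r\,dr$ in the time-increment, all go through exactly as you describe.

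The gap is in the first integral $\int_s^t P^{\theta/2}_{t-r}u_r\,dr$. With $a=\frac{\beta-\theta\delta-\alpha}{\theta}\in[0,1)$ and $\eta'=\frac{\beta-\gamma}{\theta}$ you must show
\[
\int_s^t (t-r)^{-a}\,r^{-\eta}\,dr
\ \lesssim\
s^{-\eta'}(t-s)^\delta\, T^{\frac{\alpha-\theta\eta+\theta-\gamma}{\theta}},
\]
and note that the allowed weight on the right is $s^{-\eta'}$, not $s^{-\eta}$. The devices you list yield either $(t-s)^{1-a-\eta}$ (from $r^{-\eta}\le(r-s)^{-\eta}$ followed by the Beta integral) or $(t-s)^{1-a}s^{-\eta}$ (from $r^{-\eta}\le s^{-\eta}$, which is also what the midpoint split produces). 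The first fails once $\beta>\alpha-\theta\eta+\theta$, since then $1-a-\eta-\delta=1-\eta-\frac{\beta-\alpha}{\theta}<0$ and the power of $t-s$ blows up while $s^{-\eta'}$ stays bounded; the second fails once $\eta>\eta'$, since the surplus $s^{\eta'-\eta}$ blows up as $s\to 0$. Both regimes are compatible with the hypotheses --- e.g.\ $\theta=1$, $\alpha=0$, $\beta=0.8$, $\gamma=0.4$, $\eta=\delta=0.5$ --- so even combining the two approaches does not close the gap. The paper's fix is to absorb the permitted weight \emph{before} integrating: multiply through by $s^{\eta'}$ and use $s\le r$ on $[s,t]$ to get $s^{\eta'}r^{-\eta}\le r^{\eta'-\eta}$; then bound $r^{\eta'-\eta}\lesssim(r-s)^{\eta'-\eta}+t^{\eta'-\eta}$ (one of the two terms suffices in each sign regime). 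The Beta integral now yields $(t-s)^{1-a-\delta+\eta'-\eta}=(t-s)^{\frac{\alpha-\theta\eta+\theta-\gamma}{\theta}}$, whose exponent is positive precisely by the hypothesis $\gamma<\alpha-\theta\eta+\theta$, and hence is dominated by the required power of $T$.
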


\bigskip
\begin{proof}
Recall that
$
\cL_T^{\frac{\beta-\alpha}{\theta},\beta,\delta}=
					\cE_T^{ \frac{\beta-\alpha}{\theta}} \HolBesSp{\beta}
					\cap
					C_T\HolBesSp{\alpha}
					\cap
					\cE_T^{\frac{\beta-\alpha}{\theta},\delta}
					 \HolBesSp{\beta-\theta\delta}
					$
					by definition.
\begin{enumerate}
\item				%
				Since $\beta = \alpha + \theta \cdot \frac{\beta-\alpha}{\theta}>\alpha$ and $0<T \le 1$,	
				we have
\begin{align}\label{eq:180115-1}
\| P^{\theta /2}_{\cdot} f\|_{ \cE_T^{ \frac{\beta-\alpha}{\theta}} \HolBesSp{\beta}} 
=
\sup\limits_{0< t \le T} t^{ \frac{\beta-\alpha}{\theta}} \| P^{\theta /2}_{t} f \|_{\HolBesSp{\beta} }
\lesssim
\|f\|_{\HolBesSp{\alpha} }
 \end{align}
by Lemma \ref{lm:171102-1}.
The estimate
$
\|P^{\theta /2}_{\cdot} f\|_{C_T\HolBesSp{\alpha}}
\lesssim
\|f\|_{\HolBesSp{\alpha}}
$
is also a consequence of Lemma \ref{lm:171102-1}.
Finally, when $T \ge t>s>0$,
we deduce from Lemma \ref{lm:171102-3}
that
\begin{align*}
\| e^{-t (- \LaplaceOp)^{\theta /2} } f 
 - e^{-s (- \LaplaceOp)^{\theta /2} } f \|_{ \HolBesSp{\beta -\theta \delta}} 
&\lesssim
(t-s)^{\delta} s^{- \frac{\beta-\alpha}{\theta}} 
 \| f \|_{\HolBesSp{\alpha} }.
 \end{align*}
 From this, we can easily see that
 $\| P^{\theta /2}_t f -P^{\theta /2}_s f \|_{ \HolBesSp{\beta -\theta \delta}} $ 
 satisfies the same estimate with a different constant.
Therefore, $P^{\theta /2}_{\cdot} f \in \cE_T^{\frac{\beta-\alpha}{\theta},\delta}
					 \HolBesSp{\beta-\theta\delta}$.
					 Thus, we have shown (1).
\item
We check that $I [u] \in C_T\HolBesSp{\gamma}$. Let $0 \le t \le T$.
By Lemma \ref{lm:171102-1}, 
\begin{align*}
 \| I [u ]_t \|_{\HolBesSp{\gamma} }
\le
\int_0^t \| e^{-(t-s) (- \LaplaceOp)^{\theta /2} } u_s \|_{\HolBesSp{\gamma} } ds
\lesssim
\int_0^t (t-s)^{\frac{\alpha - \gamma}{\theta}} \| u_s \|_{\HolBesSp{\alpha} } ds.
\end{align*}
Using Proposition \ref{prop:180131-1},
$0 \le t \le T$
and
$\gamma\in[\alpha,\alpha-\theta\eta+\theta)$,
we obtain
\begin{align*}
 \| I [u ]_t \|_{\HolBesSp{\gamma} }
\lesssim
\|u\|_{\cE_T^\eta\HolBesSp{\alpha}}
 \int_0^t (t-s)^{\frac{\alpha - \gamma}{\theta}} s^{-\eta} ds
\sim
 t^{\frac{\alpha- \theta\eta+ \theta-\gamma}{\theta}} 
 \|u\|_{\cE_T^\eta\HolBesSp{\alpha}}
\le
 T^{\frac{\alpha- \theta\eta+ \theta-\gamma}{\theta}} 
 \|u\|_{\cE_T^\eta\HolBesSp{\alpha}}.
 \end{align*}
In a similar way, 
using Lemma \ref{lm:171102-1} and Proposition \ref{prop:180131-1}
as well as the assumptions $0 \le t \le T$,
$\beta<\alpha+\theta$ and $\eta<1$
we obtain
\begin{align*}
t^{\frac{\beta - \gamma}{\theta}} \| I[u]_t \|_{\HolBesSp{\beta} }
&\le
t^{\frac{\beta - \gamma}{\theta}}
\int_0^t \| e^{-(t-s) (- \LaplaceOp)^{\theta /2} } u_s \|_{\HolBesSp{\beta} } ds\\
&\le
t^{\frac{\beta - \gamma}{\theta}}
\int_0^t (t-s)^{ -\frac{\beta -\alpha }{\theta}}\| u_s \|_{\HolBesSp{\alpha} } ds\\
&\lesssim
t^{\frac{\beta - \gamma}{\theta}}
\|u\|_{\cE_T^\eta\HolBesSp{\alpha}}
 \int_0^t (t-s)^{ -\frac{\beta -\alpha }{\theta}} s^{-\eta} ds
 \\
 &\simeq
 t^{\frac{\alpha- \theta\eta+ \theta-\gamma}{\theta}} 
 \|u\|_{\cE_T^\eta\HolBesSp{\alpha}}\\
 &\le
 T^{\frac{\alpha- \theta\eta+ \theta-\gamma}{\theta}} 
 \|u\|_{\cE_T^\eta\HolBesSp{\alpha}}.
 \end{align*}
 Hence, $\cE_T^{ \frac{\beta-\gamma}{\theta}} \HolBesSp{\beta}$-norm of 
 $I [u]$ is dominated by a constant multiple of 
 $T^{\frac{\alpha- \theta\eta+ \theta-\gamma}{\theta}} 
 \|u\|_{\cE_T^\eta\HolBesSp{\alpha}}$.

Finally, we estimate $\cE_T^{\frac{\beta-\gamma}{\theta},\delta}
					 \HolBesSp{\beta-\theta\delta}$-norm of $I [u]$.
Again by Lemma \ref{lm:171102-1},
\begin{align*}
\| I[u]_t - I[u]_s \|_{ \HolBesSp{\beta - \theta \delta} }
&=
\Bigl\| 
\int_s^t P^{\theta /2}_{t -r} u_r dr+
\int_0^s (P_{t-s}^{\theta/2}-I)P_{s-r}^{\theta/2}u_r\,dr
\Bigr\|_{ \HolBesSp{\beta - \theta \delta}}\\
&\lesssim
 \|u\|_{\cE_T^\eta\HolBesSp{\alpha}} 
 \int_s^t (t-r)^{\frac{\alpha-\beta+\theta \delta }{\theta}} r^{-\eta} dr+\Bigl\| 
\int_0^s (P_{t-s}^{\theta/2}-I)P_{s-r}^{\theta/2}u_r\,dr
\Bigr\|_{ \HolBesSp{\beta - \theta \delta}}.
\end{align*}
We estimate the second term of the most right-hand side.
By Proposition \ref{prop_20160927051055}(2),
\begin{align*}
\Bigl\| (P_{t-s}^{\theta/2}-I)P_{s-r}^{\theta/2}u_r
\Bigr\|_{ \HolBesSp{\beta - \theta \delta}}
&
\lesssim
(t-s)^\delta\|P_{s-r}^{\theta/2}u_r\|_{{\mathcal C}^\beta}.
\end{align*}
By Lemma \ref{lm:171102-1}
and the definition of the norm
${\mathcal E}^\eta_T{\mathcal C}^\alpha$,
\begin{align*}
\Bigl\| 
\int_0^s (P_{t-s}^{\theta/2}-I)P_{s-r}^{\theta/2}u_r\,dr
\Bigr\|_{ \HolBesSp{\beta - \theta \delta}}
&
\lesssim
(t-s)^\delta
\int_0^s (s-r)^{\frac{\alpha-\beta}{\theta}}\|u_r\|_{{\mathcal C}^\alpha}\,dr\\
&
\le
(t-s)^\delta
\int_0^s (s-r)^{\frac{\alpha-\beta}{\theta}}r^{-\eta}\|u\|_{{\mathcal E}^\eta_T{\mathcal C}^\alpha}\,dr.
\end{align*}
Recall that we are assuming
$0 \le \eta<1$
and
$\gamma <\alpha-\theta\eta+\theta$.
Let $0<s<t \le T$.
If we integrate this estimate against $r$
and use Proposition \ref{prop:180131-1} once again,
we obtain
\begin{align*}
(t-s)^{-\delta}
\Bigl\| 
\int_0^s (P_{t-s}^{\theta/2}-I)P_{s-r}^{\theta/2}u_r\,dr
\Bigr\|_{ \HolBesSp{\beta - \theta \delta}}
&\lesssim
\int_0^s(s-r)^{\frac{\alpha-\beta}{\theta}}r^{-\eta}\|u\|_{{\mathcal E}^\eta_T{\mathcal C}^\beta}\,dr\\
&\sim
s^{1-\eta-\frac{\beta-\alpha}{\theta}}\|u\|_{{\mathcal E}^\eta_T{\mathcal C}^\beta}\\
&\le
T^{\frac{\alpha+\theta-\eta\theta-\gamma}{\theta}}s^{-\frac{\beta-\gamma}{\theta}}\|u\|_{{\mathcal E}^\eta_T{\mathcal C}^\beta}.
\end{align*}
So, we need to estimate the first term of the most right-hand side.
We calculate
\begin{align*}
\lefteqn{
(t-s)^{-\delta} s^{ \frac{\beta-\gamma}{\theta} }
\| I[u]_t - I[u]_s \|_{ \HolBesSp{\beta - \theta \delta} }
}
\\
&\lesssim
(t-s)^{-\delta}
 \int_s^t (t-r)^{ -\frac{\beta -\theta\delta -\alpha }{\theta}} r^{-\eta+\frac{\beta-\gamma}{\theta}} dr
\cdot
 \|u\|_{\cE_T^\eta\HolBesSp{\alpha}}
 \\
&\lesssim
(t-s)^{-\delta}
 \int_s^t (t-r)^{ -\frac{\beta -\theta\delta -\alpha }{\theta}} 
((r-s)^{-\eta+\frac{\beta-\gamma}{\theta}}+t^{-\eta+\frac{\beta-\gamma}{\theta}}) dr
\cdot
 \|u\|_{\cE_T^\eta\HolBesSp{\alpha}}.
\end{align*}
Since
\[
\frac{\beta -\theta\delta -\alpha }{\theta}<1, \quad
\eta<\frac{\alpha-\gamma}{\theta}+1<\frac{\beta-\gamma}{\theta}+1,
\]
we are in the position of using
Proposition \ref{prop:180131-1} to have
\begin{align*}
\lefteqn{
(t-s)^{-\delta} s^{ \frac{\beta-\gamma}{\theta} }
\| I[u]_t - I[u]_s \|_{ \HolBesSp{\beta - \theta \delta} }
}
\\
&\lesssim
(t-s)^{-\delta}
 \int_s^t (t-r)^{ -\frac{\beta -\theta\delta -\alpha }{\theta}} r^{-\eta+\frac{\beta-\gamma}{\theta}} dr
\cdot
 \|u\|_{\cE_T^\eta\HolBesSp{\alpha}}
 \\
&\lesssim
\left(
(t-s)^{-\delta-\frac{\beta -\theta\delta -\alpha }{\theta}+1-\eta + \frac{\beta-\gamma}{\theta}}
+
(t-s)^{-\delta-\frac{\beta -\theta\delta -\alpha }{\theta}+1}t^{-\eta+\frac{\beta-\gamma}{\theta}}
\right)
 \|u\|_{\cE_T^\eta\HolBesSp{\alpha}}\\
&=
\left(
(t-s)^{-\frac{\gamma-\alpha }{\theta}+1- \eta }
+
(t-s)^{-\frac{\beta -\alpha }{\theta}+1}t^{-\eta+\frac{\beta-\gamma}{\theta}}
\right)
 \|u\|_{\cE_T^\eta\HolBesSp{\alpha}}.
\end{align*}
Since
\[
\gamma<\alpha-\theta\eta+\theta, \quad
\beta<\alpha+\theta,
\]
we obtain
\[
(t-s)^{-\frac{\gamma-\alpha }{\theta}+1- \eta }, \quad
(t-s)^{-\frac{\beta -\alpha }{\theta}+1}t^{-\eta+\frac{\beta-\gamma}{\theta}}\lesssim
t^{-\frac{\gamma-\alpha }{\theta}+1- \eta }
\quad (0<s<t<\infty).
\]
Thus
\begin{align*}
(t-s)^{-\delta} s^{ \frac{\beta-\gamma}{\theta} }
\| I[u]_t - I[u]_s \|_{ \HolBesSp{\beta - \theta \delta} }
 \lesssim
 t^{\frac{\alpha- \theta\eta+ \theta-\gamma}{\theta}} 
 \|u\|_{\cE_T^\eta\HolBesSp{\alpha}}.
 \end{align*}
This completes the proof.
\end{enumerate}
 \end{proof}

\section{A bird's eye view of our proof}
In this section, for the reader's convenience, 
we give a heuristic explanation 
of how our somewhat long proof is structured. 
Our aim is to take a bird's eye view of the proof of our main theorem. 
Therefore, one should keep in mind that
 arguments in this section are not intended to be rigorous.
The setting, symbols and definitions used in this section 
may not carry over to the core part (Sections 
\ref{s4}--\ref{s8}) of this paper.

To simplify our problem a little bit, 
we assume that $\theta =2$ and $u_0 =0$ in \eqref{eq:180520}.
We only consider the $C_T\HolBesSp{\alpha}$-norm
for continuous paths in a Besov-type function space.
(These simplifications do not destroy the essence of our problem.)
For every $\alpha \in {\bf R}$ and 
$z \in C_T \HolBesSp{\alpha}$, it holds that
$R^{\perp}z \in C_T\HolBesSp{\alpha}$, 
$\nabla z \in C_T\HolBesSp{\alpha -1}$ and 
 $I[z] \in C_T\HolBesSp{\alpha+2^-} 
 := \cap_{\kappa >0} C_T\HolBesSp{\alpha+2 -\kappa} $.
See (\ref{eq_20171101}) for the definition of $I[z]$.
(For simplicity, we will say that 
$z$ and $I[z]$ is of regularity $\alpha$ and $\alpha +2^-$,
respectively, etc.)
Hence,
it turns out that these three operations do not cause a problem.

It is well
known that $X := I[\xi]$ is a Gaussian process 
called 
an
Ornstein-Uhlenbeck process and 
is of regularity $0^-$ a.s., that is, $X \in C_T\HolBesSp{0^-}$ a.s.
(Since we assume $u_0 =0$, we also assume $X_0 \equiv 0$.
In the core part of this paper, the stationary 
Ornstein-Uhlenbeck process is used instead.)
We basically view $X$ instead of $\xi$ 
as a given input datum when we solve \eqref{eq:180520}. 
In the mild form, \eqref{eq:180520} 
with $\theta =2$ and $u_0 =0$ reads:
\begin{equation}\nonumber
u = I [R^{\perp} u \cdot \nabla u] + X.
 \end{equation}
Since $X$ is known, we may change the unknown 
to $\Psi:= u -X$.
Then, the above equation is equivalent to: 
\begin{align}
\Psi 
&= \underbrace{I [R^{\perp} X \cdot \nabla X]}_{=:Y} + 
\underbrace{I [R^{\perp} \Psi \rpara \nabla X] }_{=:v}
\nonumber
\\
&\qquad + \underbrace{
I [R^{\perp} \Psi (\reso+\lpara ) \nabla X] 
+
I [R^{\perp} X \cdot \nabla \Psi] +
I [R^{\perp} \Psi \cdot \nabla \Psi] }_{=:w}.
\label{eq:200413-1}
 \end{align}
Observe that the only possibly ill-defined operations in 
\eqref{eq:200413-1} 
are the usual product and the resonant product.
We will justify the definition of \eqref{eq:200413-1}.

Roughly speaking, our proof is oriented to doing so
and
consists of three steps:
\begin{enumerate}
 \item[(A)]
 Heuristic calculations to find out which symbols 
 made of $X$ alone are necessary and sufficient to 
 formulate \eqref{eq:200413-1} in a deterministic way. 
  This includes deforming \eqref{eq:200413-1} into 
  a more suitable 
  form by using the theory of paraproduct. 
   (One may get a rough sketch of this deforming 
    by reading Section 7 in the opposite direction, i.e.
    ``from right to left".)
     \item[(B)] 
      Proving the local well-posedness of (the new form of)
      the QGE equation for every given deterministic set 
      of such symbols. 
       This step is not so 
       different from the counterpart for usual 
       nonlinear (stochastic) PDEs in the mild form.
        (See Sections 4--6.)
           \item[(C)] 
    Proving that when $X$ is
an Ornstein-Uhlenbeck process, 
     the set of symbols can be constructed in a probabilistic way.
     In this step the theory of Wiener chaos plays a fundamental role.
     (See Section 8.)
     \end{enumerate}

Though cumbersome, Steps (B) and (C) are straightforward
and the reader will not be surprised or puzzled.
On the other hand, many non-experts may wonder
how to carry out Step (A).   
Below, we will loosely explain this point.

First,
take a look at the first term $Y:=I [R^{\perp} X \cdot \nabla X]$ 
on the right hand side of \eqref{eq:200413-1}.
In this case, $R^{\perp} X \cdot \nabla X$ is ill-defined.
However, it is made of $X$ alone. 
In other words, no information of the solution is involved. 
Therefore, we simply add a deterministic datum
which is denoted by $I [R^{\perp} X \cdot \nabla X]$ 
again to the list of input data we have 
at hand when we solve the equation
(so that this term obviously becomes well-defined).
Then, we will later construct this term in the probabilistic sense
(assuming that $X$ is an Ornstein-Uhlenbeck process)
in Step (C).
As for the regularity of the
{\it fictitious} product and the {\it fictitious} resonant product
of this kind,
an ``empirical rule" suggests that it is natural to guess that 
$f g \in \HolBesSp{\alpha \wedge \beta \wedge (\alpha +\beta)}$
and 
 $f\reso g \in \HolBesSp{\alpha +\beta}$
 even when $\alpha +\beta \le 0$ (i.e. these are ill-defined).
Hence, we suppose that the deterministic ``symbol"
$Y=I [R^{\perp} X \cdot \nabla X]$ is of regularity $1^-$. 
(Since we do not know a solution a priori,
we cannot use any other other terms
  in \eqref{eq:200413-1} as our input data.
To make sense of these terms, more ``symbols made of $X$
alone" are needed.)

Next, change the unknown again by 
$\hat\Psi := \Psi - Y$,
and plug it into \eqref{eq:200413-1}.
Then, we have a new form of equation for $\hat\Psi$,
in which the ill-defined terms 
$R^{\perp} Y \cdot \nabla X$ and $R^{\perp} X \cdot \nabla Y$
appear.
So, we must also assume these symbols 
of regularity $0^-$ a priori.
We will continue this kind of heuristic argument until 
 we find a well-defined formulation of our QGE.
 (This is non-trivial, but we do not give details here.)
The list of seven symbols necessary and sufficient 
are as follows
(see Definition \ref{def.200415} and Remark \ref{rem.200413}):
\[
{\bf X} :=
(X, V:= I [\nabla X], \,\,  Y, 
\,\,
R^\perp Y \reso \nabla X,
\,\,
R^\perp V \reso \nabla X,
\,\,
\nabla Y \cdot R^\perp X, 
 \,\,
\nabla V \reso R^\perp X) 
\]
Such a vector of symbols is called a driver.
According to the above rule for fictitious (resonant) products,
the regularity of a driver should be 
$(0^-, 1^-, 1^-, 0^-, 0^-, 0^-, 0^-)$.
A driver is defined to be 
 a (deterministic) element  
in the product of seven Banach spaces 
$C_T \HolBesSp{0^-} \times \cdots \times C_T\HolBesSp{0^-}$ with a certain reasonable algebraic constraint 
(see Definition \ref{def.200415}).

Now we turn to (the regularity of) a solution.
We would like make sense
of \eqref{eq:200413-1} for a given ${\bf X}$ as a fixed point 
problem in an appropriate Banach space.
After some heuristic calculations, 
we find that 
the term with the worst regularity 
 on the right hand side of \eqref{eq:200413-1} 
  (or in its deformation) is $Y$. 
Hence, it is natural to guess that the regularity of $\Psi$
is also $1^-$. 
The second worst term is $v:= I [R^{\perp} \Psi \rpara \nabla X]$,
which is equal to $R^{\perp} \Psi \rpara V + \mbox{(a nice term)}$
by Proposition \ref{prop_20160927051159}
and hence has regularity $1^-$.
Let us define $w:= \Psi - Y -v$. 
Then, it turns out that
$w$ is actually well defined and has regularity $2^-$.
We do not give a full proof of this fact here since it is too long.
To illustrate this, however, 
let us observe that $I [R^{\perp} \Psi \reso \nabla X]$ is a
well-defined
term with regularity $2^-$ if we suppose 
that the regularity of $w$ is $2^-$.
Since $\Psi = Y+v+w$, we have
\begin{align}
I [R^{\perp} \Psi \reso \nabla X]
&=
I [R^{\perp} Y \reso \nabla X]
+
I [R^{\perp} (R^{\perp} \Psi \rpara V ) \reso \nabla X]
\nonumber\\
&\qquad 
+ 
I [R^{\perp} ( \mbox{``nice"}) \reso \nabla X]
+I [R^{\perp} w \reso \nabla X].
\nonumber
\end{align}
The first term is clearly well-defined with regularity $2^-$
since the symbol inside $I$
is included in the definition of ${\bf X}$
to handle.
So are the third and the fourth terms, because 
$w$ and $\mbox{``nice"}$ are regular enough.
The second term is not so easy.
By Lemma \ref{lem:161117-107}
and Proposition \ref{prop_comm_20160919055939},
we see that
\begin{align*}
R^{\perp} (R^{\perp} \Psi \rpara V ) \reso \nabla X
&=
\{ (R^{\perp} \Psi \rpara R^{\perp} V ) 
 + \mbox{``nice"} \} \reso \nabla X
 \\
 &=
 R^{\perp} \Psi \cdot (R^{\perp} V \reso \nabla X)
 + C (R^{\perp} \Psi, R^{\perp} V, \nabla X )
 + \mbox{``nice"}\reso \nabla X
 \end{align*}
The second term above is called the commutator 
and has regularity $1^-$.
$R^{\perp} V \reso \nabla X$ is one of the components 
of ${\bf X}$ with regularity $0^-$, 
while the regularity of $R^{\perp} \Psi$ is $1^-$.
Hence, the first term is well-defined with regularity $0^-$.
From this we see that
$I [R^{\perp} \Psi \reso \nabla X]$
has a well-defined expression with regularity $2^-$
in terms of ${\bf X}$, $v$ and $w$.
So do the other terms in the definition of $w$
(though we do not illustrate them here).
Therefore, $\Psi -Y-v$ in \eqref{eq:200413-1}
can also be deformed to a well-defined form 
with regularity $2^-$ in terms of ${\bf X}$, $v$ and $w$,
which will be denoted by $G(v,w)$ in Subsection 4.3.

Combining the above considerations and educated guesses,
we arrive at the following systems of mild PDEs for $(v,w)$
for every ${\bf X}$:
\[
\begin{cases}
				\partial_t v
				=R^{\perp} (Y+v+w) \rpara \nabla X ,\\
				\partial_t w
				=
				 G(v,w).
			\end{cases}
			\]
We will adopt this kind of system 
as our definition of paracontrolled QGE 
in Definition \ref{eq_20160921004656}.
Observe that QGE is now decomposed into 
a system of two equations.
The first equation is linear, but its regularity is bad.
The second equation has complicated 
nonlinear terms, but its regularity is nice.

Before closing 
this section, we make two comments 
on the above heuristic ``algebraic" deformation of QGE.
First, if $X$ is regular enough, the above deformation
is of course rigorous. A prominent example of 
such $X$ is a sample path of 
a mollified Ornstein-Uhlenbeck process.

The second one is as follows.
There are two types of ill-definedness in \eqref{eq:200413-1}
(below, $\star$ is either the usual product or $\reso$):
\begin{enumerate}
 \item[(a):]~
 $\mbox{(a quantity that depends on a solution) $\star$ (a symbol made of $X$)}$,
    \item[(b):]~
   $\mbox{(a symbol made of $X$) $\star$ (a symbol made of $X$)}$.
        \end{enumerate} 
        For example, $Y$ is of Type (b), 
         while $I [R^{\perp} \Psi \reso \nabla X]$ and 
$I [R^{\perp} X \cdot \nabla \Psi] $ are of Type (a).
The main aim of the deformation is to convert the
ill-definedness of Type (a) to that of Type (b).
(For example, in our quick heuristic calculation above, 
we have $I [R^{\perp} \Psi \reso \nabla X]$ at first,
but we have instead $R^{\perp} Y \reso \nabla X$ 
and $R^{\perp} V \reso \nabla X$ at the end.) 
The reason is as follows.
Since we do not know a solution a priori, 
we have to use general theorems on the (resonant) product 
of two distributions in order to make sense of the 
(resonant) product in Type (a).
However, these general theorems are useless in our situation.
(That is why we have a difficulty in the first place.)
On the other hand, there is a possibility that we can
construct ill-defined terms of Type (b) 
by a probabilistic calculations concerning Ornstein-Uhlenbeck 
process. 
(In our case, we can actually do it in Section 8.)

Thus, we have heuristically explained 
where the strange-looking 
definition of our paracontrolled QGE comes from.
From the next section, we will rigorously discuss
it again.


\section{Definition of drivers and solutions}
\label{s4}

From now on, we work on ${\bf R}^2$.
Let $T>0$ be arbitrary 
and let $7/4 <\theta \le 2$.
Once chosen, $\theta$ will be fixed throughout.
(Hence, the dependence on $\theta$ will often be implicit.)
For this $\theta$, we take $0 < \kappa<\kappa'$ sufficiently small.
More precisely, we basically assume 
\begin{equation}\label{eq:180213-3}
0 < \kappa<\kappa' \ll 1,
\qquad 
\frac13 < \frac{\kappa}{\kappa'} <\frac23.
\end{equation}
That is,
there exists $\kappa_0 > 0$ 
such that 
the relevant statements hold,
whenever
\[
0 < \kappa<\kappa' \le \kappa_0,
\qquad 
\frac13 < \frac{\kappa}{\kappa'} <\frac23.
\]

The exact values of 
$\kappa$ and $\kappa'$ are not important at all.

\subsection{Definition of a driver of paracontrolled QGE}
In this subsection we define a driver of 
the paracontrolled QGE.

\begin{definition}\label{def.200415}
An element 
\[
{\mathbf X}=(X, V, Y, Z, W, \hat{Z}, \hat{W})
\]
of the product Banach space 
\begin{eqnarray*}
\lefteqn{
C_T\HolBesSp{\frac{\theta}{2}-1-\kappa} \times
(C_T\HolBesSp{ \frac32 \theta -2 -\kappa})^2 \times
\cL_T^{2\theta -3-\kappa,\frac{2\theta -3-\kappa}{\theta}} 
}
\\
&\times&
C_T\HolBesSp{\frac52\theta -5-\kappa} \times
(C_T\HolBesSp{2\theta -4-\kappa})^2 \times
C_T\HolBesSp{\frac52\theta -5 -\kappa} \times
(C_T\HolBesSp{2\theta -4-\kappa})^2
\end{eqnarray*}
is said to be a driver if the following relation holds:
\[
V_t = P_t^{\theta /2} V_0 + I [\nabla X]_t,
\qquad
t \in [0,T].
\]
The set of all drivers is denoted by $\drivers{T}{\kappa}$.
The norm $\|{\mathbf X} \|_{\drivers{T}{\kappa}}$ is defined to be 
 the sum of the norms of all components.
\end{definition}
Assuming 
$7/4 <\theta \le 2$,
we can suppose that $\kappa,\kappa'>0$ are chosen
so that
$2\theta-3-\kappa',2\theta-3-\kappa>0$.
This means that the condition on $Y$ makes sense.

As usual, we wrote $\nabla = (\partial_1, \partial_2)$.
We suppresse
the
dependency on $\theta$ for notational simplicity.
Note that the third component $Y$ is assumed to have H\"older continuity in time.
Let $t \ge 0$ and $\kappa''>0$. 
Since
$u \in C_T{\mathcal C}^{\frac12\theta-1-\kappa} 
\mapsto I [\nabla u]_t
\in C_T{\mathcal C}^{\frac32\theta -2 -\kappa-\kappa''}$
and
$V_0 \in C_T{\mathcal C}^{\frac32\theta-2-\kappa}
\mapsto
P_t^{\theta/2}V_0 \in C_T{\mathcal C}^{\frac32\theta-2-\kappa}
\subset
C_T{\mathcal C}^{\frac32\theta-2-\kappa-\kappa''}$ 
are both continuous
thanks to Proposition \ref{prop_20170804},
we learn that $\drivers{T}{\kappa}$ is a closed subset 
of the product Banach space above.

\begin{lemma}\label{lem:180208-7}
Let 
${\mathbf X}=(X, V, Y, Z, W, \hat{Z}, \hat{W})$
be a driver.
Then $ \| \nabla X_t\|_{ \HolBesSp{\frac12 \theta -2 - \kappa}} 
\lesssim \| X_t\|_{ \HolBesSp{\frac12 \theta -1- \kappa}} \le \|{\mathbf X}\|_{\drivers{T}{\kappa}}$.
\end{lemma}

\begin{proof}
Simply resort to Lemma \ref{lem:180122-1}.
\end{proof}

Here is a simple remark on the $Y$-component of a driver.
\begin{lemma}\label{rem_20171114-1}
Let
$
Y \in \cL_T^{2\theta -3-\kappa,\frac{2\theta -3-\kappa}{\theta}} 
 = C_T\HolBesSp{2\theta -3-\kappa}
			\cap
			C_T^{ \frac{2\theta -3-\kappa}{\theta} } \HolBesSp{0}
$
with $7/4<\theta \le 2$ and $0<\kappa<\kappa' \ll 1$.
Then
$
					\|Y_t-Y_s\|_{\HolBesSp{\kappa'-\kappa}}
						\lesssim
						(t-s)^{ \frac{2\theta -3-\kappa'}{\theta} }
						\|Y\|_{{\mathcal L}^{2\theta -3-\kappa,\frac{2\theta -3-\kappa}{\theta}}}.
$
\end{lemma}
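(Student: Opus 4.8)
The plan is to interpolate the two pieces of information carried by $Y$: the uniform-in-time $\HolBesSp{2\theta-3-\kappa}$ bound coming from $C_T\HolBesSp{2\theta-3-\kappa}$, and the time-H\"older bound with values in $\HolBesSp{0}$ coming from $C_T^{(2\theta-3-\kappa)/\theta}\HolBesSp{0}$. Concretely, for $0\le s<t\le T$ I would write, using the interpolation inequality (Proposition \ref{prop_20160928054620}(2)), $\|Y_t-Y_s\|_{\HolBesSp{(1-\nu)\cdot 0+\nu(2\theta-3-\kappa)}}\le \|Y_t-Y_s\|_{\HolBesSp{0}}^{1-\nu}\|Y_t-Y_s\|_{\HolBesSp{2\theta-3-\kappa}}^{\nu}$, where $\nu\in(0,1)$ is chosen so that $\nu(2\theta-3-\kappa)=\kappa'-\kappa$, i.e. $\nu=\frac{\kappa'-\kappa}{2\theta-3-\kappa}$. (The hypothesis $7/4<\theta\le 2$ together with $0<\kappa<\kappa'\ll 1$ and \eqref{eq:180213-3} guarantees $2\theta-3-\kappa>0$ and $0<\kappa'-\kappa<2\theta-3-\kappa$, so $\nu\in(0,1)$ is legitimate; this is the only place the smallness of $\kappa,\kappa'$ is used.)

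Next I would bound the two factors. For the first factor, $\|Y_t-Y_s\|_{\HolBesSp{0}}\le (t-s)^{(2\theta-3-\kappa)/\theta}\|Y\|_{C_T^{(2\theta-3-\kappa)/\theta}\HolBesSp{0}}$ directly from the definition of the seminorm. For the second factor I would simply use the triangle inequality and the $C_T$ bound, $\|Y_t-Y_s\|_{\HolBesSp{2\theta-3-\kappa}}\le \|Y_t\|_{\HolBesSp{2\theta-3-\kappa}}+\|Y_s\|_{\HolBesSp{2\theta-3-\kappa}}\le 2\|Y\|_{C_T\HolBesSp{2\theta-3-\kappa}}$. Multiplying, the exponent of $(t-s)$ becomes $(1-\nu)\cdot\frac{2\theta-3-\kappa}{\theta}=\frac{(2\theta-3-\kappa)(1-\nu)}{\theta}$, and since $(2\theta-3-\kappa)(1-\nu)=(2\theta-3-\kappa)-(\kappa'-\kappa)=2\theta-3-\kappa'$ by the choice of $\nu$, this exponent is exactly $\frac{2\theta-3-\kappa'}{\theta}$, as claimed. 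Both factors are dominated by $\|Y\|_{\cL_T^{2\theta-3-\kappa,(2\theta-3-\kappa)/\theta}}$ (which is the intersection norm, hence controls both pieces), so the product is $\lesssim (t-s)^{(2\theta-3-\kappa')/\theta}\|Y\|_{\cL_T^{2\theta-3-\kappa,(2\theta-3-\kappa)/\theta}}$.

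There is essentially no hard step here: the content is bookkeeping of exponents in the interpolation identity, and the only thing to be careful about is verifying that the interpolation parameter $\nu$ lands in $(0,1)$, which follows from the standing assumptions on $\theta$, $\kappa$, $\kappa'$. One small point worth a sentence in the write-up is that $C_T^{(2\theta-3-\kappa)/\theta}\HolBesSp{0}$ continuously embeds into $C([0,T],\HolBesSp{0})$ (indeed the H\"older seminorm plus one value controls the sup), so $\|Y_t-Y_s\|_{\HolBesSp{0}}$ is genuinely finite and the interpolation is applied to an honest element of $\HolBesSp{0}\cap\HolBesSp{2\theta-3-\kappa}$. With that remark in place the argument is complete.
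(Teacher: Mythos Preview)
Your proof is correct and is essentially the same argument as the paper's. The paper phrases it as an application of Proposition~\ref{rem_20171114-2}(1) (the embedding $\cL_T^{\alpha,\delta}\subset\cL_T^{\alpha,\delta'}$ with $\alpha=2\theta-3-\kappa$, $\delta=\frac{2\theta-3-\kappa}{\theta}$, $\delta'=\frac{2\theta-3-\kappa'}{\theta}$, noting $\alpha-\theta\delta'=\kappa'-\kappa$), but the proof of that proposition is exactly the interpolation you carry out directly.
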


\begin{proof}
Since $0 < \kappa <\kappa'$, we see from Proposition \ref{rem_20171114-2}(1) that
				$
					\cL_T^{ 2\theta -3-\kappa,\frac{2\theta -3-\kappa}{\theta} }
					\subset
					\cL_T^{2\theta -3-\kappa,\frac{2\theta -3-\kappa'}{\theta}}
					\subset
						C_T^{ \frac{2\theta -3-\kappa'}{\theta}}
						\HolBesSp{\kappa'-\kappa}.
				$
This inclusion implies
				\begin{align*}
					\|Y_t-Y_s\|_{\HolBesSp{\kappa'-\kappa}}
					\lesssim
						(t-s)^{ \frac{2\theta -3-\kappa'}{\theta} }
						\|Y\|_{C_T^{\frac{2\theta -3-\kappa'}{\theta}} 
						 \HolBesSp{\kappa'-\kappa}}
						\lesssim
						(t-s)^{ \frac{2\theta -3-\kappa'}{\theta} }
						\|Y\|_{{\mathcal L}^{2\theta -3-\kappa,\frac{2\theta -3-\kappa}{\theta}}}.
				\end{align*}
				 \end{proof}

If the regularity of $X$ is nice enough, 
we can enhance $X$ to a driver ${\mathbf X}$ in a very natural way.
This driver is called the natural enhancement of $X$.

\begin{example}\label{ex.20171112}
Let
$\alpha >2$
and
$\beta >1$.
Let $X \in C_T\HolBesSp{\alpha}$,
$Y_0 \in \HolBesSp{\beta}$, 
and
$V_0 \in (\HolBesSp{\beta})^2$
be given.
Then, 
keeping in mind that
\[
X \in C_T\HolBesSp{\alpha} \subset C_T\HolBesSp{\frac12\theta-2-\kappa}
\]
we can define a driver 
${\mathbf X}= (X, V, Y, Z, W, \hat{Z}, \hat{W})$ by setting 
\begin{align*}
V &= P_{\cdot}^{\theta /2} V_0 + I[\nabla X]
\in C_T\HolBesSp{\beta} \cap C_T\HolBesSp{\alpha} \subset C_T\HolBesSp{\frac12\theta-1-\kappa},
\\
Y &=P_{\cdot}^{\theta /2} Y_0 + I[R^\perp X \cdot \nabla X]
= P_{\cdot}^{\theta /2} Y_0 + 
I [R_2 X \cdot \partial_1 X - R_1 X \cdot \partial_2 X]
\in C_T\HolBesSp{\min(\alpha-1,\beta)},
\\
 Z &=R^\perp Y \reso \nabla X = R_2 Y \reso \partial_1 X - R_1 Y \reso \partial_2 X
\in C_T\HolBesSp{2\alpha-2} \subset C_T\HolBesSp{\frac52\theta-5-\kappa},
 \\
 W &= R^\perp V \reso \nabla X 
 = \{ R_2 V_i \reso \partial_1 X - R_1 V_i \reso \partial_2 X \}_{i=1,2}
\in C_T\HolBesSp{2\theta-3-2\kappa}\subset C_T\HolBesSp{2\theta-4-\kappa},
 \\
 \hat{Z} &= \nabla Y \cdot R^\perp X = \partial_1 Y \cdot R_2 X
 - \partial_2 Y \cdot R_1 X
\in C_T\HolBesSp{2\alpha-2} \subset C_T\HolBesSp{\frac52\theta-5-\kappa},
 \\
 \hat{W} &=R^\perp X \reso \nabla V 
 = \{ R_2 X \reso \partial_1 V_i - R_1 X\reso \partial_2 V_i \}_{i=1,2}
\in C_T\HolBesSp{2\theta-3-2\kappa}\subset C_T\HolBesSp{2\theta-4-\kappa}.
 \end{align*}
Here, we write $R^{\perp} =( R_2, -R_1)$ and $V =(V_1, V_2)$.
Moreover,
since
\[
\sup\limits_{0<t \le T}
\|R^\perp X_t \cdot \nabla X_t\|_{{\mathcal C}^0}<\infty,
\]
we have
\[
\|I[R^\perp X \cdot \nabla X]_t-I[R^\perp X \cdot \nabla X]_s\|_{{\mathcal C}^0}
=
{\rm O}(|s-t|) \quad (0<s \le t \le T).
\]
Thus,
\[
I[R^\perp X \cdot \nabla X]
\in C^1_T{\mathcal C}^0
\subset C^{\frac{2\theta-3-\kappa}{\theta}}_T{\mathcal C}^0.
\]
As a result
$Y \in 
\cL_T^{2\theta -3-\kappa,\frac{2\theta -3-\kappa}{\theta}} $.
So,
it follows 
that
${\mathbf X}= (X, V, Y, Z, W, \hat{Z}, \hat{W})$
is a driver.

The most important example of such $X$ we have in mind 
is the sample path of the approximation of 
the stationary OU-like process 
\begin{equation}\label{eq.20171113_1}
X^{\ve}_t = \int_{-\infty}^t P_{t-s}^{\theta /2} \xi^{\ve}_s ds
=: \cI [\xi^{\ve}]_t, 
\qquad 
t \in \RealNum,
\end{equation}
where
$\xi^{\ve}$
stands for 
the smooth approximation
of the space-time white noise $\xi$
on $\RealNum \times \Torus^2$, respectively.
Moreover, if we choose
\[
V_0 = \int_{-\infty}^0 P_{-s}^{\theta /2} \nabla X^{\ve}_s ds,
\qquad
Y_0 = \int_{-\infty}^0 P_{-s}^{\theta /2} 
[R^\perp X^{\ve}_s \cdot \nabla X^{\ve}_s ]ds,
\]
Then, in this case we have $V^{\ve} = \cI [\nabla X^{\ve} ]$ 
and $Y^{\ve} = \cI [ R^\perp X^{\ve} \cdot \nabla X^{\ve}]$.
Using these, 
we define
the other four symbols as follows:
\begin{align*}
Z^{\ve} &=R^\perp Y^{\ve} \reso \nabla X^{\ve}
 = R^\perp \{ \cI [ R^\perp X^{\ve} \cdot \nabla X^{\ve}]\} \reso \nabla X^{\ve},
 \\
 W^{\ve} &= R^\perp V^{\ve} \reso \nabla X^{\ve}
 =R^\perp \{ \cI [\nabla X^{\ve} ] \} \reso \nabla X^{\ve},
 \\
 \hat{Z}^{\ve} &= \nabla Y^{\ve} \cdot R^\perp X^{\ve} 
 = \nabla \{ \cI [ R^\perp X^{\ve} \cdot \nabla X^{\ve}]\} \cdot R^\perp X^{\ve},
 \\
 \hat{W}^{\ve} &=R^\perp X^{\ve} \reso \nabla V^{\ve}
 = R^\perp X^{\ve} \reso \nabla \cI [\nabla X^{\ve} ].
 \end{align*}
 The random driver 
 ${\mathbf X}^{\ve} 
 = (X^{\ve}, V^{\ve}, Y^{\ve}, Z^{\ve}, W^{\ve}, \hat{Z}^{\ve}, \hat{W}^{\ve})$ 
obtained in this way is called the stationary natural enhancement
of the smooth approximation $X^{\ve}$ given in \eqref{eq.20171113_1}.
 \end{example}
 
 
\begin{remark}\label{rem.200413}
Now, we summarize some data on the driver
for the reader's convenience.

\begin{table}[htb]
 \begin{tabular}{|c|c|c |c|c|} \hline
 Symbol & Regularity & Space & Component of ${\mathbf X}^{\ve}$ & Order \\ \hline \hline
 $X$ & $\frac12 (\theta -2)$ & $C_T\HolBesSp{\frac12 (\theta -2)-\kappa}$&
 $X^{\ve}$ & 1 \\ \hline
 $V$ & $\frac32 \theta -2$ & $(C_T\HolBesSp{ \frac32 \theta -2 -\kappa})^2 $ &
 $V^{\ve} :=\cI [\nabla X^{\ve}]$ & 1 \\ \hline 
 $Y$ & $2\theta -3$ & 
 $\cL_T^{2\theta -3-\kappa,\frac{2\theta -3-\kappa}{\theta}} $ & 
 $Y^{\ve} :=\cI [R^\perp X^{\ve} \cdot \nabla X^{\ve}]$ & 2 \\ \hline 
 $Z$ & $\frac52 (\theta -2)$ & $C_T\HolBesSp{\frac52 (\theta -2)-\kappa}$ & 
 $R^\perp Y^{\ve} \reso \nabla X^{\ve}$ & 3 \\ \hline
$W$ & $2 (\theta -2)$ & $(C_T\HolBesSp{2 (\theta -2)-\kappa})^2$& 
 $R^\perp V^{\ve} \reso \nabla X^{\ve}$ & 2 \\ \hline
 $\hat{Z}$ & $\frac52 (\theta -2)$ & $C_T\HolBesSp{\frac52 (\theta -2)-\kappa}$&
 $\nabla Y^{\ve} \cdot R^\perp X^{\ve}$ &3 \\ \hline
 $\hat{W}$ & $2 (\theta -2)$ & $(C_T\HolBesSp{2 (\theta -2)-\kappa})^2$ & 
 $R^\perp X^{\ve} \reso \nabla V^{\ve}$ & 2 \\ \hline 
 \end{tabular}
\end{table} 
\end{remark}

In the table, ``Regularity $\alpha$" means that 
 the space regularity of the symbol is $\alpha -\kappa$
 for sufficiently small $\kappa >0$.

For example,
the regularity of the symbol $X$ is
$\frac{1}{2}(\theta-2)-\kappa$
for some $\kappa>0$.

 ``Component of ${\mathbf X}^{\ve}$" indicates how
the stationary natural enhancement 
of the smooth approximation of $X^{\ve}$ is defined.
``Order" shows the order of (inhomogeneous) Wiener chaos the corresponding 
component of ${\mathbf X}^{\ve}$ belongs to.

For instance, the ``$Y$-component" of a driver has the space regularity 
$2\theta -3 -\kappa$.
Precisely, it belongs to the Banach space
$\cL_T^{2\theta -3-\kappa,\frac{2\theta -3-\kappa}{\theta}} $.
The ``$Y$-component" of ${\mathbf X}^{\ve}$, 
the stationary natural enhancement 
of the smooth approximation of the stationary OU-like process,
is defined to be $\cI [R^\perp X^{\ve} \cdot \nabla X^{\ve}]$.
As a functional of white noise $\xi$, 
$\cI [R^\perp X^{\ve} \cdot \nabla X^{\ve}]_{t,x}$ belongs to 
the second order (inhomogeneous) Wiener chaos for each $(t,x)$.


\subsection{Banach space for solutions of paracontrolled QGE}

In this subsection
we introduce the Banach space on which the fixed point problem 
for our paracontrolled QGE is formulated,
so that solutions of QGE will belong to this Banach space.

	Let $7/4 < \theta \le 2$. 
Write
\begin{equation}\label{eq:180114-2}
\rho :=\frac{4 \theta -7}{10^{100}\theta}.
\end{equation}
\begin{definition}
We set
	\begin{align*}
		\sols{T}{\kappa}{\kappa'}
		:=
			\cL_T^{ q-\kappa', 
			 \frac32\theta -2 -\kappa', 1-\frac{\kappa'}{\theta}}
			\times
			\cL_T^{q' -\kappa'+\kappa, 
			 \frac72 \theta - 5 - \theta\kappa',1-\kappa'},
	\end{align*}
where 
\begin{eqnarray}\label{eq:180114-1}
(q, q') 
:=
\begin{cases}
 ( 2 - \frac{5}{2\theta}, 1) &\frac{11}{6} < \theta \le 2, \\
 (5 - \frac{8}{\theta}- 2\rho, 7 - \frac{11}{\theta} - 3\rho)
 &\frac74 < \theta \le \frac{11}{6}.
\end{cases}
\end{eqnarray}
Its norm is denoted by $\|(v,w)\|_{\sols{T}{\kappa}{\kappa'}}$
for $(v,w) \in 
		\sols{T}{\kappa}{\kappa'}$.
\end{definition}
An element of this Banach space is usually denoted by $(v,w)$.

The parameter $\rho$ is defined to be sufficiently small
so as to justify computation below.
Note that
\begin{equation}\label{eq:180208-11}
0<q<1,~ 0<q' \le 1.
\end{equation}

\begin{remark}\label{rem_20171114-3}
For the readers' convenience, 
we compare the space regularities of the components 
$(X,V,Y,Z,W,\hat{Z},\hat{W})$
of a driver 
and those of $v$ and $w$. 
If $3/2 < \theta <2$, 
it holds that
\begin{eqnarray*}
\frac52 (\theta-2)< 2 (\theta -2) < \frac12 (\theta -2) < 0 < 2\theta -3 
< \frac32 \theta -2 < \frac72 \theta -5 < 3\theta -4.
\end{eqnarray*}
\end{remark}


By (\ref{eq:180114-1}), one can easily see that, for every $\theta \in (7/4, 2]$,
\begin{align}\label{eq_20171124_1}
\frac32 \theta -2 -\theta q < \min \Bigl\{ \frac72 \theta -5 -\theta q', \,
\frac12 (\theta -2)\Bigr\},
\quad
 q' \ge \frac{5}{\theta} -3 + 2q.
\end{align}

 
Here we give a couple of remarks on these function spaces.
	We now make a comment on
	$
		\sols{T}{\kappa}{\kappa'}
	$.
\begin{remark}\label{rem_20160921064624}
Let 
$7/4<\theta \le 2$.
	Recall that
	\begin{align*}
		\cL_T^{
		q-\kappa', 
			 \frac32\theta -2 -\kappa', 1-\frac{\kappa'}{\theta}
			 }
		&=
			\mathcal{E}_T^{ q-\kappa' }
			 \HolBesSp{\frac32\theta -2-\kappa'}
			\cap
			C_T\HolBesSp{\frac32\theta -2 - \theta q+ (\theta -1)\kappa' }
			\cap
			\mathcal{E}_T^{q -\kappa',1-\frac{\kappa'}{\theta}}
			 \HolBesSp{ \frac12\theta -2 },\\
		\cL_T^{ q' -\kappa'+\kappa, 
			 \frac72 \theta - 5 - \theta\kappa',1-\kappa' }
		&=
			\mathcal{E}_T^{q' -\kappa'+\kappa}
			 \HolBesSp{\frac72 \theta - 5 - \theta\kappa' }
			\cap
			C_T\HolBesSp{ \frac72 \theta -5 -\theta q' -\theta\kappa}
			\cap
			\mathcal{E}_T^{q' -\kappa'+\kappa,1-\kappa'}
	 \HolBesSp{\frac52 (\theta -2)}.
	\end{align*}
Thanks to Proposition \ref{rem_20171114-2}(1), we have 
		\begin{align*}
					\sols{T}{\kappa}{\kappa'}
					&=
						\cL_T^{q-\kappa', 
			 \frac32\theta -2 -\kappa', 1-\frac{\kappa'}{\theta}}
						\times
						\cL_T^{q' -\kappa'+\kappa, 
			 \frac72 \theta - 5 - \theta\kappa',1-\kappa' 
			 }\\
					&\subset
						\cL_T^{ q-\kappa', 
			 \frac32\theta -2 -\kappa', \frac32- \frac{2}{\theta} - \kappa'
			 }
						\times
						\cL_T^{ q'-\kappa'+\kappa, 
			 \frac72 \theta - 5 - \theta\kappa', \frac{1}{\theta}-\kappa' 
			 }.
				\end{align*}
\end{remark}

\begin{proposition}\label{prop:180131-11}
Let
$(u,v) \in \sols{T}{\kappa}{\kappa'}$
with $0<\kappa<\kappa' \ll 1$,
and let $q$ be as above.
\begin{enumerate}
\item[$(1)$]
$\displaystyle
\|R^\perp (v_t- v_s)\|_{\HolBesSp{(\theta -1)\kappa'}}
+
					\|v_t-v_s\|_{\HolBesSp{(\theta -1)\kappa'}}
\lesssim_{\theta,\kappa'}
						s^{-q+\kappa'}
						(t-s)^{ \frac32 -\frac{2}{\theta} -\kappa'}
						\|v\|_{
						\cL_T^{q-\kappa', 
			 \frac32\theta -2 -\kappa', 1-\frac{\kappa'}{\theta} }
						 }.
$
In particular,
$$\displaystyle
\|R^\perp (v_t- v_s)\|_{L^\infty(\Torus^2)}
+
					\|v_t-v_s\|_{L^\infty(\Torus^2)}
\lesssim_{\theta,\kappa'}
						s^{-q+\kappa'}
						(t-s)^{ \frac32 -\frac{2}{\theta} -\kappa'}
						\|v\|_{
						\cL_T^{q-\kappa', 
			 \frac32\theta -2 -\kappa', 1-\frac{\kappa'}{\theta} }
						 }.
$$
\item[$(2)$]
$\displaystyle
\|R^\perp (w_t- w_s)\|_{\HolBesSp{\frac72\theta-6}}
					+
					\|w_t-w_s\|_{\HolBesSp{\frac72\theta-6}}
\lesssim_{\theta,\kappa,\kappa'}
						s^{-q+\kappa'-\kappa}
						(t-s)^{ \frac{1}{\theta} -\kappa'}
						\|v\|_{
						\cL_T^{q-\kappa'+\kappa, 
			 \frac72\theta -5 -\theta\kappa', 1-\frac{\kappa'}{\theta} }
						 }.
$
In particular,
$$\displaystyle
\|R^\perp (w_t-w_s)\|_{L^\infty(\Torus^2)}
					+
					\|w_t-w_s\|_{L^\infty(\Torus^2)}\lesssim_{\theta,\kappa,\kappa'}
						s^{-q+\kappa'-\kappa}
						(t-s)^{\frac{1}{\theta} -\kappa'}
						\|v\|_{
						\cL_T^{q-\kappa'+\kappa, 
			 \frac72\theta -5 -\theta\kappa', 1-\frac{\kappa'}{\theta} }
						 }.
$$
\end{enumerate}
\end{proposition}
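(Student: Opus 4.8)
The plan is to prove both statements by the same mechanism: express the time-increment of $v$ (resp.\ $w$) in terms of the H\"older-in-time norms contained in the definition of the relevant $\cL$-space, then interpolate with the $C_T\HolBesSp{\cdot}$ norm to land in the low-regularity space $\HolBesSp{(\theta-1)\kappa'}$ (resp.\ $\HolBesSp{\frac72\theta-6}$), and finally feed the result through the embedding $\HolBesSp{\alpha}\subset L^\infty$ for $\alpha>0$ (Proposition \ref{prop:180209-1}) together with the fact that the Riesz transform is bounded on every $\HolBesSp{\alpha}$ (Lemma \ref{lm.20171227}).

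For (1): $v$ belongs to $\cL_T^{q-\kappa',\frac32\theta-2-\kappa',1-\frac{\kappa'}{\theta}}$, and by Remark \ref{rem_20160921064624} this is contained in $\cL_T^{q-\kappa',\frac32\theta-2-\kappa',\frac32-\frac{2}{\theta}-\kappa'}$, i.e.\ it sits in $\mathcal{E}_T^{q-\kappa'}\HolBesSp{\frac32\theta-2-\kappa'}$, in $C_T\HolBesSp{\frac32\theta-2-\theta q+(\theta-1)\kappa'}$, and crucially in $\mathcal{E}_T^{q-\kappa',\frac32-\frac{2}{\theta}-\kappa'}\HolBesSp{\frac12\theta-2}$. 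Applying Corollary \ref{cor:180131-2} with $\alpha=\frac32\theta-2-\kappa'$, $\eta=q-\kappa'$, $\delta=1-\frac{\kappa'}{\theta}$ and the smaller exponent $\delta'=\frac32-\frac{2}{\theta}-\kappa'$ (note $\alpha-\theta\delta'=\frac12\theta-2$ and one checks $\delta'\le\delta$ for $\theta\le2$), one gets
\[
\|v_t-v_s\|_{\HolBesSp{\frac12\theta-2}}\lesssim s^{-q+\kappa'}(t-s)^{\frac32-\frac{2}{\theta}-\kappa'}\|v\|_{\cL_T^{q-\kappa',\frac32\theta-2-\kappa',1-\frac{\kappa'}{\theta}}}.
\]
On the other hand $\|v_t-v_s\|_{\HolBesSp{\frac32\theta-2-\kappa'}}\le \|v_t\|+\|v_s\|\lesssim s^{-q+\kappa'}\|v\|$ directly from the $\mathcal{E}_T^{q-\kappa'}$ norm. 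Since $\frac12\theta-2<0<(\theta-1)\kappa'<\frac32\theta-2-\kappa'$ for $\theta>\frac43$ and small $\kappa'$, interpolation (Proposition \ref{prop_20160928054620}(2)) between these two bounds at the appropriate $\nu$ gives the $\HolBesSp{(\theta-1)\kappa'}$-estimate with the claimed exponents $s^{-q+\kappa'}(t-s)^{\frac32-\frac{2}{\theta}-\kappa'}$ (the time-exponent is preserved under interpolation since both endpoint bounds carry the same $(t-s)$-power up to the trivial endpoint, so one should interpolate the genuine H\"older bound against the crude sum-of-norms bound and absorb; the cleanest route is to note that $\|v_t-v_s\|_{\HolBesSp{\frac32\theta-2-\kappa'}}$ also admits a H\"older-in-time bound of the form $s^{-q+\kappa'}(t-s)^{0}$, and interpolating a $(t-s)^{\frac32-\frac2\theta-\kappa'}$ bound with a $(t-s)^0$ bound still yields a positive power, which one then checks equals the stated one after choosing $\nu$). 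Then $R^\perp$ is bounded on $\HolBesSp{(\theta-1)\kappa'}$ by Lemma \ref{lm.20171227}, giving the first display; and since $(\theta-1)\kappa'>0$, Proposition \ref{prop:180209-1} upgrades the $\HolBesSp{(\theta-1)\kappa'}$-norm to the $L^\infty$-norm, giving the ``in particular''.

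For (2): the argument is verbatim the same with $w$ in place of $v$, using that the second factor of $\sols{T}{\kappa}{\kappa'}$ embeds (again by Remark \ref{rem_20160921064624} / Proposition \ref{rem_20171114-2}(1)) into a space controlling $\|w_t-w_s\|_{\HolBesSp{\frac52(\theta-2)}}$ with weight $s^{-(q'-\kappa'+\kappa)}(t-s)^{\frac1\theta-\kappa'}$ and $\|w_t\|_{\HolBesSp{\frac72\theta-5-\theta\kappa'}}\lesssim s^{-(q'-\kappa'+\kappa)}$; interpolating at the target regularity $\frac72\theta-6$ (which lies strictly between $\frac52\theta-5$ and $\frac72\theta-5$ for $\theta<2$, and equals a positive number precisely when $\theta>\frac{12}{7}$, consistent with $\theta>\frac74$) and applying Lemma \ref{lm.20171227} and Proposition \ref{prop:180209-1} finishes it. Note the norm appearing on the right in the statement is written with first upper index $q-\kappa'+\kappa$; this should be read as $q'-\kappa'+\kappa$, the weight of the $w$-factor, and I would simply carry that index through.

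\emph{Main obstacle.} The only non-bookkeeping point is making sure the interpolation exponents line up: one must verify (a) that the target regularities $(\theta-1)\kappa'$ and $\frac72\theta-6$ are \emph{strictly positive} for the given range of $\theta$ and small $\kappa,\kappa'$ — this is where the hypothesis $\theta>7/4$ (more than enough for $\theta>12/7$) is used, and it is what permits the $L^\infty$-embedding — and (b) that the convex combination $\nu$ producing these regularities, when applied to the two endpoint time-weights, reproduces exactly the advertised powers $s^{-q+\kappa'}(t-s)^{\frac32-\frac2\theta-\kappa'}$ and $s^{-q'+\kappa'-\kappa}(t-s)^{\frac1\theta-\kappa'}$. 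Both checks are short linear computations in the exponents; everything else (boundedness of $R^\perp$, the embedding, Corollary \ref{cor:180131-2}) is quoted directly from Section 2.
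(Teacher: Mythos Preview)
Your approach is the paper's approach, but you trip over an arithmetic slip that makes you think an extra interpolation is required when it is not. With your own choice $\alpha=\tfrac32\theta-2-\kappa'$ and $\delta'=\tfrac32-\tfrac{2}{\theta}-\kappa'$ one has
\[
\alpha-\theta\delta'=\tfrac32\theta-2-\kappa'-\theta\bigl(\tfrac32-\tfrac{2}{\theta}-\kappa'\bigr)=(\theta-1)\kappa',
\]
not $\tfrac12\theta-2$. So Corollary~\ref{cor:180131-2} already outputs the bound in $\HolBesSp{(\theta-1)\kappa'}$ with exactly the exponents $s^{-(q-\kappa')}(t-s)^{\frac32-\frac2\theta-\kappa'}$, and no interpolation against the crude triangle bound is needed. (Your interpolation would also work and recovers the same time power, since it is precisely the computation hidden inside the proof of Corollary~\ref{cor:180131-2}; it is just redundant.) The same remark applies to part (2): taking $\alpha=\tfrac72\theta-5-\theta\kappa'$ and $\delta'=\tfrac1\theta-\kappa'$ gives $\alpha-\theta\delta'=\tfrac72\theta-6$ directly.

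After that, the paper does exactly what you describe: Lemma~\ref{lm.20171227} handles $R^\perp$, and since $(\theta-1)\kappa'>0$ and $\tfrac72\theta-6>\tfrac18$ for $\theta>\tfrac74$, Proposition~\ref{prop:180209-1} gives the $L^\infty$ consequence. Your observation that the right-hand side in (2) should carry $q'$ and $\|w\|$ (and last index $1-\kappa'$) rather than $q$, $\|v\|$, $1-\tfrac{\kappa'}{\theta}$ is correct; those are typos in the statement.
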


\begin{proof}
We concentrate on (2) since (2)
is somewhat more delicate than (1).
Since
$\frac72 \theta -6 >\frac{1}{8}$,
we have only to show the estimate on H\"{o}lder-Zygmund spaces.
By Lemma \ref{lm.20171227}
we have only to handle
$
					\|w_t-w_s\|_{\HolBesSp{\frac72\theta-6}}$.
It remains to use Corollary \ref{cor:180131-2}.
\end{proof}

\begin{proposition}\label{prop:180131-12}
Under the above assumption on $\kappa$ and $\kappa'$,
for every $\gamma_1, \gamma_2$ satisfying
				\[
			 \frac32 \theta -2 -\theta q
								 +(\theta -1)\kappa'					
					\leq
						\gamma_1
					\leq
						\frac32 \theta -2-\kappa', \quad
					 \frac72 \theta -5 -\theta q' - \theta\kappa 
					\leq
						\gamma_2
					\leq
					 \frac72 \theta - 5 - \theta\kappa'.
				\]
and for every $(v,w)\in\sols{T}{\kappa}{\kappa'}$,
we have
\begin{enumerate}
\item[$(1)$]
$
\|v_t\|_{\HolBesSp{\gamma_1}}
					\lesssim_{\gamma_1,\theta,\kappa,\kappa'}
						t^{
							-\frac{1}{\theta}
							\left\{ 
								\gamma_1- ( \frac32 \theta -2 -\theta q
								 +(\theta -1)\kappa')
							\right\}
					}
\|v\|_{\cL_T^{q-\kappa',\frac32\theta -2 -\kappa',1-\frac{\kappa'}{\theta}}}
$
\item[$(2)$]
$
					\|w_t\|_{\HolBesSp{\gamma_2}}
					\lesssim_{\gamma_2,\theta,\kappa,\kappa'}
						t^{
							-\frac{1}{\theta}
							\left\{
								\gamma_2- ( \frac72 \theta -5 -\theta q'
 - \theta\kappa)
							\right\}
						}
						\|w\|_{\cL_T^{
						q' -\kappa'+\kappa, 
			 \frac72 \theta - 5 - \theta\kappa',1-\kappa' 
	 						}}.
$
\end{enumerate}
\end{proposition}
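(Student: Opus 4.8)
The plan is to derive both bounds purely by interpolation, using the explicit description of the two factor spaces unpacked in Remark \ref{rem_20160921064624} together with the interpolation inequality Proposition \ref{prop_20160928054620}(2). The observation is that each of the spaces $\cL_T^{q-\kappa',\frac32\theta -2 -\kappa',1-\frac{\kappa'}{\theta}}$ and $\cL_T^{q' -\kappa'+\kappa,\frac72 \theta - 5 - \theta\kappa',1-\kappa'}$ has, among its three defining factors, a $C_T$-type factor carrying no time weight (the one of lowest regularity) and an $\cE_T$-type factor of higher regularity carrying a time weight, and that the exponents appearing in the statement are exactly those produced by interpolating between these two.

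For (1), I would first record the two endpoint estimates coming from the membership of $v$ in $C_T\HolBesSp{\frac32\theta -2 - \theta q + (\theta -1)\kappa'}$ and in $\cE_T^{q-\kappa'}\HolBesSp{\frac32\theta -2-\kappa'}$, namely $\|v_t\|_{\HolBesSp{\frac32\theta -2 - \theta q + (\theta -1)\kappa'}} \le \|v\|_{\cL_T^{q-\kappa',\frac32\theta -2 -\kappa',1-\frac{\kappa'}{\theta}}}$ and $\|v_t\|_{\HolBesSp{\frac32\theta -2-\kappa'}} \le t^{-(q-\kappa')}\|v\|_{\cL_T^{q-\kappa',\frac32\theta -2 -\kappa',1-\frac{\kappa'}{\theta}}}$ for $0<t\le T$. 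Given $\gamma_1$ in the stated range I would write it as the convex combination $\gamma_1 = (1-\nu)(\frac32\theta -2 - \theta q + (\theta -1)\kappa') + \nu(\frac32\theta -2-\kappa')$ with $\nu\in[0,1]$; subtracting the left endpoint gives the identity $\nu(q-\kappa') = \frac1\theta\{\gamma_1 - (\frac32\theta -2 - \theta q + (\theta -1)\kappa')\}$, so that Proposition \ref{prop_20160928054620}(2) applied to these two regularities yields precisely the asserted bound. For (2) I would run the identical argument, interpolating instead between the factors $C_T\HolBesSp{\frac72 \theta -5 -\theta q' - \theta\kappa}$ and $\cE_T^{q' -\kappa'+\kappa}\HolBesSp{\frac72 \theta - 5 - \theta\kappa'}$ of $\cL_T^{q' -\kappa'+\kappa,\frac72 \theta - 5 - \theta\kappa',1-\kappa'}$; writing $\gamma_2 = (1-\nu)(\frac72\theta -5 -\theta q' - \theta\kappa) + \nu(\frac72\theta - 5 - \theta\kappa')$ one checks the matching identity $\nu(q'-\kappa'+\kappa) = \frac1\theta\{\gamma_2 - (\frac72\theta -5 -\theta q' - \theta\kappa)\}$ and concludes as before.

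There is essentially no analytic obstacle here; the only things needing care are the exponent bookkeeping in the two affine-combination identities and the check that the stated intervals for $\gamma_1$ and $\gamma_2$ are nonempty, which holds because $q,q'>0$ and $\kappa<\kappa'$ are taken sufficiently small — this is also exactly what guarantees $\nu\in[0,1]$ above. Finally, the $C_T$- and $\cE_T$-type norms used in the endpoint estimates are automatically dominated by the full $\cL_T$-norm, since the latter is the intersection norm.
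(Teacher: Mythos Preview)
Your proposal is correct and takes essentially the same approach as the paper: the paper simply cites Proposition~\ref{rem_20171114-2}(3), whose proof is exactly the interpolation between the $C_T\HolBesSp{\alpha-\theta\eta}$ and $\cE_T^\eta\HolBesSp{\alpha}$ factors that you spell out. Your argument is just this general lemma specialized to the two parameter choices $(\eta,\alpha)=(q-\kappa',\tfrac32\theta-2-\kappa')$ and $(\eta,\alpha)=(q'-\kappa'+\kappa,\tfrac72\theta-5-\theta\kappa')$, with the exponent bookkeeping made explicit.
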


\begin{proof}
Use Proposition \ref{rem_20171114-2}(3).
\end{proof}

We can assume $\kappa'$ is small enough to have
\begin{equation}\label{eq:180206-1}
			 \frac32 \theta -2 -\theta q
								 +(\theta -1)\kappa'					
					<0
					<
						\frac32 \theta -2-\kappa', \quad
					 \frac72 \theta -5 -\theta q' - \theta\kappa 
					<0
<					 \frac72 \theta - 5 - \theta\kappa',
\end{equation}
since $0 < q < 1$.

\begin{corollary}\label{cor:180201-21}
Assume $(\ref{eq:180206-1})$.
\begin{enumerate}
\item[$(1)$]
$\displaystyle
					\|v_t\|_{{\mathcal C}^{(\theta-1)(\kappa'-\kappa)}}
\lesssim_{\theta,\kappa,\kappa'}
						t^{ 
						\frac{1}{\theta}
							\left( 
								 \frac32 \theta -2 -\theta q
								 +(\theta -1)\kappa 
							\right)
									}
						\|v\|_{\cL_T^{
						q-\kappa', 
			 \frac32\theta -2 -\kappa', 1-\frac{\kappa'}{\theta}
			 						}}
$
for each
$(u,v) \in \sols{T}{\kappa}{\kappa'}$.
In particular,
$$
\|v_t\|_{L^\infty(\Torus^2)}
+\|R^\perp v_t\|_{L^\infty(\Torus^2)}
\lesssim_{\theta,\kappa,\kappa'}
						t^{ 
						\frac{1}{\theta}
							\left( 
								 \frac32 \theta -2 -\theta q
								 +(\theta -1)\kappa 
							\right)
									}
						\|v\|_{\cL_T^{
						q-\kappa', 
			 \frac32\theta -2 -\kappa', 1-\frac{\kappa'}{\theta}
			 						}}.
$$
\item[$(2)$]
$\displaystyle
					\|w_t\|_{{\mathcal C}^{(\theta-1)(\kappa'-\kappa)}}
\lesssim_{\theta,\kappa,\kappa'}
						t^{
							\frac{1}{\theta}
							\left(
							\frac72 \theta -5 -\theta q'
 + (\theta -1)\kappa + \kappa'
							\right)
						}
						\|w\|_{\cL_T^{ 
						q'-\kappa'+\kappa, 
			 \frac72 \theta - 5 - \theta\kappa',1-\kappa'}}
$
for each
$(u,v) \in \sols{T}{\kappa}{\kappa'}$.
In particular,
$$
					\|w_t\|_{L^\infty(\Torus^2)}
+\|R^\perp w_t\|_{L^\infty(\Torus^2)}
\lesssim_{\theta,\kappa,\kappa'}
						t^{
							\frac{1}{\theta}
							\left(
							\frac72 \theta -5 -\theta q'
 + (\theta -1)\kappa + \kappa'
							\right)
						}
						\|w\|_{\cL_T^{ 
						q'-\kappa'+\kappa, 
			 \frac72 \theta - 5 - \theta\kappa',1-\kappa'}}.
$$
\end{enumerate}
\end{corollary}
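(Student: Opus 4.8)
The plan is to read off both H\"older--Zygmund bounds from Proposition \ref{prop:180131-12} by taking the free parameters there to be $\gamma_1 = \gamma_2 := (\theta-1)(\kappa'-\kappa)$, and then to deduce the $L^\infty(\Torus^2)$-estimates from the embedding $\HolBesSp{\gamma}\subset L^\infty(\Torus^2)$ for $\gamma>0$ (Proposition \ref{prop:180209-1}) together with the boundedness of the Riesz transform on H\"older--Zygmund spaces (Lemma \ref{lm.20171227}).

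First I would check that $\gamma := (\theta-1)(\kappa'-\kappa)$ is an admissible value of $\gamma_1$ and of $\gamma_2$. Since $\theta>1$ and $0<\kappa<\kappa'$ (by the standing convention \eqref{eq:180213-3}) we have $\gamma>0$, so $\gamma$ exceeds both lower endpoints $\frac32\theta-2-\theta q+(\theta-1)\kappa'$ and $\frac72\theta-5-\theta q'-\theta\kappa$, which are negative by \eqref{eq:180206-1}. For the upper endpoints, \eqref{eq:180206-1} gives $\frac32\theta-2-\kappa'>0$ and $\frac72\theta-5-\theta\kappa'>0$, while $\gamma<(\theta-1)\kappa'<\kappa'\to 0$ as $\kappa'\searrow 0$; hence after shrinking $\kappa'$ if necessary, $\gamma$ also lies below $\frac32\theta-2-\kappa'$ and below $\frac72\theta-5-\theta\kappa'$. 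Proposition \ref{prop:180131-12}(1),(2) then applies with $\gamma_1=\gamma_2=\gamma$ and bounds $\|v_t\|_{\HolBesSp{\gamma}}$ and $\|w_t\|_{\HolBesSp{\gamma}}$ by a power of $t$ times the relevant $\cL_T$-norm; simplifying the resulting exponent of $t$ — using the cancellation $(\theta-1)(\kappa'-\kappa)-(\theta-1)\kappa'=-(\theta-1)\kappa$ for the $v$-part and the analogous reduction for the $w$-part, and recalling that $t$ is bounded by $T\le 1$ — produces the two displayed estimates (a routine bookkeeping of exponents).

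For the ``in particular'' assertions I would stress that the Riesz transforms are not bounded on $L^\infty(\Torus^2)$, so one cannot control $\|R^\perp v_t\|_{L^\infty(\Torus^2)}$ by $\|v_t\|_{L^\infty(\Torus^2)}$ directly; one must instead work in the H\"older--Zygmund scale. Since $\gamma>0$, Proposition \ref{prop:180209-1} yields $\|v_t\|_{L^\infty(\Torus^2)}\lesssim\|v_t\|_{\HolBesSp{\gamma}}$, and writing $R^\perp v_t=(R_2 v_t,-R_1 v_t)$ and applying Lemma \ref{lm.20171227} componentwise gives $\|R^\perp v_t\|_{\HolBesSp{\gamma}}\lesssim\|v_t\|_{\HolBesSp{\gamma}}$, which again embeds into $L^\infty(\Torus^2)$; summing these contributions and invoking the H\"older--Zygmund bound just proved gives the $L^\infty$-estimate in (1), and the same argument with $w$ in place of $v$ gives (2). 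There is no genuine obstacle here — this is a direct corollary of Proposition \ref{prop:180131-12} — the only points needing a little care being the verification that $(\theta-1)(\kappa'-\kappa)$ lies in the admissible $\gamma$-ranges, which is exactly where \eqref{eq:180206-1} and the smallness of $\kappa'$ are used, and the routing of the Riesz-transform part of the ``in particular'' statement through the H\"older--Zygmund norm rather than through $L^\infty$.
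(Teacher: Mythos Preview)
Your approach is correct and is exactly the paper's: apply Proposition~\ref{prop:180131-12} with $\gamma_1=\gamma_2=(\theta-1)(\kappa'-\kappa)$, then pass to $L^\infty$ via Proposition~\ref{prop:180209-1} and Lemma~\ref{lm.20171227}. One small caveat on the ``routine bookkeeping'': substituting $\gamma_2=(\theta-1)(\kappa'-\kappa)$ into Proposition~\ref{prop:180131-12}(2) actually yields the exponent $\tfrac{1}{\theta}\bigl(\tfrac72\theta-5-\theta q'-(\theta-1)\kappa'-\kappa\bigr)$, which is \emph{smaller} than the printed exponent in (2) by $\kappa+\kappa'$; since $t\le 1$ the inequality runs the wrong way, so invoking $T\le 1$ does not bridge the gap. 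This is an apparent typo in the stated exponent of (2) rather than a flaw in your argument---the paper itself only writes ``the proof of (2) being similar'' and never uses that specific exponent downstream---so your method is precisely what is intended.
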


\begin{proof}
We concentrate on (1), the proof of (2) being similar.
From
Proposition \ref{prop:180131-12}(1)
with $\gamma_1=(\theta -1)(\kappa'-\kappa) >0$
we deduce
the desired estimate.
It remains to 
use Lemma \ref{lm.20171227} 
to have
\[
					\|v_t\|_{L^\infty(\Torus^2)}
+\|R^\perp v_t\|_{L^\infty(\Torus^2)}
\lesssim
						\|v_t\|_{\HolBesSp{(\theta -1)(\kappa'-\kappa) }}.
\]
\end{proof}

\begin{corollary}\label{cor:180201-2}
Let
$(v,w) \in \sols{T}{\kappa}{\kappa'}$.
Assume
$(\ref{eq:180206-1})$.
If
\begin{equation}\label{eq:180206-200}		 
\frac72 \theta -5 -\theta q'
								 - \theta \kappa					
					\leq
						\gamma
					\leq
					 \frac32 \theta - 2- \kappa'/
\end{equation}
Then we have
$\displaystyle
					\|v_t+w_t\|_{\HolBesSp{\gamma}}+
					\|R^{\perp}(v_t+w_t)\|_{\HolBesSp{\gamma}}
					\lesssim_{\theta,\gamma,\kappa,\kappa'}
						t^{
						-\frac{1}{\theta}
							\left(
								\gamma - \frac32 \theta +2+\theta q
								 -(\theta -1)\kappa' 
							\right)
													}
						\|(v,w)\|_{\sols{T}{\kappa}{\kappa'}}.
$
In particular,
$\displaystyle
					\|v_t+w_t\|_{{\mathcal C}^{(\kappa'-\kappa)(\theta -1)}}
+
					\|R^\perp(v_t+w_t)\|_{{\mathcal C}^{(\kappa'-\kappa)(\theta -1)}}
					\lesssim_{\theta,\gamma,\kappa,\kappa'}
						t^{\frac{1}{\theta}
							\left( 
								 \frac32 \theta -2 -\theta q
								 +(\theta -1)\kappa 
							\right)						
						 }
						\|(v,w)\|_{\sols{T}{\kappa}{\kappa'}}.
$
\end{corollary}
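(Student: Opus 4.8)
The plan is to deduce everything from the two pointwise-in-time bounds of Proposition \ref{prop:180131-12}, together with the boundedness of the Riesz transform on H\"older--Zygmund spaces (Lemma \ref{lm.20171227}) and the embedding $\HolBesSp{\gamma}\subset L^\infty$ for $\gamma>0$ (Proposition \ref{prop:180209-1}).

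The one nontrivial input is a numerical inequality: provided $\kappa,\kappa'$ are small enough,
\[
\theta\kappa+(\theta-1)\kappa'\ \le\ 2\theta-3-\theta(q'-q),
\]
where by the explicit values \eqref{eq:180114-1} the right-hand side equals $\theta\rho>0$ when $\frac74<\theta\le\frac{11}{6}$ and $3\theta-\frac{11}{2}>0$ when $\frac{11}{6}<\theta\le2$; this is exactly the point where the strict inequality \eqref{eq_20171124_1} and the definition of $\rho$ are used, and I expect it to be the (mild) main obstacle. Rewriting it, $\frac32\theta-2-\theta q+(\theta-1)\kappa'\le\frac72\theta-5-\theta q'-\theta\kappa$, so any $\gamma$ in the range \eqref{eq:180206-200} satisfies $\frac32\theta-2-\theta q+(\theta-1)\kappa'\le\gamma\le\frac32\theta-2-\kappa'$; moreover $\frac32\theta-2-\kappa'\le\frac72\theta-5-\theta\kappa'$ since $\theta>3/2$ and $\kappa'$ is small, so such $\gamma$ also satisfies $\frac72\theta-5-\theta q'-\theta\kappa\le\gamma\le\frac72\theta-5-\theta\kappa'$. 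Hence both parts of Proposition \ref{prop:180131-12} apply and yield
\[
\|v_t\|_{\HolBesSp{\gamma}}\lesssim t^{-\frac1\theta(\gamma-\frac32\theta+2+\theta q-(\theta-1)\kappa')}\|v\|_{\cL_T^{q-\kappa',\frac32\theta-2-\kappa',1-\frac{\kappa'}{\theta}}},\qquad \|w_t\|_{\HolBesSp{\gamma}}\lesssim t^{-\frac1\theta(\gamma-\frac72\theta+5+\theta q'+\theta\kappa)}\|w\|_{\cL_T^{q'-\kappa'+\kappa,\frac72\theta-5-\theta\kappa',1-\kappa'}}.
\]

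Next I would observe that the bound on $v_t$ already carries the exponent asserted in the corollary, while the displayed numerical inequality is precisely what gives $\gamma-\frac72\theta+5+\theta q'+\theta\kappa\le\gamma-\frac32\theta+2+\theta q-(\theta-1)\kappa'$; since $0<t\le T\le1$, this turns the bound on $w_t$ into one with the same (larger) power of $t$. Adding the two via $\|v_t+w_t\|_{\HolBesSp{\gamma}}\le\|v_t\|_{\HolBesSp{\gamma}}+\|w_t\|_{\HolBesSp{\gamma}}$ gives the first displayed estimate, and applying Lemma \ref{lm.20171227} to each component of $R^\perp=(R_2,-R_1)$ gives the same bound for $\|R^\perp(v_t+w_t)\|_{\HolBesSp{\gamma}}$.

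For the ``in particular'' assertion I would take $\gamma=(\kappa'-\kappa)(\theta-1)$, which is $>0$ because $\kappa<\kappa'$ and $\theta>1$, and which lies in \eqref{eq:180206-200} by \eqref{eq:180206-1} and the smallness of $\kappa,\kappa'$; substituting into the exponent and cancelling the two $(\theta-1)\kappa'$ terms gives $\frac1\theta(\frac32\theta-2-\theta q+(\theta-1)\kappa)$, as claimed. Then Proposition \ref{prop:180209-1}, applicable since $(\kappa'-\kappa)(\theta-1)>0$, upgrades the $\HolBesSp{(\kappa'-\kappa)(\theta-1)}$-bounds on $v_t+w_t$ and $R^\perp(v_t+w_t)$ to the stated $L^\infty$-bounds, completing the argument.
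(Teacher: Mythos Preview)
Your proof is correct and follows the same approach as the paper: apply both parts of Proposition \ref{prop:180131-12}, verify that the range \eqref{eq:180206-200} is contained in the intersection of the admissible ranges for $\gamma_1$ and $\gamma_2$, and compare the two powers of $t$ using $0<t\le T\le 1$. The paper's own proof is a one-line pointer to Proposition \ref{prop:180131-12}; you have supplied exactly the numerical inequality $\theta\kappa+(\theta-1)\kappa'\le 2\theta-3-\theta(q'-q)$ that the paper leaves implicit, and correctly traced it back to \eqref{eq_20171124_1} and the definition of $\rho$. One minor point: the final sentence about upgrading to $L^\infty$ via Proposition \ref{prop:180209-1} is unnecessary here, since the ``in particular'' assertion of the corollary only claims the $\HolBesSp{(\kappa'-\kappa)(\theta-1)}$ bound, not an $L^\infty$ bound.
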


\begin{proof}
From (\ref{eq:180206-200}),
we can combine two estimates
in 
Proposition \ref{prop:180131-12}.
In particular,
by letting
$\gamma = (\kappa' -\kappa)(\theta-1)$
we obtain the second estimate.
\end{proof}

We transform
Corollary \ref{cor:180201-2}
into the form in which we use.
\begin{corollary}\label{cor:180206-2}
Let
$(v,w) \in \sols{T}{\kappa}{\kappa'}$.
Let $\Phi=R^{\perp}(Y+v+w)$.
Then one has
\[
\|\Phi_t\|_{L^\infty(\Torus^2)}
\lesssim
\|\Phi_t\|_{{\mathcal C}^{-2\theta+4+\kappa'}}
\lesssim
\|\Phi_t\|_{{\mathcal C}^{2\theta-3-\kappa}}
\lesssim
\|Y_t\|_{{\mathcal C}^{2\theta-3-\kappa}}
+
t^{\frac32-\frac2\theta-q+\frac{\theta-1}{\theta}\kappa}
				\|(v,w)\|_{\sols{T}{\kappa}{\kappa'}}
				\quad (t>0).
\]
\end{corollary}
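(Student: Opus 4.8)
The plan is to write $\Phi_t=R^{\perp}Y_t+R^{\perp}(v_t+w_t)$ and to prove the three displayed inequalities in turn. The first two are soft embedding facts on the H\"older--Zygmund scale over $\Torus^{2}$; the third exploits this decomposition, controlling the driver part $R^{\perp}Y_t$ by boundedness of the Riesz transform and the solution part $R^{\perp}(v_t+w_t)$ by Corollary \ref{cor:180201-2}.

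For the first inequality, the hypothesis $\theta\le 2$ gives $-2\theta+4+\kappa'\ge\kappa'>0$, so $\HolBesSp{-2\theta+4+\kappa'}\subset L^{\infty}(\Torus^{2})$ by Proposition \ref{prop:180209-1}; applying this to $\Phi_t$ gives the first bound. For the second inequality I would note that $\theta>7/4$ together with the smallness of $\kappa,\kappa'$ in \eqref{eq:180213-3} yields $4\theta\ge 7+\kappa+\kappa'$, i.e.\ $-2\theta+4+\kappa'\le 2\theta-3-\kappa$, and then apply the embedding inequality Proposition \ref{prop_20160928054620}(1). It is precisely here that $7/4<\theta$ is used: it is what leaves room for the small corrections $\kappa,\kappa'$.

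For the third inequality I would split $\Phi_t$ as above. By Lemma \ref{lm.20171227}, $\|R^{\perp}Y_t\|_{\HolBesSp{2\theta-3-\kappa}}\lesssim\|Y_t\|_{\HolBesSp{2\theta-3-\kappa}}$; this is the first term on the right, and it is the only piece genuinely requiring the regularity $2\theta-3-\kappa$ (which is exactly the regularity carried by the $Y$-component of a driver). For $R^{\perp}(v_t+w_t)$ I would invoke the second (``In particular'') assertion of Corollary \ref{cor:180201-2} with the small positive regularity $(\kappa'-\kappa)(\theta-1)>0$: it bounds $\|R^{\perp}(v_t+w_t)\|_{\HolBesSp{(\kappa'-\kappa)(\theta-1)}}$, hence, by Proposition \ref{prop:180209-1} once more, also $\|R^{\perp}(v_t+w_t)\|_{L^{\infty}(\Torus^{2})}$, by $t^{\frac{3}{2}-\frac{2}{\theta}-q+\frac{\theta-1}{\theta}\kappa}\|(v,w)\|_{\sols{T}{\kappa}{\kappa'}}$ --- which is exactly the exponent $\frac1\theta(\frac32\theta-2-\theta q+(\theta-1)\kappa)$ appearing in the statement. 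Adding the two contributions and routing the $R^{\perp}Y_t$ part back through the first two displayed inequalities produces the stated chain; the single intermediate norm $\|\Phi_t\|_{\HolBesSp{2\theta-3-\kappa}}$ should be read as a compact record of this two-step argument, the $(v,w)$-part being rougher as $t\downarrow 0$ but still of positive order.

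There is no analytic difficulty here: the content is the bookkeeping of regularities and of where each structural hypothesis enters --- $\theta\le 2$ for the embedding into $L^{\infty}$, $\theta>7/4$ for the H\"older embedding $4\theta\ge 7+\kappa+\kappa'$, and the ordering $\kappa<\kappa'$ of \eqref{eq:180213-3} (together with the value of $q$) for the admissibility of Corollary \ref{cor:180201-2}. The only point I would watch closely is tracking the small parameters $\kappa<\kappa'$ when checking those endpoint inequalities; I expect this, and not any estimate, to be the sole obstacle.
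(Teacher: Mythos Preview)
Your argument is correct and matches the paper's (implicit) proof: the paper states this corollary immediately after Corollary~\ref{cor:180201-2} with the remark ``We transform Corollary~\ref{cor:180201-2} into the form in which we use'' and gives no further proof, so the intended justification is precisely your decomposition $\Phi_t=R^\perp Y_t+R^\perp(v_t+w_t)$, Lemma~\ref{lm.20171227} for the $Y$-part, the ``In particular'' case of Corollary~\ref{cor:180201-2} for the $(v,w)$-part, and the two embeddings (Propositions~\ref{prop:180209-1} and~\ref{prop_20160928054620}(1)) for the first two inequalities.

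Your caveat about reading the intermediate norm $\|\Phi_t\|_{\HolBesSp{2\theta-3-\kappa}}$ as a ``compact record'' is well placed and worth making explicit: at $\theta=2$ one has $2\theta-3-\kappa=1-\kappa>1-\kappa'=\tfrac32\theta-2-\kappa'$, so the $v$-component is not literally controlled in $\HolBesSp{2\theta-3-\kappa}$ with the displayed power of $t$ (the exponent in the statement corresponds to $\gamma=(\theta-1)(\kappa'-\kappa)$ in Corollary~\ref{cor:180201-2}, not to $\gamma=2\theta-3-\kappa$). The paper only ever uses the outer $L^\infty$ bound (see Lemma~\ref{lem:180201-3}) and the $\HolBesSp{-2\theta+4+2\kappa'}$ bound (see Step~2 of Lemma~\ref{lem_20171219}), both of which your argument establishes rigorously; the displayed chain is indeed a shorthand rather than three separately sharp inequalities.
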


\begin{lemma}
Under assumptions
on
Lemmas \ref{lm.20171227}
and \ref{rem_20171114-1} 
and
Proposition \ref{prop:180131-11},
we have
 \begin{align}\nonumber
 \label{eq:180122-14}
\lefteqn{
\| \Phi_t-\Phi_s\|_{L^{\infty}(\Torus^2)}
}\nonumber\\
& \lesssim_{\theta,\gamma,\kappa,\kappa'} (t-s)^{2-\frac{3}{\theta}-\frac{\kappa'}{\theta}} 
 \| Y\|_{\cL_T^{2\theta-3-\kappa,\frac{2\theta-3-\kappa}{\theta}}}+
						s^{-q+\kappa'}
						(t-s)^{ \frac32 -\frac{2}{\theta} -\kappa'}
						\|v\|_{
						\cL_T^{q-\kappa', 
			 \frac32\theta -2 -\kappa', 1-\frac{\kappa'}{\theta} }
						 }\\
						 &\quad +
 s^{-q'+\kappa'-\kappa}
						(t-s)^{\frac{1}{\theta}-\kappa'}
						\|w\|_{
						\cL_T^{ 
						q' -\kappa'+\kappa, 
			 \frac72 \theta - 5 - \theta\kappa',1-\kappa' 	}
						}. 
 \end{align}
\end{lemma}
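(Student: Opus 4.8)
The plan is to use linearity of $R^{\perp}$ to split
\[
\Phi_t-\Phi_s
=
R^{\perp}(Y_t-Y_s)
+R^{\perp}(v_t-v_s)
+R^{\perp}(w_t-w_s)
\]
and to bound each of the three summands in $L^{\infty}(\Torus^2)$ by a result already at our disposal; since the three terms on the right-hand side of \eqref{eq:180122-14} are precisely these three contributions, the triangle inequality costs nothing here and there is no cancellation to exploit.

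For the $Y$-piece I would first observe that \eqref{eq:180213-3} forces $\kappa'-\kappa>0$, so $\HolBesSp{\kappa'-\kappa}\hookrightarrow L^{\infty}(\Torus^2)$ by Proposition \ref{prop:180209-1}; then Lemma \ref{lm.20171227}, applied to $R_1$ and $R_2$, reduces $\|R^{\perp}(Y_t-Y_s)\|_{\HolBesSp{\kappa'-\kappa}}$ to $\|Y_t-Y_s\|_{\HolBesSp{\kappa'-\kappa}}$, and Lemma \ref{rem_20171114-1} bounds the latter by $(t-s)^{(2\theta-3-\kappa')/\theta}\|Y\|_{\cL_T^{2\theta-3-\kappa,(2\theta-3-\kappa)/\theta}}$. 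It then only remains to note the elementary identity $\frac{2\theta-3-\kappa'}{\theta}=2-\frac{3}{\theta}-\frac{\kappa'}{\theta}$, which is exactly the exponent in the first term of \eqref{eq:180122-14}.

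For the $v$-piece and the $w$-piece I would simply invoke the ``in particular'' ($L^{\infty}$) assertions of Proposition \ref{prop:180131-11}(1) and (2), applied to the two components of $(v,w)\in\sols{T}{\kappa}{\kappa'}$; these are legitimate because $(\theta-1)\kappa'>0$ and, using $\theta>7/4$, one has $\frac{7}{2}\theta-6>\frac18>0$, so the H\"older--Zygmund spaces occurring there embed into $L^{\infty}(\Torus^2)$. This yields the second and third terms on the right of \eqref{eq:180122-14} verbatim, and adding the three bounds completes the proof. I do not expect a genuine obstacle: all the analytic content lives in the quoted lemmas, and the only things needing attention are checking that the smallness condition \eqref{eq:180213-3} gives $\kappa'-\kappa>0$ (which licenses the first Besov embedding) and tracking the exponents, in particular the identity above relating the $Y$-increment exponent to the form stated in \eqref{eq:180122-14}.
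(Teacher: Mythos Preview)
Your proposal is correct and follows essentially the same route as the paper: split $\Phi_t-\Phi_s$ into the $Y$-, $v$-, and $w$-contributions, handle the $Y$-piece via Lemma~\ref{lm.20171227} and Lemma~\ref{rem_20171114-1} (passing through $\HolBesSp{\kappa'-\kappa}\hookrightarrow L^\infty$), and handle the $v$- and $w$-pieces via the $L^\infty$ conclusions of Proposition~\ref{prop:180131-11}. The only difference is cosmetic: the paper re-derives the $L^\infty$ bounds for $R^{\perp}(v_t-v_s)$ and $R^{\perp}(w_t-w_s)$ from the Besov estimates rather than quoting the ``in particular'' clauses directly.
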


\begin{proof}
By the definition of the function spaces
and
Lemmas \ref{lm.20171227}
and \ref{rem_20171114-1} 
we have
\[
\| R^{\perp} (Y_t -Y_s)\|_{L^{\infty}(\Torus^2)} 
\lesssim \| Y_t -Y_s \|_{\HolBesSp{\kappa' -\kappa}}
 \lesssim (t-s)^{2-\frac{3}{\theta}-\frac{\kappa'}{\theta}} 
 \| Y\|_{\cL_T^{2\theta-3-\kappa,\frac{2\theta-3-\kappa}{\theta}}}.
\]
Meanwhile,
the definition of the function spaces,
Lemma \ref{lm.20171227}
and
Proposition \ref{prop:180131-11}
yield
\begin{align*}
\|R^{\perp} (v_t-v_s)\|_{L^\infty(\Torus^2)}
					&\lesssim
						\|v_t-v_s\|_{\HolBesSp{(\theta -1)\kappa'}}
					\lesssim
						s^{-q+\kappa'}
						(t-s)^{ \frac32 -\frac{2}{\theta} -\kappa'}
						\|v\|_{
						\cL_T^{q-\kappa', 
			 \frac32\theta -2 -\kappa', 1-\frac{\kappa'}{\theta} }
						 },
 \\
 \|R^{\perp} (w_t-w_s)\|_{L^\infty(\Torus^2)}
					&\lesssim
						\|w_t-w_s\|_{\HolBesSp{\frac72 \theta -6}}
					\lesssim
						s^{-q'+\kappa'-\kappa}
						(t-s)^{\frac{1}{\theta}-\kappa'}
						\|w\|_{
						\cL_T^{ 
						q' -\kappa'+\kappa, 
			 \frac72 \theta - 5 - \theta\kappa',1-\kappa' 	}
						}. 
 \end{align*}
 Consequently,
(\ref{eq:180122-14}) follows.
\end{proof}

\subsection{Integration maps for paracontrolled QGE}

Let 
${\mathbf X} =(X, V, Y, Z, W, \hat{Z}, \hat{W}) \in \drivers{T}{\kappa}$ and
 $v_0\in\HolBesSp{\alpha}$ for some $\alpha \in \RealNum$.
For $(v,w)\in\sols{T}{\kappa}{\kappa'}$
 we set
\begin{equation}\label{eq:180227-1}
\Phi=R^{\perp}(Y+v+w)
\end{equation}
as before and
\begin{equation}\label{eq:180227-2}
\com (v, w)_t = P^{\theta/2}_t v_0 + I [\Phi \rpara \nabla X] _t
-
\Phi_t \rpara V_t.
\end{equation}
Define a mapping
$F$ on $\sols{T}{\kappa}{\kappa'}$
by
\begin{equation}
	\label{eq_20161002045615}
	F(v,w)
	=
		\Phi \rpara \nabla X
\end{equation}
and a mapping $G$ on $\sols{T}{\kappa}{\kappa'}$ by
\begin{align}
	\label{eq_20161002050746}
	G(v,w)
	&=
		\Phi \lpara \nabla X 
		+
		Z + R^{\perp}w \reso \nabla X + \Phi \cdot W
		 + 
		 \{ R^{\perp} (\Phi \rpara V) 
		 - \Phi \rpara R^{\perp} V \}
		 \reso \nabla X 
		 \nonumber\\
		& \qquad + 
		 \comC (\Phi, R^{\perp} V, \nabla X)
		 +R^{\perp}	\com (v,w) \reso \nabla X		
\nonumber\\
		& \quad
		+
		\hat{Z} + R^{\perp}X \cdot \nabla w 
		 + R^{\perp} X \cdot \{\nabla \Phi \rpara V \} +
		 R^{\perp} X (\rpara + \lpara)
		 \{\Phi \rpara \nabla V \} +\Phi \cdot \hat{W}
		 \nonumber\\
		& \qquad 
		+
				\comC (\Phi, \nabla V, R^{\perp} X)
				 +R^{\perp} X \cdot \nabla \com (v,w)	
		 +	\Phi \cdot \nabla (Y+v+w)
		 + (X+Y+v+w).\nonumber
\end{align}
In the above definitions of $F$ and $G$, 
all operations on the right-hand side were done at each fixed time $t \in [0,T]$.
The role of each term in the long definition of $G$ will be explained later.


Let $f,g \in C_T\HolBesSp{\alpha}$ for some $\alpha \in \RealNum$.
We write
\[
R^{\perp} f \rpara \nabla g :=R^{\perp}_1 f \rpara \partial_1 g
+R^{\perp}_2 f \rpara \partial_2 g.
\]
We also define 
$R^{\perp} f \lpara \nabla g, R^{\perp} f \reso \nabla g, R^{\perp} f \cdot \nabla g, 
R^{\perp} f \rpara I(\nabla g)$
in an analogous way.

\begin{remark}
The precise meanings of the simplified symbols
used in the definitions of $F$ and $G$ are as follows:
In this remark, $f, g, h \in C_T\HolBesSp{\alpha}$ 
and $K =(k_1,k_2) \in (C_T\HolBesSp{\alpha})^2$ for some $\alpha \in \RealNum$.

More complex symbols are precisely given 
(when they are well defined)
as follows:
\begin{align}
\nonumber
R^{\perp} \{ R^{\perp} f \rpara K \} \reso \nabla h 
&:=
\sum\limits_{i,j =1}^2
R^{\perp}_i \{ R^{\perp}_j f \rpara k_j \} \reso \partial_i h,
\\
\nonumber
\{ R^{\perp} f \rpara R^{\perp} K \} \reso \nabla h 
&:=
\sum\limits_{i,j =1}^2
\{ R^{\perp}_j f \rpara R^{\perp}_i k_j \} \reso \partial_i h,
\\
\nonumber
\comC (R^{\perp} f, R^{\perp} K, \nabla h)
&:=
\sum\limits_{i,j =1}^2 \comC (R^{\perp}_j f, R^{\perp}_i k_j, \partial_i h),
\\
\nonumber
R^{\perp} h \cdot \{\nabla R^{\perp}f \rpara k\}
&:=
\sum\limits_{i,j =1}^2
R^{\perp}_i h \cdot \{\partial _i R^{\perp}_j f \rpara k_j\}
\\
\nonumber
R^{\perp} h \star \{ R^{\perp} f \rpara \nabla K \} 
&:=
\sum\limits_{i,j =1}^2
R^{\perp}_i h \star \{ R^{\perp}_j f \rpara \partial_i k_j \} ,
\\
\nonumber
\comC (R^{\perp} f, \nabla K, R^{\perp} h)
&:=
\sum\limits_{i,j =1}^2
\comC (R^{\perp}_j f, \partial_i k_j, R^{\perp}_i h).
\end{align}
\end{remark}
In the second to last equality,
$\star$ stands for any one of $\cdot, \lpara, \reso, \rpara$.


The map
$
	\mathcal{M}
	=
		(\mathcal{M}^1,\mathcal{M}^2)
$
is defined on $\sols{T}{\kappa}{\kappa'}$ by
\begin{align}
	\label{eq_20161003034438}
	(\mathcal{M}^1(v,w)_t,
	\mathcal{M}^2(v,w)_t)
	&=
		(P^{\theta /2}_tv_0
		+
			I[F(v,w) ]_t,
		P^{\theta /2}_t w_0
		+
		I[ G(v,w) ]_t)
		\quad (0<t \le T)
		\end{align}
for every initial value
$
	(v_0,w_0)
$.
We will use the fractional Schauder estimate to show that
this is a well-defined map from $\sols{T}{\kappa}{\kappa'}$ to itself
and has good property.
Note the map $\mathcal{M}$ depends on the driver
 ${\mathbf X} \in \drivers{T}{\kappa} $ and the initial value $(v_0,w_0)$.
To make this dependency explicit, 
we will sometimes write 
$\mathcal{M}_{{\mathbf X}, (v_0,w_0)} (v,w)$ for $\mathcal{M} (v,w)$.

We interpret
the QGE equation as a fixed point problem 
for $\mathcal{M}:(v,w)\mapsto(\mathcal{M}^1(v,w),\mathcal{M}^2(v,w))$
as follows:
\begin{definition}\label{eq_20160921004656}
	For every
	$
		(v_0,w_0)
		\in
			\HolBesSp{ \frac32 \theta -2 -\theta q
								 +(\theta -1)\kappa' }
			\times
			\HolBesSp{\frac72 \theta -5 -\theta q' -\theta\kappa}
	$
	and
	$
		{\mathbf X} \in\drivers{T}{\kappa}
	$,
	we consider the system
	\begin{gather}\label{eq_20160920090410}
			\begin{cases}
				v_t
				=
					\mathcal{M}^1(v,w)_t,\\
				w_t
				=
					\mathcal{M}^2(v,w)_t
			\end{cases}
			\quad (0<t \le T).
	\end{gather}
	If there exist $T_\ast>0$ and $(v,w)\in\sols{T_\ast}{\kappa}{\kappa'}$ 
	 satisfying \eqref{eq_20160920090410},
	 then $(v,w)$ is called a local solution to the paracontrolled 
	 QGE on $[0,T_\ast]$ with initial condition $(v_0,w_0)$.
\end{definition}

About the regularity of $(v_0,w_0)$, one should note that
due to \eqref{eq_20171124_1}
 the regularity of $v_0$ is worse than 
 that of the first component $X$ of a driver ${\mathbf X}$
(if $0 < \kappa <\kappa'$ are chosen small enough).

\section{Estimates of integration map $\mathcal{M}$}

In this section 
we estimate the integration map 
$\mathcal{M}=\mathcal{M}_{{\bf X},(v_0,w_0)}= (\mathcal{M}^1, \mathcal{M}^2)$
using the various inequalities we proved in the preceding sections.

\begin{proposition}\label{pr:180213-1}
Let $T \in (0,1]$
and $0<\kappa <\kappa' \ll 1$ be as in \eqref{eq:180213-3}.
Then, for every $
		(v_0,w_0)
		\in
			\HolBesSp{ \frac32 \theta -2 -\theta q
								 +(\theta -1)\kappa' }
			\times
			\HolBesSp{\frac72 \theta -5 -\theta q' -\theta\kappa}
	$
	and
	${\mathbf X} \in \drivers{T}{\kappa}$, the map
	$
		\mathcal{M}=\mathcal{M}_{{\bf X},(v_0,w_0)}:
			\sols{T}{\kappa}{\kappa'}
			\to
			\sols{T}{\kappa}{\kappa'}
				$
	is well defined.
	Moreover, there exist positive constants $\const[1], \const[2]$ 
 and 
	 $a$ such that
	 the following estimate holds:
	For every 
	$
		(v,w)\in\sols{T}{\kappa}{\kappa'}
	$,
	\begin{multline*}
		\|\mathcal{M} (v,w)\|_{
		\sols{T}{\kappa}{\kappa'}
				}
		\leq
			\const[1]
			(\|v_0\|_{\HolBesSp{\frac32 \theta -2 - \theta q + (\theta -1)\kappa'}}
			+
				\|w_0\|_{\HolBesSp{\frac72 \theta -5 - \theta q' -\theta\kappa}}
			)\\
			\quad+
			\const[2]
			T^{a}
			\left(1+
				\|v_0\|_{\HolBesSp{\frac32 \theta -2 - \theta q + (\theta -1)\kappa'}}
				+
				\|(v,w)\|_{\sols{T}{\kappa}{\kappa'}}^3
			\right).
	\end{multline*}
	 Here,
the constants  $a$ and $\const[1]$ depend only on $\kappa, \kappa'$
	 and $\const[2]$ depends only on $\kappa, \kappa'$ and ${\bf X}$. 
	 More precisely, $\const[2]$ is given by an at most 
	 third-order polynomial in $\|{\mathbf X}\|_{\drivers{1}{\kappa}}$ for fixed $\kappa, \kappa'$.
	 	 	 	\end{proposition}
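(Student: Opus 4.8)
The plan is to write $\mathcal{M}(v,w)=(P^{\theta/2}_\cdot v_0,\,P^{\theta/2}_\cdot w_0)+(I[F(v,w)],\,I[G(v,w)])$ and estimate the two summands separately. The first summand produces the $\const[1]$-term: with $\beta:=\frac{3}{2}\theta-2-\kappa'$ one has $\beta-\theta(q-\kappa')=\frac{3}{2}\theta-2-\theta q+(\theta-1)\kappa'$, so Proposition \ref{prop_20170804}(1) (with $\delta=1-\frac{\kappa'}{\theta}\in[0,1]$ and $q>\kappa'$, cf.\ \eqref{eq:180213-3}) gives $P^{\theta/2}_\cdot v_0\in\cL_T^{q-\kappa',\,\frac{3}{2}\theta-2-\kappa',\,1-\frac{\kappa'}{\theta}}$ with norm $\lesssim\|v_0\|_{\HolBesSp{\frac{3}{2}\theta-2-\theta q+(\theta-1)\kappa'}}$; likewise, with $\beta:=\frac{7}{2}\theta-5-\theta\kappa'$ and $\delta=1-\kappa'$, one gets $P^{\theta/2}_\cdot w_0\in\cL_T^{q'-\kappa'+\kappa,\,\frac{7}{2}\theta-5-\theta\kappa',\,1-\kappa'}$ with norm $\lesssim\|w_0\|_{\HolBesSp{\frac{7}{2}\theta-5-\theta q'-\theta\kappa}}$ (using $q'-\kappa'+\kappa>0$, from \eqref{eq:180208-11}, \eqref{eq:180213-3}). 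As $T\le1$, the implicit constants here depend only on $\kappa,\kappa'$ and the fixed $\theta$; this yields $\const[1]$, and together with the estimates below shows $\mathcal{M}$ is a well-defined self-map of $\sols{T}{\kappa}{\kappa'}$.

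For the second summand the strategy is, for each of $F$ and $G$: (i) bound the integrand in some $\cE_T^\eta\HolBesSp{\bar\alpha}$ with $\eta<1$ and $\bar\alpha$ the largest attainable regularity; then (ii) invoke the fractional Schauder estimate Proposition \ref{prop_20170804}(2), which places $I[F(v,w)]$ in $\cL_T^{q-\kappa',\,\frac{3}{2}\theta-2-\kappa',\,1-\frac{\kappa'}{\theta}}$ and $I[G(v,w)]$ in $\cL_T^{q'-\kappa'+\kappa,\,\frac{7}{2}\theta-5-\theta\kappa',\,1-\kappa'}$ once $\gamma$ is chosen so that the output explosion exponent $\frac{\beta-\gamma}{\theta}$ equals $q-\kappa'$, resp.\ $q'-\kappa'+\kappa$; the prefactor $T^{\frac{\bar\alpha-\theta\eta+\theta-\gamma}{\theta}}$ is then automatically a \emph{positive} power of $T$, since the hypothesis $\gamma<\bar\alpha-\theta\eta+\theta$ of Proposition \ref{prop_20170804}(2) makes its exponent positive. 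Before the term-by-term work I would record two a priori bounds. First, for $\Phi=R^\perp(Y+v+w)$: by Lemma \ref{lm.20171227} and Corollaries \ref{cor:180201-2} and \ref{cor:180206-2}, $\Phi\in C_T\HolBesSp{2\theta-3-\kappa}$ with the appropriate $t\to0$ weight, and its time increments are controlled by Corollary \ref{cor:180131-2} together with the bounds on $\Phi$ already obtained. Second, for the auxiliary term $\com(v,w)_t=P^{\theta/2}_tv_0+I[\Phi\rpara\nabla X]_t-\Phi_t\rpara V_t$: substituting $V_t=P^{\theta/2}_tV_0+I[\nabla X]_t$ and commuting $I$ past $\rpara$ via the heat-semigroup--paraproduct commutator Proposition \ref{prop_20160927051159} (with Proposition \ref{prop_20170804} and Lemma \ref{lem:161117-106}) shows $\com(v,w)$ is smoother than $I[\Phi\rpara\nabla X]$ and $\Phi\rpara V$ separately, with $t\to0$ singularity integrable against the fractional heat kernel, and norm $\lesssim\|v_0\|_{\HolBesSp{\frac{3}{2}\theta-2-\theta q+(\theta-1)\kappa'}}+(\|{\mathbf X}\|_{\drivers{1}{\kappa}}+\|(v,w)\|_{\sols{T}{\kappa}{\kappa'}})\|{\mathbf X}\|_{\drivers{1}{\kappa}}$.

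Then I would pass through $F$ and the long definition of $G$ one term at a time. The paraproduct terms ($\Phi\rpara\nabla X$, $\Phi\lpara\nabla X$, $R^\perp X(\rpara+\lpara)\{\Phi\rpara\nabla V\}$, $R^\perp X\cdot\{\nabla\Phi\rpara V\}$) are handled by Corollary \ref{cor:180131-1}, Lemma \ref{lem:180122-1} and Lemma \ref{lem:180208-7}; the driver terms ($Z,\hat Z,X,Y$) are immediate from the definition of $\drivers{T}{\kappa}$; the products and resonant products ($\Phi\cdot W$, $\Phi\cdot\hat W$, $\Phi\cdot\nabla(Y+v+w)$, $R^\perp X\cdot\nabla w$, $R^\perp X\cdot\nabla\com(v,w)$, $R^\perp w\reso\nabla X$) by Corollary \ref{cor:180131-1}(3) and Proposition \ref{prop_20160926062106}(3) --- this is exactly where $\theta>7/4$ is used, since the pertinent sums of regularity exponents equal $4\theta-7$ up to small multiples of $\kappa,\kappa'$ (e.g.\ $R^\perp w\reso\nabla X$ needs $(\frac{7}{2}\theta-5)+(\frac{1}{2}\theta-2)>0$), and where the smallness of $\rho$ and of $\kappa<\kappa'$ in \eqref{eq:180114-2}, \eqref{eq:180213-3} is needed; the Riesz commutator $R^\perp(\Phi\rpara V)-\Phi\rpara R^\perp V$ is $\HolBesSp{\cdot}$-bounded by Lemma \ref{lem:161117-107} (using $2\theta-3-\kappa<1$) and then paired resonantly with $\nabla X$; the $\comC$-terms $\comC(\Phi,R^\perp V,\nabla X)$ and $\comC(\Phi,\nabla V,R^\perp X)$ by Proposition \ref{prop_comm_20160919055939} (after checking $0<2\theta-3-\kappa<1$, that the other two exponents sum negatively, and that all three sum positively); and the $\com$-terms $R^\perp\com(v,w)\reso\nabla X$, $R^\perp X\cdot\nabla\com(v,w)$ by inserting the bound from the preceding paragraph. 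In each case the resulting bound on the integrand is at most quadratic in $\|(v,w)\|_{\sols{T}{\kappa}{\kappa'}}$ and at most cubic in $\|{\mathbf X}\|_{\drivers{1}{\kappa}}$ (the cubic appearing only through the $\com$-terms, which carry a factor bilinear in the driver), and gains a positive power of $T$ from Proposition \ref{prop_20170804}(2). Summing the finitely many contributions, taking $a$ to be the minimum of the gains, and using $x^2\le1+x^3$ to absorb the quadratic terms into the displayed form gives the inequality, with $\const[1],a$ depending only on $\kappa,\kappa'$ and $\const[2]$ a polynomial of degree at most $3$ in $\|{\mathbf X}\|_{\drivers{1}{\kappa}}$.

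The main obstacle is the bookkeeping. For every one of the many terms one must simultaneously choose the intermediate regularity $\bar\alpha$ and the $t\to0$ weight $\eta$ so that (a) the relevant paraproduct, resonant, or commutator estimate applies --- that is, the pertinent sums of H\"older exponents are strictly positive, which is precisely the content of the hypothesis $\theta>7/4$ combined with $\rho,\kappa,\kappa'$ small, and which degenerates exactly as $\theta\downarrow7/4$ --- and (b) the output of Proposition \ref{prop_20170804}(2) lands in precisely $\sols{T}{\kappa}{\kappa'}$, i.e.\ its explosion exponent $\frac{\beta-\gamma}{\theta}$ must come out equal to $q-\kappa'$ (resp.\ $q'-\kappa'+\kappa$) while $\eta<1$ so that a genuine positive power of $T$ is gained. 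The tightest points are the resonant products against the least regular object $\nabla X$ (margin $4\theta-7$) and the terms containing $\com(v,w)$, where one must check that the $t\to0$ singularity of $\com$ survives application of $I$ in an integrable way.
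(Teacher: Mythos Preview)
Your proposal is correct and follows essentially the same route as the paper: the paper's proof simply states that the result is immediate from Propositions \ref{prop_20161003143833} and \ref{prop_20161003143917}, which are proved by exactly the scheme you outline --- treat the initial data via Proposition \ref{prop_20170804}(1), estimate $F$ and $G$ termwise in weighted spaces $\cE_T^\eta\HolBesSp{\bar\alpha}$ (the latter via the ten-term decomposition you describe, after first establishing the $\com(v,w)$ bound by the I--IV splitting coming from the semigroup--paraproduct commutator), and then apply the Schauder estimate Proposition \ref{prop_20170804}(2) with $\gamma$ chosen so that the output explosion exponent matches $q-\kappa'$ resp.\ $q'-\kappa'+\kappa$. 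Two small inaccuracies worth noting: Lemma \ref{lem:161117-106} is the Leibniz rule for $\partial_l$ and paraproducts, not the tool for commuting $I$ past $\rpara$ (that is Proposition \ref{prop_20160927051159} alone, together with the time-increment term you must also isolate); and the cubic dependence on $\|{\mathbf X}\|_{\drivers{1}{\kappa}}$ does not arise solely from the $\com$-terms --- the $\comC$-terms and the Riesz commutator term $\{R^\perp(\Phi\rpara V)-\Phi\rpara R^\perp V\}\reso\nabla X$ are also trilinear in the driver when $\Phi$ contributes its $Y$-part.
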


The goal of this section is 
to prove Proposition \ref{pr:180213-1}.
The proof is immediate from 
Propositions \ref{prop_20161003143833} and \ref{prop_20161003143917} below.
In the following subsections 
(in particular, in Lemma \ref{lem_20161002130857}, 
Proposition \ref{prop_20161003143833}, Lemma \ref{lem_20161003140013}, Lemma \ref{lem_20171219} and
Proposition \ref{prop_20161003143917}), 
we implicitly assume that
$
		(v_0,w_0)
		\in
			\HolBesSp{ \frac32 \theta -2 -\theta q
								 +(\theta -1)\kappa' }
			\times
			\HolBesSp{\frac72 \theta -5 -\theta q' -\theta\kappa}
	$,
	${\mathbf X} \in \drivers{T}{\kappa}$,
	$0< t \le T \le 1$
and $0<\kappa <\kappa' \ll 1$ are as in \eqref{eq:180213-3}.


\subsection{Estimates of $\mathcal{M}^1$}
First, we estimate $F$.

Let
\begin{equation}\label{eq:180122-22}
\eta := - \frac{1}{\theta}\left(
								 \frac32 \theta -2 -\theta q
								 +(\theta -1)\kappa \right).
\end{equation}

\begin{lemma}\label{lem:180201-3}
Assume $2\theta-3-\kappa>(\theta-1)(\kappa'-\kappa)$
and $t>0$. 
Then we have
\[
\| \Phi_t\|_{L^\infty(\Torus^2)}
\lesssim_{\theta,\kappa,\kappa'}
\|{\mathbf X}\|_{\drivers{T}{\kappa}} 
 +t^{-\eta}
				\|(v,w)\|_{\sols{T}{\kappa}{\kappa'}}.
\]
\end{lemma}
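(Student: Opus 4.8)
The plan is to split $\Phi_t = R^{\perp}Y_t + R^{\perp}v_t + R^{\perp}w_t$ and estimate the three pieces separately. For the first piece, note that $7/4<\theta\le 2$ forces $2\theta-3-\kappa>0$, so Proposition \ref{prop:180209-1} gives $\HolBesSp{2\theta-3-\kappa}\subset L^\infty(\Torus^2)$, and Lemma \ref{lm.20171227} shows $R^{\perp}$ acts boundedly on $\HolBesSp{2\theta-3-\kappa}$. Hence $\|R^{\perp}Y_t\|_{L^\infty(\Torus^2)}\lesssim\|R^{\perp}Y_t\|_{\HolBesSp{2\theta-3-\kappa}}\lesssim\|Y_t\|_{\HolBesSp{2\theta-3-\kappa}}\le\|Y\|_{\cL_T^{2\theta -3-\kappa,\frac{2\theta -3-\kappa}{\theta}}}\le\|{\mathbf X}\|_{\drivers{T}{\kappa}}$, which handles the driver term.

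For the $v$- and $w$-pieces I would invoke Corollary \ref{cor:180201-21} (its hypothesis \eqref{eq:180206-1} being available once $\kappa'$ is chosen small, since $0<q<1$). Part (1) yields $\|R^{\perp}v_t\|_{L^\infty(\Torus^2)}\lesssim t^{\frac{1}{\theta}\left(\frac32\theta-2-\theta q+(\theta-1)\kappa\right)}\|v\|_{\cL_T^{q-\kappa',\frac32\theta-2-\kappa',1-\frac{\kappa'}{\theta}}}=t^{-\eta}\|v\|_{\cL_T^{q-\kappa',\frac32\theta-2-\kappa',1-\frac{\kappa'}{\theta}}}$ by the very definition \eqref{eq:180122-22} of $\eta$, and part (2) gives $\|R^{\perp}w_t\|_{L^\infty(\Torus^2)}\lesssim t^{\frac{1}{\theta}\left(\frac72\theta-5-\theta q'+(\theta-1)\kappa+\kappa'\right)}\|w\|_{\cL_T^{q'-\kappa'+\kappa,\frac72\theta-5-\theta\kappa',1-\kappa'}}$. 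It remains only to check that the $w$-power of $t$ is at most $t^{-\eta}$: by the first inequality in \eqref{eq_20171124_1} we have $\frac32\theta-2-\theta q<\frac72\theta-5-\theta q'$, so after adding $(\theta-1)\kappa$ and noting $\kappa'>0$ the $w$-exponent is strictly larger than $-\eta$, and since $0<t\le T\le 1$ a larger exponent produces a smaller value. Summing the three estimates and bounding each $\cL_T$-norm by $\|(v,w)\|_{\sols{T}{\kappa}{\kappa'}}$ gives the claim.

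An equivalent and perhaps more transparent packaging: the hypothesis $2\theta-3-\kappa>(\theta-1)(\kappa'-\kappa)$ places all three summands $Y_t,v_t,w_t$ in $\HolBesSp{(\theta-1)(\kappa'-\kappa)}$ — for $v_t,w_t$ this is exactly the content of Proposition \ref{prop:180131-12} (equivalently Corollary \ref{cor:180201-21}) together with the stated time weights, while for $Y_t$ it is just the embedding $\HolBesSp{2\theta-3-\kappa}\subset\HolBesSp{(\theta-1)(\kappa'-\kappa)}$ guaranteed by the hypothesis — so that $\Phi_t\in R^{\perp}\HolBesSp{(\theta-1)(\kappa'-\kappa)}\subset\HolBesSp{(\theta-1)(\kappa'-\kappa)}\subset L^\infty(\Torus^2)$ with the asserted norm bound. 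I do not expect any genuine obstacle here; the only point needing a little care is the exponent bookkeeping for the $w$-term, which is precisely where \eqref{eq_20171124_1} and the restriction $T\le 1$ are used.
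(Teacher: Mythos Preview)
Your proof is correct and follows essentially the same route as the paper: the paper simply invokes Corollary~\ref{cor:180206-2} (together with $\|Y_t\|_{\HolBesSp{2\theta-3-\kappa}}\le\|{\mathbf X}\|_{\drivers{T}{\kappa}}$), and that corollary is precisely the packaged form of the three-term splitting and the Proposition~\ref{prop:180131-12}/Corollary~\ref{cor:180201-21} estimates you carry out by hand. Your exponent comparison for the $w$-term via \eqref{eq_20171124_1} is exactly what underlies the combination step in Corollary~\ref{cor:180201-2}.
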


\begin{proof}
Simply use Corollary \ref{cor:180206-2} and 
$\|Y_t\|_{{\mathcal C}^{2\theta-3-\kappa}} \le 
\|{\mathbf X}\|_{\drivers{T}{\kappa}}$.
\end{proof}

\begin{lemma}\label{lem_20161002130857}
	For any $(v,w)\in\sols{T}{\kappa}{\kappa'}$ and $0<t\leq T$,
	we have $F(v,w)_t\in\HolBesSp{ \frac12 \theta -2 -\kappa}$ and
	\begin{align*}
		\|F(v,w)_t\|_{\HolBesSp{\frac12 \theta -2-\kappa}}
		\leq
			\const
			(
				\|{\mathbf X}\|_{\drivers{1}{\kappa}}
				+
				t^{-\eta}
				\|(v,w)\|_{\sols{T}{\kappa}{\kappa'}}
			)
			\|{\mathbf X}\|_{\drivers{1}{\kappa}},
	\end{align*}
	where $\const >0$ is a constant depending only on $\kappa$ and $\kappa'$.
	In particular, $F(v,w) \in \cE_T^{\eta} \HolBesSp{\frac12 \theta -2 -\kappa}$
with the estimate
\[
\|F(u,v)\|_{\cE_T^{\eta} \HolBesSp{\frac12 \theta -2 -\kappa}}
\lesssim
1+
				\|(v,w)\|_{\sols{T}{\kappa}{\kappa'}}
\]
with the implicit constant depending on ${\mathbf X}$.
\end{lemma}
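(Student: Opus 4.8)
The plan is to unwind the definition \eqref{eq_20161002045615} of $F$, namely $F(v,w)_t = \Phi_t \rpara \nabla X_t$ with $\Phi$ as in \eqref{eq:180227-1}, and to estimate this single right-paraproduct directly. The natural tool is Proposition \ref{prop_20160926062106}(1): since the high-frequency factor in $\rpara$ is $\nabla X_t$, which has H\"older regularity $\frac12\theta - 2 - \kappa$, we get
\[
\|F(v,w)_t\|_{\HolBesSp{\frac12\theta - 2 - \kappa}}
= \|\Phi_t \rpara \nabla X_t\|_{\HolBesSp{\frac12\theta - 2 - \kappa}}
\lesssim \|\Phi_t\|_{L^\infty(\Torus^2)}\,\|\nabla X_t\|_{\HolBesSp{\frac12\theta - 2 - \kappa}}.
\]
So the whole statement reduces to bounding these two factors.

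The second factor is handled by Lemma \ref{lem:180122-1} (or directly Lemma \ref{lem:180208-7}): $\|\nabla X_t\|_{\HolBesSp{\frac12\theta-2-\kappa}} \lesssim \|X_t\|_{\HolBesSp{\frac12\theta-1-\kappa}} \le \|{\mathbf X}\|_{\drivers{T}{\kappa}} \le \|{\mathbf X}\|_{\drivers{1}{\kappa}}$, using $T \le 1$ in the last step. For the first factor I would invoke Lemma \ref{lem:180201-3} (whose hypothesis $2\theta-3-\kappa > (\theta-1)(\kappa'-\kappa)$ is automatic for $7/4 < \theta \le 2$ and $0 < \kappa < \kappa' \ll 1$ as in \eqref{eq:180213-3}, since then $2\theta-3-\kappa > \tfrac18$): this gives $\|\Phi_t\|_{L^\infty(\Torus^2)} \lesssim \|{\mathbf X}\|_{\drivers{T}{\kappa}} + t^{-\eta}\|(v,w)\|_{\sols{T}{\kappa}{\kappa'}}$, with $\eta$ as in \eqref{eq:180122-22}. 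Note $\eta > 0$: indeed $\frac32\theta - 2 - \theta q + (\theta-1)\kappa < \frac32\theta - 2 - \theta q + (\theta-1)\kappa' < 0$ by $\kappa < \kappa'$, $\theta > 1$ and \eqref{eq:180206-1}, so $t^{-\eta}$ is genuinely an explosion factor at $t = 0$ and not a decay.

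Multiplying the two bounds yields, at each fixed $t \in (0,T]$,
\[
\|F(v,w)_t\|_{\HolBesSp{\frac12\theta - 2 - \kappa}}
\lesssim \bigl(\|{\mathbf X}\|_{\drivers{1}{\kappa}} + t^{-\eta}\|(v,w)\|_{\sols{T}{\kappa}{\kappa'}}\bigr)\,\|{\mathbf X}\|_{\drivers{1}{\kappa}},
\]
which is the asserted inequality (with the constant depending only on $\kappa,\kappa'$ through Proposition \ref{prop_20160926062106}(1) and Lemmas \ref{lem:180122-1}, \ref{lem:180201-3}). For the ``in particular'' claim, multiply by $t^\eta$ and take the supremum over $0 < t \le T$: since $\eta > 0$ and $T \le 1$ we have $t^\eta \le 1$, hence $t^\eta\|F(v,w)_t\|_{\HolBesSp{\frac12\theta-2-\kappa}} \lesssim \|{\mathbf X}\|_{\drivers{1}{\kappa}}^2 + \|{\mathbf X}\|_{\drivers{1}{\kappa}}\|(v,w)\|_{\sols{T}{\kappa}{\kappa'}}$, uniformly in $t$; this gives $F(v,w) \in \cE_T^{\eta}\HolBesSp{\frac12\theta-2-\kappa}$ with $\|F(v,w)\|_{\cE_T^{\eta}\HolBesSp{\frac12\theta-2-\kappa}} \lesssim 1 + \|(v,w)\|_{\sols{T}{\kappa}{\kappa'}}$, the implicit constant absorbing the powers of $\|{\mathbf X}\|_{\drivers{1}{\kappa}}$.

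There is no real obstacle here: this lemma is a short consequence of the paraproduct estimate together with the already-established bound on $\|\Phi_t\|_{L^\infty}$. The only points requiring care are bookkeeping of the regularity index ($\frac12\theta-2-\kappa$ must be the regularity of $\nabla X$, which forces the use of the $\rpara$ rather than $\lpara$ estimate) and confirming the sign $\eta > 0$ so that the ``in particular'' statement follows from $t \le T \le 1$; both are immediate from the standing hypotheses on $\theta, \kappa, \kappa'$.
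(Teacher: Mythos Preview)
Your proof is correct and follows essentially the same approach as the paper: apply Proposition~\ref{prop_20160926062106}(1) to the paraproduct $\Phi_t \rpara \nabla X_t$, then bound $\|\nabla X_t\|_{\HolBesSp{\frac12\theta-2-\kappa}}$ via Lemma~\ref{lem:180208-7} and $\|\Phi_t\|_{L^\infty}$ via Lemma~\ref{lem:180201-3}. Your write-up is somewhat more detailed than the paper's (which simply says ``combine Lemmas~\ref{lem:180208-7} and~\ref{lem:180201-3}''), in particular spelling out the verification that $\eta>0$ and the derivation of the ``in particular'' claim, but the argument is the same.
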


\bigskip 

\begin{proof}
By Proposition \ref{prop_20160926062106}(1),
we obtain
\[
\| F(v,w)_t\|_{ \HolBesSp{ \frac12 \theta -2 - \kappa}}
\lesssim
\| \Phi_t\|_{L^\infty(\Torus^2)} 
\| \nabla X_t\|_{ \HolBesSp{\frac12 \theta -2 - \kappa}}.
\]
Thus
it remains to combine
Lemmas \ref{lem:180208-7} and
\ref{lem:180201-3}.
\end{proof}


Next, we estimate $\mathcal{M}^1 (v,w)$ by using the fractional Schauder estimate.
\begin{proposition}\label{prop_20161003143833}
	The map
	$
		\mathcal{M}^1:
			\sols{T}{\kappa}{\kappa'}
			\to
			\cL_T^{
q -\kappa', \frac32 \theta -2-\kappa',1-\frac{\kappa'}{\theta}
			}
	$
	is well defined. Moreover, 
	there exist positive constants $\const[1], \const[2]$ such that
	 the following estimate holds:
	For every 
	$
		(v,w)\in\sols{T}{\kappa}{\kappa'}
	$,
	\begin{align*}
\lefteqn{
\|\mathcal{M}^1(v,w)\|_{\cL_T^{
q -\kappa', \frac32 \theta -2-\kappa',1-\frac{\kappa'}{\theta}
		}
		}
}\\
		&\leq
\const[1]
\|v_0\|_{\HolBesSp{ \frac32 \theta -2 -\theta q
								 +(\theta -1)\kappa' }}
			+
			\const[2]
			T^{
			\frac{1}{\theta} \left\{ 
		\frac32 \theta -2 -(\theta -1) \kappa' - (2- \theta) \kappa
						 \right\}
						 			}
			(1+\|(v,w)\|_{\sols{T}{\kappa}{\kappa'}}).
	\end{align*}
	Here, $\const[1]$ depends only on $\kappa, \kappa'$
	 and $\const[2]$ depends only on $\kappa, \kappa'$ and ${\bf X}$. 
	 More precisely, $\const[2]$ is given by an at most 
	 second-order polynomial in $\|{\mathbf X}\|_{\drivers{1}{\kappa}}$ for fixed $\kappa, \kappa'$.
\end{proposition}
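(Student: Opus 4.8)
The plan is to write $\mathcal{M}^1(v,w) = P^{\theta/2}_\cdot v_0 + I[F(v,w)]$ and bound the two pieces separately by the fractional Schauder estimate (Proposition \ref{prop_20170804}), feeding the second piece with the bound on $F$ from Lemma \ref{lem_20161002130857}. For the first piece I would invoke Proposition \ref{prop_20170804}(1) with $\alpha = \frac32\theta - 2 - \theta q + (\theta-1)\kappa'$ (the regularity of $v_0$), $\beta = \frac32\theta - 2 - \kappa'$ and $\delta = 1 - \frac{\kappa'}{\theta}$; since $0 < q < 1$ and $\kappa'$ is small one has $\kappa' < q$, hence $\alpha < \beta$ and $\frac{\beta-\alpha}{\theta} = q - \kappa'$, so $\cL_T^{\frac{\beta-\alpha}{\theta},\beta,\delta}$ is exactly the target space $\cL_T^{q-\kappa', \frac32\theta-2-\kappa', 1-\frac{\kappa'}{\theta}}$. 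This gives $\|P^{\theta/2}_\cdot v_0\|_{\cL_T^{q-\kappa', \frac32\theta-2-\kappa', 1-\frac{\kappa'}{\theta}}} \lesssim \|v_0\|_{\HolBesSp{\frac32\theta-2-\theta q+(\theta-1)\kappa'}}$ with a constant depending only on $\kappa,\kappa'$, which is the $\const[1]$-term.

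For the second piece, Lemma \ref{lem_20161002130857} yields $F(v,w) \in \cE_T^\eta\HolBesSp{\frac12\theta-2-\kappa}$ with $\eta$ as in \eqref{eq:180122-22} and $\|F(v,w)\|_{\cE_T^\eta\HolBesSp{\frac12\theta-2-\kappa}} \lesssim 1 + \|(v,w)\|_{\sols{T}{\kappa}{\kappa'}}$, the implicit constant being at most a second-order polynomial in $\|{\mathbf X}\|_{\drivers{1}{\kappa}}$. I would then apply Proposition \ref{prop_20170804}(2) with $\alpha = \frac12\theta-2-\kappa$, $\gamma = \frac32\theta-2-\theta q+(\theta-1)\kappa'$, $\beta = \frac32\theta-2-\kappa'$, $\delta = 1-\frac{\kappa'}{\theta}$ and the same $\eta$. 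With these choices $\frac{\beta-\gamma}{\theta} = q-\kappa'$, so the output again lands in $\cL_T^{q-\kappa', \frac32\theta-2-\kappa', 1-\frac{\kappa'}{\theta}}$, while a short computation gives $\frac{\alpha-\theta\eta+\theta-\gamma}{\theta} = \frac{1}{\theta}\{\frac32\theta - 2 - (\theta-1)\kappa' - (2-\theta)\kappa\}$, the exponent of $T$ claimed. Adding the two bounds and absorbing the (at most quadratic in $\|{\mathbf X}\|_{\drivers{1}{\kappa}}$) constant into $\const[2]$ gives the asserted estimate, and well-definedness of $\mathcal{M}^1$ follows from the same two applications of Proposition \ref{prop_20170804} together with $F(v,w)_t \in \HolBesSp{\frac12\theta-2-\kappa}$.

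The substance of the proof is checking the hypotheses of Proposition \ref{prop_20170804}(2) for these parameters: $\alpha < \beta$, $\alpha \le \gamma < \alpha - \theta\eta + \theta$, $\gamma \le \beta < \alpha + \theta$, $0 < \delta \le \frac{\beta-\alpha}{\theta}$ and $\eta \in [0,1)$. Each reduces either to an elementary relation such as $0 \le \kappa$ or $\kappa' \le q$, or to comparing the small parameters $\kappa, \kappa'$ with a positive quantity built from $\theta$ — the relevant facts being $\frac32\theta-2 > 0$ (needed for $\gamma < \alpha-\theta\eta+\theta$ and for positivity of the $T$-exponent) and $\frac52\theta-2-\theta q > 0$ (needed for $\eta < 1$), both valid since $\theta > 7/4$ and $q < 1$, together with the first inequality of \eqref{eq:180206-1} and $\theta > 1$ (for $\eta > 0$). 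I expect this bookkeeping to be the only real obstacle; the one inequality worth singling out is $\beta < \alpha + \theta$, which unpacks to $\kappa < \kappa'$ and is precisely why \eqref{eq:180213-3} insists that $\kappa$ be strictly smaller than $\kappa'$.
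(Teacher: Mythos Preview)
Your proposal is correct and follows essentially the same approach as the paper: split $\mathcal{M}^1(v,w)=P^{\theta/2}_\cdot v_0+I[F(v,w)]$, apply Proposition~\ref{prop_20170804}(1) and (2) with the very parameters you list, and feed in Lemma~\ref{lem_20161002130857}. Your hypothesis checks for Proposition~\ref{prop_20170804}(2) are in fact more explicit than the paper's, which only records the key computation $\alpha-\theta\eta+\theta-\gamma=\frac32\theta-2-(\theta-1)\kappa'-(2-\theta)\kappa>0$ and leaves the remaining inequalities implicit.
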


\bigskip 
\begin{proof}
In Lemma \ref{lem_20161002130857}, we showed
 $F (v,w) \in \cE_T^{\eta } \HolBesSp{\frac12 \theta -2 -\kappa}$,
where $\eta$ is given by (\ref{eq:180122-22}).
We refine this estimate.
We now invoke 
Proposition \ref{prop_20170804}(2) with 
\begin{gather*}
\alpha = \frac12 \theta -2-\kappa
\le
\gamma = \frac32 \theta -2 - \theta q+ (\theta - 1) \kappa'
<
\beta = \frac32 \theta -2 -\kappa', 
\quad
\quad
\delta =1 - \frac{\kappa'}{\theta}.
\end{gather*}
Since $\alpha - \theta \eta + \theta -\gamma 
= \frac32 \theta -2 -(\theta -1) \kappa' - (2- \theta) \kappa >0$, we
are in the position of using 
Proposition \ref{prop_20170804}(2).
Combining
Proposition \ref{prop_20170804}(2)
with Lemma \ref{lem_20161002130857},
we have
\begin{align*}
\left\| I [F(v,w)] \right\|_{\cL_T^{
q -\kappa', \frac32 \theta -2-\kappa',1-\frac{\kappa'}{\theta}
}
}
		&\lesssim
		T^{
		\frac{1}{\theta} \left\{ 
		\frac32 \theta -2 -(\theta -1) \kappa' - (2- \theta) \kappa
						 \right\}
		}
			\|F(v,w)\|_{\cE_T^{\eta}\HolBesSp{\frac12 \theta -2-\kappa}}
			\\
			&\lesssim
			T^{
\frac{1}{\theta} \left\{ 
		\frac32 \theta -2 -(\theta -1) \kappa' - (2- \theta) \kappa
						 \right\}			
			}
			(1+\|(v,w)\|_{\sols{T}{\kappa}{\kappa'}}).
						\end{align*}

It remains to handle $ P^{\theta /2}_{\cdot} v_0 $.
We use 
Proposition \ref{prop_20170804}(1) with 
\[
\alpha = \frac32 \theta -2 -\theta q
								 +(\theta -1)\kappa', 
								 \quad
\beta = \frac32 \theta -2 -\kappa', 
\quad
\delta =1 - \frac{\kappa'}{\theta}
\]
to obtain
\[
\left\| P^{\theta /2}_{\cdot} v_0 \right\|_{
\cL_T^{q -\kappa', \frac32 \theta -2-\kappa',1-\frac{\kappa'}{\theta}}
}
		\lesssim
		 \|v_0\|_{\HolBesSp{ 
		 \frac32 \theta -2 -\theta q
								 +(\theta -1)\kappa'		 }},
		 		\]
				which completes the proof.
		\end{proof}

%

\subsection{Estimates of $\mathcal{M}^2$ 
}
In this subsection we
will control $\mathcal{M}^2$.
First, we estimate the comutator $\com (v, w)$.
From the assumption of the parameters we have
\begin{gather}\label{eq:180101-1}
-\frac12 \theta +2 + 2\kappa' -\kappa>0,\\
\label{eq.20171228_1}
\max \Bigl\{ -\frac72 +q + \frac{6}{\theta},
-2 +q' + \frac{3}{\theta},-2+\frac{4}{\theta}+q,
4-\frac{7}{\theta}
\Bigr\} 
<
\frac{5}{\theta} -3 +2q,\\
\label{eq:180101-4}
\Bigl(\frac{4}{\theta} -1 + \frac{\kappa +\kappa'}{\theta} \Bigr) 
- 2-\frac{3}{\theta}-\frac{\kappa'}{\theta}
<1.
\end{gather}

\begin{lemma}\label{lem_20161003140013}
	For any
	$
		(v,w)\in\sols{T}{\kappa}{\kappa'}
	$
	and
	$0<t\leq T$,
	we have
	\begin{align*}
		\|\com(v,w)_t \|_{\HolBesSp{-\frac12 \theta +2+\kappa'}}
		&\le \const t^{- ( \frac{5}{\theta} -3 +2q - \frac{\theta -1}{\theta}\kappa') }
			(1+
								 \|v_0\|_{\HolBesSp{\frac32 \theta -2 - \theta q + (\theta -1)\kappa'				 }}
				+
				\|(v,w)\|_{\sols{T}{\kappa}{\kappa'}}
			).
	\end{align*}
Here, $\const$ is a positive constants depending only on $\kappa$, $\kappa'$ and 
$\|{\mathbf X} \|_{\drivers{1}{\kappa}}$, which is given by an at most second-order polynomial in
	 $\|{\mathbf X} \|_{\drivers{1}{\kappa}}$.
	
\end{lemma}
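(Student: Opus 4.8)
The plan is to use the defining relation $V_t = P_t^{\theta/2}V_0 + I[\nabla X]_t$ of a driver to recognise $\com(v,w)_t$ as a commutator between the integration operator $I$ and the paraproduct $\Phi\rpara(\cdot)$, modulo two lower-order remainders. Concretely I would write
\[
\com(v,w)_t = P_t^{\theta/2}v_0 \;-\; \Phi_t\rpara P_t^{\theta/2}V_0 \;+\;\Bigl(I[\Phi\rpara\nabla X]_t - \Phi_t\rpara I[\nabla X]_t\Bigr)
\]
and estimate the three summands in $\HolBesSp{-\frac12\theta+2+\kappa'}$ separately. The term $P_t^{\theta/2}v_0$ is controlled by the smoothing estimate Proposition \ref{prop_20160927051055}(1): since $v_0\in\HolBesSp{\frac32\theta-2-\theta q+(\theta-1)\kappa'}$ one pays $t^{-\delta}$ with $\theta\delta = (-\frac12\theta+2+\kappa')-(\frac32\theta-2-\theta q+(\theta-1)\kappa')$, and \eqref{eq.20171228_1} (after absorbing the $O(\kappa+\kappa')$ contributions) gives $\delta\le\frac5\theta-3+2q-\frac{\theta-1}\theta\kappa'$. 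For $\Phi_t\rpara P_t^{\theta/2}V_0$ I would use the paraproduct bound Proposition \ref{prop_20160926062106}(1), the $L^\infty$-bound $\|\Phi_t\|_{L^\infty}\lesssim\|{\mathbf X}\|_{\drivers{1}{\kappa}}+t^{-\eta}\|(v,w)\|_{\sols{T}{\kappa}{\kappa'}}$ of Lemma \ref{lem:180201-3}, and the fact that $\|P_t^{\theta/2}V_0\|_{\HolBesSp{\frac32\theta-2-\kappa}}\lesssim\|{\mathbf X}\|_{\drivers{1}{\kappa}}$ combined with Proposition \ref{prop_20160927051055}(1) to reach regularity $-\frac12\theta+2+\kappa'$ at cost $t^{-\delta'}$, $\delta'=-1+\frac4\theta+\frac{\kappa+\kappa'}\theta$; again \eqref{eq.20171228_1} bounds the combined exponent $\eta+\delta'$ by $\frac5\theta-3+2q-\frac{\theta-1}\theta\kappa'$.

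The heart of the matter is the last summand. Writing $I[\Phi\rpara\nabla X]_t - \Phi_t\rpara I[\nabla X]_t = \int_0^t\bigl(P_{t-s}^{\theta/2}(\Phi_s\rpara\nabla X_s) - \Phi_t\rpara P_{t-s}^{\theta/2}\nabla X_s\bigr)\,ds$ and inserting $\pm\,\Phi_s\rpara P_{t-s}^{\theta/2}\nabla X_s$, the integrand splits into
\[
[P_{t-s}^{\theta/2},\Phi_s\rpara]\nabla X_s \;+\; (\Phi_s-\Phi_t)\rpara P_{t-s}^{\theta/2}\nabla X_s.
\]
On the first term I would apply the commutator estimate Proposition \ref{prop_20160927051159} (valid for $P^{\theta/2}$ by the Remark following it) with $\Phi_s\in\HolBesSp{2\theta-3-\kappa}$ — admissible since $2\theta-3-\kappa<1$ — and $\nabla X_s\in\HolBesSp{\frac12\theta-2-\kappa}$, choosing $a=3-\frac7\theta-\frac{2\kappa+\kappa'}\theta$ so that $\alpha+\beta-a\theta=-\frac12\theta+2+\kappa'$; on the second term I would use Proposition \ref{prop_20160926062106}(1), the smoothing bound $\|P_{t-s}^{\theta/2}\nabla X_s\|_{\HolBesSp{-\frac12\theta+2+\kappa'}}\lesssim(t-s)^{-(-1+\frac4\theta+\frac{\kappa+\kappa'}\theta)}\|{\mathbf X}\|_{\drivers{1}{\kappa}}$, and the time-modulus estimate \eqref{eq:180122-14} for $\|\Phi_s-\Phi_t\|_{L^\infty}$. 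After inserting the pointwise bounds $\|\Phi_s\|_{\HolBesSp{2\theta-3-\kappa}},\|\Phi_s\|_{L^\infty}\lesssim\|{\mathbf X}\|_{\drivers{1}{\kappa}}+s^{-\eta}\|(v,w)\|_{\sols{T}{\kappa}{\kappa'}}$ from Corollary \ref{cor:180206-2}, both contributions are integrated in $s$ via the beta-function identity Proposition \ref{prop:180131-1}, producing powers $t^{1+a}$, $t^{1+a-\eta}$ for the commutator part and analogous powers with $q'$ (and $s^{-q'+\kappa'-\kappa}$) in place of $q$ for the remainder part; each exponent is then checked, using \eqref{eq.20171228_1}, \eqref{eq:180101-1} and \eqref{eq:180101-4}, to be $\ge -(\frac5\theta-3+2q-\frac{\theta-1}\theta\kappa')$, which since $t\le1$ is what is needed.

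The main obstacle will be the exponent bookkeeping in this last step: one needs simultaneously $a>-1$ and all the $s$-exponents $>-1$ (so the beta integrals converge) together with the verification that the worst resulting explosion rate does not exceed the claimed one. The constraint $a>-1$ amounts, up to the small $\kappa,\kappa'$ corrections, to $\theta>7/4$, so this is precisely where the standing hypothesis is consumed, and the $\rho$-shift in the definition of $(q,q')$ for $\theta\le 11/6$ provides the remaining slack in the comparison with $\frac5\theta-3+2q-\frac{\theta-1}\theta\kappa'$. A minor technical point to dispose of en route is the interchange of $\Phi_t\rpara(\cdot)$ with the time integral defining $I[\nabla X]_t$, which follows from continuity of the paraproduct and dominated convergence. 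Finally, the assertion that the constant is an at most second-order polynomial in $\|{\mathbf X}\|_{\drivers{1}{\kappa}}$ is read off by degree counting: every term above carries at most one factor contributed by the driver through $\Phi$ — namely through $Y$, the alternative being a factor of $\|(v,w)\|_{\sols{T}{\kappa}{\kappa'}}$ — and one explicit factor of $\nabla X$ or $V_0$, for total driver-degree two.
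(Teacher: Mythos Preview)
Your proposal is correct and follows essentially the same route as the paper: the decomposition $\com(v,w)_t = P_t^{\theta/2}v_0 - \Phi_t\rpara P_t^{\theta/2}V_0 - \int_0^t(\Phi_t-\Phi_s)\rpara P_{t-s}^{\theta/2}\nabla X_s\,ds + \int_0^t[P_{t-s}^{\theta/2},\Phi_s\rpara]\nabla X_s\,ds$ is exactly the paper's $\mathrm{I}-\mathrm{II}-\mathrm{III}+\mathrm{IV}$, and each term is handled with the same tools (Proposition~\ref{prop_20160927051055}, Proposition~\ref{prop_20160926062106}(1), Proposition~\ref{prop_20160927051159}, estimate~\eqref{eq:180122-14}) and the same exponent comparisons via~\eqref{eq.20171228_1}. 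The only cosmetic difference is that the paper splits the commutator term $\mathrm{IV}$ into a $Y$-part and a $(v+w)$-part before estimating, whereas you insert the combined bound $\|\Phi_s\|_{\HolBesSp{2\theta-3-\kappa}}\lesssim\|{\mathbf X}\|+s^{-\eta}\|(v,w)\|$ directly; the resulting integrals and final exponents coincide.
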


\begin{proof}
We write
\begin{align*}
{\rm I}=P^{\theta /2}_t v_0, \
{\rm II}=\Phi_t \rpara P^{\theta /2}_t V_0, \
{\rm III}= \int_0^t ( \Phi_t - \Phi_s) \rpara P^{\theta /2}_{t-s} [\nabla X_s] ds, \
{\rm IV}=
\int_0^t [ P^{\theta /2}_{t-s}, \Phi_s \rpara]
\nabla X_s ds,
\end{align*}
so that
\begin{align}\label{eq.0720_01}
\com (v, w)_t
=
P^{\theta /2}_t v_0 + I [\Phi \rpara \nabla X]_t - \Phi_t \rpara V_t
={\rm I}
 - {\rm II}
-{\rm III}
+
{\rm IV}.
\end{align}

We will
estimate ${\rm I},\ldots,{\rm IV}$.

\paragraph{Estimate of ${\rm I}$}
We have 
\[
\| P^{\theta /2}_t v_0\|_{\HolBesSp{ -\frac12 \theta +2+\kappa'}}
\lesssim
t^{- ( 
-2 +\frac{4}{\theta} +q + \frac{2-\theta}{\theta}\kappa'
) } 
\| v_0\|_{\HolBesSp{ \frac32 \theta -2 - \theta q + (\theta -1)\kappa' }}
\]
from Proposition \ref{prop_20160927051055}(1).
It remains to use
(\ref{eq.20171228_1}) to control the negative power
in the right-hand side above.

\paragraph{Estimate of ${\rm II}$}

We use
Proposition \ref{prop_20160926062106}(1)
and
Corollary \ref{cor:180206-2}
to have
\begin{align*}
 \| \Phi_t \rpara P^{\theta /2}_t V_0 \|_{\HolBesSp{- \frac12 \theta +2+\kappa'}}
& \lesssim
 \| \Phi_t\|_{L^{\infty} }
 \| P^{\theta /2}_t V_0 \|_{\HolBesSp{- \frac12 \theta +2+\kappa'}}\\
& \lesssim
 \{ \|Y_t\|_{\HolBesSp{2 \theta -3-\kappa}}
 + t^{
								 \frac32 -\frac{2}{\theta} - q
								 +\frac{\theta -1}{\theta}\kappa } 
 \|(v,w)\|_{\sols{T}{\kappa}{\kappa'}}
 \} 
 \| P^{\theta /2}_t V_0 \|_{\HolBesSp{- \frac12 \theta +2+\kappa'}}.
\end{align*}

We use
Proposition \ref{prop_20160927051055}(1)
to have
\begin{align*}
 \| \Phi_t \rpara P^{\theta /2}_t V_0 \|_{\HolBesSp{- \frac12 \theta +2+\kappa'}}
 &\lesssim
 \{ \|Y_t\|_{\HolBesSp{2 \theta -3-\kappa}}
 + t^{
								 \frac32 -\frac{2}{\theta} - q
								 +\frac{\theta -1}{\theta}\kappa } 
 \|(v,w)\|_{\sols{T}{\kappa}{\kappa'}}
 \} 
 t^{ 2-\frac{4}{\theta}-\frac{\kappa+\kappa'}{\theta}}
\| V_0 \|_{\HolBesSp{\frac32 \theta -2-\kappa}}.
\end{align*}
Since
\[
\frac32-\frac2{\theta}-q<0,
\]
we have
\begin{align*}
 \| \Phi_t \rpara P^{\theta /2}_t V_0 \|_{\HolBesSp{- \frac12 \theta +2+\kappa'}}
 &\lesssim
 \{ \|{\mathbf X} \|_{\drivers{1}{\kappa}}
 + t^{
								 \frac32 -\frac{2}{\theta} - q
								 +\frac{\theta -1}{\theta}\kappa } 
 \|(v,w)\|_{\sols{T}{\kappa}{\kappa'}}
 \} 
 t^{ 2-\frac{4}{\theta}-\frac{\kappa+\kappa'}{\theta}}
\| V_0 \|_{\HolBesSp{\frac32 \theta -2-\kappa}}\\
 &\le
 \{ \|{\mathbf X} \|_{\drivers{1}{\kappa}}
 + 
 \|(v,w)\|_{\sols{T}{\kappa}{\kappa'}}
 \} 
 t^{
								 \frac32 -\frac{2}{\theta} - q
								 +\frac{\theta -1}{\theta}\kappa + 2-\frac{4}{\theta}-\frac{\kappa+\kappa'}{\theta}}
\| V_0 \|_{\HolBesSp{\frac32 \theta -2-\kappa}}\\
 &=
 \{ \|{\mathbf X} \|_{\drivers{1}{\kappa}}
 + 
 \|(v,w)\|_{\sols{T}{\kappa}{\kappa'}}
 \} 
 t^{
								 \frac72 -\frac{6}{\theta} - q
								 +\frac{\theta -1}{\theta}\kappa-\frac{\kappa+\kappa'}{\theta}}
\| V_0 \|_{\HolBesSp{\frac32 \theta -2-\kappa}}.
 \end{align*}
This term satisfies the desired estimate due to \eqref{eq.20171228_1}.

\paragraph{Estimate of ${\rm III}$}

From Lemma \ref{lem:180122-1} and
Proposition \ref{prop_20160927051055}(1) we can also see that
\begin{equation}\label{eq:180122-15}
 \| P^{\theta /2}_{t-s} [\nabla X_s] \|_{\HolBesSp{- \frac12 \theta +2+\kappa'}}
 \lesssim 
 (t-s)^{-( \frac{4}{\theta} -1 + \frac{\kappa +\kappa'}{\theta})}
 \| \nabla X_s\|_{\HolBesSp{\frac12 \theta -2-\kappa}}
 \lesssim 
 (t-s)^{-( \frac{4}{\theta} -1 + \frac{\kappa +\kappa'}{\theta})}
 \| {\mathbf X} \|_{\drivers{1}{\kappa}}.
 \end{equation}

We use 
Proposition \ref{prop_20160926062106}(1)
to have
\[
\Bigl\|{\rm III} \Bigr\|_{\HolBesSp{- \frac12 \theta +2+\kappa' }}
\lesssim
 \int_0^t \| \Phi_t - \Phi_s\|_{L^{\infty}} 
 \| P^{\theta /2}_{t-s} \nabla X_s\|_{\HolBesSp{- \frac12 \theta +2+\kappa'}} ds
\]
keeping in mind that
$- \frac12 \theta +2+\kappa'>0$.

If we combine 
(\ref{eq:180122-14}) and (\ref{eq:180122-15}),
then we obtain
\begin{align*}
\Bigl\|{\rm III} \Bigr\|_{\HolBesSp{- \frac12 \theta +2+\kappa' }}
 &\lesssim
 \| {\mathbf X} \|_{\drivers{1}{\kappa}} 
 \int_0^t 
 \Bigl\{ 
 (t-s)^{2-\frac{3}{\theta}-\frac{\kappa'}{\theta}} \| {\mathbf X} \|_{\drivers{1}{\kappa}} 
 +
 s^{-q+\kappa'}
						(t-s)^{ \frac32 -\frac{2}{\theta} -\kappa'}
						\|(v,w)\|_{\sols{T}{\kappa}{\kappa'}}
						 \\
						 &\qquad \qquad
						 + 
s^{-q'+\kappa'-\kappa}
						(t-s)^{\frac{1}{\theta}-\kappa'} \|(v,w)\|_{\sols{T}{\kappa}{\kappa'}}
						\Bigr\}
 (t-s)^{-( \frac{4}{\theta} -1 + \frac{\kappa +\kappa'}{\theta})}
 ds\\
 &=
 \| {\mathbf X} \|_{\drivers{1}{\kappa}} 
 \int_0^t 
 \Bigl\{ 
 (t-s)^{3-\frac{7}{\theta}-\frac{\kappa+2\kappa'}{\theta}} 
\| {\mathbf X} \|_{\drivers{1}{\kappa}} 
 +
 s^{-q+\kappa'}
						(t-s)^{ \frac52 -\frac{6}{\theta} -\frac{\kappa +(1+\theta)\kappa'}{\theta}}
						\|(v,w)\|_{\sols{T}{\kappa}{\kappa'}}
						 \\
						 &\qquad \qquad
						 + 
s^{-q'+\kappa'-\kappa}
						(t-s)^{1-\frac{3}{\theta}-\frac{\kappa +(1+\theta)\kappa'}{\theta}} \|(v,w)\|_{\sols{T}{\kappa}{\kappa'}}
						\Bigr\}
 ds.
\end{align*}
Since we have
(\ref{eq:180208-11}),
(\ref{eq:180101-4}) and
\[
4-\frac{7}{\theta}>0, \quad
\frac72 -\frac{6}{\theta} -q>0, \quad
2-\frac{3}{\theta}>0,
\]
we are in the position of using
Proposition \ref{prop:180131-1}
to have
\begin{align*}
\lefteqn{
\Bigl\|{\rm III} \Bigr\|_{\HolBesSp{- \frac12 \theta +2+\kappa' }}
}\\
&\lesssim
 \| {\mathbf X} \|_{\drivers{1}{\kappa}} 
 (
 t^{4-\frac{7}{\theta}-\frac{\kappa+2\kappa'}{\theta}} \| {\mathbf X} \|_{\drivers{1}{\kappa}} 
 +
 t^{\frac52 -\frac{6}{\theta} -q-\frac{\kappa +\kappa'}{\theta}}
						\|(v,w)\|_{\sols{T}{\kappa}{\kappa'}}
+
t^{2-\frac{3}{\theta}-q'-\frac{(1+\theta)\kappa +\kappa'}{\theta}} \|(v,w)\|_{\sols{T}{\kappa}{\kappa'}}).
\end{align*}
Thus,
from (\ref{eq.20171228_1})
we conclude
\begin{align*}
\Bigl\|{\rm III} \Bigr\|_{\HolBesSp{- \frac12 \theta +2+\kappa' }}
 &\lesssim
 \|{\mathbf X} \|_{\drivers{1}{\kappa}}^2 
 + 
 t^{- (\frac{5}{\theta} -3 +2q - \frac{\theta -1}{\theta}\kappa'
)
 }
 \|{\mathbf X} \|_{\drivers{1}{\kappa}}\|(v,w)\|_{\sols{T}{\kappa}{\kappa'}}.
 \end{align*}

 %
 %
%
%


\paragraph{Estimate of ${\rm IV}$}
We write
\begin{align*}
{\rm IV}_Y=
\int_0^t [ P^{\theta /2}_{t-s}, R^{\perp}Y_s \rpara]
\nabla X_s ds,
\quad
{\rm IV}_{v+w}=
\int_0^t [ P^{\theta /2}_{t-s}, R^{\perp}(v_s +w_s) \rpara]
\nabla X_s ds
%
\end{align*}
We use Proposition \ref{prop_20160927051159}
and $-3 + \frac{7}{\theta} <1$ to have
\begin{align*}
\Bigl\|{\rm IV}_Y \Bigr\|_{\HolBesSp{- \frac12 \theta +2+\kappa'}}
\lesssim
 \int_0^t (t-s)^{3-\frac{7}{\theta}-\frac{\kappa' +2\kappa}{\theta}} 
 \| R^{\perp} Y_s\|_{\HolBesSp{ 2\theta -3 -\kappa}} 
 \| \nabla X_s\|_{\HolBesSp{\frac12 \theta -2 -\kappa }} ds
 \lesssim
 \|{\mathbf X} \|_{\drivers{1}{\kappa}}^2.
 \end{align*}
Meanwhile,
Lemma \ref{lem:180122-1}
and
Corollary \ref{cor:180201-2}
yield
\begin{align*}
\Bigl\|{\rm IV}_{v+w}
 \Bigr\|_{\HolBesSp{- \frac12\theta+2+\kappa'}}
 & \lesssim
 \|{\mathbf X} \|_{\drivers{1}{\kappa}}
\|(v,w)\|_{\sols{T}{\kappa}{\kappa'}} \int_0^t (t-s)^{3-\frac{7}{\theta}-\frac{\kappa' +2\kappa}{\theta}} 
s^{-\frac12+\frac{1}{\theta}+\frac{\kappa}{\theta}-q+\frac{\theta-1}{\theta}\kappa'}
ds
 \\
& \sim
t^{- ( \frac{6}{\theta}-\frac72 +q 
							 + \frac{\kappa+(2-\theta)\kappa'}{\theta})} 
\|{\mathbf X} \|_{\drivers{1}{\kappa}}\|(v,w)\|_{\sols{T}{\kappa}{\kappa'}}.
 \end{align*}
Using \eqref{eq.20171228_1} we obtain the desired estimate.
\end{proof}


Now we estimate $G$.
Arithmetic shows
\begin{equation}\label{eq:180101-19}
\frac12\theta-2-\kappa<0,
\end{equation}
\begin{align}
\label{eq:180101-11}
-\frac52\theta +5+ \theta \kappa'
>0>
\frac52\theta-5- \kappa', \quad
-\frac52\theta +5+ \theta \kappa'
+
\frac52\theta-5- \kappa'
=(\theta-1)\kappa'>0,
\end{align}
\begin{equation}\label{eq:180122-20}
\frac12 + q - \frac{1}{\theta} + \frac{2- \theta}{\theta}\kappa' 
\le \frac{5}{\theta} -3 +2q - \frac{\theta -1}{\theta}\kappa',
\end{equation}
\begin{equation}\label{eq:180122-161}
\frac{2}{\theta}-1+q+\kappa
\le 
 \frac{5}{\theta} -3 +2q - \frac{\theta -1}{\theta}\kappa',
\end{equation}	
\begin{equation}\label{eq:180122-16}
 \frac{6}{\theta}-\frac72 + q +\frac{3-\theta}{\theta} \kappa' 
\le 
 \frac{5}{\theta} -3 +2q - \frac{\theta -1}{\theta}\kappa',
\end{equation}	
\begin{equation}\label{eq:180101-21}
-\frac{1}{\theta} \left\{
\frac32 \theta -2 - \theta q +(\theta -1) \kappa \right\}
\le \frac{5}{\theta} -3 +2q - \frac{\theta -1}{\theta}\kappa',
\end{equation}
\begin{equation}\label{eq:180101-15}
\frac72 \theta -5 -\theta q' - \theta\kappa 
					\le 0 \le 
						-2\theta +4 +2\kappa'
					\le
						\frac32 \theta -2-\kappa'
						\end{equation}
if $0<\kappa<\kappa' \ll 1$.
Since $4\theta > 7$
and $0<\kappa<\kappa' \ll 1$, 
it holds 
\begin{equation}\label{eq:180122-18}
-4 + \frac{7}{\theta} +q' 
+ \frac{ \kappa' +\theta\kappa}{\theta} < 
\frac{5}{\theta} -3 +2q - \frac{\theta -1}{\theta}\kappa',
\end{equation}
\begin{equation}\label{eq:180101-13}
(2 \theta -3 -\kappa)+(2 \theta -4 -\kappa)=4\theta-7-2\kappa>0.
\end{equation}

\begin{lemma}\label{lem_20171219}
	For any $(v,w)\in\sols{T}{\kappa}{\kappa'}$ and $0<t\leq T$, we have
	\begin{eqnarray*}
	\lefteqn{
		\|G(v,w)_t\|_{\HolBesSp{\frac52\theta-5-(\kappa' +\kappa)}}
		}
		\\
		&\le
\const
			 t^{- ( \frac{5}{\theta} -3 +2q - \frac{\theta -1}{\theta}\kappa') }
			 			\Big(1+
				 \|v_0\|_{\HolBesSp{\frac32 \theta -2 - \theta q + (\theta -1)\kappa'}}
				+
					\|(v,w)\|_{\sols{T}{\kappa}{\kappa'}}^3
					+\|(v,w)\|_{\sols{T}{\kappa}{\kappa'}}
			\Big).
	\end{eqnarray*}
	Here, $\const$ is a positive constant depending only on $\kappa$, $\kappa'$, 
	and $\|{\mathbf X}\|_{\drivers{1}{\kappa}}$
	and it is given by a third-order polynomial 
	in $\|{\mathbf X}\|_{\drivers{1}{\kappa}}$.
In particular,
\[
\|G(v,w)\|_{{\mathcal E}^{ \frac{5}{\theta} -3 +2q - \frac{\theta -1}{\theta}\kappa'}_T\HolBesSp{\frac52\theta-5-(\kappa' +\kappa)}}
\le \const\Big(1+
				 \|v_0\|_{\HolBesSp{\frac32 \theta -2 - \theta q + (\theta -1)\kappa'}}
				+
					\|(v,w)\|_{\sols{T}{\kappa}{\kappa'}}^3
					+\|(v,w)\|_{\sols{T}{\kappa}{\kappa'}}
			\Big).
\]
\end{lemma}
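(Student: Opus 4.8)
The plan is to estimate separately each of the roughly a dozen summands appearing in the definition of $G(v,w)_t$, in the norm of $\HolBesSp{\frac52\theta-5-(\kappa'+\kappa)}$ and with the time weight $t^{-(\frac{5}{\theta}-3+2q-\frac{\theta-1}{\theta}\kappa')}$, and then to sum. For each summand I will choose regularity exponents for the factors as follows: for the driver components $X,V,Y,Z,W,\hat Z,\hat W$ I use the regularities recorded in the table of Section~3 together with Lemma~\ref{lem:180122-1}, Lemma~\ref{lm.20171227} and the embedding Proposition~\ref{prop_20160928054620}(1); for $\Phi=R^\perp(Y+v+w)$ and its time increments I use Corollaries~\ref{cor:180201-21}, \ref{cor:180201-2}, \ref{cor:180206-2} and Proposition~\ref{prop:180131-11}; and for $\com(v,w)$ I invoke Lemma~\ref{lem_20161003140013}, which already supplies the spatial regularity $-\frac12\theta+2+\kappa'$, the time exponent $-(\frac{5}{\theta}-3+2q-\frac{\theta-1}{\theta}\kappa')$, and the affine dependence on $\|v_0\|_{\HolBesSp{\cdots}}$ and $\|(v,w)\|_{\sols{T}{\kappa}{\kappa'}}$. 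The relevant bilinear/trilinear tools are Proposition~\ref{prop_20160926062106}, Corollary~\ref{cor:180131-1}, the commutator estimates Proposition~\ref{prop_comm_20160919055939} and Proposition~\ref{prop_20160927051159}, and the Riesz--paraproduct commutator Lemma~\ref{lem:161117-107}; in every application the admissibility hypotheses (a regularity in $(0,1)$, a sum of two regularities of the required sign, and a beta-integral time exponent lying in $[0,1)$ so that Proposition~\ref{prop:180131-1} applies) are exactly what the arithmetic inequalities \eqref{eq:180101-1}--\eqref{eq:180101-13} gathered before the statement encode, so I will cite the pertinent one as I go.

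It is convenient to group the summands of $G$ into three families. First, the paraproduct and zero-order pieces $\Phi\lpara\nabla X$, $R^\perp X(\rpara+\lpara)\{\Phi\rpara\nabla V\}$ and $X+Y+v+w$: these are handled with Corollary~\ref{cor:180131-1}(1)--(2) and the $L^\infty$-bounds on $\Phi_t$ and on $v_t+w_t$ from Corollaries~\ref{cor:180206-2} and \ref{cor:180201-2}, the resulting time exponents being dominated by the target thanks to \eqref{eq:180122-20}, \eqref{eq:180122-161} and \eqref{eq:180101-21}. Second, the ``closed-form'' resonant and product pieces $Z,\hat Z$, $R^\perp w\reso\nabla X$, $R^\perp X\cdot\nabla w$, $\Phi\cdot W$, $\Phi\cdot\hat W$ and $\Phi\cdot\nabla(Y+v+w)$: here I use Proposition~\ref{prop_20160926062106}(2)--(3) and Corollary~\ref{cor:180131-1}(3); the positivity of the resonant regularity sums is \eqref{eq:180101-13} (e.g.\ for $\Phi\cdot\hat W$ and $\Phi\cdot\nabla(Y+v+w)$, pairing $\HolBesSp{2\theta-3-\kappa}$ with $\HolBesSp{2\theta-4-\kappa}$) and its analogues, the time weights come from Proposition~\ref{prop:180131-12} and Corollary~\ref{cor:180201-2}, and domination of the exponents uses \eqref{eq:180122-16} and \eqref{eq:180122-18}. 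Third, the genuinely paracontrolled pieces $\{R^\perp(\Phi\rpara V)-\Phi\rpara R^\perp V\}\reso\nabla X$, $\comC(\Phi,R^\perp V,\nabla X)$, $\comC(\Phi,\nabla V,R^\perp X)$, $R^\perp X\cdot\{\nabla\Phi\rpara V\}$, $R^\perp\com(v,w)\reso\nabla X$ and $R^\perp X\cdot\nabla\com(v,w)$: for the first I apply Lemma~\ref{lem:161117-107} with $f=\Phi$, $g=V$ (legitimate since the regularity of $\Phi_t$ lies in $(0,1)$), producing an object of regularity $(2\theta-3-\kappa)+(\frac32\theta-2-\kappa)$ which I then resonate against $\nabla X$; for the two $\comC$-terms I apply Proposition~\ref{prop_comm_20160919055939}, after first lowering the regularity of $R^\perp V$, resp.\ $\nabla V$, by embedding so that both $\beta+\gamma<0$ and $\alpha+\beta+\gamma>0$ hold simultaneously (this is where \eqref{eq:180101-1} and \eqref{eq:180101-11} enter); and for the two $\com(v,w)$-terms I combine Lemma~\ref{lem_20161003140013} with Lemma~\ref{lm.20171227}, Lemma~\ref{lem:180122-1} and Proposition~\ref{prop_20160926062106}(3)/Corollary~\ref{cor:180131-1}(3), using $\bigl(-\tfrac12\theta+2+\kappa'\bigr)+\bigl(\tfrac12\theta-2-\kappa\bigr)=\kappa'-\kappa>0$ for the resonant pairing.

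Summing the resulting bounds yields an expression of the stated shape. The spatial regularity of every summand is at least $\frac52\theta-5-(\kappa'+\kappa)$; since $\frac52\theta-5\le 0$, the numerous summands with much better regularity embed for free, and only a few (those containing $\nabla w$ or $\com(v,w)$) reach the threshold, which forces $4\theta-7>0$, hence $\theta>7/4$, together with $\kappa,\kappa'$ small for this $\theta$. The worst time exponent over all summands is exactly $\frac{5}{\theta}-3+2q-\frac{\theta-1}{\theta}\kappa'$, by design of \eqref{eq:180101-1}--\eqref{eq:180101-13}. The dependence on $\|{\mathbf X}\|_{\drivers{1}{\kappa}}$ is polynomial of degree at most three, the cubic contributions coming from $R^\perp\com(v,w)\reso\nabla X$ and $R^\perp X\cdot\nabla\com(v,w)$, where the bound of Lemma~\ref{lem_20161003140013} is already quadratic in $\|{\mathbf X}\|$ and is paired with one further factor from $\nabla X$, resp.\ $R^\perp X$. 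Finally, a generic summand is at most quadratic in $\|(v,w)\|_{\sols{T}{\kappa}{\kappa'}}$ and at most affine in $\|v_0\|_{\HolBesSp{\frac32\theta-2-\theta q+(\theta-1)\kappa'}}$ (the $v_0$-dependence entering only through the two $\com(v,w)$-summands); writing $a^2\le a+a^3$ puts this into the form $1+\|v_0\|_{\HolBesSp{\cdots}}+\|(v,w)\|^3_{\sols{T}{\kappa}{\kappa'}}+\|(v,w)\|_{\sols{T}{\kappa}{\kappa'}}$. The final assertion, on $\|G(v,w)\|_{\cE^{\frac{5}{\theta}-3+2q-\frac{\theta-1}{\theta}\kappa'}_T\HolBesSp{\frac52\theta-5-(\kappa'+\kappa)}}$, is then immediate from the definition of the $\cE$-norm, since the pointwise-in-$t$ bound holds uniformly for $0<t\le T$. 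The main obstacle is not any single estimate but the combinatorial bookkeeping in the third family: one must select the regularity exponents so that every call to Propositions~\ref{prop_comm_20160919055939} and \ref{prop_20160927051159} and to Lemma~\ref{lem:161117-107} stays inside its narrow admissible window while the induced time singularity never exceeds the target --- both constraints being tight, which is precisely where the quantitative choices of $q,q'$ (and of $\rho$) in \eqref{eq:180114-1} are used.
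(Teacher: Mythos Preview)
Your proposal is correct and follows essentially the same route as the paper, which decomposes $G$ into ten pieces $T_1,\ldots,T_{10}$ (a slightly different grouping from your three families, but covering exactly the same summands) and bounds each with precisely the tools you cite. One minor correction to your commentary: the constraint $4\theta-7>0$ enters through the product $R^\perp Y\cdot\nabla Y$ inside $\Phi\cdot\nabla(Y+v+w)$ (this is where \eqref{eq:180101-13} is used in Proposition~\ref{prop_20160926062106}(3)), not through the $\nabla w$ or $\com(v,w)$ terms, which in fact land in regularities well above $\frac52\theta-5-(\kappa'+\kappa)$; and no artificial lowering by embedding is needed for the $\comC$ terms, since $\beta+\gamma=2\theta-4-2\kappa<0$ already holds with the natural exponents.
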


Noteworthy in Lemma \ref{lem_20171219} 
is the fact that we have
\[
\frac{5}{\theta} -3 +2q - \frac{\theta -1}{\theta}\kappa'<1.
\]
\bigskip
We set
\begin{equation}\label{eq:180227-3}
T_1=\Phi \cdot \nabla (Y+v+w), \quad
T_2= 
		 \comC (\Phi, R^{\perp} V, \nabla X)
+
				\comC (\Phi, \nabla V, R^{\perp} X), \quad
T_3=
		\Phi \lpara \nabla X,
\end{equation}
\[
T_4=R^{\perp}w \reso \nabla X, \quad
T_5= R^{\perp}X \cdot \nabla w, \quad
T_6=\{ R^{\perp} (\Phi \rpara V) - \Phi \rpara R^{\perp} V \}
		 \reso \nabla X , \quad
\]
\[
T_7= R^{\perp} X \cdot \{\nabla \Phi \rpara V \}, \quad
T_8=
		 R^{\perp} X (\rpara + \lpara)
		 \{\Phi \rpara \nabla V \} +\Phi \cdot \hat{W} + \Phi \cdot W,
\]
\[
T_9=Z+\hat{Z}+X+Y+v+w, \quad
T_{10}=R^{\perp}	\com (v,w) \reso \nabla X
				 +R^{\perp} X \cdot \nabla \com (v,w),
\]
so that
\[
G(v,w)=\sum\limits_{j=1}^{10}T_j.
\]
In Step $j$ below,
we estimate $T_j$,
$j=1,2,\ldots,10$.

\begin{proof}
{\bf [Step 1]}~
First, we calculate $T_1$.
Insert the definition of $\Phi$ to obtain $9$ terms.
Among the $9$ terms,
the most difficult one is $R^{\perp} v \cdot \nabla v$,
which determines the indices of our solution spaces.
We can use
Corollary \ref{cor:180131-1}(3)
in view of
(\ref{eq:180101-11})
to have
\[
\| R^{\perp} v_t \cdot \nabla v_t \|_{ \HolBesSp{ \frac52\theta-5- \kappa'}} 
\lesssim
 \| R^{\perp} v_t \|_{ \HolBesSp{ -\frac52\theta +5+ \theta \kappa' }} 
 \| \nabla v_t \|_{ \HolBesSp{ \frac52\theta-5- \kappa' }}.
\]
As before,
from
Lemmas \ref{lem:180122-1} and \ref{lm.20171227}
\begin{equation}\label{eq:180430-1}
\| R^{\perp} v_t \cdot \nabla v_t \|_{ \HolBesSp{ \frac52\theta-5- \kappa'}} 
\lesssim
 \| v_t \|_{ \HolBesSp{ -\frac52\theta +5+ \theta \kappa' }} 
 \| v_t \|_{ \HolBesSp{ \frac52\theta -4 - \kappa' }}.
\end{equation}
From
Proposition \ref{prop:180131-12}(1),
we deduce
\begin{align*}
\|v_t\|_{\HolBesSp{-\frac52\theta +5+ \theta \kappa' }}
					&\lesssim
						t^{
							-\frac{1}{\theta}
							\left\{ 
								-4\theta+7 +\theta q
								 +\kappa' 
							\right\}
					},
\|v\|_{\cL_T^{q-\kappa',\frac32\theta -2 -\kappa',1-\frac{\kappa'}{\theta}}}\\
\|v_t\|_{\HolBesSp{ \frac52\theta -4 - \kappa' }}
					&\lesssim
						t^{
							-\frac{1}{\theta}
							\left\{ 
								 \theta -2 - \theta q
								 -\theta \kappa' 
							\right\}
					}
\|v\|_{\cL_T^{q-\kappa',\frac32\theta -2 -\kappa',1-\frac{\kappa'}{\theta}}}.
\end{align*}
Inserting the estimate into (\ref{eq:180430-1}),
we have
\begin{align}\label{eq:180211-3}
\| R^{\perp} v_t \cdot \nabla v_t \|_{ \HolBesSp{ \frac52\theta-5- \kappa'}} 
&\lesssim
 t^{- ( \frac{5}{\theta} -3 +2q - \frac{\theta -1}{\theta}\kappa') }
 \|v\|_{\cL_T^{q -\kappa', \frac32 \theta -2 -\kappa', 1-\frac{\kappa'}{\theta} }}^2\\
\nonumber
&\le
 t^{- ( \frac{5}{\theta} -3 +2q - \frac{\theta -1}{\theta}\kappa') }
\|(v,w)\|_{\sols{T}{\kappa}{\kappa'}}^2. 
\end{align}
Note that 
$ \| w_t \|_{ \HolBesSp{ -\frac52\theta-5+ \theta \kappa' }}$
and 
$ \| w_t \|_{ \HolBesSp{ \frac52\theta -4 - \kappa' }} $
admit similar estimates to (actually better estimates than) those of 
$ \| v_t \|_{ \HolBesSp{ -\frac52\theta-5+ \theta \kappa' }}$
and 
$ \| v_t \|_{ \HolBesSp{ \frac52\theta -4 - \kappa' }}$, respectively.
So, 
$R^{\perp} v_t \cdot \nabla w_t + R^{\perp} w_t \cdot \nabla v_t + R^{\perp} w_t \cdot \nabla w_t$
admits the same estimate.

Since
$\frac52\theta-5\le 2 \theta -4 $,
we have
\[
\|R^{\perp} Y_t \cdot \nabla Y_t
\|_{ \HolBesSp{\frac52\theta-5- \kappa' }} 
\lesssim
\|R^{\perp} Y_t \cdot \nabla Y_t
\|_{ \HolBesSp{2 \theta -4 - \kappa}}.
\]
From
(\ref{eq:180101-13})
we are in the position of using
Corollary \ref{cor:180131-1}(3)
with $\alpha=2\theta-4-\kappa$, $\beta=2\theta-3$
to have
\begin{equation}\label{eq:20200113-1}
\|R^{\perp} Y_t \cdot \nabla Y_t
\|_{ \HolBesSp{2 \theta -4 - \kappa}}
\lesssim
\|R^{\perp} Y_t 
\|_{ \HolBesSp{2 \theta -3 - \kappa}} 
\|\nabla Y_t
\|_{ \HolBesSp{2 \theta -4 - \kappa}}.
\end{equation}
Note that the condition $\theta >7/4$ is used here.
If we use 
Lemmas \ref{lem:180122-1} and \ref{lm.20171227},
then we obtain
\begin{align}
\nonumber
\|R^{\perp} Y_t \cdot \nabla Y_t
\|_{ \HolBesSp{\frac52\theta-5- \kappa' }} 
\lesssim
\| Y_t 
\|^2_{ \HolBesSp{2 \theta -3 - \kappa}} 
\le
 \| {\mathbf X}\|_{\drivers{1}{\kappa}}^2.
\end{align}

In a similar way, 
thanks to Proposition \ref{prop:180131-12},
\begin{align}
\nonumber
\|R^{\perp} v_t \cdot \nabla Y_t
\|_{ \HolBesSp{\frac52\theta-5- \kappa' }} 
&\lesssim
 \| R^{\perp} v_t \|_{ \HolBesSp{ -\frac52\theta +5+ \theta \kappa' }} 
 \| \nabla Y_t \|_{ \HolBesSp{ \frac52\theta-5- \kappa' }}
\\
 \nonumber
 &
 \lesssim
 \| R^{\perp} v_t \|_{ \HolBesSp{ -\frac52\theta +5+ \theta \kappa' }} 
 \| \nabla Y_t \|_{ \HolBesSp{ 2\theta-4- \kappa}}
\\
\nonumber
& 
\lesssim
 \| v_t \|_{ \HolBesSp{ -\frac52\theta +5+ \theta \kappa' }} 
 \| Y_t \|_{ \HolBesSp{ 2\theta-3- \kappa}}
\\
 \nonumber
 &
 \lesssim
					 t^{
							-\frac{1}{\theta}
							\left\{ 
								-4\theta+7 +\theta q
								 +\kappa' 
							\right\}
					}
										\|v\|_{\cL_T^{q-\kappa',\frac32\theta -2 -\kappa',1-\frac{\kappa'}{\theta}}}
 \| {\mathbf X}\|_{\drivers{1}{\kappa}}.
 \end{align}
In view of (\ref{eq:180122-18}),
this estimate is better than \eqref{eq:180211-3}.
The term
$R^{\perp} w_t \cdot \nabla Y_t$ admits the same estimate.

We also have 
\begin{align}
\nonumber
\|R^{\perp} Y_t \cdot \nabla v_t
\|_{ \HolBesSp{\frac52\theta-5- \kappa' }} 
&\lesssim
 \| R^{\perp} Y_t \|_{ \HolBesSp{2\theta-3- \kappa }} 
 \| \nabla v_t \|_{ \HolBesSp{(-2\theta +3 +2 \kappa')\vee( \frac52\theta-5- \kappa')}}
\\
 \nonumber
 &
 \lesssim 
 \| {\mathbf X}\|_{\drivers{1}{\kappa}} 
 \| v_t \|_{ \HolBesSp{(-2\theta +4 +2 \kappa')\vee( \frac52\theta-4- \kappa')}}
\\
 \nonumber
 &
 \lesssim
 \Bigl(
 t^{-\left(\frac{6}{\theta}-\frac72 + q +\frac{3-\theta }{\theta} \kappa'	\right)}
					\vee
t^{-\left(1-\frac{2}{\theta}-q-\kappa'\right)	}
					\Bigr)
\|v\|_{\cL_T^{q-\kappa',\frac32\theta -2 -\kappa',1-\frac{\kappa'}{\theta}}}
 \| {\mathbf X}\|_{\drivers{1}{\kappa}}.
 \end{align}
From (\ref{eq:180122-161}) and (\ref{eq:180122-16})
we obtain the desired estimate for $R^{\perp} Y_t \cdot \nabla v_t$.
The term
$R^{\perp} Y_t \cdot \nabla w_t$ can be done in the same way.
Combining these all,
we have the desired estimate for 
$\Phi \cdot \nabla (Y+v+w)$.
\begin{align*}
\|(T_{1})_t\|_{ \HolBesSp{\frac52\theta-5- \kappa' }} 
&\le
			 t^{- ( \frac{5}{\theta} -3 +2q - \frac{\theta -1}{\theta}\kappa') }
					+\|(v,w)\|_{\sols{T}{\kappa}{\kappa'}}.
\end{align*}

\bigskip 
\bigskip

\noindent
{\bf [Step 2]}~We estimate
$\comC (\Phi, R^{\perp} V, \nabla X)$
and
$\comC (\Phi, \nabla V, R^{\perp} X)$
with the help of
Proposition \ref{prop_comm_20160919055939},
the commutator estimate.

To this end we set
\begin{equation}\label{eq:180122-30}
\alpha=-2\theta +4 +2\kappa', \quad
\beta=\frac32 \theta -2 - \kappa, \quad
\gamma=\frac12 \theta -2 - \kappa.
\end{equation}
Note that $\alpha \in (0,1)$, that $\beta+\gamma<0$
and that $\alpha+\beta+\gamma=2\kappa'-2\kappa>0$.
Thus,
we deduce
\[
\|
\comC (\Phi_t, R^{\perp} V_t, \nabla X_t)
\|_{ \HolBesSp{2( \kappa' -\kappa)}}
\lesssim
\|
\Phi_t
\|_{ \HolBesSp{-2\theta +4 +2\kappa'}}
\| 
R^{\perp} V_t
\|_{ \HolBesSp{ \frac32 \theta -2 - \kappa}}
\|
 \nabla X_t
\|_{ \HolBesSp{ \frac12 \theta -2 - \kappa}}
\]
from Proposition \ref{prop_comm_20160919055939}.
Using
Lemmas \ref{lem:180122-1} and \ref{lm.20171227},
we obtain
\[
\|
\comC (\Phi_t, R^{\perp} V_t, \nabla X_t)
\|_{ \HolBesSp{2( \kappa' -\kappa)}}
\lesssim
\|
\Phi_t
\|_{ \HolBesSp{-2\theta +4 +2\kappa'}}
\| 
 V_t
\|_{ \HolBesSp{ \frac32 \theta -2 - \kappa}}
\| X_t
\|_{ \HolBesSp{ \frac12 \theta -1 - \kappa}}.
\]
Using
Corollary \ref{cor:180206-2},
we estimate
$\comC (\Phi, R^{\perp} V, \nabla X)$ as follows:
\begin{align*}
\|
\comC (\Phi_t, R^{\perp} V_t, \nabla X_t)
\|_{ \HolBesSp{2( \kappa' -\kappa)}}
&\lesssim
\{
 \|{\mathbf X}\|_{\drivers{1}{\kappa}} 
+ t^{ - ( \frac{6}{\theta}-\frac72 + q +\frac{3-\theta }{\theta} \kappa') } 
\|(v,w)\|_{\sols{T}{\kappa}{\kappa'}} 
\}
\|{\mathbf X}\|_{\drivers{1}{\kappa}}^2\\
&\lesssim
t^{ - ( \frac{6}{\theta}-\frac72 + q +\frac{3-\theta }{\theta} \kappa') } \{
 \|{\mathbf X}\|_{\drivers{1}{\kappa}} 
+ 
\|(v,w)\|_{\sols{T}{\kappa}{\kappa'}} 
\}
\|{\mathbf X}\|_{\drivers{1}{\kappa}}^2.
\end{align*}
%
As a result,
from (\ref{eq:180122-16})
we obtain
\begin{align}\label{eq:180128-1}
\nonumber
\|
\comC (\Phi_t, R^{\perp} V_t, \nabla X_t)
\|_{ \HolBesSp{2( \kappa' -\kappa)}}
&\lesssim
\{
 \|{\mathbf X}\|_{\drivers{1}{\kappa}} 
+ t^{ - (
\frac{5}{\theta} -3 +2q - \frac{\theta -1}{\theta}\kappa'
)
} 
\|(v,w)\|_{\sols{T}{\kappa}{\kappa'}} 
\}
\|{\mathbf X}\|_{\drivers{1}{\kappa}}^2\\
&\lesssim t^{ - (
\frac{5}{\theta} -3 +2q - \frac{\theta -1}{\theta}\kappa'
)
} 
\{
 \|{\mathbf X}\|_{\drivers{1}{\kappa}} 
+
\|(v,w)\|_{\sols{T}{\kappa}{\kappa'}} 
\}
\|{\mathbf X}\|_{\drivers{1}{\kappa}}^2.
\end{align}

In a similar way, 
we set
\[
\alpha^\dagger=-2\theta +4 +2\kappa', \quad
\beta^\dagger=\frac32 \theta -3 - \kappa, \quad
\gamma^\dagger=\frac12 \theta -1 - \kappa.
\]
Note that
$\alpha^\dagger+\beta^\dagger+\gamma^\dagger=2\kappa'-2\kappa>0$
and
$\beta^\dagger+\gamma^\dagger=2\theta-4-\kappa'-\kappa$.
Thus
using
Proposition \ref{prop_comm_20160919055939}
and
Lemmas \ref{lem:180122-1} and \ref{lm.20171227},
we have
\begin{align*}
\|
\comC (\Phi_t, \nabla V_t, R^{\perp} X_t)
\|_{ \HolBesSp{ 2(\kappa' -\kappa)}}
&\lesssim
\|
\Phi_t
\|_{ \HolBesSp{-2\theta +4 +2\kappa'}}
\| 
\nabla V_t
\|_{ \HolBesSp{ \frac32 \theta -3 - \kappa}}
\|
R^{\perp} X_t
\|_{ \HolBesSp{ \frac12 \theta -1 - \kappa}}
\\
&\lesssim
\{
 \|{\mathbf X}\|_{\drivers{1}{\kappa}} 
+ t^{ - (
 \frac{6}{\theta}-\frac72 + q +\frac{3-\theta }{\theta} \kappa' 
) } 
\|(v,w)\|_{\sols{T}{\kappa}{\kappa'}} 
\}
\|{\mathbf X}\|_{\drivers{1}{\kappa}}^2.
\end{align*}
As a result
once again
from (\ref{eq:180122-16})
\begin{align}\label{eq:180128-2}
\|
\comC (\Phi_t, \nabla V_t, R^{\perp} X_t)
\|_{ \HolBesSp{ 2(\kappa' -\kappa)}}
\nonumber
&\lesssim
\{
 \|{\mathbf X}\|_{\drivers{1}{\kappa}} 
+ t^{ - (
\frac{5}{\theta} -3 +2q - \frac{\theta -1}{\theta}\kappa'
)
} 
\|(v,w)\|_{\sols{T}{\kappa}{\kappa'}} 
\}
\|{\mathbf X}\|_{\drivers{1}{\kappa}}\\
&\lesssim
t^{ - (
\frac{5}{\theta} -3 +2q - \frac{\theta -1}{\theta}\kappa'
)
} 
\{
 \|{\mathbf X}\|_{\drivers{1}{\kappa}} 
+ 
\|(v,w)\|_{\sols{T}{\kappa}{\kappa'}} 
\}
\|{\mathbf X}\|_{\drivers{1}{\kappa}}.
\end{align}
Since
$2(\kappa'-\kappa)>
\frac52(\theta-2)-\kappa-\kappa'$,
we
can deduce the desired estimate
of
$\comC (\Phi, R^{\perp} V, \nabla X)$
and
$\comC (\Phi, \nabla V, R^{\perp} X)$
from
(\ref{eq:180128-1})
and
(\ref{eq:180128-2}):
\[
\|(T_{2})_t\|_{\frac52(\theta-2)-\kappa-\kappa'}
\lesssim
t^{ - (
\frac{5}{\theta} -3 +2q - \frac{\theta -1}{\theta}\kappa'
)
} 
\{
 \|{\mathbf X}\|_{\drivers{1}{\kappa}} 
+ 
\|(v,w)\|_{\sols{T}{\kappa}{\kappa'}} 
\}
\|{\mathbf X}\|_{\drivers{1}{\kappa}}.
\]

\bigskip 
\bigskip

\noindent
{\bf [Step 3]}
Since we have
(\ref{eq:180101-19}),
we can use
Proposition \ref{prop_20160926062106}(2)
to have
\[
\|
\Phi_t \lpara \nabla X_t
\|_{ \HolBesSp{\frac52\theta-5- ( \kappa +\kappa')}}
\lesssim
\|
\Phi_t 
\|_{ \HolBesSp{2\theta -3 - \kappa' }}
\|
 \nabla X_t
\|_{ \HolBesSp{\frac12 \theta- 2- \kappa}}.
\]
By Lemma \ref{lem:180122-1}
\[
\|
\Phi_t \lpara \nabla X_t
\|_{ \HolBesSp{\frac52\theta-5- ( \kappa +\kappa')}}
\lesssim
\|
\Phi_t 
\|_{ \HolBesSp{2\theta -3 - \kappa' }}
\|
X_t
\|_{ \HolBesSp{\frac12 \theta- 1- \kappa}}.
\]
Lemma \ref{lem:180201-3}
yields
\begin{align*}
\|
\Phi_t \lpara \nabla X_t
\|_{ \HolBesSp{\frac52\theta-5- ( \kappa +\kappa')}}
\lesssim
\{
 \|{\mathbf X}\|_{\drivers{1}{\kappa}} 
+ t^{ - (
\frac12 + q - \frac{1}{\theta} + \frac{2- \theta}{\theta}\kappa' 
)
} 
\|(v,w)\|_{\sols{T}{\kappa}{\kappa'}} 
\}
\|{\mathbf X}\|_{\drivers{1}{\kappa}}.
\end{align*}
From (\ref{eq:180122-20}),
we conclude that
the estimate of
$
\|
\Phi_t \lpara \nabla X_t
\|_{ \HolBesSp{\frac52\theta-5- ( \kappa +\kappa')}}$
is valid:
\begin{align*}
\|(T_3)_t
\|_{ \HolBesSp{\frac52\theta-5- ( \kappa +\kappa')}}
&\lesssim
\{
 \|{\mathbf X}\|_{\drivers{1}{\kappa}} 
+ t^{ - (
\frac{5}{\theta} -3 +2q - \frac{\theta -1}{\theta}\kappa'
)
} 
\|(v,w)\|_{\sols{T}{\kappa}{\kappa'}} 
\}
\|{\mathbf X}\|_{\drivers{1}{\kappa}}\\
&\lesssim t^{ - (
\frac{5}{\theta} -3 +2q - \frac{\theta -1}{\theta}\kappa'
)
} 
\{
 \|{\mathbf X}\|_{\drivers{1}{\kappa}} 
+
\|(v,w)\|_{\sols{T}{\kappa}{\kappa'}} 
\}
\|{\mathbf X}\|_{\drivers{1}{\kappa}}.
\end{align*}

\bigskip 
\bigskip

\noindent
{\bf [Step 4]}~From
Proposition \ref{prop_20160926062106}(3),
a basic property of the resonant, 
we see that
 \begin{align*}
\|
R^{\perp} w_t \reso \nabla X_t
\|_{ \HolBesSp{ \kappa' - \kappa}}
\lesssim
\|
R^{\perp} w_t
\|_{ \HolBesSp{ - \frac12 \theta +2+ \kappa' }}
\|
 \nabla X_t
\|_{ \HolBesSp{\frac12 \theta -2 - \kappa}}
 \lesssim
 t^{ -( 
 -4 + \frac{7}{\theta} +q' 
+ \frac{ \kappa' +\theta\kappa}{\theta})
 } 
\|(v,w)\|_{\sols{T}{\kappa}{\kappa'}} 
\|{\mathbf X}\|_{\drivers{1}{\kappa}}.
\end{align*}
Thus,
from (\ref{eq:180122-18}),
\[
\|(T_4)_t
\|_{ \HolBesSp{\frac52\theta-5- \kappa' - \kappa}}
\lesssim
\|R^{\perp} w_t \reso \nabla X_t
\|_{ \HolBesSp{ \kappa' - \kappa}}
\lesssim
t^{ - (
\frac{5}{\theta} -3 +2q - \frac{\theta -1}{\theta}\kappa'
)
} 
\|(v,w)\|_{\sols{T}{\kappa}{\kappa'}} 
\|{\mathbf X}\|_{\drivers{1}{\kappa}}.
\]
\bigskip 
\bigskip

\noindent
{\bf [Step 5]}~We recall
\[
\eta=- \frac{1}{\theta}\left(
								 \frac32 \theta -2 -\theta q
								 +(\theta -1)\kappa \right)
< \frac{6}{\theta}-\frac72 + q -\frac{3-\theta }{\theta} \kappa'.
\]
See (\ref{eq:180122-22}).

By Corollary \ref{cor:180131-1}(3),
a basic property of the usual product, 
and
Lemma \ref{lem:180201-3},
we obtain
\begin{align*}
\|
\Phi_t \cdot W_t
\|_{ \HolBesSp{2\theta -4 - \kappa}}
+
\|
\Phi_t \cdot \hat{W}_t
\|_{ \HolBesSp{2\theta -4 - \kappa}}
&\lesssim
\|
\Phi_t 
\|_{ \HolBesSp{-2\theta +4 + 2\kappa' }}
(
\|
W_t
\|_{ \HolBesSp{2\theta -4 - \kappa}}
+
\|
\hat{W}_t
\|_{ \HolBesSp{2\theta -4- \kappa}}
)
\\
&\lesssim
\{ \|{\mathbf X}\|_{\drivers{1}{\kappa}} 
+ t^{ - 
( \frac{6}{\theta}-\frac72 + q -\frac{3-\theta }{\theta} \kappa')
} 
\|(v,w)\|_{\sols{T}{\kappa}{\kappa'}} 
\}
\|{\mathbf X}\|_{\drivers{1}{\kappa}}
\\
&\le t^{ - 
( \frac{6}{\theta}-\frac72 + q +\frac{3-\theta }{\theta} \kappa')
} 
\{ \|{\mathbf X}\|_{\drivers{1}{\kappa}} 
+
\|(v,w)\|_{\sols{T}{\kappa}{\kappa'}} 
\}
\|{\mathbf X}\|_{\drivers{1}{\kappa}}.
\end{align*}
Since
$\dfrac12\theta-1-\kappa<0<2\kappa'-2\kappa$
and
$
\frac12 \theta -1 - \kappa- \frac12 \theta +1 + \kappa'>0
>\frac12 \theta -1 - \kappa,
$
we are in the position of using
Corollary \ref{cor:180131-1}(3)
to have
\[
\|
R^{\perp} X_t \cdot \nabla w_t
\|_{ \HolBesSp{\frac12 \theta -1 - \kappa}}
\lesssim
\|
R^{\perp} X_t \cdot \nabla w_t
\|_{ \HolBesSp{2\kappa'- 2\kappa}}
\lesssim
\|
R^{\perp} X_t
\|_{ \HolBesSp{\frac12 \theta -1 - \kappa }}
\|
 \nabla w_t
\|_{ \HolBesSp{ - \frac12 \theta +1 + \kappa'}}.
\]
As before we use
Lemmas \ref{lem:180122-1} and \ref{lm.20171227}
to obtain
\[
\|
R^{\perp} X_t \cdot \nabla w_t
\|_{ \HolBesSp{\frac12 \theta -1 - \kappa}}
\lesssim
\|
 X_t
\|_{ \HolBesSp{\frac12 \theta -1 - \kappa }}
\|
w_t
\|_{ \HolBesSp{ - \frac12 \theta +2+ \kappa'}}.
\]
It follows from Proposition \ref{prop:180131-12}(2) that
 \begin{align*}
\|
R^{\perp} X_t \cdot \nabla w_t
\|_{ \HolBesSp{\frac12 \theta -1 - \kappa}}
&\lesssim
 t^{ -( 
 -4 + \frac{7}{\theta} +q' 
+ \frac{ \kappa' +\theta\kappa}{\theta} 
)
 } 
\|(v,w)\|_{\sols{T}{\kappa}{\kappa'}} 
\|{\mathbf X}\|_{\drivers{1}{\kappa}}.
\end{align*}
Thus,
from (\ref{eq:180122-18}),
\[
\|
R^{\perp} X_t \cdot \nabla w_t
\|_{ \HolBesSp{\frac52\theta-5- \kappa' - \kappa}}
\lesssim
\|
R^{\perp} X_t \cdot \nabla w_t
\|_{ \HolBesSp{\frac12 \theta -1 - \kappa}}
\lesssim t^{ - (
\frac{5}{\theta} -3 +2q - \frac{\theta -1}{\theta}\kappa'
)
} 
\|(v,w)\|_{\sols{T}{\kappa}{\kappa'}} 
\|{\mathbf X}\|_{\drivers{1}{\kappa}}.
\]
In total,
we have
\[
\|(T_5)_t
\|_{ \HolBesSp{\frac52\theta-5- \kappa' - \kappa}}
\lesssim t^{ - (
\frac{5}{\theta} -3 +2q - \frac{\theta -1}{\theta}\kappa'
)
} 
\|(v,w)\|_{\sols{T}{\kappa}{\kappa'}} 
\|{\mathbf X}\|_{\drivers{1}{\kappa}}.
\]

\bigskip 
\bigskip

%
%
\noindent
{\bf [Step 6]}
Since
(\ref{eq:180101-1})
is satisfied,
we are in the position of using Lemma \ref{lem:161117-107}.
The result is:
\[
\|
R^{\perp} (\Phi_t \rpara V_t) 
		 - \Phi_t \rpara R^{\perp} V_t
		 \|_{ \HolBesSp{ -\frac12 \theta +2 + 2\kappa' -\kappa}}
		 \lesssim
		 \| \Phi_t \|_{ \HolBesSp{ -2\theta +4 +2 \kappa' }}
		 \| V_t \|_{ \HolBesSp{\frac32 \theta -2 - \kappa}}.
\]
If we use
Lemma \ref{lem:180201-3},
then we obtain
\begin{align*}
\|
R^{\perp} (\Phi_t \rpara V_t) 
		 - \Phi_t \rpara R^{\perp} V_t
		 \|_{ \HolBesSp{ -\frac12 \theta +2 + 2\kappa' -\kappa}}
		 &\lesssim		 
		 		 \{ \|{\mathbf X}\|_{\drivers{1}{\kappa}} 
+ t^{ - 
( \frac{6}{\theta}-\frac72 + q +\frac{3-\theta }{\theta} \kappa')
 } 
\|(v,w)\|_{\sols{T}{\kappa}{\kappa'}} \}
\|{\mathbf X}\|_{\drivers{1}{\kappa}}.
		 		 \end{align*}	
Since
$\kappa'>\kappa$,
we deduce from
Proposition \ref{prop_20160926062106}(3) 
and
Lemma \ref{lem:180208-7}
that
\begin{align*}
\|(T_6)_t\|_{ \HolBesSp{2 (\kappa' -\kappa)}}
&\lesssim
\|
\{ R^{\perp} (\Phi_t \rpara V_t) 
		 - \Phi_t \rpara R^{\perp} V_t
		 \} \|_{ \HolBesSp{ -\frac12 \theta +2 + 2\kappa' -\kappa}}
		 \| 
		 \nabla X_t
		 \|_{ \HolBesSp{\frac12 \theta -2 -\kappa}}\\
	&\lesssim		 
		 		 \{ \|{\mathbf X}\|_{\drivers{1}{\kappa}} 
+ t^{ -
( \frac{6}{\theta}-\frac72 + q +\frac{3-\theta }{\theta} \kappa')
} 
\|(v,w)\|_{\sols{T}{\kappa}{\kappa'}} \}
\|{\mathbf X}\|_{\drivers{1}{\kappa}}^2.
		 		 \end{align*}			 

As a result
from (\ref{eq:180122-16})
\begin{eqnarray*}
\|(T_6)_t\|_{ \HolBesSp{2 (\kappa' -\kappa)}}
&\lesssim&
\{
 \|{\mathbf X}\|_{\drivers{1}{\kappa}} 
+ t^{ - (
\frac{5}{\theta} -3 +2q - \frac{\theta -1}{\theta}\kappa'
)
} 
\|(v,w)\|_{\sols{T}{\kappa}{\kappa'}} 
\}
\|{\mathbf X}\|_{\drivers{1}{\kappa}}\\
&\lesssim& t^{ - (
\frac{5}{\theta} -3 +2q - \frac{\theta -1}{\theta}\kappa'
)
} 
\{
 \|{\mathbf X}\|_{\drivers{1}{\kappa}} 
+
\|(v,w)\|_{\sols{T}{\kappa}{\kappa'}} 
\}
\|{\mathbf X}\|_{\drivers{1}{\kappa}}.
\end{eqnarray*}
Since
$2(\kappa'-\kappa)>
\frac52(\theta-2)-\kappa-\kappa'$,
we deduce the desired estimate.

\bigskip 
\bigskip

\noindent
{\bf [Step 7]}~
By 
Corollary \ref{cor:180131-1}(3),
a basic property of the usual product
\[
\|
R^{\perp} X_t \cdot ( \nabla \Phi_t \rpara V_t) 
 \|_{ \HolBesSp{\frac12 \theta -1-\kappa}}
\lesssim
\| R^{\perp} X_t \|_{ \HolBesSp{\frac12 \theta -1-\kappa}}
\| \nabla \Phi_t \rpara V_t \|_{ \HolBesSp{-\frac12 \theta +1 + 2\kappa' -\kappa}}.
\]
Since $\kappa'>\kappa$,
this implies
\[
\|
R^{\perp} X_t \cdot ( \nabla \Phi_t \rpara V_t) 
 \|_{ \HolBesSp{\frac12 \theta -1-\kappa}}
\lesssim
\| R^{\perp} X_t \|_{ \HolBesSp{\frac12 \theta -1-\kappa}}
\| \nabla \Phi_t \rpara V_t \|_{ \HolBesSp{-\frac12 \theta +1 + 2\kappa' -\kappa}}.
\]
By
Proposition \ref{prop_20160926062106}(2)
and
Lemma \ref{lm.20171227}
 we have	 
\begin{align*}
\|
R^{\perp} X_t \cdot ( \nabla \Phi_t \rpara V_t) 
 \|_{ \HolBesSp{\frac12 \theta -1-\kappa}}
 &\lesssim
\| X_t \|_{ \HolBesSp{\frac12 \theta -1-\kappa}}
\| \nabla \Phi_t \|_{ \HolBesSp{ -2\theta +3 +2 \kappa' }}
\| V_t \|_{ \HolBesSp{\frac32 \theta -2 - \kappa}}^2.
\end{align*}
If we use
Lemma \ref{lem:180201-3},
then we obtain
\begin{align*}
\|
R^{\perp} X_t \cdot ( \nabla \Phi_t \rpara V_t) 
 \|_{ \HolBesSp{\frac12 \theta -1-\kappa}}
&\lesssim		
 \{ \|{\mathbf X}\|_{\drivers{1}{\kappa}} 
+ t^{ - 
( \frac{6}{\theta}-\frac72 + q +\frac{3-\theta }{\theta} \kappa')
 } 
\|(v,w)\|_{\sols{T}{\kappa}{\kappa'}} \}
\|{\mathbf X}\|_{\drivers{1}{\kappa}}^2.
 \end{align*}			 
As a result
from (\ref{eq:180122-16})
\begin{align*}
\|
R^{\perp} X_t \cdot ( \nabla \Phi_t \rpara V_t) 
 \|_{ \HolBesSp{\frac12 \theta -1-\kappa}}
&\lesssim
\{
 \|{\mathbf X}\|_{\drivers{1}{\kappa}} 
+ t^{ - (
\frac{5}{\theta} -3 +2q - \frac{\theta -1}{\theta}\kappa'
)
} 
\|(v,w)\|_{\sols{T}{\kappa}{\kappa'}} 
\}
\|{\mathbf X}\|_{\drivers{1}{\kappa}}^2\\
&\lesssim t^{ - (
\frac{5}{\theta} -3 +2q - \frac{\theta -1}{\theta}\kappa'
)
} 
\{
 \|{\mathbf X}\|_{\drivers{1}{\kappa}} 
+
\|(v,w)\|_{\sols{T}{\kappa}{\kappa'}} 
\}
\|{\mathbf X}\|_{\drivers{1}{\kappa}}^2.
\end{align*}
Since
$\frac12 \theta -1-\kappa>
\frac52(\theta-2)-\kappa-\kappa'$,
we deduce the desired estimate:
\[
\|(T_7)_t 
 \|_{ \HolBesSp{\frac12 \theta -1-\kappa}}
\lesssim t^{ - (
\frac{5}{\theta} -3 +2q - \frac{\theta -1}{\theta}\kappa'
)
} 
\{
 \|{\mathbf X}\|_{\drivers{1}{\kappa}} 
+
\|(v,w)\|_{\sols{T}{\kappa}{\kappa'}} 
\}
\|{\mathbf X}\|_{\drivers{1}{\kappa}}^2.
\]

\bigskip 
\bigskip

\noindent
{\bf [Step 8]}~
Arithemetic shows
\[
\frac12 \theta -1 -\kappa<0, \quad
\frac32 \theta -3-\kappa<0.
\]
So we are in the position of using
Proposition \ref{prop_20160926062106}(2)
to have
\[
\|
R^{\perp} X_t (\rpara + \lpara)
		 \{\Phi_t \rpara \nabla V_t \} 
 \|_{ \HolBesSp{2\theta -4 -(\kappa + \kappa') }}
\lesssim	
 	\|R^{\perp} X_t \|_{ \HolBesSp{\frac12 \theta -1 -\kappa}}
\| \Phi_t \rpara\nabla V_t \|_{ \HolBesSp{ \frac32 \theta -3-\kappa}}
\]
Proposition \ref{prop_20160926062106}(1) 
yields
\begin{align*}
\|
R^{\perp} X_t (\rpara + \lpara)
		 \{\Phi_t \rpara \nabla V_t \} 
 \|_{ \HolBesSp{2\theta -4 -(\kappa + \kappa') }}
 & \lesssim	
 	\|R^{\perp} X_t \|_{ \HolBesSp{\frac12 \theta -1 -\kappa}}
\| \Phi_t \|_{L^{\infty}}
\| \nabla V_t \|_{ \HolBesSp{ \frac32 \theta -3-\kappa}}
\end{align*}
From
Lemmas \ref{lem:180122-1} and \ref{lm.20171227}
we obtain
\begin{align*}
\|
R^{\perp} X_t (\rpara + \lpara)
		 \{\Phi_t \rpara \nabla V_t \} 
 \|_{ \HolBesSp{2\theta -4 -(\kappa + \kappa') }}
 & \lesssim	
 	\|X_t \|_{ \HolBesSp{\frac12 \theta -1 -\kappa}}
\| \Phi_t \|_{L^{\infty}}
\| V_t \|_{ \HolBesSp{ \frac32 \theta -3-\kappa}}.
\end{align*}
Thus
from
Lemma \ref{lem:180201-3}
\begin{align*}
\|
R^{\perp} X_t (\rpara + \lpara)
		 \{\Phi_t \rpara \nabla V_t \} 
 \|_{ \HolBesSp{2\theta -4 -(\kappa + \kappa') }}
 &
 \lesssim
 	 \{
	 \|{\mathbf X}\|_{\drivers{1}{\kappa}} 
+ t^{ 
\frac{1}{\theta} \{
\frac32 \theta -2 - \theta q +(\theta -1) \kappa \}
 } 
\|(v,w)\|_{\sols{T}{\kappa}{\kappa'}} 
\}
\|{\mathbf X}\|_{\drivers{1}{\kappa}}^2.
 		 \end{align*} 
As a result
from (\ref{eq:180101-21}),
\begin{align*}
\|
R^{\perp} X_t (\rpara + \lpara)
		 \{\Phi_t \rpara \nabla V_t \} 
 \|_{ \HolBesSp{2\theta -4 -(\kappa + \kappa') }}
&\lesssim
\{
 \|{\mathbf X}\|_{\drivers{1}{\kappa}} 
+ t^{ - (
\frac{5}{\theta} -3 +2q - \frac{\theta -1}{\theta}\kappa'
)
} 
\|(v,w)\|_{\sols{T}{\kappa}{\kappa'}} 
\}
\|{\mathbf X}\|_{\drivers{1}{\kappa}}^2\\
&\lesssim t^{ - (
\frac{5}{\theta} -3 +2q - \frac{\theta -1}{\theta}\kappa'
)
} 
\{
 \|{\mathbf X}\|_{\drivers{1}{\kappa}} 
+
\|(v,w)\|_{\sols{T}{\kappa}{\kappa'}} 
\}
\|{\mathbf X}\|_{\drivers{1}{\kappa}}^2.
\end{align*}		 
Since
$2\theta -4 -(\kappa + \kappa') >
\frac52(\theta-2)-\kappa-\kappa'$,
we deduce the desired estimate:
\[
\|(T_8)_t \|_{ \HolBesSp{\frac52(\theta-2)-\kappa-\kappa'}}
\lesssim t^{ - (
\frac{5}{\theta} -3 +2q - \frac{\theta -1}{\theta}\kappa'
)
} 
\{
 \|{\mathbf X}\|_{\drivers{1}{\kappa}} 
+
\|(v,w)\|_{\sols{T}{\kappa}{\kappa'}} 
\}
\|{\mathbf X}\|_{\drivers{1}{\kappa}}^2.
\]

\bigskip 
\bigskip

\noindent
{\bf [Step 9]}~From Corollary \ref{cor:180201-2} with $\gamma=0$
we deduce
\begin{align*}
\|(T_9)_t\|_{ \HolBesSp{\frac52(\theta -2) -\kappa}}
\lesssim
\|{\mathbf X}\|_{\drivers{1}{\kappa}} +\|v_t +w_t \|_{ L^{\infty}}
\lesssim 
\|{\mathbf X}\|_{\drivers{1}{\kappa}} 
+ t^{ \frac{1}{\theta} \{
\frac32 \theta -2 - \theta q +(\theta -1) \kappa 
\}
} 
\|(v,w)\|_{\sols{T}{\kappa}{\kappa'}}.
\end{align*}	As a result
from (\ref{eq:180101-21}),
\begin{align*}
\|(T_9)_t\|_{ \HolBesSp{\frac52(\theta -2) -\kappa-\kappa'}}
&\lesssim 
\|(T_9)_t\|_{ \HolBesSp{\frac52(\theta -2) -\kappa}}\\
&\lesssim 
\|{\mathbf X}\|_{\drivers{1}{\kappa}} 
+ t^{ - (
\frac{5}{\theta} -3 +2q - \frac{\theta -1}{\theta}\kappa'
)
} 
\|(v,w)\|_{\sols{T}{\kappa}{\kappa'}}\\
&\lesssim t^{ - (
\frac{5}{\theta} -3 +2q - \frac{\theta -1}{\theta}\kappa'
)
} 
(
\|{\mathbf X}\|_{\drivers{1}{\kappa}} 
+
\|(v,w)\|_{\sols{T}{\kappa}{\kappa'}}).
\end{align*}

\bigskip 
\bigskip

\noindent
{\bf [Step 10]}~
We can estimate the terms involving the commutator by using
Lemma \ref{lem_20161003140013}.
Indeed,
\begin{align*}
\|R^{\perp}	\com (v,w)_t \reso \nabla X_t	 \|_{ \HolBesSp{\kappa'- \kappa}} 
&\lesssim
\|R^{\perp}	\com (v,w)_t 	 \|_{ \HolBesSp{-\frac12 \theta +2+ \kappa'}} 
\| \nabla X_t	 \|_{ \HolBesSp{\frac12 \theta -2- \kappa}} \\
&\lesssim
\|\com (v,w)_t 	 \|_{ \HolBesSp{-\frac12 \theta +2+ \kappa'}} \|{\mathbf X}\|_{\drivers{1}{\kappa}}, 
\\
\|R^{\perp} X_t \cdot \nabla \com (v,w)_t	 \|_{ \HolBesSp{\frac12 \theta -1- \kappa}} 
&\lesssim
\|R^{\perp} X_t 	 \|_{ \HolBesSp{\frac12 \theta -1 - \kappa}} 
\| \nabla \com (v,w)_t	 \|_{ \HolBesSp{ -\frac12 \theta +1+\kappa'}} 
\\
&\lesssim
\|\com (v,w)_t\|_{ \HolBesSp{-\frac12 \theta +2+ \kappa'}} \|{\mathbf X}\|_{\drivers{1}{\kappa}}.
\end{align*}
We can use Lemma \ref{lem_20161003140013}
to handle
$\|\com (v,w)_t\|_{ \HolBesSp{-\frac12 \theta +2+ \kappa'}}$.
The result is
\begin{align*}
\lefteqn{
\|R^{\perp}	\com (v,w)_t \reso \nabla X_t	 \|_{ \HolBesSp{\kappa'- \kappa}} 
+
\|R^{\perp} X_t \cdot \nabla \com (v,w)_t	 \|_{ \HolBesSp{\frac12 \theta -1- \kappa}} 
}\\
&\lesssim		 \const t^{- ( \frac{5}{\theta} -3 +2q - \frac{\theta -1}{\theta}\kappa') }
			(1+
				 \|v_0\|_{\HolBesSp{\frac32 \theta -2 - \theta q + (\theta -1)\kappa'				 }}
				+
				\|(v,w)\|_{\sols{T}{\kappa}{\kappa'}}
			).
\end{align*}
Since
$\min\{\kappa'-\kappa, \frac12 \theta -1 -\kappa \}>
\frac52(\theta-2)-\kappa-\kappa'$,
we deduce the desired estimate:
\[
\|(T_{10})_t\|_{ \HolBesSp{\frac52(\theta-2)-\kappa-\kappa'}} \lesssim
t^{- ( \frac{5}{\theta} -3 +2q - \frac{\theta -1}{\theta}\kappa') }
			(1+
				 \|v_0\|_{\HolBesSp{\frac32 \theta -2 - \theta q + (\theta -1)\kappa'				 }}
				+
				\|(v,w)\|_{\sols{T}{\kappa}{\kappa'}}
			).
\
\]
Thus, we have estimated all the terms
$T_1,T_2,\ldots,T_{10}$ in the definition of $G(v,w)$.
\end{proof}


Now we are in the position to estimate ${\mathcal M}_2$.
\begin{proposition}\label{prop_20161003143917}
	The map
	$
		\mathcal{M}^2:
			\sols{T}{\kappa}{\kappa'}
			\to
			\cL_T^{q' -\kappa'+\kappa, \frac72 \theta -5 - \theta\kappa',1-\kappa'}
	$
	is well defined.
Moreover, there exist positive constants $\const[1], \const[2]$
 and 
	 $a$ such that
	 the following estimate holds:
	For every 
	$
		(v,w)\in\sols{T}{\kappa}{\kappa'}
	$,
	\begin{multline*}
		\|\mathcal{M}^2(v,w)\|_{\cL_T^{
		q' -\kappa'+\kappa, \frac72 \theta -5 - \theta\kappa',1-\kappa'
		}}
		\leq
			\const[1]
			(
				\|w_0\|_{\HolBesSp{\frac72 \theta -5 - \theta q' -\theta\kappa}}
			)\\
			\quad+
			\const[2]
			T^{a}
			\left(1+
				\|v_0\|_{\HolBesSp{\frac32 \theta -2 - \theta q + (\theta -1)\kappa'}}
				+
				\|(v,w)\|_{\sols{T}{\kappa}{\kappa'}}^3
				+\|(v,w)\|_{\sols{T}{\kappa}{\kappa'}}
			\right).
	\end{multline*}
Here, $\const[1]$ and the constant $a$ depend only on $\kappa, \kappa'$
	 and $\const[2]$ depends only on $\kappa, \kappa'$ and ${\bf X}$. 
	 More precisely, $\const[2]$ is given by an at most 
	 third-order polynomial in $\|{\mathbf X}\|_{\drivers{1}{\kappa}}$ for fixed $\kappa, \kappa'$.	
\end{proposition}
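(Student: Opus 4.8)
The plan is to follow the same scheme as in the proof of Proposition~\ref{prop_20161003143833}, with $F$ replaced by $G$ and with Lemma~\ref{lem_20171219} playing the role that Lemma~\ref{lem_20161002130857} played there. By \eqref{eq_20161003034438} we have
\[
\mathcal{M}^2(v,w)_t = P^{\theta/2}_t w_0 + I[G(v,w)]_t,
\]
so it suffices to bound the two summands separately in the norm of $\cL_T^{q'-\kappa'+\kappa,\,\frac72\theta-5-\theta\kappa',\,1-\kappa'}$ and add them.

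For the first summand I would apply the fractional Schauder estimate Proposition~\ref{prop_20170804}(1) with
\[
\alpha = \tfrac72\theta-5-\theta q'-\theta\kappa, \qquad \beta = \tfrac72\theta-5-\theta\kappa', \qquad \delta = 1-\kappa';
\]
here $\alpha<\beta$ (as $q'>0$, $0<\kappa<\kappa'$), $\delta\in[0,1]$, and $\frac{\beta-\alpha}{\theta}=q'-\kappa'+\kappa$, so the conclusion is precisely that $\|P^{\theta/2}_\cdot w_0\|_{\cL_T^{q'-\kappa'+\kappa,\,\frac72\theta-5-\theta\kappa',\,1-\kappa'}}\lesssim\|w_0\|_{\HolBesSp{\frac72\theta-5-\theta q'-\theta\kappa}}$, which produces the $\const[1]$-term of the assertion. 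For the second summand I would invoke Lemma~\ref{lem_20171219}, which gives $G(v,w)\in\cE_T^{\eta_0}\HolBesSp{\frac52\theta-5-(\kappa'+\kappa)}$ with $\eta_0:=\frac{5}{\theta}-3+2q-\frac{\theta-1}{\theta}\kappa'$, together with the bound on its $\cE_T^{\eta_0}\HolBesSp{\frac52\theta-5-(\kappa'+\kappa)}$-norm by $\const(1+\|v_0\|+\|(v,w)\|_{\sols{T}{\kappa}{\kappa'}}^3+\|(v,w)\|_{\sols{T}{\kappa}{\kappa'}})$, the constant being an at most third-order polynomial in $\|\mathbf{X}\|_{\drivers{1}{\kappa}}$. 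Then I apply Proposition~\ref{prop_20170804}(2) with
\[
\alpha = \tfrac52\theta-5-(\kappa'+\kappa), \quad \eta = \eta_0, \quad \gamma = \tfrac72\theta-5-\theta q'-\theta\kappa, \quad \beta = \tfrac72\theta-5-\theta\kappa', \quad \delta = 1-\kappa'.
\]
A direct computation gives $\frac{\beta-\gamma}{\theta}=q'-\kappa'+\kappa$, so the target space is correct, and the conditions $\alpha<\beta$, $\gamma\le\beta<\alpha+\theta$, $0<\delta\le\frac{\beta-\alpha}{\theta}$ all hold for $\theta\in(7/4,2]$ and $0<\kappa<\kappa'\ll1$. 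This yields $\|I[G(v,w)]\|_{\cL_T^{q'-\kappa'+\kappa,\,\frac72\theta-5-\theta\kappa',\,1-\kappa'}}\lesssim T^{a}\|G(v,w)\|_{\cE_T^{\eta_0}\HolBesSp{\frac52\theta-5-(\kappa'+\kappa)}}$ with $a=\frac{\alpha-\theta\eta_0+\theta-\gamma}{\theta}$, and combining with the Lemma~\ref{lem_20171219} bound gives the $\const[2]T^a(\cdots)$-term, with $\const[2]$ an at most third-order polynomial in $\|\mathbf{X}\|_{\drivers{1}{\kappa}}$.

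The one non-routine point — and the main obstacle — is to confirm the chain $\alpha\le\gamma<\alpha-\theta\eta_0+\theta$, which is simultaneously needed for the range hypothesis on $\gamma$ in Proposition~\ref{prop_20170804}(2) and for the positivity of the exponent $a$ of $T$. Both the inequality $\alpha\le\gamma$ and the strict inequality $\gamma<\alpha-\theta\eta_0+\theta$ reduce, in the limit $\kappa,\kappa'\to0$, to $q'\ge\frac{5}{\theta}-3+2q$, which is exactly \eqref{eq_20171124_1}; since the right-hand side there is strictly below $q'$ for every $\theta\in(7/4,2]$ (this is the point of the definition of $(q,q')$ in \eqref{eq:180114-1}), the strict inequality survives for $0<\kappa<\kappa'\ll1$, and the auxiliary bound $\eta_0<1$ recorded after Lemma~\ref{lem_20171219} guarantees $\eta\in[0,1)$ as required. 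In other words, all the genuinely hard analytic work has already been done in Lemma~\ref{lem_20171219}; what remains is the bookkeeping that the explosion rate $\eta_0$ forced by the worst term $R^\perp v\cdot\nabla v$ of $G$ is compatible, via the defining choice of $(q,q')$, with the regularity gain demanded of $\mathcal{M}^2$.
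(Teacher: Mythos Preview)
Your overall scheme is exactly the paper's: split $\mathcal{M}^2(v,w)=P^{\theta/2}_\cdot w_0+I[G(v,w)]$, use Proposition~\ref{prop_20170804}(1) for the first term and Proposition~\ref{prop_20170804}(2) combined with Lemma~\ref{lem_20171219} for the second, with the very same choice of parameters $\alpha,\beta,\gamma,\delta,\eta$.

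The gap is in your verification that $a=\frac{\alpha-\theta\eta_0+\theta-\gamma}{\theta}>0$. You assert that in the limit $\kappa,\kappa'\to0$ the inequality $\gamma<\alpha-\theta\eta_0+\theta$ reduces to $q'\ge\frac{5}{\theta}-3+2q$ with \emph{strict} inequality for every $\theta\in(7/4,2]$. That is false on the sub-range $11/6<\theta\le2$: there $q'=1$ and $q=2-\frac{5}{2\theta}$, so $\frac{5}{\theta}-3+2q=1=q'$ with \emph{equality}. Consequently, in this range the leading-order part of $\alpha-\theta\eta_0+\theta-\gamma$ vanishes and one must look at the $\kappa,\kappa'$ corrections. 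The paper computes explicitly
\[
\alpha-\theta\eta_0+\theta-\gamma=(\theta-2)\kappa'+(\theta-1)\kappa \qquad (11/6<\theta\le2),
\]
which has a \emph{negative} contribution from $\kappa'$ when $\theta<2$, and its positivity is secured only by the standing assumption $\kappa/\kappa'>1/3$ from \eqref{eq:180213-3} (indeed one checks $(\theta-2)\kappa'+(\theta-1)\kappa>\frac{-\kappa'+3\kappa}{4}>0$). So ``$\kappa,\kappa'\ll1$'' alone is not enough; you must invoke the lower bound on $\kappa/\kappa'$. Once you add this check, your argument is complete and coincides with the paper's.
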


\begin{proof}
First, using
Proposition \ref{prop_20170804}(1) with 
\[
\alpha =\frac72 \theta -5 - \theta q' -\theta\kappa,
\quad
\beta = \frac72 \theta -5 -\theta\kappa',
\quad \delta =1 -\kappa',
\] 
we obtain
\[
\left\| P^{\theta /2}_{\cdot} w_0 \right\|_{\cL_T^{
q' -\kappa'+\kappa, \frac72 \theta -5 - \theta\kappa',1-\kappa'
}}
		\lesssim
		 \|w_0\|_{\HolBesSp{\frac72 \theta -5 - \theta q' -\theta\kappa }}.
		 		\]

Next, we use 
Proposition \ref{prop_20170804}(2), 
the Schauder estimate with
\begin{align*}
\alpha &= \frac52(\theta -2)-(\kappa' +\kappa), 
\quad
\beta = \frac72 \theta -5 - \theta\kappa', \quad
\gamma = \beta - \theta (q' -\kappa' +\kappa), 
\\
\delta &=1-\kappa', 
\quad
\eta = \frac{5}{\theta} -3 +2q - \frac{\theta -1}{\theta}\kappa'.
\end{align*}
It is straightforward to check that 
$\alpha \le \gamma < \alpha -\theta\eta +\theta$, 
$\gamma \le \beta < \alpha +\theta$ and $0 < \delta \le (\beta -\alpha)/\theta$
due to the assumption $\kappa /\kappa' < 3/4$.
In particular, 
\begin{eqnarray*}
\alpha -\theta\eta +\theta -\gamma =
\begin{cases}
(\theta -2) \kappa' +(\theta -1) \kappa 
& \mbox{if $\frac{11}{6} < \theta \le 2$}, \\
\theta \rho +
(\theta -2) \kappa' +(\theta -1) \kappa 
 & \mbox{if $\frac74 < \theta \le \frac{11}{6}$}
\end{cases}
\end{eqnarray*}
and hence
\[
\alpha -\theta\eta +\theta -\gamma
> \frac{ -\kappa' + 3\kappa}{4}.
\]
The right-hand side is positive 
since $\kappa /\kappa' >1/3$ is assumed.

Setting
$a :=\theta^{-1} (\alpha -\theta\eta +\theta -\gamma) >0$, we see 
from Proposition \ref{prop_20170804}(2),
the fractional
version of the Schauder estimate
that
\[
\|I[G(v,w)]\|_{\cL_T^{
q' -\kappa'+\kappa, \frac72 \theta -5 - \theta\kappa',1-\kappa'
}}
					\lesssim
					T^{a}
					\| G(v,w) \|_{
					\cE_T^{\frac{5}{\theta} -3 +2q - \frac{\theta -1}{\theta}\kappa'	
					}
					 \HolBesSp{ \frac52(\theta -2)-(\kappa' +\kappa)}}.
\]
By Lemma \ref{lem_20171219}, the estimate of $G(v,w)$
we prove the proposition.
\end{proof}

\section{Local well-posedness of paracontrolled QGE}

At the beginning of this section we prove the local Lipschitz continuity of 
the integration map $\mathcal{M}$.
Next, we prove the local well-posedness of our paracontrolled QGE, 
that is, 
the equation \eqref{eq_20160920090410}
admits a unique solution locally in time 
for every driver and initial condition 
and the solution depends continuously on these input data.
(These parts are a kind of routine
once the estimate of $\mathcal{M}$ as in Proposition \ref{pr:180213-1}
is established.
So, some of our proofs may be sketchy in this section.)

In the following proposition the local Lipschitz continuity of 
the integration map is given. 
The positive constant $a =a(\kappa, \kappa') >0$ the same as in Proposition \ref{pr:180213-1}.
\begin{proposition}\label{pr:180214-1}
Let $T \in (0,1]$
and $0<\kappa <\kappa' \ll 1$ be as in \eqref{eq:180213-3}.
For $i=1,2$, let $
		(v_0^{(i)},w_0^{(i)})
		\in
			\HolBesSp{ \frac32 \theta -2 -\theta q
								 +(\theta -1)\kappa' }
			\times
			\HolBesSp{\frac72 \theta -5 -\theta q' -\theta\kappa}
	$
	and
	${\mathbf X}^{(i)} \in \drivers{T}{\kappa}$. 
Then, there exists a positive constant $\const[3]$ depending only on $\kappa$ and $\kappa'$ such that
	 the following estimate holds:
	For 
	$(v^{(i)}, w^{(i)})\in\sols{T}{\kappa}{\kappa'}$, $i=1,2$, it holds that
	\begin{multline*}
		\|\mathcal{M}_{{\mathbf X}^{(1)}, (v_0^{(1)},w_0^{(1)})} (v^{(1)},w^{(1)})
		-
		 \mathcal{M}_{{\mathbf X}^{(2)}, (v_0^{(2)},w_0^{(2)})} (v^{(2)},w^{(2)})
		 		\|_{
		\sols{T}{\kappa}{\kappa'}
				}
				\\
		\leq
			\const[3] (1+M^2)
			(\|v_0^{(1)} - v_0^{(2)}\|_{\HolBesSp{\frac32 \theta -2 - \theta q + (\theta -1)\kappa'}}
			+
				\|w_0^{(1)} - w_0^{(2)}\|_{\HolBesSp{\frac72 \theta -5 - \theta q' -\theta\kappa}}
			)\\
			\quad+
			\const[3](1+M^2)
						T^{a}
			\left(
				\|(v^{(1)},w^{(1)}) -(v^{(2)},w^{(2)})\|_{\sols{T}{\kappa}{\kappa'}}
				+\| {\mathbf X}^{(1)}- {\mathbf X}^{(2)} \|_{ \drivers{T}{\kappa}}
			\right).
	\end{multline*}
	 Here, we set $\displaystyle M = \max_{i=1,2} \max \{ \|{\mathbf X}^{(i)}\|_{\drivers{T}{\kappa}}, 
	 \|(v^{(i)},w^{(i)})\|_{\sols{T}{\kappa}{\kappa'}}	\}$ for simplicity.
	 	 	 	\end{proposition}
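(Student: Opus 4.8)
The plan is to mimic the proof of Proposition \ref{pr:180213-1}, but carry out every estimate for differences rather than for single terms. All the bilinear and trilinear operations appearing in $F$ and $G$ (paraproducts, resonant products, commutators, products, the Riesz transform, $\nabla$, the fractional heat semigroup, the map $I$) are linear or multilinear and continuous in each argument on the relevant Besov spaces, so each of them satisfies a ``Leibniz-type'' bound for differences: e.g. $\|a_1\rpara b_1-a_2\rpara b_2\|\le\|(a_1-a_2)\rpara b_1\|+\|a_2\rpara(b_1-b_2)\|$, and similarly $A(f_1,g_1,h_1)-A(f_2,g_2,h_2)$ telescopes into three terms, each with one factor being a difference and the other two being either of the $(\cdot)^{(1)}$ or $(\cdot)^{(2)}$ quantities. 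After telescoping, one bounds each factor exactly as in Section 4, and the un-differenced factors are all controlled by $M$ (for the symbols and for $(v^{(i)},w^{(i)})$) or by $1+\|v_0^{(i)}\|$ in the few places where the initial data enters $\com$. Since $F$ is bilinear-type and $G$ is at most cubic-type in the $(v,w)$-variables, the resulting polynomial in $M$ has degree $\le 2$ on the difference terms; collecting everything gives the factor $(1+M^2)$.

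Concretely, the steps in order are: (1) Set up the difference of $\Phi$'s, $\Phi^{(1)}-\Phi^{(2)}=R^\perp\big((Y^{(1)}-Y^{(2)})+(v^{(1)}-v^{(2)})+(w^{(1)}-w^{(2)})\big)$, and record, via Corollary \ref{cor:180206-2} and Proposition \ref{prop:180131-11}, the difference analogues of Lemma \ref{lem:180201-3} and of the pointwise/H\"older bounds on $\Phi$, $v$, $w$. (2) Prove the difference analogue of Lemma \ref{lem_20161002130857} for $F$: telescoping $\Phi^{(1)}\rpara\nabla X^{(1)}-\Phi^{(2)}\rpara\nabla X^{(2)}$ into $(\Phi^{(1)}-\Phi^{(2)})\rpara\nabla X^{(1)}$ and $\Phi^{(2)}\rpara\nabla(X^{(1)}-X^{(2)})$, then apply Proposition \ref{prop_20160926062106}(1), Lemma \ref{lem:180122-1}. (3) Prove the difference analogue of Lemma \ref{lem_20161003140013} for $\com(v,w)$: differentiate each of ${\rm I},{\rm II},{\rm III},{\rm IV}$ in the decomposition \eqref{eq.0720_01}, telescoping every product/paraproduct/commutator and every occurrence of $P^{\theta/2}$, $\nabla X$, $V$, $V_0$, $v_0$; this is where the term $\|v_0^{(1)}-v_0^{(2)}\|$ enters. (4) Prove the difference analogue of Lemma \ref{lem_20171219} for $G$ by going through Steps 1--10 of that proof verbatim, each time replacing a single product/commutator by its telescoped difference and invoking the step-3 bound for the $\com$-terms in Step 10. (5) Feed the resulting $\cE_T^\eta$-bounds into the fractional Schauder estimate Proposition \ref{prop_20170804}(1)--(2) exactly as in Propositions \ref{prop_20161003143833} and \ref{prop_20161003143917}, with the same exponents $\alpha,\beta,\gamma,\delta,\eta$ and the same $a=\theta^{-1}(\alpha-\theta\eta+\theta-\gamma)>0$, to obtain the factor $T^a$ on the nonlinear part and the $T$-independent factor $\const[1]$ (absorbed into $\const[3]$) on the part coming from the initial-data differences $P^{\theta/2}_\cdot(v_0^{(1)}-v_0^{(2)})$ and $P^{\theta/2}_\cdot(w_0^{(1)}-w_0^{(2)})$.

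The main obstacle is bookkeeping rather than any genuinely new estimate: one must make sure that in each telescoped term the \emph{difference} factor is placed in the Besov space where the corresponding single-term estimate in Section 4 put that same factor, and that the \emph{non-difference} factors land in spaces controlled by $M$ with the same time-singularity exponents, so that Proposition \ref{prop:180131-1} applies with the identical pairs of exponents and all the inequalities \eqref{eq.20171228_1}, \eqref{eq:180101-4}, \eqref{eq:180122-20}--\eqref{eq:180122-18} are reused unchanged. A minor point requiring care is the dependence of $\comC$ and the commutator with $P^{\theta/2}_t$ and with $R_l$ on \emph{three} arguments: telescoping produces three terms and one must check the regularity constraints ($0<\alpha<1$ etc.) are the same for each; but these are literally the choices \eqref{eq:180122-30} already made. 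A second bit of care: when differences of $X^{(1)},X^{(2)}$, etc.\ appear, they are measured in the $\drivers{T}{\kappa}$-norm of ${\mathbf X}^{(1)}-{\mathbf X}^{(2)}$, which by definition is the sum of the component norms, so $\|\nabla(X^{(1)}-X^{(2)})\|_{\HolBesSp{\theta/2-2-\kappa}}\lesssim\|{\mathbf X}^{(1)}-{\mathbf X}^{(2)}\|_{\drivers{T}{\kappa}}$ and similarly for the others, exactly as in Lemmas \ref{lem:180208-7} and \ref{rem_20171114-1}. Once all ten $T_j$-differences and the $F$- and $\com$-differences are bounded, summing and using $T\le1$ (so $T^{a'}\le T^{a}$ whenever $a'\ge a$, absorbing the better exponents) yields the stated inequality with $\const[3]=\const[3](\kappa,\kappa')$ and the claimed $(1+M^2)$, $T^a$ structure.
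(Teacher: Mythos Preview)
Your proposal is correct and follows exactly the approach the paper takes: the paper's own proof is a single sentence observing that all terms in $\mathcal{M}$ are polynomials in $(v,w)$ and the components of ${\mathbf X}$, so the proof of Proposition \ref{pr:180213-1} carries over to differences. Your write-up is simply a careful elaboration of that sentence, with the telescoping, the reuse of the exponents and inequalities \eqref{eq.20171228_1}--\eqref{eq:180122-18}, and the Schauder step all spelled out; nothing in it departs from or exceeds what the paper intends.
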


\begin{proof}
The proof is essentially the same as the one for Proposition \ref{pr:180213-1}
since all the terms in the definition of ${\mathcal M}$ are ``polynomials" in 
$(v,w)$ and the components of ${\mathbf X}$.
 \end{proof}

By Propositions \ref{pr:180213-1} and \ref{pr:180214-1} above, 
we can easily show that QGE \eqref{eq_20160920090410} 
has a unique time-local solution for any driver ${\bf X}$ and initial condition $(v_0, w_0)$.
Roughly, the argument goes as follows:
If we take $M >0$ large enough compared to the norms of 
${\bf X}$ and $(v_0, w_0)$, then 
Proposition \ref{pr:180213-1} implies that $\mathcal{M}= \mathcal{M}_{{\bf X}, (v_0, w_0) }$ 
maps $B_{T_*, M}$ into itself for sufficiently small $T_* >0$.
Here, we set
$B_{T, M} = \{ (v, w)\in\sols{T}{\kappa}{\kappa'} \mid 
 \| (v,w)\|_{\sols{T}{\kappa}{\kappa'} } \le M\}$.
Moreover, Proposition \ref{pr:180214-1} implies that
$\mathcal{M}\vert_{B_{T_*, M}} \colon B_{T_*, M} \to B_{T_*, M}$ is a contraction 
if $T_* >0$ is chosen smaller.
Therefore, this map has a unique fixed point. 
Thus, our QGE \eqref{eq_20160920090410} has a unique solution
locally in time for every ${\bf X}$ and $(v_0, w_0)$.

More precisely, we have the following local well-posedness,
which is our main result for the deterministic paracontrolled QGE.
Since we can prove it by a routine argument, we omit its proof. 
(For example, see the proof of \cite[Theorem 4.27]{hin}.)

\begin{proposition}\label{pr:180215-1}
Let $0<\kappa <\kappa' \ll 1$ be as in \eqref{eq:180213-3}.
 Then, there exists a continuous mapping
 \[
 \tilde{T}_\ast:
 \HolBesSp{ \frac32 \theta -2 -\theta q
								 +(\theta -1)\kappa' }
			\times
			\HolBesSp{\frac72 \theta -5 -\theta q' -\theta\kappa}
 \times
 \drivers{1}{\kappa}
 \to
 (0,1]
 \]
 such that the following $(1)$ and $(2)$ hold{\rm:}
 \begin{enumerate}[$(1)$]
		\item	For every
				$
					(v_0,w_0)
					\in
						\HolBesSp{ \frac32 \theta -2 -\theta q
								 +(\theta -1)\kappa' }
			\times
			\HolBesSp{\frac72 \theta -5 -\theta q' -\theta\kappa}
				$
				and
				$
					{\bf X}\in\drivers{1}{\kappa}
				$,
 set $T_\ast=\tilde{T}_\ast(v_0,w_0,{\bf X})$.
 Then, the system \eqref{eq_20160920090410} admits a unique solution $(v,w)\in\sols{T_\ast}{\kappa}{\kappa'}$
				and there is a positive constant $\const$ depending only on $\kappa$, $\kappa'$, $T_\ast$ and
				$\|{\bf X}\|_{\drivers{1}{\kappa}}$
				such that
				\begin{align*}
					\|(v,w)\|_{\sols{T_\ast}{\kappa}{\kappa'}}
					\leq
						\const
						\left(
							1
							+\|v_0\|_{\HolBesSp{-\frac{2}{3}+\kappa'}}
							+\|w_0\|_{\HolBesSp{-\frac{1}{2}-2\kappa}}
						\right).
				\end{align*}
		\item[$(2)$]
		Let $\{(v_0^{(n)},w_0^{(n)})\}_{n=1}^\infty$ and $\{{\bf X}^{(n)}\}_{n=1}^\infty$
 converge to $(v_0,w_0)$ in
				$
				\HolBesSp{ \frac32 \theta -2 -\theta q
								 +(\theta -1)\kappa' }
			\times
			\HolBesSp{\frac72 \theta -5 -\theta q' -\theta\kappa}				
				$
				and
 ${\bf X}$ in $\drivers{1}{\kappa}$,
 respectively.
 Set $$T_\ast^{(n)}=\tilde{T}_\ast(v_0^{(n)},w_0^{(n)}, {\bf X}^{(n)})$$
 and let $(v^{(n)},w^{(n)})$ be a unique solution on $[0,T^{(n)}_\ast]$
 to the system \eqref{eq_20160920090410} with the initial condition $(v_0^{(n)},w_0^{(n)})$ driven by ${\bf X}^{(n)}$.
 Then, for every $0<t<T_\ast$, we have
 \begin{align*}
 \lim_{n\to\infty}
 \|(v^{(n)},w^{(n)})-(v,w)\|_{\sols{t}{\kappa}{\kappa'}}
 =
 0.
 \end{align*}
	\end{enumerate}
\end{proposition}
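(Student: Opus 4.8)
## Proof Proposal for Proposition \ref{pr:180215-1}

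The plan is to realize the time-local solution as a fixed point of $\mathcal{M}_{{\mathbf X},(v_0,w_0)}$ obtained via the Banach fixed point theorem, and then upgrade the quantitative bounds from Propositions \ref{pr:180213-1} and \ref{pr:180214-1} into the continuity of $\tilde T_\ast$ and the convergence statement. First I would fix the data $(v_0,w_0)$ and ${\mathbf X}$, set $R := 1 + \|v_0\|_{\HolBesSp{\frac32\theta-2-\theta q+(\theta-1)\kappa'}} + \|w_0\|_{\HolBesSp{\frac72\theta-5-\theta q'-\theta\kappa}}$, and choose the radius $M := 2\const[1] R$ where $\const[1]$ is the constant from Proposition \ref{pr:180213-1}. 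Then for $T$ small enough that $\const[2] T^a(1 + R + M^3 + M) \le \const[1] R$, Proposition \ref{pr:180213-1} shows $\mathcal{M}$ maps the closed ball $B_{T,M}$ into itself, and Proposition \ref{pr:180214-1} (with equal drivers and equal initial data) shows $\mathcal{M}$ is a strict contraction on $B_{T,M}$ once $\const[3](1+M^2)T^a \le 1/2$. Both conditions are met for $T$ below an explicit threshold depending only on $\kappa,\kappa',\|{\mathbf X}\|_{\drivers{1}{\kappa}}$ (using $\drivers{T}{\kappa}\subset\drivers{1}{\kappa}$ up to restriction, and monotonicity of the norms in $T$) and on $R$; I would define $\tilde T_\ast(v_0,w_0,{\mathbf X})$ to be (a fixed fraction of) this threshold. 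The fixed point $(v,w)\in B_{T_\ast,M}$ gives the solution, and uniqueness in the whole space $\sols{T_\ast}{\kappa}{\kappa'}$ follows by a standard bootstrap: any two solutions lie in some ball $B_{T_\ast,M'}$, and on a possibly shorter interval they coincide by contraction, after which one propagates the coincidence along $[0,T_\ast]$ using the semigroup property of $\mathcal M$. The a priori bound $\|(v,w)\|_{\sols{T_\ast}{\kappa}{\kappa'}}\le M = 2\const[1]R$ is exactly the asserted inequality (noting that for $\theta=2$ the exponents specialize to $-2/3+\kappa'$ and $-1/2-2\kappa$).

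For part (1) it remains to check that $\tilde T_\ast$ can be chosen \emph{continuous}. Since the threshold above is built from the continuous functions $(v_0,w_0,{\mathbf X})\mapsto R$ and $(v_0,w_0,{\mathbf X})\mapsto\|{\mathbf X}\|_{\drivers{1}{\kappa}}$ via the continuous operations appearing in the inequalities $\const[2]T^a(1+R+M^3+M)\le\const[1]R$ and $\const[3](1+M^2)T^a\le 1/2$ (with $M=2\const[1]R$), solving for the largest admissible $T$ yields a continuous, $(0,1]$-valued function; taking e.g.\ half of it keeps us strictly inside the admissible region and ensures the fixed point construction works uniformly on a neighborhood, which is what we will need for part (2).

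For part (2), given sequences $(v_0^{(n)},w_0^{(n)})\to(v_0,w_0)$ and ${\mathbf X}^{(n)}\to{\mathbf X}$, continuity of $\tilde T_\ast$ gives $T_\ast^{(n)}\to T_\ast$, so for fixed $t<T_\ast$ we have $t<T_\ast^{(n)}$ for all large $n$ and all solutions are defined on $[0,t]$; moreover the $\sols{t}{\kappa}{\kappa'}$-norms of $(v^{(n)},w^{(n)})$ and $(v,w)$ are uniformly bounded by some $M'$ (from part (1) and boundedness of the data). Then I would apply Proposition \ref{pr:180214-1} with the two sets of data to get
\begin{align*}
\|(v^{(n)},w^{(n)})-(v,w)\|_{\sols{t}{\kappa}{\kappa'}}
&\le \const[3](1+M'^2)\big(\|v_0^{(n)}-v_0\|_{\HolBesSp{\frac32\theta-2-\theta q+(\theta-1)\kappa'}} + \|w_0^{(n)}-w_0\|_{\HolBesSp{\frac72\theta-5-\theta q'-\theta\kappa}}\big)\\
&\quad + \const[3](1+M'^2)\,t^a\big(\|(v^{(n)},w^{(n)})-(v,w)\|_{\sols{t}{\kappa}{\kappa'}} + \|{\mathbf X}^{(n)}-{\mathbf X}\|_{\drivers{t}{\kappa}}\big).
\end{align*}
If $t$ is small enough that $\const[3](1+M'^2)t^a\le 1/2$ we absorb the difference term on the left and conclude convergence on $[0,t]$; for general $t<T_\ast$ we iterate this on successive subintervals using the flow property of the solution map (re-centering at the endpoint, where the solutions at time $t_0$ converge in the relevant Besov spaces by the previous step and the a priori smoothing of Proposition \ref{rem_20171114-2}(3)). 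The main obstacle I expect is precisely the bookkeeping in this last iteration: one must verify that the intermediate initial data (the solution evaluated at a positive time) converge in the correct function spaces and that the lengths of the subintervals can be chosen uniformly in $n$, which is where the uniform lower bound on $\tilde T_\ast$ over a neighborhood of the limiting data — guaranteed by its continuity — is essential.
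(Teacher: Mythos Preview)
Your proposal is correct and follows precisely the approach the paper itself sketches just before the proposition: the paper explicitly says the result follows from a routine Banach fixed-point argument using Propositions~\ref{pr:180213-1} and~\ref{pr:180214-1}, with $\mathcal{M}$ mapping a ball $B_{T_*,M}$ into itself and being a contraction there for $T_*$ small enough, and then omits the details (referring to \cite[Theorem~4.27]{hin}). Your write-up simply fills in those omitted details, including the iteration for part~(2), in the expected way.
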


As usual, the local solution admits a prolongation up to 
the explosion time.
Namely, for every $(v_0, w_0)$ as in Proposition \ref{pr:180215-1} 
and ${\bf X} \in\drivers{T}{\kappa}$, $T>0$,
there exists $T_{\rm{exp}} \in (0,T]$ such that
QGE \eqref{eq_20160920090410} 
admits a unique solution $(v,w)\in\sols{t }{\kappa}{\kappa'}$
for every $t < T_{\rm{exp}}$
and 
\[
\limsup_{t \nearrow T_{\rm{exp}}} 
(
\| v_t \|_{\HolBesSp{\frac32 \theta -2 -\theta q
								 +(\theta -1)\kappa' }}
+
\| w_t \|_{\HolBesSp{\frac72 \theta -5 -\theta q' -\theta\kappa}} 
)
=\infty
\]
if $T_{\rm{exp}} <T$.
Moreover, 
the function $(v_0,w_0, {\bf X}) \mapsto T_{\rm{exp}}$ 
can be arranged lower semicontinuous.

\section{Relation between paracontrolled solution and classical mild solution}

In this section 
we show that the solution of our paracontrolled QGE 
coincides in a suitable sense with 
the classical mild solution of QGE
when the driver ${\mathbf X}$ is a natural enhancement of 
nice (and deterministic) $\xi$.

Assume that 
$\xi \in C_T \HolBesSp{\alpha}$, $X_0 \in \HolBesSp{\alpha}$ and
$Y_0, V_0 \in \HolBesSp{\alpha-1}$
for some $\alpha >2$.
Set
$$X_t = P_{\cdot}^{\theta /2} X_0 + I[\xi]$$
(or equivalently,
$\partial_t X_t = - ( (-\LaplaceOp)^{\theta /2} +1) X_t +\xi$ with the initial value $X_0$).
We define its natural enhancement ${\mathbf X}= (X, V, Y, Z, W, \hat{Z}, \hat{W})$
as in Example \ref{ex.20171112}.

\begin{proposition}\label{pr.20171116}
Let the notation be as above and assume $(v_0,w_0)
		\in
			\HolBesSp{(\theta -1)(-\frac12 +\kappa') }
			\times
			\HolBesSp{ -\theta\kappa}$.
We denote by $(v,w)$
a unique solution of the paracontrolled QGE \eqref{eq_20160920090410}.			
Then, $u: =X+Y+v+w$ solves 
\begin{align*}
\partial_t u_t 
&= - (-\LaplaceOp)^{\theta /2} u_t + R^{\perp} u_t \cdot \nabla u_t +\xi_t
\\
&= - ( (-\LaplaceOp)^{\theta /2} +1) u_t + R^{\perp} u_t \cdot \nabla u_t + u_t +\xi_t,
\quad
\mbox{with $u_0 = X_0+Y_0+v_0+w_0$}
\end{align*}
in the classical mild sense $($locally in time$)$,
or eqivalently,
\[
u=P^{\theta/2}_{\cdot}u_0+I[ R^{\perp} u \cdot \nabla u+ u+\xi].
\]
\end{proposition}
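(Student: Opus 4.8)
The plan is to verify that $u := X + Y + v + w$ satisfies the mild formulation by substituting in the known mild equations for each of the four summands and checking that the nonlinear terms reorganize into $R^\perp u \cdot \nabla u$. Since the driver $\mathbf X$ here is the natural enhancement of a smooth $\xi$ (so all components of $\mathbf X$ are the ``naive'' products and resonant products of $X,V,Y$ as in Example \ref{ex.20171112}, and everything has enough regularity that all paraproducts and resonant products recombine into ordinary products), the identities $fg = f\lpara g + f\reso g + f\rpara g$ hold literally, and likewise $R^\perp(f\rpara g) = R^\perp f\rpara g + \{R^\perp(f\rpara g)-R^\perp f\rpara g\}$, $\comC$-terms vanish into ordinary products, etc. So the proof is essentially an exercise in carefully collecting terms.

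First I would record the mild equations: by hypothesis $X = P^{\theta/2}_\cdot X_0 + I[\xi]$; by Example \ref{ex.20171112}, $V = P^{\theta/2}_\cdot V_0 + I[\nabla X]$ and $Y = P^{\theta/2}_\cdot Y_0 + I[R^\perp X\cdot\nabla X]$; and from the fixed-point system \eqref{eq_20160920090410}, $v = P^{\theta/2}_\cdot v_0 + I[F(v,w)]$ and $w = P^{\theta/2}_\cdot w_0 + I[G(v,w)]$, where $F(v,w) = \Phi\rpara\nabla X$ with $\Phi = R^\perp(Y+v+w)$, and $G(v,w)$ is the long expression in \eqref{eq_20161002050746}. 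Adding these four gives
\[
u = P^{\theta/2}_\cdot u_0 + I[\xi + \nabla X + R^\perp X\cdot\nabla X + F(v,w) + G(v,w)],
\]
so it suffices to show the bracketed integrand equals $R^\perp u\cdot\nabla u + u + \xi$, i.e. that
\[
\nabla X + R^\perp X\cdot\nabla X + \Phi\rpara\nabla X + G(v,w) = R^\perp u\cdot\nabla u + u.
\]
Then I would expand $R^\perp u\cdot\nabla u$ with $u = X + (Y+v+w)$ and with $R^\perp u = R^\perp X + R^\perp(Y+v+w) = R^\perp X + \Phi$, obtaining $R^\perp X\cdot\nabla X + R^\perp X\cdot\nabla(Y+v+w) + \Phi\cdot\nabla X + \Phi\cdot\nabla(Y+v+w)$, and similarly split $\Phi\cdot\nabla X = \Phi\lpara\nabla X + \Phi\reso\nabla X + \Phi\rpara\nabla X$ and $R^\perp X\cdot\nabla(Y+v+w) = R^\perp X\cdot\nabla Y + R^\perp X\cdot\nabla w + R^\perp X\cdot\nabla v$ (noting $v = \com(v,w) + \Phi\rpara V - P^{\theta/2}_\cdot v_0 + \ldots$ — actually one should substitute the paracontrolled ansatz $v = \Phi\rpara V + \com(v,w)$ built into the definition \eqref{eq:180227-2} of $\com$, so that $\nabla v = \nabla(\Phi\rpara V) + \nabla\com(v,w) = \nabla\Phi\rpara V + \Phi\rpara\nabla V + \nabla\com(v,w)$). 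Matching this against the ten groups of terms in $G(v,w)$ — $\Phi\lpara\nabla X$, $Z = R^\perp Y\reso\nabla X$, $R^\perp w\reso\nabla X$, $\Phi\cdot W$, the $\{R^\perp(\Phi\rpara V)-\Phi\rpara R^\perp V\}\reso\nabla X$ correction, $\comC(\Phi, R^\perp V,\nabla X)$, $R^\perp\com(v,w)\reso\nabla X$, $\hat Z$, $R^\perp X\cdot\nabla w$, the $R^\perp X\cdot(\nabla\Phi\rpara V)$ and $R^\perp X(\rpara+\lpara)\{\Phi\rpara\nabla V\}$ terms, $\Phi\cdot\hat W$, $\comC(\Phi,\nabla V, R^\perp X)$, $R^\perp X\cdot\nabla\com(v,w)$, $\Phi\cdot\nabla(Y+v+w)$, and $(X+Y+v+w)$ — and using the product decompositions together with $W = R^\perp V\reso\nabla X$, $\hat W = R^\perp X\reso\nabla V$, $Z = R^\perp Y\reso\nabla X$, $\hat Z = \nabla Y\cdot R^\perp X$, and Lemma \ref{lem:161117-106} for $\nabla(\Phi\rpara V)$, every term cancels or recombines; the leftover $(X+Y+v+w) = u$ accounts for the extra $+u$ in the mild equation, and the stray $\nabla X$ on the left is absorbed into $I[\nabla X]$ which is already part of $V$ hence of the mild equation for $u$ — wait, more precisely $\nabla X$ appears because $V$'s mild equation contributes it; but $V$ is not a summand of $u$, so in fact the $\nabla X$ term should \emph{not} be there. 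Let me restate: the correct bracket identity to check, after recollecting, is simply $R^\perp X\cdot\nabla X + \Phi\rpara\nabla X + G(v,w) = R^\perp u\cdot\nabla u + u$, with no $\nabla X$ term, since $v$'s and $w$'s mild equations are $I[F]$ and $I[G]$ and $X$'s is $I[\xi]$, $Y$'s is $I[R^\perp X\cdot\nabla X]$ — there is no $I[\nabla X]$ among the summands of $u$.)

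The main obstacle is purely bookkeeping: verifying that the elaborate expression $G(v,w)$ in \eqref{eq_20161002050746}, together with $\Phi\rpara\nabla X$ and $R^\perp X\cdot\nabla X$, reassembles \emph{exactly} into $R^\perp u\cdot\nabla u + u$ when all the Littlewood--Paley decompositions collapse (which they do, legitimately, because $\alpha > 2$ makes $X,V,Y$ smooth enough for all the bilinear operations to be ordinary products and for the commutators $\comC$, $R^\perp(\cdot\rpara\cdot)-(\cdot)\rpara R^\perp(\cdot)$, $[\mathcal A,\cdot\rpara]$ to reduce to genuine identities rather than mere estimates). Concretely I would: (i) establish that for $f,g$ in the relevant smooth spaces, $f\lpara g + f\reso g + f\rpara g = fg$ and the paracontrolled-ansatz decomposition of $v$ holds strongly; (ii) expand both sides; (iii) cancel term by term using the defining identities of $Z,W,\hat Z,\hat W$ and Lemmas \ref{lem:161117-106}, \ref{lem:161117-107}. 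Finally, since the paracontrolled solution is unique in $\sols{T}{\kappa}{\kappa'}$ and, by the regularity of $\xi$, the function $u$ built above lies in that space, $u$ is the mild solution; conversely a classical mild solution yields a paracontrolled one by the same identities run backwards, giving coincidence locally in time.
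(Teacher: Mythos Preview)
Your approach is correct and essentially the same as the paper's: add the mild equations for $X,Y,v,w$, reduce to the identity $R^\perp X\cdot\nabla X + F(v,w) + G(v,w) = R^\perp u\cdot\nabla u + u$, and verify it by expanding $R^\perp u\cdot\nabla u$ via $u = X + (Y+v+w)$, $R^\perp u = R^\perp X + \Phi$, splitting into paraproducts, substituting the ansatz $v = \com(v,w) + \Phi\rpara V$ (which follows from the first fixed-point equation and the definition \eqref{eq:180227-2}), and using Lemma~\ref{lem:161117-106} plus the defining identities $Z = R^\perp Y\reso\nabla X$, $W = R^\perp V\reso\nabla X$, $\hat Z = \nabla Y\cdot R^\perp X$, $\hat W = R^\perp X\reso\nabla V$ valid for the natural enhancement. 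The paper organizes the same computation slightly differently --- it expands $\Phi\cdot\nabla X$ and $R^\perp X\cdot\nabla(Y+v+w)$ forward and matches against the pieces of $G$ --- but the ingredients and identities are identical.

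Two small remarks. First, your mid-proof wobble about a stray $\nabla X$ term is indeed spurious (you self-correct): $V$ is not a summand of $u$, so $I[\nabla X]$ never enters; only $I[\xi]$, $I[R^\perp X\cdot\nabla X]$, $I[F]$, $I[G]$ do. Second, your final paragraph (uniqueness of the mild solution and the converse direction) is not needed for the proposition as stated, which only asserts that the paracontrolled $u$ satisfies the mild equation; you can drop it.
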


\begin{proof}
We use the quantities
$T_1,T_2,\ldots,T_{10}$
defined in (\ref{eq:180227-3}).
It is sufficient to show that 
\begin{align}
v+ w &=
P_{\cdot}^{\theta /2} (v_0 +w_0) + 
I [\Phi \cdot \nabla X] 
+ I[R^{\perp} X \cdot \nabla (Y+v+w)]
\label{eq_20171116_1}
\\
\nonumber
&\quad
 + I [\Phi \cdot \nabla (Y+v+w) ] + I[X +Y +v+w]\\
\nonumber
&=
P_{\cdot}^{\theta /2} (v_0 +w_0) + 
I [\Phi \cdot \nabla X] 
+ I[R^{\perp} X \cdot \nabla (Y+v+w)]
+ I [T_1] + I[X +Y +v+w] 
\nonumber
\end{align}
Indeed, adding $X +Y$ to \eqref{eq_20171116_1}
and setting $u=X+Y+v+w$, we obtain
\begin{align*}
u &=
P_{\cdot}^{\theta /2} (X_0 +Y_0+v_0 +w_0) + I[\xi]
+I [R^{\perp} X \cdot \nabla X] 
\\
&\qquad
+ 
I [R^{\perp} (u-X) \cdot \nabla X] 
+ I[R^{\perp} X \cdot \nabla (u-X)]
 + I [R^{\perp} (u-X) \cdot \nabla (u -X) ] + I[u] 
\\
&= 
P_{\cdot}^{\theta /2} (X_0 +Y_0+v_0 +w_0) 
+ I[ R^{\perp} u \cdot \nabla u+ u+\xi]
\\
&= 
P_{\cdot}^{\theta /2} (u_0) 
+ I[ R^{\perp} u \cdot \nabla u+ u+\xi],
\end{align*}
which proves the assertion of the proposition.

Now we show \eqref{eq_20171116_1}.
Before doing so,
one should note that since $v$ satisfies the first equation 
of \eqref{eq_20160920090410}, we have 
$v = \com (v,w) + \Phi \rpara V$
thanks to
(\ref{eq:180227-1})
and
(\ref{eq:180227-2}).

First, we calculate $\Phi \cdot \nabla X$:
\begin{align*}
\Phi \cdot \nabla X
&=
\Phi ( \rpara +\lpara +\reso) \nabla X
\\
&=
\Phi ( \rpara +\lpara) \nabla X
+ 
Z +R^{\perp} w \reso \nabla X
+ R^{\perp} \{ \com (v,w) + \Phi \rpara V \} \reso \nabla X
\\
&=
\Phi ( \rpara +\lpara) \nabla X
+ 
Z +R^{\perp} w \reso \nabla X
+ R^{\perp} \com (v,w) \reso \nabla X
\\
&\quad
+ \{ R^{\perp}(\Phi \rpara V) 
- \Phi \rpara R^{\perp} V \}\reso \nabla X
+
\{\Phi \rpara R^{\perp} V \}\reso \nabla X
\\
&=
\Phi ( \rpara +\lpara) \nabla X
+ 
Z +R^{\perp} w \reso \nabla X
+ R^{\perp} \com (v,w) \reso \nabla X
\\
&\quad
+ \{ R^{\perp}(\Phi \rpara V) 
- \Phi \rpara R^{\perp} V \}\reso \nabla X
+
\Phi \cdot W + \comC (\Phi, R^{\perp} V, \nabla X)\\
&=
T_3
+ 
Z 
+
T_4
+ R^{\perp} \com (v,w) \reso \nabla X
\\
&\quad
+
T_6
+
\Phi \cdot W + \comC (\Phi, R^{\perp} V, \nabla X).
\end{align*}

Next, we calculate
$R^{\perp} X \cdot \nabla (Y+v+w)$:
\begin{align*}
R^{\perp} X \cdot \nabla (Y+v+w)
&=
\hat{Z} + R^{\perp} X \cdot \nabla \{ \com (v,w) + \Phi \rpara V \}
+ R^{\perp} X \cdot \nabla w 
\\
&=
\hat{Z} + R^{\perp} X \cdot \nabla w + R^{\perp} X \cdot \nabla \com (v,w) 
+
R^{\perp} X \cdot \{ \nabla \Phi \rpara V\}
\\
& \quad
+
R^{\perp} X ( \rpara +\lpara +\reso) \{ 
 \Phi \rpara \nabla V\}
 \\
&=
\hat{Z} + R^{\perp} X \cdot \nabla w + R^{\perp} X \cdot \nabla \com (v,w) 
+
R^{\perp} X \cdot \{ \nabla \Phi \rpara V\}
\\
& \quad
+
R^{\perp} X ( \rpara +\lpara) \{ 
 \Phi \rpara \nabla V\}
+
\Phi \cdot \hat{W} 
		+
				\comC (\Phi, \nabla V, R^{\perp} X)
 \\
&=
\hat{Z} + T_5 + R^{\perp} X \cdot \nabla \com (v,w) 
+T_7
+T_8
+
\Phi \cdot \hat{W} 
		+
				\comC (\Phi, \nabla V, R^{\perp} X).
 \end{align*}
Note that we used Lemma \ref{lem:161117-106}
for the second equality.

Combining these all, we can easily check that
the right-hand side of \eqref{eq_20171116_1}
is equal to
\[
P_{\cdot}^{\theta /2} (v_0 +w_0) + I [F(v,w)+G(v,w)],
\]
where $F$ and $G$ were defined in \eqref{eq_20161002045615}.
This completes the proof.
\end{proof}



\section{Probabilistic Part: Enhancement of white noise}
\label{s8}

The main purpose of this section is to prove 
the enhancement of white noise.
In this section we do not always assume $\theta \in (7/4, 2]$
and the condition on $\theta$ will be specified in each theorem or
each proposition.
The time interval is basically $[0,T]$ for arbitrary $T>0$.
If $u\colon (-\infty, T] \to {\mathcal C}^{\alpha}$ for some $\alpha \in {\mathbf R}$, 
then as before we write 
${\mathcal I}[u]_t = \int_{-\infty}^t P_{t-s}^{\theta /2} u_s ds$, $t \in (-\infty, T]$,
whenever this makes sense.

Denote by $\xi$ the space-time white noise
associated with $L^2 ( {\mathbf R} \times {\mathbf T}^2)$.
Let $\chi \colon {\mathbf R}^2 \to {\mathbf R}$
be a smooth and even function with compact support such that $\chi (0)=1$.
See Section \ref{s7.1} below.
We write 
\begin{equation}\label{eq:180323-1}
\chi^{\ve} (x):= \chi (\ve x)
\end{equation} 
for $\ve >0$ and $x \in {\mathbf R}^2$.
Set 
\begin{equation}\label{eq:180522-1}
\xi^{\ve} (t,x) := \sum\limits_{k \in {\mathbf Z}^2} \chi^{\ve} (k) \hat\xi_t (k){\bf e}_k(x),
\end{equation}
where
the random variable
$\hat\xi (k)$ stands for the $k$th Fourier coefficient 
of $\xi$ (see Remark \ref{re:180524} below).
Heuristically, $\chi^{\ve}$ kills high frequencies. 
This is called a smooth approximation of $\xi$.
We study the stationary natural enhancement
 $
 {\mathbf X}^{\ve} 
 = (X^{\ve}, V^{\ve}, Y^{\ve}, Z^{\ve}, W^{\ve}, \hat{Z}^{\ve}, \hat{W}^{\ve})
 $
as in Example \ref{ex.20171112}.

\begin{theorem}\label{pr:180220}
Let $\theta \in (8/5, 2]$ and $\kappa >0$.
Then, there exists a random driver ${\mathbf X}$ such that
\[
{\mathbf E}[\| {\bf X} \|_{ {\mathcal X}_T^{\kappa} }^p] < \infty, 
\qquad
\lim_{\ve \searrow 0} 
{\mathbf E}[\| {\mathbf X} - {\mathbf X}^{\ve} \|_{ {\mathcal X}_T^{\kappa} }^p] =0
\]
for every $1 <p <\infty$.
Moreover, ${\mathbf X}$ does not depend on 
the cut-off function $\chi$.
\end{theorem}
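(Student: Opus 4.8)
The plan is to construct the limiting driver $\mathbf{X}$ componentwise by building each of the seven objects $X,V,Y,Z,W,\hat Z,\hat W$ as a limit in the appropriate Besov-space-valued space, and to obtain the quantitative convergence via a Kolmogorov-type continuity criterion applied to each component of $\mathbf{X}-\mathbf{X}^\varepsilon$. The key structural input is that each component of $\mathbf{X}^\varepsilon$ lies in a fixed (inhomogeneous) Wiener chaos of bounded order---order $1$ for $X^\varepsilon,V^\varepsilon$, order $2$ for $Y^\varepsilon,W^\varepsilon,\hat W^\varepsilon$, order $3$ for $Z^\varepsilon,\hat Z^\varepsilon$---so that all $L^p(\Omega)$ norms are equivalent to the $L^2(\Omega)$ norm by Gaussian hypercontractivity. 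Hence it suffices to control second moments. Concretely, for a generic component $A^\varepsilon_{t,x}$ I would estimate $\mathbf{E}[|\triangle_j A^\varepsilon_{t,x}|^2]$ and $\mathbf{E}[|\triangle_j(A^\varepsilon_{t,x}-A^{\varepsilon'}_{t,x})|^2]$ by expanding into Wiener chaos and bounding the resulting kernels, using stationarity in $(t,x)$ to reduce to a single spectral computation, and then feeding these bounds into the Besov-space version of Kolmogorov's lemma (in $x$, and in $(t,x)$ for the $Y$-component which additionally needs time regularity) to get moment bounds on the Besov (and $\mathcal{L}_T$) norms. The convergence statement follows by showing the pairwise differences form a Cauchy sequence with a rate, e.g.\ $\mathbf{E}[\|A^\varepsilon - A^{\varepsilon'}\|^2] \lesssim (\varepsilon\vee\varepsilon')^{\delta}$ for some $\delta>0$, which both identifies the limit $A$ and gives $\mathbf{E}[\|A-A^\varepsilon\|^p]\to 0$.

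The heart of the matter is the spectral/kernel estimates for the ``resonant'' components $Z,W,\hat Z,\hat W$, which involve a resonant product $\reso$ of a term built from $\mathcal{I}[\cdots]$ (which carries the regularizing power of the fractional heat kernel) against $\nabla X^\varepsilon$ or $R^\perp X^\varepsilon$. For these, after the chaos expansion one must bound sums over frequencies $k_1,k_2,\ldots$ of products of the symbols $e^{-|2\pi k|^\theta}$-type factors, the Riesz multipliers $k_\ell/|k|$, the derivative multipliers $k_\ell$, and the time-integrated heat symbol (which contributes a factor $\sim |k|^{-\theta}$ after integrating $\int_{-\infty}^t$), localized by the Littlewood--Paley cutoffs and the resonant constraint $|m_1-m_2|\le 1$. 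The point is that the resonant structure forces cancellation: the ``diagonal'' part of the second moment, which would be logarithmically or polynomially divergent for the naive product, is killed because the two factors at comparable frequencies have, heuristically, mean zero contributions; this is exactly the phenomenon the introduction refers to when it says ``no renormalization has to be done''. I expect this to be the main obstacle: one must carry out, for each of the four resonant symbols, a careful frequency count checking that the exponent of $2^j$ coming out of $\mathbf{E}[|\triangle_j A^\varepsilon_{t,x}|^2]$ is strictly better than $-2\times(\text{stated regularity})$, which is where the restriction $\theta>8/5$ enters (more restrictive than the $\theta>7/4$ needed for the deterministic part, reflecting that $W,\hat W$ have regularity $2(\theta-2)-\kappa$ which must stay above the threshold for the resonant product of the chaos-expanded pieces to converge). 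For the third-chaos symbols $Z,\hat Z$ one additionally has cross terms between the order-$1$ and order-$3$ parts that must be handled, but these are subcritical.

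The remaining steps are comparatively routine. For the second-moment bound on differences I would use the elementary estimate $|\chi^\varepsilon(k)\chi^\varepsilon(k') - \chi^{\varepsilon'}(k)\chi^{\varepsilon'}(k')| \lesssim (|\varepsilon k|\wedge 1 + |\varepsilon k'|\wedge 1 + \text{same with }\varepsilon')$, valid since $\chi$ is smooth with $\chi(0)=1$, which produces the positive power $(\varepsilon\vee\varepsilon')^\delta$ after sacrificing a tiny amount of regularity (absorbed into $\kappa$); the evenness of $\chi$ is used to keep the approximations real-valued and to match certain symmetrized kernels. The $Y$-component requires, in addition to the spatial Besov bound, a bound on $\mathbf{E}[\|Y^\varepsilon_t - Y^\varepsilon_s\|^2_{\mathcal{C}^0}]$ of order $|t-s|^{2\gamma}$ for a suitable $\gamma$, obtained from the identity $Y^\varepsilon = \mathcal{I}[R^\perp X^\varepsilon\cdot\nabla X^\varepsilon]$ together with Lemma~\ref{lm:171102-3} applied inside the chaos expansion, so that the Kolmogorov criterion in $(t,x)$ places $Y$ in $\mathcal{L}_T^{2\theta-3-\kappa,(2\theta-3-\kappa)/\theta}$. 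That $\mathbf{X}$ is a genuine driver---i.e.\ that the relation $V_t = P^{\theta/2}_t V_0 + I[\nabla X]_t$ passes to the limit---follows because $\mathbf{X}^\varepsilon$ satisfies it and both sides converge; closedness of $\mathcal{X}_T^\kappa$ was already noted in the text. Finally, independence of the limit from $\chi$: given two cutoffs $\chi,\tilde\chi$, the same difference estimate shows $\mathbf{E}[\|\mathbf{X}^\varepsilon - \tilde{\mathbf{X}}^\varepsilon\|^p]\to 0$, so the two limits coincide a.s.
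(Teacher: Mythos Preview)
Your overall architecture is the same as the paper's: chaos expansion, hypercontractivity to reduce to second moments, explicit Fourier-side kernel estimates fed into a Besov--Kolmogorov criterion, and a difference estimate for the $\chi^\varepsilon$-mollified versions. The paper packages this via an abstract ``$Q$-function'' criterion (the key Lemma~\ref{lm.0405_2B}), but that is just a clean formulation of what you describe.

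There is, however, a genuine gap in your plan concerning the first-chaos contributions to $Z$ and $\hat Z$. First, a misattribution: the constraint $\theta>8/5$ does \emph{not} come from $W,\hat W$ (those only require $\theta>3/2$; see Lemmas~\ref{lm.0407_b} and~\ref{lm.0602_a}); it comes from $Z,\hat Z$, and in fact from \emph{both} their $\Pi_3$ and $\Pi_1$ parts, which are equally critical. Second, and more seriously, you call the $\Pi_1$ contributions ``subcritical'', but in fact the naive kernel for $\Pi_1(Z^{i,j;\varepsilon})$ \emph{diverges} as $\varepsilon\searrow 0$: after the Wick contraction one is left with a sum over $l\in\mathbf{Z}^2$ whose summand behaves like $|l|^{2-2\theta}$ for large $|l|$, which is not summable in dimension two for any $\theta\le 2$. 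The zeroth-chaos cancellation you correctly identify (the antisymmetry of $R^\perp\!\cdot\nabla$) is not enough here.

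What saves the day---and what your sketch is missing---is a second algebraic cancellation exploited at the level of the full combination $Z^{2,2}-Z^{2,1}-Z^{1,2}+Z^{1,1}$. One subtracts from each $\Pi_1$-kernel a specific reference term (replacing $k+l$ by $l$ in several places) that turns out to be symmetric under $i\leftrightarrow i'$ and therefore drops out of the alternating sum. The \emph{difference} between the true kernel and this reference term gains one power of $|k|/|l|$ (via mean-value-type estimates on $\psi_\circ$, on $|k+l|^{-1}-|l|^{-1}$, and on the resolvent $( -2\pi i\sigma + B_{k,l})^{-1}-(-2\pi i\sigma+B'_l)^{-1}$), and this extra decay is exactly what makes the $l$-sum converge with the exponent $\gamma=6-8/\theta>1$. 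This subtraction device (the passage from $Q$ to $\tilde Q$ in Lemmas~\ref{lm.0605_b} and~\ref{lm.0613_b}) is the technical heart of the $Z,\hat Z$ estimates, and without it the $\Pi_1$ bound simply fails.
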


Due to a few cancellations in the enhancement 
procedure, 
we do not have to do any renormalization. 
The proof of Theorem
\ref{pr:180220}
 is decomposed into Lemmas
 \ref{lm.0406_a}, \ref{lm.0406_b}, 
 \ref{lm.0406_c}, \ref{lm.0407_b},
 \ref{lm.0602_a}, \ref{lm.0612_a}, \ref{lm.0613_a}
 and
 \ref{lm.0613_ab}, below.
The rest of this section is devoted to showing
Theorem \ref{pr:180220}.


\subsection{Notation and preliminaries}\label{s7.1}
In this subsection
we let $\theta \in (0,2]$ and
we suppose the space dimension $d$ is general.
One should be careful of the difference between subindices and superindices
of $k$ and $x$.

\begin{itemize}
\item
Let 
${\bf T}^d = ({\bf R}/{\bf Z})^d$ denote the $d$-dimensional torus.
For $k=(k^1, \ldots, k^d) \in {\bf Z}^d$ and $x=(x^1, \ldots, x^d) \in {\bf T}^d$, we set
 $k\cdot x = k^1 x^1 +\cdots + k^d x^d$ (modulo ${\mathbf Z}$)
 and 
${\bf e}_{k}(x) := \exp (2\pi \sqrt{-1} k\cdot x)$.
We set 
\begin{equation}\label{eq:200410-1}
\hat{E}:= {\bf R} \times {\bf Z}^d.
\end{equation}
An element of $\hat{E}$ is often denoted by $\mu=(\sigma,k)$, 
where $\sigma \in {\bf R}$ and $k \in {\bf Z}^d$.
Their norm-like quantities are defined by 
$
|k|_* := 1+ |k|
$
 and 
$|\mu|_* =|(\sigma,k)|_* :=1+ |\sigma|^{1/2}+|k|^{\theta /2}$,
where 
$|k| = \sqrt{(k^1)^2 +\cdots +(k^d)^2}$ as usual.
As a measure on $\hat{E}$, the product of the Lebesgue measure 
and the counting measure is used, 
which will be denoted by $d\mu =d(\sigma,k)$.

\item
For the partition of unity $\{ \rho_j \}_{ j \ge -1 }$ used in the definition of paraproduct,
we write 
\begin{equation}\label{eq:180303-1}
\psi_{\circ} (x, x')= \sum\limits_{ |j -j'| \le 1} \rho_j (x) \rho_{j'} (x').
\end{equation}



\item
Let $\xi$ be a
space-time white noise on $E:= {\bf R} \times {\bf T}^d$
which is defined on a probability space $(\Omega, {\mathcal F}, {\mathbf P})$,
that is, $\xi:\Omega \to L^2 ( E, dtdx)$ is a centered Gaussian random variable 
whose Cameron--Martin space is $L^2 ( E, dtdx)$.
(Equivalently, $\xi$ is an an isonormal Gaussian
process associated with $L^2 ( E, dtdx)$.)
For every $f \in L^2 ( E, dtdx)$,
$\langle \xi, f\rangle$ is a real-valued centered Gaussian random variable 
whose covariance is given by 
${\mathbf E} [\langle \xi, f\rangle \langle \xi, g \rangle ] = \langle f, g \rangle_{L^2 (E)}$.

\item
The orthogonal projection onto the $n$th homogeneous 
Wiener chaos ${\mathcal Q}_n$
is denoted by $\Pi_n\colon L^2 ({\mathbf P}) \to {\mathcal Q}_n$
($n \ge 0$).

For example,
\begin{equation}\label{eq:180419-1}
\Pi_0[F]={\mathbf E}[F].
\end{equation}
%
For $f=f ((s_1, x_1), \ldots, (s_n, x_n)) \in 
L^2 ( E^{n}, (dtdx)^{\otimes n})$,
${\mathcal J}_n (f)$ stands for the $n$th 
multiple Wiener-It\^o integral with respect to $\xi$.
In particular,
if $n=1$, then ${\mathcal J}_1 (f)=\langle \xi, f\rangle$.
As is well known, 
${\mathcal J}_n (f)\in {\mathcal Q}_n$
and satisfies 
\begin{equation}\label{eq:180417-1}
{\mathbf E}[ |{\mathcal J}_n (f) |^2] 
\le \| f\|_{L^2 ( E^{n})}^2.
\end{equation}
In an obvious way, ${\mathcal J}_n$ extends to 
complex-valued functions and (\ref{eq:180417-1}) still holds.
(For white noise and multiple Wiener-It\^o integrals, see 1.1 \cite{Nua} among others.)
\end{itemize}

\begin{remark}\label{re:180524}
Here, we give a simple remark on $\hat{\xi} (k), k \in {\bf Z}^d$.
Its definition is given by 
$\langle \hat{\xi} (k), h \rangle = \langle \xi, h \otimes {\bf e}_{-k}\rangle$
for (real-valued) $h \in L^2 ({\mathbf R})$. 
Clearly, $\hat{\xi} (-k) = \overline{\hat{\xi} (k)}$, ${\mathbf P}$-a.s.
Both the real and the imaginary part of $\hat{\xi} (k)$ are 
a constant multiple of the time white noise.
Indeed, 
$\hat{\xi} (k) =(\eta_1 +\sqrt{-1}\eta_2) /\sqrt{2}$ in law for every $k \neq 0$
and $\hat{\xi} (0) =\eta_1$ in law,
where $\eta_1, \eta_2$ are two independent copies of the time white noise 
associated with $L^2 ({\mathbf R})$.
Their covariance is given by 
${\mathbf E} [\langle \hat{\xi} (k) , h\rangle \langle \hat{\xi} (l) , \tilde{h} \rangle ] 
= \delta_{k+l, 0} \langle h, \tilde{h} \rangle_{L^2 ({\bf R})}$
for every (real-valued) $h, \tilde{h} \in L^2 ({\bf R})$.

Then, 
$\xi^{\ve} = \sum\limits_{k \in {\mathbf Z}^2} \chi^{\ve} (k) \hat\xi (k) {\bf e}_k$ 
as in \eqref{eq:180522-1} 
satisfies
$\langle \xi^{\ve} , f \rangle
= \langle \xi, ({\mathcal F}^{-1}_{{\bf R}^d} \chi^{\ve})(- \bullet) * f(t, \bullet) \rangle$
for $f \in L^2 (E)$,
where 
${\mathcal F}_{{\bf R}^d}$ stands for the Fourier transform on ${\bf R}^d$
and $*$ for the convolution with respect to the space variable.
(This could be used as an alternative definition of $\xi^{\ve}$,
but the series representation as in \eqref{eq:180522-1} will be used throughout this paper).

%
%
%
%

Concerning the covariance, the following formal contraction rule holds:
\begin{equation}\label{eq:180510-2}
{\mathbf E} [ \hat{\xi}_{s_1} (k) \hat{\xi}_{s_2} (l) ] 
= \delta (s_1- s_2) \delta_{k+l, 0}.
\end{equation}
Though not really mathematically rigorous, 
\eqref{eq:180510-2} is used very often in mathematical physics literature. 
We will also use it in this paper
since \eqref{eq:180510-2} is quite useful and computations using this rule 
(at least those in the this paper)
can easily be made rigorous.
\end{remark}

Now we prove a couple of auxiliary estimates for later use.
\begin{lemma}\label{lm.0405_1}
Let $d \ge 1$ and $\theta \in (0,2]$.
Assume that $\alpha, \beta \in (0,2 +\frac{2d}{\theta})$ satisfy
$\alpha + \beta > 2 +\frac{2d}{\theta}$.
Then, 
\[
\int_{\hat{E}} |\mu|_*^{- \alpha} |\nu - \mu|_*^{- \beta} d\mu 
\lesssim
 |\nu|_*^{- \alpha- \beta + 2 +\frac{2d}{\theta}},
 \]
 where the implicit constant is independent of $\nu \in \hat{E}$. 
\end{lemma}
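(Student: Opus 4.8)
\textbf{Proof plan for Lemma \ref{lm.0405_1}.}

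The plan is to establish this by the standard ``cut into three regions'' argument that underlies convolution-type bounds for weights of the form $|\mu|_*^{-\alpha}$. Write $\hat{E} = {\mathbf R} \times {\mathbf Z}^d$ and recall $|\mu|_* = 1 + |\sigma|^{1/2} + |k|^{\theta/2}$ for $\mu = (\sigma, k)$. The key scaling observation is that $|\mu|_*$ behaves, for $|\mu|_*$ large, like the quasi-norm of $\mu$ under the parabolic dilation $(\sigma, k) \mapsto (\lambda^2 \sigma, \lambda^{2/\theta} k)$, so a ball $\{|\mu|_* \le R\}$ has ``volume'' (Lebesgue measure in $\sigma$ times counting measure in $k$) comparable to $R^{2 + 2d/\theta}$. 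This accounts for the exponent $2 + \frac{2d}{\theta}$ in the claim: it is the homogeneous dimension $\mathfrak{d} := 2 + \frac{2d}{\theta}$ of $\hat{E}$ with respect to this scaling. I would first record the volume estimate $\#\{k \in {\mathbf Z}^d : |k|^{\theta/2} \lesssim R\} \cdot (\text{length in } \sigma \lesssim R^2) \simeq R^{\mathfrak{d}}$, being slightly careful that near the origin the counting measure is bounded below by $1$, which is why the $+1$ in $|k|_*$ and the hypothesis $\alpha, \beta > 0$ are needed.

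The main step is then to split the integral $\int_{\hat{E}} |\mu|_*^{-\alpha} |\nu - \mu|_*^{-\beta}\, d\mu$ according to the three regions: (i) $|\mu|_* \le \tfrac12 |\nu|_*$, where $|\nu - \mu|_* \simeq |\nu|_*$, so the integral is bounded by $|\nu|_*^{-\beta} \int_{|\mu|_* \le |\nu|_*} |\mu|_*^{-\alpha}\, d\mu \lesssim |\nu|_*^{-\beta} |\nu|_*^{\mathfrak{d} - \alpha}$, using $\alpha < \mathfrak{d}$ so that the integral over the ball converges at infinity (a dyadic decomposition over shells $|\mu|_* \sim 2^j$ gives $\sum_{2^j \lesssim |\nu|_*} 2^{j(\mathfrak{d} - \alpha)} \simeq |\nu|_*^{\mathfrak{d} - \alpha}$); (ii) the symmetric region $|\nu - \mu|_* \le \tfrac12 |\nu|_*$, where $|\mu|_* \simeq |\nu|_*$, handled identically with the roles of $\alpha$ and $\beta$ swapped, using $\beta < \mathfrak{d}$; and (iii) the region where both $|\mu|_* \gtrsim |\nu|_*$ and $|\nu - \mu|_* \gtrsim |\nu|_*$, where I bound $|\nu - \mu|_*^{-\beta} \lesssim |\nu|_*^{-\beta}$ is too lossy, so instead I estimate by $\int_{|\mu|_* \gtrsim |\nu|_*} |\mu|_*^{-\alpha} |\nu - \mu|_*^{-\beta}\, d\mu$ and dyadically decompose in $|\mu|_* \sim 2^j$ with $2^j \gtrsim |\nu|_*$; on each shell $|\nu - \mu|_* \lesssim 2^j$ as well, so $\int_{|\mu|_* \sim 2^j} |\nu - \mu|_*^{-\beta}\, d\mu \lesssim 2^{j(\mathfrak{d} - \beta)}$ (here $\beta < \mathfrak{d}$ is used again), giving $\sum_{2^j \gtrsim |\nu|_*} 2^{-j\alpha} 2^{j(\mathfrak{d} - \beta)} = \sum_{2^j \gtrsim |\nu|_*} 2^{j(\mathfrak{d} - \alpha - \beta)} \simeq |\nu|_*^{\mathfrak{d} - \alpha - \beta}$, where convergence of the sum at $j \to \infty$ is exactly the hypothesis $\alpha + \beta > \mathfrak{d}$.

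Summing the three contributions yields the bound $\lesssim |\nu|_*^{\mathfrak{d} - \alpha - \beta} = |\nu|_*^{-\alpha - \beta + 2 + 2d/\theta}$ uniformly in $\nu$, as desired. The one technical point requiring a little care is the translation-invariance of the shell volume estimate in the mixed variable: I need $\int_{|\mu - \nu|_* \sim 2^j}\, d\mu \lesssim 2^{j\mathfrak{d}}$ with an implicit constant independent of $\nu$, which follows because Lebesgue measure on ${\mathbf R}$ is translation invariant and the counting measure on ${\mathbf Z}^d$ is as well, and the sublevel set $\{|\mu - \nu|_* \le R\}$ is a translate (in $\sigma$ and in $k$) of $\{|\mu|_* \le R\}$ up to the harmless rounding caused by $k$ being integer-valued. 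I expect the main obstacle to be purely bookkeeping: keeping the three regions genuinely disjoint (or allowing controlled overlap) and making sure each dyadic sum is summed on the correct side ($\sum_{j \le J}$ versus $\sum_{j \ge J}$), with convergence at the relevant endpoint matched to precisely one of the three hypotheses $\alpha < \mathfrak{d}$, $\beta < \mathfrak{d}$, $\alpha + \beta > \mathfrak{d}$.
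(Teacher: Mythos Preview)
Your proof is correct and rests on the same two ingredients as the paper's: the volume estimate $\int_{\hat E}\mathbf 1_{|\mu|_*\sim 2^j}\,d\mu\lesssim 2^{j\mathfrak d}$ with $\mathfrak d=2+2d/\theta$, and the quasi-triangle inequality $|\nu|_*\le|\mu|_*+|\nu-\mu|_*$ (valid since $\theta/2\le 1$). The organization differs slightly: the paper does a simultaneous double dyadic decomposition $|\mu|_*\approx 2^i$, $|\nu-\mu|_*\approx 2^j$, observes that the quasi-triangle inequality forces $\max(i,j)\ge l-l_0$ when $|\nu|_*\approx 2^l$, and then bounds the double sum $\sum_{i,j}2^{-\alpha i-\beta j}\min(2^i,2^j)^{\mathfrak d}$ over that region directly. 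Your three-region split is the same computation unpacked: region (i) is $i<l$ (forcing $j\approx l$), region (ii) is $j<l$ (forcing $i\approx l$), and region (iii) is $i,j\ge l$. Either packaging is fine; the paper's is a line shorter, yours makes the role of each hypothesis $\alpha<\mathfrak d$, $\beta<\mathfrak d$, $\alpha+\beta>\mathfrak d$ more transparent.
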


\begin{proof}
We mimic the proof of  \cite[Lemma 9.8]{gp}.
For $l \in {\mathbf N}$ with $|\nu|_* \approx 2^l$, 
we have
\[
\int_{\hat{E}} |\mu|_*^{- \alpha} |\nu - \mu|_*^{- \beta} d\mu 
\lesssim
\sum\limits_{i,j \ge 0} 2^{-\alpha i -\beta j}
\chi_{[l-l_0,\infty)}(\max(i,j))
\int_{\hat{E}}
{\bf 1}_{|\mu|_* \approx 2^i, |\nu -\mu|_* \approx 2^j}(\mu)
d\mu,
\]
where $l_0 \in {\mathbf N}$ is a fixed constant depending only on $\theta$.
We use
\[
\int_{\hat{E}}
{\bf 1}_{|\mu|_* \approx 2^i, |\nu -\mu|_* \approx 2^j}(\mu)
d\mu
\le
\int_{\hat{E}}
{\bf 1}_{ |\mu|_* \approx 2^i}(\mu)
d\mu
\lesssim
(2^i)^{2 +\frac{2d}{\theta}}
\]
and
\[
\int_{\hat{E}}
{\bf 1}_{|\mu|_* \approx 2^i, |\nu -\mu|_* \approx 2^j}(\mu)
d\mu
\le
\int_{\hat{E}}
{\bf 1}_{ |\nu -\mu|_* \approx 2^j }(\mu)
d\mu
=
\int_{\hat{E}}
{\bf 1}_{ |\mu|_* \approx 2^j }(\mu)
d\mu
\lesssim
(2^j)^{2 +\frac{2d}{\theta}}.
\]

Hence, since we are assuming $\alpha, \beta \in (0,2 +\frac{2d}{\theta})$ and
$\alpha + \beta > 2 +\frac{2d}{\theta}$,
then
\begin{align*}
\int_{\hat{E}} |\mu|_*^{- \alpha} |\nu - \mu|_*^{- \beta} d\mu 
&\le
\sum\limits_{i,j \ge 0}
\chi_{[l-l_0,\infty)}(\max(i,j))
\min(2^i,2^j)^{2+\frac{2d}{\theta}}\cdot 2^{-\alpha i-\beta j}
\\
&\lesssim
(2^l)^{- \alpha -\beta +2 +\frac{2d}{\theta} }\\
&\approx
|\nu|_*{}^{- \alpha -\beta +2 +\frac{2d}{\theta} }.
\end{align*}
Here, the summations are taken over all $(i,j)$ with the described conditions.
\end{proof}


Let $\psi_{\circ}$ be a function given by
(\ref{eq:180303-1}).
\begin{lemma}\label{lm.0405_3}
{\rm (i)}~
Let $\lambda \ge 0$. Then, we have 
\[
\psi_{\circ} (k, l) \lesssim \frac{|l|_*^{\lambda}}{|k+l|_*^{ \lambda}}
 {\bf 1}_{ |k|_* \approx |l|_* } 
\qquad
(k, l \in {\bf R}^d),
\]
where the implicit constant is independent of $k, l$.
\\
{\rm (ii)}~Let $\lambda \in [0,1)$ and $k=(k^1, \ldots ,k^d), 
l =(l^1, \ldots, l^d) \in {\bf R}^d$.
Then, 
\[
\Bigl| 
\frac{\partial}{\partial k^i}\psi_{\circ} (k, l)
\Bigr|
 \lesssim 
 |l|_*^{-1 + \lambda}
 {\bf 1}_{ |k|_* \approx |l|_* } \qquad
(k, l \in {\bf R}^d),
\]
where the implicit constant is independent of $k, l$.
\end{lemma}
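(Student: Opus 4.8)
The plan is to prove Lemma \ref{lm.0405_3} by exploiting the elementary geometric fact underlying the definition \eqref{eq:180303-1}: the function $\psi_{\circ}(k,l) = \sum_{|j-j'|\le 1}\rho_j(k)\rho_{j'}(l)$ is supported on the region where $|k|_*$ and $|l|_*$ are comparable, and each $\rho_j$ and its derivatives satisfy the usual Littlewood-Paley scaling bounds. So the whole statement reduces to bookkeeping with dyadic annuli.

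For part (i), I would first record the support property: if $\rho_j(k)\rho_{j'}(l)\neq 0$ with $|j-j'|\le 1$, then $|k|\sim 2^j$ and $|l|\sim 2^{j'}\sim 2^j$ (with the convention absorbing the $j=-1$ block), hence $|k|_*\approx|l|_*$; this already gives the indicator factor $\mathbf 1_{|k|_*\approx|l|_*}$. Since $\psi_{\circ}$ is bounded by $1$ (it is a partial sum of a partition of unity squared, up to the $|j-j'|\le 1$ restriction, and in any case $0\le\rho_j\le 1$), we have $\psi_{\circ}(k,l)\lesssim \mathbf 1_{|k|_*\approx|l|_*}$. On this support $|k+l|_*\lesssim |k|_*+|l|_*\lesssim |l|_*$, so $|l|_*^{\lambda}/|k+l|_*^{\lambda}\gtrsim 1$ for $\lambda\ge 0$, and multiplying the trivial bound by this quantity which is $\gtrsim 1$ yields the claimed inequality. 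That is the entire argument for (i).

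For part (ii), I would differentiate: $\partial_{k^i}\psi_{\circ}(k,l) = \sum_{|j-j'|\le1}(\partial_{k^i}\rho_j)(k)\,\rho_{j'}(l)$. By the scaling $\rho_j(\cdot)=\rho_0(2^{-j}\cdot)$ for $j\ge 0$ we get $|\partial_{k^i}\rho_j(k)|\lesssim 2^{-j}\mathbf 1_{|k|\sim 2^j}$, and $|\rho_{j'}(l)|\le\mathbf1_{|l|\sim 2^{j'}}$; the number of nonzero terms in the sum is $O(1)$ for each fixed $(k,l)$. On the nonzero support $|l|_*\approx 2^{j'}\approx 2^j$, so $2^{-j}\approx |l|_*^{-1}\le |l|_*^{-1+\lambda}$ using $\lambda\in[0,1)$ and $|l|_*\ge 1$. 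Combining gives $|\partial_{k^i}\psi_{\circ}(k,l)|\lesssim |l|_*^{-1+\lambda}\mathbf1_{|k|_*\approx|l|_*}$, as required. (The $j=-1$ block contributes only when $|k|_*,|l|_*$ are $O(1)$, where all bounds are trivial.)

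There is no real obstacle here; the only point requiring a little care is the handling of the lowest-frequency block $\rho_{-1}$ and the convention that $|k|\sim 2^j$ should be read as $|k|_*\lesssim 1$ when $j=-1$, so that the comparability $|k|_*\approx|l|_*$ and the power bounds remain valid there. I would dispatch that case by noting that on it everything is bounded by an absolute constant and $|l|_*^{-1+\lambda}\asymp 1$. I can therefore afford to be brief and write the proof as a short paragraph invoking the support and scaling properties of the dyadic partition of unity.

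\begin{proof}
\textbf{(i)}~Since $0 \le \rho_j \le 1$ for all $j$ and the sum defining $\psi_\circ$ has, for each fixed $(k,l)$, only finitely many nonzero terms, we have $\psi_\circ(k,l) \lesssim 1$. If $\rho_j(k)\rho_{j'}(l) \ne 0$ with $|j-j'| \le 1$, then $|k|_* \approx 2^j$ and $|l|_* \approx 2^{j'} \approx 2^j$, so $|k|_* \approx |l|_*$; hence $\psi_\circ(k,l) \lesssim \mathbf 1_{|k|_* \approx |l|_*}$. On this support $|k+l|_* \le 1 + |k| + |l| \lesssim |l|_*$, so for $\lambda \ge 0$ one has $|l|_*^\lambda / |k+l|_*^\lambda \gtrsim 1$, and multiplying the previous bound by this quantity gives
\[
\psi_\circ(k,l) \lesssim \frac{|l|_*^\lambda}{|k+l|_*^\lambda}\, \mathbf 1_{|k|_* \approx |l|_*}.
\]

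\textbf{(ii)}~Differentiating term by term,
\[
\frac{\partial}{\partial k^i}\psi_\circ(k,l)
= \sum_{|j-j'| \le 1} \bigl(\partial_{k^i}\rho_j\bigr)(k)\, \rho_{j'}(l).
\]
For $j \ge 0$, the scaling relation $\rho_j(\cdot) = \rho_0(2^{-j}\cdot)$ gives $|\partial_{k^i}\rho_j(k)| \lesssim 2^{-j}\mathbf 1_{|k|_* \approx 2^j}$, while for $j = -1$ the left-hand side is bounded by an absolute constant supported in $|k|_* \lesssim 1$; in all cases $|\partial_{k^i}\rho_j(k)| \lesssim 2^{-j}\mathbf 1_{|k|_* \approx 2^j}$ with the convention $2^{-(-1)} \asymp 1$. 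Also $|\rho_{j'}(l)| \le \mathbf 1_{|l|_* \approx 2^{j'}}$, and for each fixed $(k,l)$ only $O(1)$ terms are nonzero. On the support of a nonzero term $|k|_* \approx 2^j \approx 2^{j'} \approx |l|_*$, so $2^{-j} \approx |l|_*^{-1} \le |l|_*^{-1+\lambda}$ because $\lambda \in [0,1)$ and $|l|_* \ge 1$. Therefore
\[
\Bigl| \frac{\partial}{\partial k^i}\psi_\circ(k,l) \Bigr|
\lesssim |l|_*^{-1+\lambda}\, \mathbf 1_{|k|_* \approx |l|_*},
\]
as claimed.
\end{proof}
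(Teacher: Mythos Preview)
Your proof is correct. The paper does not actually give its own argument for this lemma; it simply refers the reader to \cite[Section 5]{hin} or \cite[Lemma 5.10]{hos}, and your elementary derivation from the dyadic support and scaling properties of the Littlewood--Paley pieces is exactly the standard argument those references contain.
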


\begin{proof}
Modify results in \cite[Section 5]{hin} or \cite[Lemma 5.10]{hos}.
\end{proof}

\begin{proposition}\label{prop:180417-1}
Let $n\ge 0$ be an integer and 
let $\Xi$ be a ${\mathcal D}' ({\mathbf T}^d)$-valued random variable 
defined on a Gaussian probability space 
whose Fourier coefficient
$\hat\Xi (k) = \langle \Xi, {\bf e}_{-k}\rangle$ belongs to 
the $($complex-valued$)$ $n$th inhomogeneous Wiener chaos
$\oplus_{i=0}^n {\mathcal Q}_i$ 
for every $k \in {\bf Z}^d$.
Then, for every $s \in {\mathbf R}$ and $p \in (1, \infty)$, we have 
\begin{equation}\label{eq:180219-1}
{\mathbf E}[ \| \Xi \|_{ {\mathcal C}^{s} }^{p} ] 
\lesssim
\sum\limits_{j=-1}^{\infty}
2^{ j (s p + d)}
\Bigl( \sup\limits_{x \in {\mathbf T}^d}{\mathbf E}[ |\triangle_j \Xi (x) |^2 ]
\Bigr)^{p/2}.
\end{equation}
In particular, if the right-hand side of \eqref{eq:180219-1} is finite,
then $\Xi$ belongs to ${\mathcal C}^{s}$, a.s.
%
%
%
\end{proposition}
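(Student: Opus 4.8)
The plan is to prove \eqref{eq:180219-1} by combining the Littlewood--Paley characterization of $\HolBesSp{s}$ with Gaussian hypercontractivity. First I would recall that, up to the equivalent norm in Remark \ref{rem:171027-1}, one has
\[
\| \Xi \|_{\HolBesSp{s}}^p \lesssim \sup_{j \ge -1} 2^{jsp}\| \LPBlock[j]\Xi\|_{L^\infty(\Torus^d)}^p,
\]
but since I want to sum rather than take a supremum (in order to get a finite-moment bound that will later be summable in the probabilistic estimates), I would instead fix a small auxiliary $\varepsilon>0$ and a large exponent and estimate $\|\Xi\|_{\HolBesSp{s}} \le \|\Xi\|_{B^{s+\varepsilon}_{\infty,\infty}}$; the cleanest route, though, is to observe that by the embedding $B^{s'}_{p,p} \hookrightarrow B^{s}_{\infty,\infty}$ (valid for $s' - d/p = s$, i.e.\ $s' = s + d/p$) one has
\[
\| \Xi \|_{\HolBesSp{s}}^p \lesssim \| \Xi \|_{B^{s+d/p}_{p,p}}^p = \sum_{j=-1}^\infty 2^{j(s+d/p)p}\|\LPBlock[j]\Xi\|_{L^p(\Torus^d)}^p = \sum_{j=-1}^\infty 2^{j(sp+d)}\|\LPBlock[j]\Xi\|_{L^p(\Torus^d)}^p.
\]
Taking expectations and using Fubini, the problem is reduced to bounding $\expect\big[\|\LPBlock[j]\Xi\|_{L^p(\Torus^d)}^p\big] = \int_{\Torus^d}\expect[|\LPBlock[j]\Xi(x)|^p]\,dx$.

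Next I would invoke Gaussian hypercontractivity. The key observation is that $\LPBlock[j]\Xi(x) = \sum_k \rho_j(k)\FourierCoeff{\Xi}(k)\FourierBase[k](x)$ is a finite linear combination of the random variables $\FourierCoeff{\Xi}(k)$, each of which lies in the fixed inhomogeneous chaos $\oplus_{i=0}^n \cQ_i$ by hypothesis; hence $\LPBlock[j]\Xi(x) \in \oplus_{i=0}^n \cQ_i$ as well (the chaos decomposition is closed under finite linear combinations and the projection $\LPBlock[j]$ acts only on the spatial variable, not on the probability space). By the equivalence of $L^p$ norms on a fixed inhomogeneous Wiener chaos (Nelson's hypercontractivity; see \cite{Nua}), for every $p \in (1,\infty)$ there is a constant $C_{n,p}$ with
\[
\expect[|\LPBlock[j]\Xi(x)|^p] \le C_{n,p}\,\big(\expect[|\LPBlock[j]\Xi(x)|^2]\big)^{p/2}.
\]
Substituting this into the integral over $\Torus^d$ and using $|\Torus^d| = 1$ gives
\[
\int_{\Torus^d}\expect[|\LPBlock[j]\Xi(x)|^p]\,dx \le C_{n,p}\Big(\sup_{x\in\Torus^d}\expect[|\LPBlock[j]\Xi(x)|^2]\Big)^{p/2},
\]
and summing the weighted estimates over $j$ yields exactly \eqref{eq:180219-1}. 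The final sentence of the proposition then follows: if the right-hand side is finite, then $\expect[\|\Xi\|_{\HolBesSp{s}}^p] < \infty$, so $\|\Xi\|_{\HolBesSp{s}} < \infty$ almost surely, i.e.\ $\Xi \in \HolBesSp{s}$ a.s.

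The only mild subtlety — and the step I would be most careful about — is making precise that $\LPBlock[j]\Xi(x)$ genuinely lies in a finite inhomogeneous chaos, uniformly enough to apply hypercontractivity with a constant independent of $j$ and $x$: this requires knowing that $\LPBlock[j]$ involves only finitely many Fourier modes $k$ (true, by \eqref{eq:180329-1}, since $\rho_j$ has compact support) and that hypercontractivity gives a constant depending only on the chaos order $n$ and on $p$, not on the particular element. Both are standard facts about Wiener chaos, so this is a matter of bookkeeping rather than a genuine obstacle; the rest is the routine Besov-embedding and Fubini argument sketched above. One should also double-check that the embedding $B^{s+d/p}_{p,p}\hookrightarrow B^{s}_{\infty,\infty}$ is available in the torus setting, which it is via Proposition \ref{prop_20160926062106}-type transplantation from $\RealNum^d$, or one can bypass it entirely by the direct computation $\|\LPBlock[j]\Xi\|_{L^\infty} \lesssim \|\LPBlock[j]\Xi\|_{L^p}$ on each dyadic block (finite-dimensional trigonometric polynomial of spectrum $\sim 2^j$, Bernstein's inequality costing $2^{jd/p}$), which reproduces the same weight $2^{jd}$ inside the sum.
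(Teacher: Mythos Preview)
Your proof is correct and follows essentially the same route as the paper: the embedding $B^{s+d/p}_{p,p}\hookrightarrow \HolBesSp{s}$, Fubini, and hypercontractivity on the fixed inhomogeneous chaos, concluding by bounding the integral over $\Torus^d$ by the supremum. The paper's proof is just a terser version of what you wrote, citing the same two ingredients.
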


\begin{proof}
This kind of results is already well known.
See \cite[Lemma 5.3]{hos} for example,
where Hoshino proved the almost the same assertion for $d=1$.
However,
the dimension $d$
of the torus does not play a major role.
Our statement and proof are slight modifications of those in \cite{hos}.

The proof of \eqref{eq:180219-1} uses the following two facts:
{\rm (i)}~The continuous embedding 
$B^{\alpha}_{p,p}({\bf T}^d) 
\hookrightarrow {\mathcal C}^{\alpha - d/p}({\bf T}^d)$ 
for 
$\alpha \in {\mathbf R}$ and $1 \le p \le \infty$. 
See \cite{Triebel-text-83} for the case of ${\mathbf R}^d$.
A similar argument works for $\Torus^d$.
{\rm (ii)}~Restricted to the $n$th inhomogeneous Wiener chaos, 
all the $L^p (\Omega, {\mathbf P})$-norms, $1 <p <\infty$, 
are equivalent (see Proposition 7.4, \cite{hu} for instance).
With these facts and the Fubini theorem, we have
\begin{eqnarray}\label{eq:180414-1}
{\mathbf E}[ \| \Xi \|_{ {\mathcal C}^{s} }^{p} ] 
&\lesssim&
{\mathbf E}[ \| \Xi \|_{ B^{s + d/p}_{p, p} }^{p} ] 
\\
&=&
\sum\limits_{j=-1}^{\infty} 
2^{ j (s p + d)}
\int_{{\bf T}^d} {\mathbf E}[ |\triangle_j \Xi (x)|^{p}] dx
\nonumber\\
&\le&
\sum\limits_{j=-1}^{\infty} 
2^{ j (s p + d)}
\int_{{\bf T}^d} {\mathbf E}[ |\triangle_j \Xi (x)|^{2}]^{p/2} dx
\nonumber\\
&\le&
\sum\limits_{j=-1}^{\infty}
2^{ j (s p +d)}
\Bigl( \sup\limits_{x \in {\mathbf T}^d}{\mathbf E}[ |\triangle_j \Xi (x) |^2 ]
\Bigr)^{p/2}.
\nonumber
\end{eqnarray}
Thus, we have obtained \eqref{eq:180219-1}.
\end{proof}

\begin{lemma}
For $\gamma >1$ and $\lambda>0$, 
\begin{equation}\label{eq:180329-4}
\int_{{\mathbf R}} \frac{ds}{\lambda^\gamma + |s|^\gamma} 
=C_\gamma \lambda^{-(\gamma-1)},
\qquad
\mbox{where} \quad
C_\gamma := \int_{{\mathbf R} } \frac{ds}{1 + |s|^\gamma} <\infty,
\end{equation}
\end{lemma}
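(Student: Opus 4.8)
The final statement to prove is the elementary integral identity
\begin{equation*}
\int_{{\mathbf R}} \frac{ds}{\lambda^\gamma + |s|^\gamma} = C_\gamma \lambda^{-(\gamma-1)}, \qquad C_\gamma := \int_{{\mathbf R}} \frac{ds}{1+|s|^\gamma} < \infty,
\end{equation*}
for $\gamma > 1$ and $\lambda > 0$. The plan is simply to reduce the left-hand integral to $C_\gamma$ by the substitution $s = \lambda t$, and separately to check that $C_\gamma$ is finite.

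First I would verify finiteness of $C_\gamma$. The integrand $1/(1+|s|^\gamma)$ is continuous and bounded by $1$ on ${\mathbf R}$, so it is integrable on any bounded interval, say $[-1,1]$. On $\{|s| \ge 1\}$ we have $1+|s|^\gamma \ge |s|^\gamma$, hence $1/(1+|s|^\gamma) \le |s|^{-\gamma}$, and $\int_{|s| \ge 1} |s|^{-\gamma}\, ds = 2\int_1^\infty s^{-\gamma}\, ds = \frac{2}{\gamma-1} < \infty$ precisely because $\gamma > 1$. Adding the two contributions gives $C_\gamma < \infty$.

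Next I would perform the change of variables $s = \lambda t$, $ds = \lambda\, dt$, which is legitimate since $\lambda > 0$. Then $\lambda^\gamma + |s|^\gamma = \lambda^\gamma(1 + |t|^\gamma)$, so
\begin{equation*}
\int_{{\mathbf R}} \frac{ds}{\lambda^\gamma + |s|^\gamma} = \int_{{\mathbf R}} \frac{\lambda\, dt}{\lambda^\gamma(1+|t|^\gamma)} = \lambda^{1-\gamma} \int_{{\mathbf R}} \frac{dt}{1+|t|^\gamma} = C_\gamma\, \lambda^{-(\gamma-1)},
\end{equation*}
which is exactly the claimed identity. Since the statement is this elementary, there is no real obstacle; the only points requiring a word of justification are the convergence of $C_\gamma$ (using $\gamma > 1$ at infinity) and the applicability of the scaling substitution (using $\lambda > 0$), both of which are immediate.
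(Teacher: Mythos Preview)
Your proof is correct and takes essentially the same approach as the paper: the paper's proof is the single line ``Simply perform the change of variables: $s \mapsto t=\lambda^{-1}s$,'' which is exactly your substitution $s=\lambda t$. Your additional verification that $C_\gamma<\infty$ (splitting at $|s|=1$ and using $\gamma>1$) is a welcome detail the paper leaves implicit.
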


\begin{proof}
Simply
perform the change of variables:
$s \mapsto t=\gamma^{-1}s$.
\end{proof}


We will only treat the multiple Wiener-It\^o integrals
whose kernel behaves quite nicely.
In what follows,
all the random variables are defined on the probability space
$(\Omega,{\mathcal F},{\mathbf P})$ on which $\xi$ is defined
(unless otherwise specified).

We also recall
that $\hat{E}={\bf R} \times {\bf Z}^d$
as in (\ref{eq:200410-1}).


\begin{definition}\label{def.gk}
A random field $A =\{ A_{t,x}\}_{(t,x) \in E}$ 
on 
$E={\bf R} \times {\bf T}^d$ 
is said to have a {\it good kernel} if 
each $A_{t,x}$ is written as
\begin{eqnarray}\label{eq:180223-5}
A_{t,x}
= {\mathcal J}_n (f_{t,x}).
\end{eqnarray}
for some $n \ge 1$ 
and some $f_{t,x} \in L^2 ( E^{n}, (dtdx)^{\otimes n}))$
satisfying the following conditions \eqref{eq:180223-1}--\eqref{eq:180223-4}:
\begin{itemize}
\item
Let $Q_0 \in L^2 (\hat{E}^n, {\mathbf C})$ be such that
\begin{equation}\label{eq:180223-1}
Q_0 ((-\sigma_1, -k_1), \ldots, (-\sigma_n, -k_n))
=
\overline{Q_0 ((\sigma_1,k_1), \ldots, (\sigma_n,k_n))},
\qquad
 \{(\sigma_i,k_i)\}_{i=1}^n \in \hat{E}^n.
\end{equation}
We define $Q_t \in L^2 (\hat{E}^n, {\mathbf C})$ by 
\begin{equation}\label{eq:180223-2}
Q_t ((\sigma_1,k_1), \ldots, (\sigma_n,k_n))
=
e^{-2\pi \sqrt{-1} ( \sigma_1 +\cdots + \sigma_n)t} 
Q_0 ((\sigma_1,k_1), \ldots, (\sigma_n,k_n)).
\end{equation}
The function
$Q_t$ is called the $Q$-function of $A$.
\item
Define $H_t \in L^2 (\hat{E}^n, {\mathbf C})$ so that
\begin{equation}\label{eq:180223-3}
Q_t((\sigma_1,k_1), \ldots, (\sigma_n,k_n)) = {\mathcal F}_{{\rm time}} H_t((\sigma_1,k_1), \ldots, (\sigma_n,k_n)) 
\end{equation}
for every $(k_1, \ldots, k_n) \in ({\mathbf Z}^d)^n$.
Here, ${\mathcal F}_{{\rm time}} H_t$ stands for the Fourier transform 
that acts on the function
\[(s_1, \ldots, s_n)
\in {\bf R}^n \mapsto H_t ((s_1,k_1), \ldots, (s_n,k_n)) \in {\mathbf C}
\]
for a fixed $(k_1, \ldots, k_n)$.
It is called the time-Fourier transform. 
The function
$H_t$ is called the $H$-function of $A$.
\item
Define a real-valued function by
$f_{t,x} \in L^2 ( E^{n}, (dtdx)^{\otimes n})$ by 
\begin{equation}\label{eq:180223-4}
f_{t,x} ((s_1,y_1), \ldots, (s_n,y_n))
=
\sum\limits_{k_1, \ldots, k_n \in {\bf Z}^d}
\prod_{i=1}^n {\bf e}_{k_i} (x -y_i)
H_t ((s_1,k_1), \ldots, (s_n,k_n))
\end{equation}
for every $\{(s_i,y_i)\}_{i=1}^n \in E^n$.
\end{itemize}
\end{definition}

\begin{remark}\label{re:180223-1}
We make some comments on Definition \ref{def.gk} above.
\begin{enumerate}
\item
Condition \eqref{eq:180223-1} is equivalent to 
\[
H_t ((s_1,-k_1), \ldots, (s_n,-k_n)) = 
\overline{ H_t ((s_1,k_1), \ldots, (s_n,k_n))}
\]
for every $t \in {\bf R}$
and
$\{(s_i,k_i)\}_{i=1}^n \in \hat{E}^n$,
which in turn is equivalent to that $f_{(t,x)}$ is real-valued.
\item
Since $Q_0 \in L^2 (E^n)$, 
the function 
\[
(\sigma_1, \ldots, \sigma_n) 
\mapsto Q_t ((\sigma_1,k_1), \ldots, (\sigma_n,k_n))
\]
belongs to $L^2 (d\sigma_1 \cdots d\sigma_n)$ for every 
$t \in {\bf R}, (k_1, \ldots, k_n) \in ({\bf Z}^d)^n$. 
Hence, ${\mathcal F}_{{\rm time}}$ in \eqref{eq:180223-3} works 
perfectly well as an isometry. In particular, 
\begin{equation}\label{eq:180223-6}
\int_{{\mathbf R}^n} ds_1 \cdots ds_n |H_t ((s_1,k_1), \ldots, (s_n,k_n))|^2
=
\int_{{\mathbf R}^n} d\sigma_1 \cdots d\sigma_n 
|Q_t ((\sigma_1,k_1), \ldots, (\sigma_n,k_n))|^2
\end{equation}
for every $t, (k_1, \ldots, k_n)$. 
If in addition
$(s_1, \ldots, s_n) \mapsto H_t ((s_1,k_1), \ldots, (s_n,k_n))$
is $L^1({\bf R}^n)$ for every 
$t, k_1, \ldots, k_n$, then
$Q_t = {\mathcal F}_{{\rm time}} H_t $ is explicitly given by
\begin{eqnarray*}
\lefteqn{
 Q_t ((\sigma_1,k_1), \ldots, (\sigma_n,k_n))
}
\\
&=&
\int_{{\bf R}^n} ds_1\cdots ds_n 
e^{ -2\pi \sqrt{-1} ( \sigma_1 s_1 +\cdots + \sigma_n s_n)}
H_t ((s_1,k_1), \ldots, (s_n,k_n)).
\end{eqnarray*}
\item
The square-integrability of $Q_0$, $Q_t$, $H_t$ and $f_{t,x}$ are mutually equivalent. 
Indeed,
\begin{equation}\label{eq:180223-7}
\| Q_0 \|_{L^2 (\hat{E}^n)} =\| Q_t \|_{L^2 (\hat{E}^n)} 
=\| H_t \|_{L^2 (\hat{E}^n)} 
=
\| f_{t,x} \|_{L^2 (E^n)},
\qquad
(t,x) \in E.
\end{equation}
\item
As a (random) function in $x$, $A_{t,x}$ can be expressed as a (random) Fourier
series as follows: 
\begin{equation}\label{eq:180223-8}
A_{t,x}
= 
\sum\limits_{l \in {\mathbf Z}^d}{\bf e}_{l} (x)
{\mathcal J}_n (g_{t}^{(l)}),
\end{equation}
where we set
\begin{equation}\label{eq:180223-9}
g_{t}^{(l)} ((s_1,y_1), \ldots, (s_n,y_n))
=
\sum\limits_{k_1+ \cdots + k_n =l}
\prod_{i=1}^n {\bf e}_{k_i} (-y_i)
H_t ((s_1,k_1), \ldots, (s_n,k_n))
\end{equation}
for $\{(s_i,y_i)\}_{i=1}^n \in E^n$.
By the Plancherel theorem,
$\sum\limits_{l \in {\mathbf Z}^d} \| g_{t}^{(l)} \|_{L^2 (E^n)}^2 =
\| f_{t,x} \|_{L^2 (E^n)}^2$
for all
$(t,x) \in E$.
\end{enumerate}
\end{remark}

Without a further condition on $Q_0$, 
we can hardly prove that $A =\{ A_{t,x}\}_{(t,x)\in E}$ in Definition \ref{def.gk}
belongs to a suitable Banach space.
A typical condition
for this purpose is as follows:
for almost all
$\mu=(\sigma, k) \in \hat{E}$
\begin{equation}\label{eq:180223-10}
\int_{\hat{E}^{n-1}}
 |Q_0 (\mu_1, \ldots, \mu_{n-1},\mu-\mu_1-\cdots-\mu_{n-1}) |^2
 d\mu_1\cdots d\mu_{n-1}
\le 
M^2 |\mu|_*^{- 2 \gamma} |k|_*^{- 2 \delta}.
\end{equation}
Here, $\gamma, \delta \in {\mathbf R}$ and $M >0$ are constants 
(a further condition on them will be imposed later). 
The integration
is actually taken over the ``hyperplane" 
$
\{ (\mu_1, \ldots, \mu_n) \in \hat{E}^n \mid \mu_1+ \cdots + \mu_n =\mu\}.
$
Hence, \eqref{eq:180223-10} remains the same
even if any argument of $Q_0$ plays a special role instead of the $n$th argument.
When $n=1$, \eqref{eq:180223-10}
simply reads as 
\begin{equation}\label{eq:180417-2}
|Q_0 (\mu) |^2
\le 
M^2 |\mu|_*^{- 2 \gamma} |k|_*^{- 2 \delta}.
\end{equation}


Now we present the key lemma for enhancing the white noise.
This can be regarded as a fractional version of the corresponding results in \cite{gp, hin}.
One should note that the condition $\gamma >1$ is quite important.
It will be understood that $C_T^{\kappa} {\mathcal C}^{\alpha }= C_T {\mathcal C}^{\alpha }$
when $\kappa =0$.

\begin{lemma}\label{lm.0405_2}
Let $d \ge 1$ and $\theta \in (0,2]$.
Assume that 
$A = \{A_{t,x} \}_{(t,x) \in E}$
has a good kernel in the sense of Definition $\ref{def.gk}$.
Assume \eqref{eq:180223-10} for some
$\gamma >1$,
$\delta \in {\mathbf R}$, 
$M >0$.

Then, we have
\[
{\mathbf E}
\Bigl[
\| A \|^{2p}_{C_T^{\kappa} {\mathcal C}^{\alpha -\theta \kappa} }
\Bigr]
\lesssim M^{2p},
\qquad
p \in [0,\infty), \,\, \alpha < - \frac{\theta+d}{2} + 
\frac{\gamma \theta}{2}+\delta, 
\,\, \kappa \in \left[0, \frac{\gamma -1}{2} \wedge 1\right).
\]
Here, the implicit constant may depend on $n, p, \alpha, \kappa, \gamma, \delta, T, d, \theta$.
In particular, 
the random field $A$ almost surely belongs to 
$C_T^{\kappa} {\mathcal C}^{\alpha -\theta \kappa}$. 
%
\end{lemma}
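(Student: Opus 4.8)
The plan is to apply the general Besov-regularity criterion of Proposition \ref{prop:180417-1} to $\Xi = A_t$ (for fixed $t$) and to increments $A_t - A_s$, after bounding the second moments of the Littlewood-Paley blocks $\triangle_j A_{t,x}$ uniformly in $x$. First I would fix $t$, use the chaos representation \eqref{eq:180223-8}--\eqref{eq:180223-9} together with the isometry bound \eqref{eq:180417-1} to write
\[
{\mathbf E}[ |\triangle_j A_{t,x}|^2 ]
\lesssim
\sum_{l \in {\mathbf Z}^d} \rho_j(l)^2 \, \| g_t^{(l)} \|_{L^2(E^n)}^2 ,
\]
and then, expanding $g_t^{(l)}$ and using the Plancherel/contraction identity that underlies \eqref{eq:180223-6}--\eqref{eq:180223-7}, bound the right-hand side by
\[
\sum_{|k|_* \approx 2^j}
\int_{\hat E^{n-1}} |Q_0(\mu_1,\dots,\mu_{n-1},\mu - \mu_1 - \cdots - \mu_{n-1})|^2 \, d\mu_1 \cdots d\mu_{n-1} \, d\sigma ,
\]
which is exactly where hypothesis \eqref{eq:180223-10} enters. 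Plugging in the bound $M^2 |\mu|_*^{-2\gamma} |k|_*^{-2\delta}$ and integrating in $\sigma$ via \eqref{eq:180329-4} (this is the place the condition $\gamma > 1$ is used, so that $\int_{\mathbf R} (\,(1+|k|^{\theta/2})^{2\gamma} + |\sigma|^{\gamma}\,)^{-1} d\sigma \lesssim |k|_*^{-\theta(\gamma-1)}$), and then summing $\sum_{|k|_* \approx 2^j} |k|_*^{-\theta(\gamma-1) - 2\delta} \lesssim 2^{j(d - \theta(\gamma-1) - 2\delta)}$, I get
\[
\sup_{x} {\mathbf E}[|\triangle_j A_{t,x}|^2] \lesssim M^2 \, 2^{j(d - \theta\gamma + \theta - 2\delta)} .
\]
Feeding this into \eqref{eq:180219-1} with exponent $2p$ in place of $p$, the series $\sum_j 2^{j(2p\alpha + d)} (2^{j(d-\theta\gamma+\theta-2\delta)})^{p}$ converges precisely when $2p\alpha + d + p(d - \theta\gamma + \theta - 2\delta) < 0$, i.e. $\alpha < -\frac{\theta+d}{2} + \frac{\gamma\theta}{2} + \delta$, giving ${\mathbf E}[\|A_t\|_{{\mathcal C}^\alpha}^{2p}] \lesssim M^{2p}$ uniformly in $t \in [-T,T]$ (or $[0,T]$).

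Next I would handle the time increments to get the $C_T^\kappa$ part. Using \eqref{eq:180223-2}, the $Q$-function of $A_t - A_s$ is $(e^{-2\pi\sqrt{-1}(\sigma_1+\cdots+\sigma_n)t} - e^{-2\pi\sqrt{-1}(\sigma_1+\cdots+\sigma_n)s})Q_0$, so that along the contraction hyperplane the kernel picks up a factor $|e^{-2\pi\sqrt{-1}\sigma t} - e^{-2\pi\sqrt{-1}\sigma s}|^2 \le \min(4, (2\pi|\sigma||t-s|)^2) \lesssim |\sigma|^{2\kappa}|t-s|^{2\kappa}$ for any $\kappa \in [0,1]$, with $\sigma = \sigma_1 + \cdots + \sigma_n$ the time-variable dual to $\mu$. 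Absorbing $|\sigma|^{2\kappa} \lesssim |\mu|_*^{4\kappa}$ into the integrand (this costs $4\kappa$ in the $|\mu|_*$-exponent, hence effectively replaces $\gamma$ by $\gamma - 2\kappa$), repeating the $\sigma$-integration — which now requires $\gamma - 2\kappa > 1$, i.e. $\kappa < \frac{\gamma-1}{2}$ — and the $k$-summation, I obtain
\[
\sup_x {\mathbf E}[|\triangle_j (A_{t,x} - A_{s,x})|^2] \lesssim M^2 |t-s|^{2\kappa} \, 2^{j(d - \theta(\gamma - 2\kappa) + \theta - 2\delta)} .
\]
Then Proposition \ref{prop:180417-1} applied to $A_t - A_s$ in ${\mathcal C}^{\alpha - \theta\kappa}$ yields ${\mathbf E}[\|A_t - A_s\|_{{\mathcal C}^{\alpha-\theta\kappa}}^{2p}] \lesssim M^{2p} |t-s|^{2p\kappa}$ under the same condition $\alpha < -\frac{\theta+d}{2} + \frac{\gamma\theta}{2} + \delta$, because the shift $\alpha \mapsto \alpha - \theta\kappa$ in the target regularity exactly cancels the shift $\gamma \mapsto \gamma - 2\kappa$ in the exponent count. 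Finally, by the Kolmogorov continuity theorem (choosing $p$ large enough, and using the additional constraint $\kappa < 1$ only through the bound on the increment factor), these two moment estimates upgrade to ${\mathbf E}[\|A\|_{C_T^\kappa {\mathcal C}^{\alpha - \theta\kappa}}^{2p}] \lesssim M^{2p}$, and the a.s.\ membership follows; the case $\kappa = 0$ is just the first moment bound combined with a Kolmogorov-type argument in time at the fixed regularity $\alpha$.

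The main obstacle I anticipate is the bookkeeping in passing from the chaos expansion \eqref{eq:180223-8} to the clean second-moment bound in terms of the hyperplane integral \eqref{eq:180223-10}: one must correctly identify that $\| g_t^{(l)} \|_{L^2(E^n)}^2 = \int_{\sum k_i = l} \int_{{\mathbf R}^n} |H_t|^2 \, ds \, = \int_{\sum k_i = l}\int_{{\mathbf R}^n} |Q_t|^2 \, d\sigma$ by \eqref{eq:180223-6}, and then recognize, after integrating out all but the total frequency-time variable $\mu = \sum \mu_i$, that what remains is precisely the left side of \eqref{eq:180223-10} integrated against $d\sigma$ over the fiber $\{k\ \text{fixed}\}$. (One should also be slightly careful that \eqref{eq:180223-10} is stated as a bound that is symmetric in the arguments, so no loss occurs in choosing which variable to single out.) Once that reduction is in place, everything else is a routine application of Lemma-type integral estimates — \eqref{eq:180329-4} for the time integral, \eqref{eq:180329-1} for the frequency sum — and of Proposition \ref{prop:180417-1} and Kolmogorov's theorem, with the three stated constraints on $\alpha$ and $\kappa$ emerging transparently from the convergence of the dyadic series.
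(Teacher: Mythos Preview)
Your proposal is correct and follows essentially the same route as the paper: bound $\mathbf{E}[|\triangle_j A_{t,x}|^2]$ via the $L^2$-isometry \eqref{eq:180417-1} and Plancherel in time to reduce to the hyperplane integral \eqref{eq:180223-10}, integrate out $\sigma$ using $\gamma>1$, sum over $|k|_*\approx 2^j$, feed into Proposition \ref{prop:180417-1}, and then repeat for increments with the extra factor $|\sigma|^{2\kappa}|t-s|^{2\kappa}$. The only point the paper handles more carefully is the final Kolmogorov step: since Kolmogorov from a $2p\kappa$-moment bound yields only $(\kappa-\tfrac{1}{2p})$-H\"older regularity, ``choosing $p$ large'' alone does not reach $C_T^\kappa$; the paper exploits the strict inequalities on $\alpha$ and $\kappa$ to pick $\kappa<\kappa'<\kappa''$ (and $\alpha<\alpha'<\alpha''$) along the line $\alpha-\theta\kappa=\mathrm{const}$, applying the moment bound \eqref{eq:180220-3} at level $\kappa''$ and the Besov--H\"older embedding \eqref{eq:180220-4} at level $\kappa'$ so that $\kappa'-\tfrac{1}{2p}\ge\kappa$ for $p$ large.
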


\begin{proof}
In \cite{hin} it is assumed $\delta >0$. But, it looks redundant.
So we give a proof for the sake of completeness.
We may assume that $p\gg 1$. Indeed,
for the case $p=0$ the conclusion is trivial
and so we can use the H\"{o}lder inequality
to have the conclusion for all $p>0$.

We see from the assumption
and
(\ref{eq:180223-10}) that, for any $\kappa \in [0, (\gamma -1)/2)$
and $j=0,1,\ldots$,
\begin{align}\label{eq:180220-2}
\lefteqn{
\Bigl\| 
|\sigma_1 + \cdots + \sigma_n |^{\kappa}
\rho_j (k_1 + \cdots + k_n)
Q_0 (\mu_1, \ldots, \mu_n)
\Bigr\|_{L^2 (\hat{E}^n)}^2
}
\\
&\lesssim
\sum\limits_{k \in {\mathbf Z}^d} \rho_j (k)^2 
 \int_{{\mathbf R} }d \sigma |\sigma|^{2\kappa}
 \int_{\hat{E}^{n-1}}
 |Q_0 (\mu_1, \ldots, \mu_{n-1},\mu-\mu_1-\cdots-\mu_{n-1}) |^2\,d\mu_1\cdots\,d\mu_{n-1}
\nonumber\\
&\lesssim 
M^2
\sum\limits_{k \in {\mathbf Z}^d} \rho_j (k)^2 |k|_*^{-2 \delta}
 \int_{{\mathbf R} }d \sigma |(\sigma, k) |_*^{-2 (\gamma - 2\kappa)}
\nonumber\\
&\sim 
M^2
\sum\limits_{k \in {\mathbf Z}^d} \rho_j (k)^2
|k|_*^{-2 \delta}
|k|_*^{\theta (1- \gamma + 2\kappa)}
\sim 
M^2 (2^j)^{d+\theta - \gamma\theta -2\delta+2\kappa\theta}.
\nonumber
 \end{align}

Now we show ${\mathbf E}[ \| A_t \|_{ {\mathcal C}^{\alpha} }^{2p} ] <\infty$ for every fixed $t$
by using \eqref{eq:180219-1}.
Note that $\triangle_j$ applies to functions in the $x$-variable,
while ${\mathcal J}_n$ applies to functions in the $(s_i, y_i)$-variables.
Then, we can easily see that
\begin{equation}\label{eq:180223-16}
\triangle_j A_{t,x} = {\mathcal J}_n
\Bigl(
\sum\limits_{k_1, \ldots, k_n \in {\bf Z}^d}
\rho_j \left(\sum\limits_{i=1}^n k_i\right)
{\bf e}_{\sum\limits_{i=1}^n k_i} (x)
\prod_{i=1}^n {\bf e}_{-k_i} (y_i)
H_t ((s_1,k_1), \ldots, (s_n,k_n))
\Bigr).
\end{equation}
Since ${\mathcal J}_n$ does not act on $x$,
we have
\[
\triangle_j A_{t,x} =
\sum\limits_{k_1, \ldots, k_n \in {\bf Z}^d}
\rho_j \left(\sum\limits_{i=1}^n k_i\right)
{\bf e}_{\sum\limits_{i=1}^n k_i} (x) {\mathcal J}_n
\Bigl(
\prod_{i=1}^n {\bf e}_{-k_i} (y_i)
H_t ((s_1,k_1), \ldots, (s_n,k_n))
\Bigr)
\]
thanks to equality (\ref{eq:180223-8}).
From this equality, (\ref{eq:180417-1}), \eqref{eq:180223-2} and \eqref{eq:180223-6} (the 
$L^2$-isometric property of ${\mathcal F}_{{\rm time}}$). we have
\begin{align*}
{\mathbf E}[ |\triangle_j A_{t,x}|^2] 
&\le
\int_{{\mathbf R}^n} ds_1 \cdots ds_n
\int_{({\mathbf T}^d)^n} dy_1 \cdots dy_n
\\
&
\times
\Bigl|
\sum\limits_{k_1, \ldots, k_n \in {\bf Z}^d}
\rho_j \left(\sum\limits_{i=1}^n k_i\right)
{\bf e}_{\sum\limits_{i=1}^n k_i} (x)
\prod_{i=1}^n {\bf e}_{-k_i} (y_i)
H_t ((s_1,k_1), \ldots, (s_n,k_n))
\Bigr|^2
\\
&=
\int_{{\mathbf R}^n} ds_1 \cdots ds_n
\sum\limits_{k_1, \ldots, k_n \in {\bf Z}^d}
\left| \rho_j \left(\sum\limits_{i=1}^n k_i\right)\right|^2
|H_t ((s_1,k_1), \ldots, (s_n,k_n))|^2
\\
&=
\int_{{\mathbf R}^n} d\sigma_1 \cdots d\sigma_n
\sum\limits_{k_1, \ldots, k_n \in {\bf Z}^d}
\left| \rho_j \left(\sum\limits_{i=1}^n k_i\right)\right|^2
|Q_0 ((\sigma_1,k_1), \ldots, (\sigma_n,k_n))|^2.
\end{align*}
As a reuslt, this quantity is independent of $x$.
Combining this with
Proposition \ref{prop:180417-1}
and 
\eqref{eq:180220-2} with $\kappa=0$, we have
\begin{equation}\nonumber
{\mathbf E}[ \| A_t \|_{ {\mathcal C}^{\alpha} }^{2p} ] 
\lesssim
M^{2p}
\sum\limits_{j=-1}^{\infty}
(2^j)^{
 d+p (2\alpha +d + \theta - \gamma\theta - 2\delta)}.
\end{equation}
Since $\alpha < - (d + \theta)/2 + \gamma\theta/2 +\delta$, the right-hand side
is finite for sufficiently large $p$.
Thus,
\[
{\mathbf E}[ \| A_t \|_{ {\mathcal C}^{\alpha} }^{2p} ] 
\lesssim
1.
\]

Next we estimate 
${\mathbf E}[\| A \|^{2p}_{C_T^{\kappa} {\mathcal C}^{\alpha -\theta \kappa} }]$.
Going through an argument which is essentially the same way as above, we see that
\begin{align*}
\lefteqn{
{\mathbf E}[ |\triangle_j A_{t,x}- \triangle_j A_{t',x} |^2] 
}\\
&\le 
\int_{{\mathbf R}^n} d\sigma_1 \cdots d\sigma_n
\sum\limits_{k_1, \ldots, k_n \in {\bf Z}^d}
\left| \rho_j \left(\sum\limits_{i=1}^n k_i\right)\right|^2
|Q_0 ((\sigma_1,k_1), \ldots, (\sigma_n,k_n))|^2
\\
&
\quad
\times
| e^{-2\pi \sqrt{-1} ( \sigma_1 +\cdots + \sigma_n)t}
 -e^{-2\pi \sqrt{-1} ( \sigma_1 +\cdots + \sigma_n)t'} |^2
 \\
 &
 \lesssim
 |t -t'|^{2\kappa}
 \int_{{\mathbf R}^n} d\sigma_1 \cdots d\sigma_n
\sum\limits_{k_1, \ldots, k_n \in {\bf Z}^d}
\left| \sum\limits_{i=1}^n \sigma_i\right|^{2\kappa}
\left| \rho_j \left(\sum\limits_{i=1}^n k_i\right)\right|^2
|Q_0 ((\sigma_1,k_1), \ldots, (\sigma_n,k_n))|^2.
\end{align*}
for $t, t' \in [0,T]$ and $\kappa \in [0, (\gamma -1)/2) \cap [0,1]$.
Let
\[
\Theta=2(\alpha - \theta \kappa) p + d
+p (d + \theta - \gamma\theta - 2\delta + 2\kappa \theta).
\]
Combining this with
\eqref{eq:180219-1}
and 
\eqref{eq:180220-2}, we have
\begin{eqnarray}\label{eq:180220-3}
{\mathbf E}[ \| A_t - A_{t'} \|_{ {\mathcal C}^{\alpha -\theta \kappa} }^{2p} ] 
\sim
M^{2p} |t -t'|^{2p\kappa }
 \sum\limits_{j=-1}^{\infty}
(2^j)^{\Theta }
\sim
 M^{2p} |t -t'|^{2p\kappa }.
 \end{eqnarray}
if $p$ is sufficiently large 
since $\alpha < - (d + \theta)/2 + \gamma\theta/2 +\delta$.
We may also assume that $2\kappa p >1$ holds.
Then, 
the so-called Besov-H\"older embedding theorem yields 
\begin{align}
{\mathbf E}
\Bigl[ \Bigl( \sup\limits_{0 \le t < t' \le T }\frac{
\| A_t - A_{t'} \|_{ {\mathcal C}^{\alpha -\theta \kappa} } 
}
{
|t -t'|^{\kappa -(1/2p)}
}
\Bigr)^{2p} 
\Bigr] 
&\lesssim
\int_{[0,T]^2} \frac{
{\mathbf E}[ \| A_t - A_{t'} \|_{ {\mathcal C}^{\alpha -\theta \kappa} }^{2p}] 
}
{
|t -t'|^{1 +2p\kappa }} dtdt'.
\label{eq:180220-4}
\end{align}

We can find $\alpha <\alpha' <\alpha''$ and $0 \le \kappa < \kappa' < \kappa''$ 
such that $(\alpha', \kappa')$ and $(\alpha'', \kappa'')$
still satisfy 
\[
\alpha', \alpha'' < - \frac{\theta+d}{2} + 
\frac{\gamma \theta}{2}+\delta, 
\, \kappa', \kappa'' \in \left[0, \frac{\gamma -1}{2} \wedge 1\right)
\]
and 
$\alpha -\theta \kappa = \alpha' -\theta \kappa' = \alpha'' -\theta \kappa''$.
If $p$ is sufficiently large, we see from \eqref{eq:180220-4}
for $(\alpha', \kappa')$ and
 \eqref{eq:180220-3} for $(\alpha'', \kappa'')$ that
\begin{align*}
{\mathbf E}
\Bigl[ \Bigl( \sup\limits_{0 \le t < t' \le T }\frac{
\| A_t - A_{t'} \|_{ {\mathcal C}^{\alpha -\theta \kappa} } 
}
{
|t -t'|^{\kappa }
}
\Bigr)^{2p} 
\Bigr] 
&\lesssim
\int_{[0,T]^2} \frac{
{\mathbf E}[ \| A_t - A_{t'} \|_{ {\mathcal C}^{\alpha' -\theta \kappa'} }^{2p}] 
}
{
|t -t'|^{1 +2p\kappa' }} dtdt'
\\
&\lesssim
\int_{[0,T]^2} \frac{M^{2p}|t -t'|^{2p\kappa'' }
}
{
|t -t'|^{1 +2p\kappa' }} dtdt'\\
&\lesssim M^{2p}.
\end{align*}
This proves the lemma.
 \end{proof}


Next we generalize the definition of \lq \lq random fields with a good kernel"
in Definition \ref{def.gk}. 
We assumed 
$\| Q_0 \|_{L^2 (\hat{E}^n)} =\| f_{t,x} \|_{L^2 (E^n)} <\infty$
in Definition \ref{def.gk}
in order to define $A_{t,x}$ by \eqref{eq:180223-5}.
In many cases, however, 
this condition is too restrictive.
For example, think of
the Ornstein--Uhlenbeck process $X = {\mathcal I}[\xi]$, 
which is the simplest component of our driver ${\mathbf X}$.
As we will see in the next subsection, 
$Q_0$ for $X$ in not $L^2 (E)$.
This fact motivates our generalization.

The reason why we can generalize is as follows:
The expressions \eqref{eq:180223-8}--\eqref{eq:180223-9}
suggest that
the condition ``$\| f_{t,x} \|_{L^2 (E^n)}^2 <\infty$" may be relaxed:
\begin{quote}
There exists $N \in {\mathbf N}$ such that
$\| g_{t}^{(l)} \|_{L^2 (E^n)}^2={\rm O}(|l|^N)$ for every $l$.
\end{quote}
Indeed, if 
\begin{equation}\label{eq:180223-12}
\| g_{t}^{(l)} \|_{L^2(E^n)}^2 = {\rm O}(|l|^{N})
\quad \mbox{ for some $N$ as $|l| \to \infty$}, 
\end{equation}
then 
from (\ref{eq:180417-1})
\[
{\mathbf E}[ | {\mathcal J}_n (g_{t}^{(l)})|^2] = {\rm O}(|l|^{N})
\]
and
\begin{equation}\label{eq:180223-11}
A_{t}
:= 
\sum\limits_{l \in {\mathbf Z}^d}{\bf e}_{l} (\cdot)
{\mathcal J}_n (g_{t}^{(l)})
\qquad
\mbox{in ${\mathcal D}' = {\mathcal D}'({\bf T}^d, {\bf R})$, 
}
\end{equation}
defines a random distribution (i.e., ${\mathcal D}'$-valued random variable)
for each $t$.
(This part is a general theory.)

\begin{remark}\label{re:180223-2}
At first glance,
\eqref{eq:180223-8}
in the classical case and \eqref{eq:180223-11} 
in the generalized case
seem identical.
However, their meanings are different.
\eqref{eq:180223-8} is for an arbitrary fixed $x \in {\mathbf T}$,
while
\eqref{eq:180223-11} is 
(supposed to be) an equality in ${\mathcal D}'$
and therefore 
one can not substituting a fixed $x$ into \eqref{eq:180223-11}.
\end{remark}

\begin{definition}\label{def.gk2}
Let a measurable function
$Q_0 \colon \hat{E}^n \to {\mathbf C}$ with \eqref{eq:180223-1} be 
such that
\[
(\sigma_1, \ldots, \sigma_n) 
\mapsto Q_0 ((\sigma_1,k_1), \ldots, (\sigma_n,k_n))
\]
belongs to $L^2 (d\sigma_1 \cdots d\sigma_n)$ for every 
$k_1, \ldots, k_n \in {\bf Z}^d$. 
Define 
$Q_t$, $H_t$, $g_t^{(l)}$ by \eqref{eq:180223-2},
\eqref{eq:180223-3} and \eqref{eq:180223-9}, respectively.

By slightly abusing the terminology, 
we say that
a ${\mathcal D}'$-valued stochastic process 
 $A =\{ A_t\}_{t \in {\bf R} }$ has a {\it good kernel} 
 if there exists $Q_0$ as above such that 
 the right-hand side of \eqref{eq:180223-11} makes sense as a random 
 distribution and equals $A_t$ a.s. for every $t$. 
 \end{definition}

In the above definition,
we do not claim here that every such $Q_0$ defines a stochastic process by 
 \eqref{eq:180223-11}. 
 We need
an extra condition 
to ensure it.
For instance if we assume that 
$Q_0 \colon \hat{E}^n \to {\mathbf C}$ with \eqref{eq:180223-1}
satisfies \eqref{eq:180223-10} for some $\gamma >1$, $\delta \in {\mathbf R}$,
$M>0$.
Then, a calculation quite similar to \eqref{eq:180220-2} shows that
$\| g_{t}^{(l)} \|_{L^2(E^n)}^2 \lesssim |l|_*^{-2\delta -\theta (\gamma-1)}$
and that
$\| g_{t}^{(k_1+ \cdots+ k_n)} \|_{L^2(E^n)}$ donimates the $L^2$-norm of 
$(\sigma_1, \ldots, \sigma_n) 
\mapsto Q_0 ((\sigma_1,k_1), \ldots, (\sigma_n,k_n))$ for any given 
$(k_1, \ldots, k_n)$.
Therefore, under this assumption the right-hand side of
 \eqref{eq:180223-11} makes sense.

Going deeper in this direction, we can naturally generalize 
our key lemma (Lemma \ref{lm.0405_2}) as follows:
\begin{lemma}\label{lm.0405_2B}
Let $d \ge 1$, $\theta \in (0,2]$, $p \ge 0$, 
$\gamma >1$,
$\delta \in {\mathbf R}$, $M >0$ and 
$
0 \le \kappa < \frac{\gamma -1}{2} \wedge 1.
$
Assume that 
$Q_0 \colon \hat{E}^n \to {\mathbf C}$ with \eqref{eq:180223-1}
satisfies condition \eqref{eq:180223-10}.
Then, 
$A = \{A_{t} \}_{t \in {\bf R}}$ defined by \eqref{eq:180223-11}
has a good kernel in the sense of Definition \ref{def.gk2}.
Moreover, all the conclusions of Lemma \ref{lm.0405_2} 
remain true
as long as
\begin{equation}\label{eq:180510-1}
\alpha < - \frac{\theta+d}{2} + 
\frac{\gamma \theta}{2}+\delta.
\end{equation}
\end{lemma}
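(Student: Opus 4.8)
The plan is to reduce the statement to a verbatim repetition of the proof of Lemma~\ref{lm.0405_2}; the only genuinely new point, and the first step I would carry out, is to check that \eqref{eq:180223-11} really does define a $\mathcal{D}'$-valued random variable when $Q_0 \notin L^2(\hat{E}^n)$. For this I would fix $l \in \mathbf{Z}^d$ and bound $\|g_t^{(l)}\|_{L^2(E^n)}^2$. By the definition \eqref{eq:180223-9} of $g_t^{(l)}$, the relation \eqref{eq:180223-2}--\eqref{eq:180223-3}, and the slicewise $L^2$-isometry of the time-Fourier transform (which, under the hypotheses of Definition~\ref{def.gk2}, holds for each fixed $(k_1,\ldots,k_n)$, cf.\ \eqref{eq:180223-6}), one has
\[
\|g_t^{(l)}\|_{L^2(E^n)}^2
=
\int_{\mathbf{R}^n} d\sigma_1\cdots d\sigma_n
\sum_{k_1+\cdots+k_n=l}
|Q_0((\sigma_1,k_1),\ldots,(\sigma_n,k_n))|^2 .
\]
Writing $\sigma=\sigma_1+\cdots+\sigma_n$ and integrating first over the hyperplane $\{\mu_1+\cdots+\mu_n=(\sigma,l)\}$, condition \eqref{eq:180223-10} majorizes this by $M^2|l|_*^{-2\delta}\int_{\mathbf{R}}|(\sigma,l)|_*^{-2\gamma}\,d\sigma$; since $|(\sigma,l)|_*=1+|\sigma|^{1/2}+|l|^{\theta/2}$, the substitution $\sigma=(1+|l|^{\theta/2})^2 u$ together with the convergence of $\int_{\mathbf{R}}(1+|u|^{1/2})^{-2\gamma}\,du$ — which is precisely where the hypothesis $\gamma>1$ is used, compare \eqref{eq:180329-4} — gives $\|g_t^{(l)}\|_{L^2(E^n)}^2\lesssim M^2|l|_*^{-2\delta-\theta(\gamma-1)}$. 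In particular this is $\mathrm{O}(|l|^N)$ for suitable $N$, so \eqref{eq:180223-12} holds; by the general theory recalled before the lemma, the series \eqref{eq:180223-11} converges to a random distribution $A_t$ for every $t$, and $A=\{A_t\}$ has a good kernel in the sense of Definition~\ref{def.gk2}.

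Once $A$ is legitimately in hand, I would recover the quantitative Besov bounds exactly as in the proof of Lemma~\ref{lm.0405_2}. The key observation is that although $A_t$ itself is only a distribution, for each $j\ge-1$ the Littlewood--Paley block $\triangle_j A_{t,x}$ is a genuine random band-limited function of $x$, whose Wiener-It\^o kernel is the one displayed in \eqref{eq:180223-16}; by \eqref{eq:180220-2} with $\kappa=0$ — whose proof uses only \eqref{eq:180223-10} and never the $L^2$-integrability of $Q_0$ — this kernel lies in $L^2(E^n)$ with norm $\lesssim M^2(2^j)^{d+\theta-\gamma\theta-2\delta}$. Hence $\mathbf{E}[|\triangle_j A_{t,x}|^2]$ is computed exactly as before, is independent of $x$, and Proposition~\ref{prop:180417-1} (applicable because the Fourier coefficients of $A_t$ lie in $\oplus_{i=0}^n\mathcal{Q}_i$) together with \eqref{eq:180219-1} yields $\mathbf{E}[\|A_t\|_{\mathcal{C}^\alpha}^{2p}]\lesssim M^{2p}$ for $\alpha<-(\theta+d)/2+\gamma\theta/2+\delta$ and $p$ large.

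For the time regularity I would repeat the second half of the proof of Lemma~\ref{lm.0405_2} without change: estimate $\mathbf{E}[|\triangle_j A_{t,x}-\triangle_j A_{t',x}|^2]$ using $|e^{-2\pi\sqrt{-1}(\sigma_1+\cdots+\sigma_n)t}-e^{-2\pi\sqrt{-1}(\sigma_1+\cdots+\sigma_n)t'}|^2\lesssim|t-t'|^{2\kappa}|\sigma_1+\cdots+\sigma_n|^{2\kappa}$ for $\kappa\in[0,1]$, insert \eqref{eq:180220-2} for the given $\kappa\in[0,\frac{\gamma-1}{2}\wedge1)$, apply \eqref{eq:180219-1}, and conclude with the Besov--H\"older (Kolmogorov-type) embedding in time and the two-parameter interpolation between pairs $(\alpha',\kappa')$ and $(\alpha'',\kappa'')$ as in \eqref{eq:180220-3}--\eqref{eq:180220-4}, obtaining $\mathbf{E}[\|A\|_{C_T^\kappa\mathcal{C}^{\alpha-\theta\kappa}}^{2p}]\lesssim M^{2p}$ under \eqref{eq:180510-1}. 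The only real obstacle is the first step — making precise in what sense $A_t$ exists and why its frequency-localized pieces remain square-integrable functions — and this is entirely resolved by the frequency-localization mechanism above; after that, every remaining line is a copy of the earlier argument, with all implicit constants depending only on $n,p,\alpha,\kappa,\gamma,\delta,T,d,\theta$.
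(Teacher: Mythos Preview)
Your proposal is correct and follows essentially the same approach as the paper: you first verify $\|g_t^{(l)}\|_{L^2(E^n)}^2\lesssim M^2|l|_*^{-2\delta-\theta(\gamma-1)}$ from \eqref{eq:180223-10} (the paper records this bound in the paragraph immediately preceding the lemma), conclude that $A_t$ is a well-defined random distribution, and then observe that the proof of Lemma~\ref{lm.0405_2} only ever manipulates the band-limited quantities $\triangle_j A_{t,x}$, whose Wiener--It\^o kernels are finite Fourier sums and hence genuinely $L^2$, so every line of that earlier argument goes through verbatim. Your write-up is in fact more detailed than the paper's, which simply points to \eqref{eq:180223-16} and notes that the assumption $\|Q_0\|_{L^2(\hat E^n)}<\infty$ is never invoked there.
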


\begin{proof}
The proof of Lemma \ref{lm.0405_2} works without any modification.
To understand this,
simply observe that 
what is actually computed in the proof of Lemma \ref{lm.0405_2}
is always $\triangle_j A_{t,x}$, not $A_{t,x}$ itself.

Since we have already seen that $A_t$ is a random distribution, 
$\triangle_j A_t$ is well-defined as a finite Fourier series
(and hence can be evaluated at every $x$):
\begin{equation}\nonumber
\triangle_j A_{t,x}
= 
\sum\limits_{l \in {\mathbf Z}^d} \rho_j (l) {\bf e}_{l} (x)
{\mathcal J}_n (g_{t}^{(l)})
=
{\mathcal J}_n \left( \sum\limits_{l \in {\mathbf Z}^d} \rho_j (l) {\bf e}_{l} (x) g_{t}^{(l)} \right).
\end{equation}
By the definition of $g_{t}^{(l)}$, this coincides with \eqref{eq:180223-16}.
The rest is exactly the same as in the corresponding parts of the
 proof of Lemma \ref{lm.0405_2},
where the isometry of ${\mathcal F}_{{\rm time}}$
 and condition \eqref{eq:180223-10} are used, 
but the assumption ``$\| Q_0 \|_{L^2 (\hat{E}^n)} <\infty$" is not.
\end{proof}


\subsection{Fractional version of Ornstein--Uhlenbeck process}
From now on we work on ${\mathbf T}^2$.
In this subsection we prove the convergence of the first component of 
the driver ${\mathbf X}^{\ve}$. 

What we call (the fractional version of) the
 Ornstein--Uhlenbeck process is the solution of the 
 following linearized equation:
\begin{equation}\nonumber
\partial_t X= -(-\Delta)^{\theta /2} X -X + \xi.
\end{equation}
Its stationary solution should be given by 
\begin{equation}\label{eq:180221-2}
X_{t} = {\mathcal I}[\xi]_t
=
 \int_{- \infty}^t e^{- (t-s) ( (-\Delta)^{\theta /2}+1)} (\xi_s) ds 
=
\sum\limits_{k \in {\bf Z^2}} 
{\bf e}_k (\bullet)
 \int_{- \infty}^{\infty}
h(t-s, k) \hat{\xi}_s (k) ds,
 \end{equation}
where $h: \hat{E}= {\bf R} \times {\bf Z}^2 \to {\bf R}$ is defined by
\begin{equation}\label{def.h}
h(t, k) := {\bf 1}_{(0, \infty)} (t) e^{-\{ (2\pi |k|)^{\theta} +1\} t }.
\end{equation}
At this stage, it is not completely obvious whether $X$ makes sense.
%

So, we first compute the smooth approximation $X^{\ve}$.
Set, for $\ve \in (0,1]$,
\begin{equation} \label{eq.0914_1}
X_{t,x}^{\ve} 
:=
\sum\limits_{k \in {\bf Z^2}} \chi^{\ve} (k)
{\bf e}_k (x)
 \int_{- \infty}^{\infty}
h(t-s, k) \hat{\xi}_s (k) ds
=
{\mathcal J}_1
(f^{\ve}_{t,x}),
 \end{equation}
 where 
 \begin{equation} \label{eq:180221-1}
 f^{\ve}_{t,x} (s, y) 
 = 
 \sum\limits_{k \in {\bf Z^2}} \chi^{\ve} (k)
{\bf e}_k (x- y)
 h(t- s, k).
 \end{equation}
Thanks to the truncation by $\chi$, 
$f^{\ve}_{t,x}$ (the integrand in ${\mathcal J}_1$) 
belongs to $L^2 (E)$.
Hence, the right-hand side of \eqref{eq.0914_1} is well defined.

Let $(s,k), (\sigma,k) \in \hat{E}$.
Set 
\begin{eqnarray}\label{eq.0406_1}
H_t^{X} (s,k) &:=& h(t-s, k),
\\
Q_t^{X} (\sigma,k) &:=& 
\frac{e^{- 2\pi \sqrt{-1} \sigma t}}{ -2\pi \sqrt{-1} \sigma + (2\pi |k|)^{\theta} +1 }
=
e^{-2\pi \sqrt{-1} \sigma t} Q_0^{X} (\sigma,k),
\nonumber
\\
H_t^{X,\ve} (s,k) &:=& \chi^{\ve} (k)H_t^{X} (s,k), 
\qquad
Q_t^{X,\ve} (\sigma,k):= \chi^{\ve} (k)Q_t^{X} (\sigma,k).
\nonumber
\end{eqnarray}
Obviously, $Q_0^{X}$ satisfies 
(\ref{eq:180417-2}),
the assumption of Lemma \ref{lm.0405_2B} and 
hence defines a Besov space-valued process with a good kernel, 
which immediately turns out to be $X$.
Since $Q_0^{X}$ is independent of $\chi$, so is $X$.
Similarly, $X^{\ve}$ has a good kernel
 defined from $Q_0^{X,\ve}$.


The following lemma will be a model case
to many other estimates:
\begin{lemma}\label{lm.0406_a}
Let $\theta \in (0,2]$.
For every $\alpha < \frac{\theta}{2} -1$ and $1 < p <\infty$, it holds that
\[
{\mathbf E} [ \| X \|_{C_T {\mathcal C}^{ \alpha}}^p ]<\infty,
\qquad
\lim_{\ve \searrow 0}
{\mathbf E} [ \| X - X^{\ve} \|_{C_T {\mathcal C}^{ \alpha}}^p ]
=0.
\]
\end{lemma}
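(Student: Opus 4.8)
The strategy is to verify that both $X$ and the differences $X - X^{\ve}$ fit directly into the framework of Lemma \ref{lm.0405_2B} (the key enhancing lemma), so that the two assertions follow by checking a single decay estimate of the form \eqref{eq:180223-10} (here in the $n=1$ case, i.e.\ \eqref{eq:180417-2}). Since $X = {\mathcal J}_1(f_{\dummyTime})$ lives entirely in the first Wiener chaos, this reduces everything to a pointwise bound on the $Q$-function $Q_0^{X}$.

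\textbf{Step 1: Bound $Q_0^{X}$.} From the explicit formula
\[
Q_0^{X}(\sigma,k) = \frac{1}{-2\pi\sqrt{-1}\sigma + (2\pi|k|)^\theta + 1},
\]
I would estimate $|Q_0^{X}(\sigma,k)|^2 \lesssim \big( |\sigma| + |k|^\theta + 1 \big)^{-2} \lesssim |\mu|_*^{-4}$, using that $|\mu|_* = 1 + |\sigma|^{1/2} + |k|^{\theta/2}$ and hence $|\mu|_*^2 \simeq 1 + |\sigma| + |k|^\theta$. Thus \eqref{eq:180417-2} holds with $\gamma = 2$, $\delta = 0$, and $M$ an absolute constant. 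Since $\gamma = 2 > 1$, Lemma \ref{lm.0405_2B} applies and gives ${\mathbf E}[\|X\|_{C_T{\mathcal C}^\alpha}^p] < \infty$ for every $p \in [0,\infty)$ and every
\[
\alpha < -\frac{\theta + d}{2} + \frac{\gamma\theta}{2} + \delta = -\frac{\theta+2}{2} + \theta = \frac{\theta}{2} - 1
\]
(recall $d = 2$). This is exactly the claimed range of $\alpha$. For the general $1 < p < \infty$ one then uses the H\"older inequality exactly as in the proof of Lemma \ref{lm.0405_2}.

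\textbf{Step 2: The difference $X - X^{\ve}$.} Here $X - X^{\ve}$ again has a good kernel, with $Q$-function $Q_0^{X} - Q_0^{X,\ve} = (1 - \chi^{\ve}(k))Q_0^{X}(\sigma,k)$. The point is that $\chi$ is smooth with $\chi(0)=1$, so $|1 - \chi^{\ve}(k)| \lesssim \min(1, \ve|k|) \le (\ve|k|)^{\nu}$ for any $\nu \in [0,1]$ (using $|1-\chi^\ve(k)| \lesssim \ve|k|$ on $|k| \le \ve^{-1}$ from $\chi(0)=1$ and boundedness elsewhere). Hence for small $\nu > 0$,
\[
|(1-\chi^{\ve}(k)) Q_0^{X}(\sigma,k)|^2 \lesssim \ve^{2\nu} |k|^{2\nu} |\mu|_*^{-4} \lesssim \ve^{2\nu} |\mu|_*^{-4 + 4\nu/\theta}
\]
(absorbing $|k|^{2\nu} \lesssim |\mu|_*^{2\nu \cdot 2/\theta}$). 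This is \eqref{eq:180417-2} with $\gamma = 2 - 2\nu/\theta$, $\delta = 0$ and $M \simeq \ve^\nu$; for $\nu$ small enough we still have $\gamma > 1$, and the admissible range of $\alpha$ degrades only by $O(\nu)$, so it still contains any fixed $\alpha < \frac{\theta}{2} - 1$ once $\nu$ is chosen small. Lemma \ref{lm.0405_2B} then yields ${\mathbf E}[\|X - X^{\ve}\|_{C_T{\mathcal C}^\alpha}^p] \lesssim \ve^{\nu p} \to 0$ as $\ve \searrow 0$.

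\textbf{Main obstacle.} There is no serious analytic difficulty: everything is first-chaos and the $Q$-function is explicit and rational, so the only real content is the bookkeeping in Step 2 — choosing $\nu$ small enough that the shifted exponent $\gamma = 2 - 2\nu/\theta$ still exceeds $1$ \emph{and} that the resulting regularity threshold $-\frac{\theta+2}{2} + \frac{\gamma\theta}{2}$ remains strictly above the target $\alpha$. Both are satisfied for all sufficiently small $\nu > 0$ since the target $\alpha$ is strictly below $\frac{\theta}{2}-1$ by hypothesis, so this is a matter of verifying the inequalities rather than a genuine obstruction. The only point requiring a little care is the elementary interpolation estimate $|1 - \chi^\ve(k)| \lesssim \min(1,\ve|k|)$, which uses the smoothness and the normalization $\chi(0) = 1$ but not the evenness (evenness and compact support of $\chi$ are needed elsewhere, not here). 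Independence of the limit $X$ from $\chi$ is immediate because $Q_0^{X}$ itself does not involve $\chi$.
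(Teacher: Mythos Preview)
Your proof is correct and follows essentially the same approach as the paper: both verify the pointwise bound \eqref{eq:180417-2} for $Q_0^X$ with $\gamma=2$, $\delta=0$, and then handle the difference $X-X^{\ve}$ by the interpolation estimate $|1-\chi^{\ve}(k)|\lesssim(\ve|k|)^{\nu}$ to obtain a slightly reduced $\gamma$ together with the factor $\ve^{\nu}$, before invoking Lemma~\ref{lm.0405_2B}. The only difference is notational (your $\nu$ versus the paper's $a$), and your bookkeeping of the exponents is in fact a bit cleaner.
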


\begin{proof}
From (\ref{eq.0406_1})
\[
|Q_0^{X,\ve} (\sigma,k) |^2
\le
|Q_0^{X} (\sigma,k) |^2 
=
\frac{1}{ (2\pi \sigma)^2 + \{ (2\pi |k|)^{\theta} +1 \}^2 }
\lesssim 
|(\sigma, k)|_*^{-4},
\]
so that (\ref{eq:180417-2}) holds
with $\gamma =2$, $\delta =0$,
so that our assumption $\alpha < \frac{\theta}{2} -1$
matches (\ref{eq:180510-1}).
Then, we can use Lemma \ref{lm.0405_2B}
to see $X, X^{\ve} \in L^p (\Omega, {\mathbf P}; C_T {\mathcal C}^{ \alpha})$
if $\alpha < \frac{\theta}{2} -1$.

Next, we prove $X^{\ve}$ is Cauchy.
Since $X$ and $X^{\ve}$ have a good kernel whose $Q$-functions
are given by $Q^X_0$ and $Q^{X,\ve}_0$,
respectively, 
$X - X^{\ve}$ also has a good kernel
whose $Q$-function is given by $Q_0^{X} - Q_0^{X,\ve}$.
For every sufficiently small $a >0$, we have
\begin{align*}
|Q_0^{X} (\sigma,k)- Q_0^{X,\ve} (\sigma,k)|^2
&\lesssim
| 1 - \chi(\ve k) |^2 |Q_0^{X} (\sigma,k) |^2 
\\
&\lesssim
\frac{\ve^{\frac12\theta a} |k|^{\frac12 \theta a}}{ (2\pi \sigma)^2 + \{ (2\pi |k|)^{\theta} +1 \}^2 }
\lesssim 
\ve^{\theta a} |(\sigma, k)|_*^{- (4-a)}.
\end{align*}
Thus,
(\ref{eq:180417-2}) is fulfilled.
We use Lemma \ref{lm.0405_2B} with $\gamma =2 -a/2 \in (1,\infty)$, $\delta =0$
to prove the convergence of $X- X^{\ve}$ 
in $L^p (\Omega, {\mathbf P}; C_T {\mathcal C}^{ \alpha})$
as $\ve \searrow 0$
if $\alpha < \frac{\theta}{2} -1$
since the constant $a$ is arbitrarily small.
%
%
%
%
\end{proof}


Next, we study $V={\mathcal I} [\nabla X]$.
We will check that $V_i^{\ve} ={\mathcal I} [\partial_i X^{\ve}]$ $(i =1,2)$ 
is convergent 
as $\ve \searrow 0$ if the Besov regularity 
 is smaller than
$\frac32 \theta -2$.

From \eqref{eq.0914_1} we have
\begin{equation} \nonumber 
(V_{i}^{\ve})_{t,x}
=
\sum\limits_{k \in {\bf Z^2}} \chi^{\ve} (k)
{\bf e}_k (x)
(2\pi \sqrt{-1} k^i)
 \int_{- \infty}^{\infty}
 du h(t-u, k) 
 \int_{- \infty}^{\infty}
h(u-s, k) \hat{\xi}_s (k) ds
 \end{equation}
and the $H$-function and the $Q$-function in this case are given by
\begin{eqnarray} \nonumber 
H^{V_i}_t (s,k) 
&=& 
2\pi \sqrt{-1} k^i
 \int_{- \infty}^{\infty}
 du h(t-u, k) 
h(u-s, k),
\\
Q^{V_i}_t (\sigma,k) &=& 
\frac{e^{- 2\pi \sqrt{-1} \sigma t} 2\pi \sqrt{-1} k^i}
{ \{ -2\pi \sqrt{-1} \sigma + (2\pi |k|)^{\theta} +1 \}^2 }
=
e^{-2\pi \sqrt{-1} \sigma t} Q^{V_i}_0 (\sigma,k).
\nonumber
\end{eqnarray}
We set
\begin{eqnarray}
H_t^{V_i,\ve} (s,k) &=& \chi^{\ve} (k)H_t^{V_i} (s,k), 
\qquad
Q_t^{V_i,\ve} (\sigma,k)= \chi^{\ve} (k)Q_t^{V_i} (\sigma,k).
\nonumber
\end{eqnarray}
This shows that $V_{i}^{\ve}$ has a good kernel,
whose $H$-function and $Q$-function are 
given by $H_t^{V_i,\ve}$ and $Q_t^{V_i,\ve}$, respectively.
It will turn out that $Q_0^{V_i}$ satisfies 
the assumption of Lemma \ref{lm.0405_2B}.
The corresponding process is denoted by $V_i$.
Since $Q_0^{V_i}$ is independent of $\chi$, so is $V_i$ ($i=1,2$).

\begin{lemma}\label{lm.0406_b}
Let $\theta \in (2/3,2]$.
Then
for every $\alpha < \frac{3}{2}\theta -2$, $1 < p <\infty$ and $i=1,2$, 
\[
{\mathbf E} [ \| V_i \|_{C_T {\mathcal C}^{ \alpha}}^p ]
<\infty,
\qquad
\lim_{\ve \searrow 0}
{\mathbf E} [ \| V_i - V_i^{\ve} \|_{C_T {\mathcal C}^{ \alpha}}^p ]
=0.
\]
\end{lemma}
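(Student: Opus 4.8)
The plan is to mimic the proof of Lemma \ref{lm.0406_a} almost verbatim, the only changes being the bookkeeping of the exponents of $|\mu|_*$. The key point is that $V_i$ and $V_i^{\ve}$ have good kernels in the sense of Definitions \ref{def.gk2}, so that Lemma \ref{lm.0405_2B} is directly applicable once one identifies the decay of $Q_0^{V_i}$.

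First I would record the pointwise bound on the $Q$-function. From the explicit formula
\[
Q^{V_i}_0 (\sigma,k)
=
\frac{2\pi \sqrt{-1} k^i}
{ \{ -2\pi \sqrt{-1} \sigma + (2\pi |k|)^{\theta} +1 \}^2 },
\]
one has $|Q_0^{V_i}(\sigma,k)|^2 \lesssim |k|^2 |(\sigma,k)|_*^{-8} \lesssim |(\sigma,k)|_*^{2/\theta \cdot \theta} |(\sigma,k)|_*^{-8}$; more precisely, using $|k|^{\theta/2} \lesssim |(\sigma,k)|_*$ we get $|k|^2 \lesssim |(\sigma,k)|_*^{4/\theta}$, whence
\[
|Q_0^{V_i}(\sigma,k)|^2 \lesssim |(\sigma,k)|_*^{-8 + 4/\theta} = |(\sigma,k)|_*^{-2\gamma}
\]
with $\gamma = 4 - 2/\theta$, and $\delta = 0$. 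Since $\theta > 2/3$, we have $\gamma = 4 - 2/\theta > 1$, so the hypothesis of Lemma \ref{lm.0405_2B} (with $n=1$, $d=2$) is met; and of course $|Q_0^{V_i,\ve}| = |\chi^\ve(k)||Q_0^{V_i}| \le |Q_0^{V_i}|$, so the same bound holds for the approximation. Condition \eqref{eq:180510-1} then reads
\[
\alpha < -\frac{\theta+2}{2} + \frac{\gamma\theta}{2} = -\frac{\theta+2}{2} + \frac{(4-2/\theta)\theta}{2} = -\frac{\theta}{2} - 1 + 2\theta - 1 = \frac{3}{2}\theta - 2,
\]
which is exactly the stated range of $\alpha$. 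Applying Lemma \ref{lm.0405_2B} with $\kappa = 0$ gives ${\mathbf E}[\|V_i\|_{C_T{\mathcal C}^\alpha}^p] < \infty$ and ${\mathbf E}[\|V_i^\ve\|_{C_T{\mathcal C}^\alpha}^p] < \infty$ for all $1 < p < \infty$.

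For the convergence, I would note that $V_i - V_i^\ve$ also has a good kernel, with $Q$-function $Q_0^{V_i} - Q_0^{V_i,\ve} = (1 - \chi(\ve k))Q_0^{V_i}$. For small $a > 0$, the elementary estimate $|1 - \chi(\ve k)| \lesssim (\ve|k|)^{a/2 \cdot \theta/\theta} \lesssim \ve^{\theta a/2}|k|^{\theta a/2}$ (valid since $\chi$ is smooth with $\chi(0)=1$) yields $|1-\chi(\ve k)|^2 \lesssim \ve^{\theta a}|(\sigma,k)|_*^{a}$, hence
\[
|Q_0^{V_i}(\sigma,k) - Q_0^{V_i,\ve}(\sigma,k)|^2 \lesssim \ve^{\theta a}|(\sigma,k)|_*^{-2\gamma + a} = \ve^{\theta a}|(\sigma,k)|_*^{-2(\gamma - a/2)},
\]
so condition \eqref{eq:180223-10} holds with $\gamma' = \gamma - a/2$ (still $> 1$ for $a$ small), $\delta = 0$, and $M \lesssim \ve^{\theta a/2}$. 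Since $a$ can be taken arbitrarily small, \eqref{eq:180510-1} for $\gamma'$ still allows any $\alpha < \frac{3}{2}\theta - 2$, and Lemma \ref{lm.0405_2B} gives ${\mathbf E}[\|V_i - V_i^\ve\|_{C_T{\mathcal C}^\alpha}^p] \lesssim \ve^{p\theta a/2} \to 0$ as $\ve \searrow 0$. I expect no real obstacle here: the argument is a direct transcription of Lemma \ref{lm.0406_a}, and the only thing to be careful about is the exponent arithmetic converting powers of $|k|$ into powers of $|(\sigma,k)|_*$ via $|k|^{\theta/2} \lesssim |(\sigma,k)|_*$, together with checking $\gamma > 1$, which is where the constraint $\theta > 2/3$ enters. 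Finally, independence of the limit $V_i$ from $\chi$ follows because $Q_0^{V_i}$ does not involve $\chi$, as already noted before the statement.
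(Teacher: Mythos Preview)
Your proof is correct and follows essentially the same approach as the paper: identify $\gamma = 4 - 2/\theta$, $\delta = 0$, check $\gamma > 1$ precisely when $\theta > 2/3$, verify that \eqref{eq:180510-1} gives exactly $\alpha < \tfrac32\theta - 2$, and then invoke Lemma \ref{lm.0405_2B}; the convergence step via $|1-\chi(\ve k)| \lesssim (\ve|k|)^{b}$ is likewise the same as in Lemma \ref{lm.0406_a}. There is a harmless bookkeeping slip in your difference estimate (the exponent on $|(\sigma,k)|_*$ should be $2a$ rather than $a$, giving $\gamma' = \gamma - a$), but since $a>0$ is arbitrarily small this does not affect the conclusion.
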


\begin{proof}
We set
\[
(\gamma,\delta)=\left(4-\frac{2}{\theta},0\right)
\]
to apply Lemma \ref{lm.0405_2B},
so that our assumption $\alpha < \frac{3\theta}{2} -2$
matches (\ref{eq:180510-1}).
Note that $4 -2/\theta >1$ if and only if $\theta >2/3$.
In this case, we have 
\[
|Q_0^{V_i,\ve} (\sigma,k) |^2 
\lesssim
|Q_0^{V_i} (\sigma,k) |^2 
\lesssim
\frac{|k|^2}{[ (2\pi \sigma)^2 + \{ (2\pi |k|)^{\theta} +1 \}^2 ]^2 }
\lesssim 
|(\sigma, k)|_*^{- 2\gamma}.
\]
The rest is omitted since it is 
quite similar to the proof of Lemma \ref{lm.0406_a}.
\end{proof}


\subsection{Convergence of $Y:= {\mathcal I}[R^\perp X \cdot \nabla X]$}

In this subsection 
we prove that 
\[
Y^{\ve}:= {\mathcal I}[R_2 X^{\ve} \cdot \partial_1 X^{\ve}] 
- {\mathcal I}[R_1 X^{\ve} \cdot \partial_2 X^{\ve}]
\]
is convergent 
as $\ve \searrow 0$ if the Besov regularity 
is smaller than $2\theta -3$.

Firstly, $R_i X_{t,x}^{\ve}$ has the following expression ($i=1,2$):
\begin{eqnarray}\label{eq.0406_2}
R_i X_{t,x}^{\ve} 
&=&
\sum\limits_{k=(k^1, k^2) \in {\bf Z^2}} \chi^{\ve} (k)
\frac{2\pi \sqrt{-1}k^i}{2\pi |k|} {\bf 1}_{k \neq 0}
{\bf e}_k (x)
 \int_{- \infty}^{\infty}
h(t-s, k) \hat{\xi}_s (k) ds
\\
&=&
{\mathcal J}_1
\Bigl(
\sum\limits_{k \in {\bf Z^2}} \chi^{\ve} (k)
\frac{2\pi \sqrt{-1}k^i}{2\pi |k|} {\bf 1}_{k \neq 0}
{\bf e}_k (x- \star)
 h(t- \bullet, k)
\Bigr),
\nonumber
 \end{eqnarray}
whose $H$-function and $Q$-function are as follows:
For $(s,k), (\sigma,k) \in \hat{E}$,
\begin{eqnarray}\label{eq.0406_3}
H^{R_i X}_t (s,k) &=& h(t-s, k) \frac{2\pi \sqrt{-1}k^i}{2\pi |k|} {\bf 1}_{k \neq 0},
\\
Q^{R_i X}_t (\sigma,k) &=& 
\frac{e^{-2\pi \sqrt{-1} \sigma t}}{- 2\pi \sqrt{-1} \sigma + (2\pi |k|)^\theta +1 }
\frac{ 2\pi \sqrt{-1}k^i }{2\pi |k|} {\bf 1}_{k \neq 0}.
\nonumber
\\
H^{R_i X,\ve}_t (s,k) &=& \chi^{\ve} (k) H^{R_i X}_t (s,k),
\qquad 
Q^{R_i X,\ve}_t (\sigma,k) = \chi^{\ve} (k) Q^{R_i X}_t (\sigma,k).
\nonumber
\end{eqnarray}
Here, $h$ is given by (\ref{def.h}).

Secondly, $\partial_j X_{t,x}^{\ve}$ has the following expression
($j =1,2$):
\begin{eqnarray}\label{eq.0406_4}
\partial_j X_{t,x}^{\ve} 
&=&
\sum\limits_{l=(l^1, l^2) \in {\bf Z^2}} \chi^{\ve} (l)
(2\pi \sqrt{-1} l^j)
{\bf e}_l (x)
 \int_{- \infty}^{\infty}
h(t-s, l) \hat{\xi}_s (l) ds
\\
&=&
{\mathcal J}_1
\Bigl(
\sum\limits_{l \in {\bf Z^2}} \chi^{\ve} (l)
(2\pi \sqrt{-1} l^j)
{\bf e}_l (x- \star)
 h(t- \bullet, l)
\Bigr).
\nonumber
 \end{eqnarray}
In this case, 
the $H$-function and the $Q$-function are as follows:
For $(s,k), (\sigma,l) \in \hat{E}$,
\begin{eqnarray}\label{eq.0406_5}
H^{\partial_j X}_t (s,l) &=& h(t-s, l) (2\pi \sqrt{-1}l^j),
\\
Q^{\partial_j X}_t (\sigma,l) 
&=& 
\frac{e^{- 2\pi \sqrt{-1} \sigma t} 2\pi \sqrt{-1}l^j}
{- 2\pi \sqrt{-1} \sigma + (2\pi |l|)^\theta +1 }
\nonumber
\\
H^{\partial_j X,\ve}_t (s,l) &=& \chi^{\ve} (l) H^{\partial_j X}_t (s,l),
\qquad
Q^{\partial_j X,\ve}_t (\sigma,l) = \chi^{\ve} (l) Q^{\partial_j X}_t (\sigma,l).
\nonumber
\end{eqnarray}

Hence, 
${\mathcal I}[R_i X^{\ve} \cdot \partial_j X^{\ve}]_{t,x}$ 
has the following expression:
\begin{eqnarray}\label{eq.0406_6}
{\mathcal I} [R_i X^{\ve} \cdot \partial_j X^{\ve} ]_{t,x}
&=&
 \sum\limits_{k, l \in {\bf Z^2}}
 \chi^{\ve} (k) \chi^{\ve} (l) {\bf e}_{k+l} (x)
 \int_{- \infty}^{\infty}
du
h (t-u, k+l) 
\\
&&
\times 
\frac{2\pi \sqrt{-1}k^i }{2\pi |k|} {\bf 1}_{k \neq 0} \cdot (2\pi \sqrt{-1} l^j)
\nonumber\\
&&
\times
 \int_{- \infty}^{\infty}\int_{- \infty}^{\infty}
h(u-s_1, k) h(u-s_2, l) \hat{\xi}_{s_1} (k) \hat{\xi}_{s_2} (l) ds_1 ds_2.
\nonumber
 \end{eqnarray}

Since the right-hand side of \eqref{eq.0406_6} above is a 
homogeneous polynomial in $\xi$ of degree two, 
the following Wiener chaos decomposition holds:
\[
{\mathcal I}[R_i X^{\ve} \cdot \partial_j X^{\ve}]_{t,x}
=
\Pi_2 ({\mathcal I}[R_i X^{\ve} \cdot \partial_j X^{\ve}]_{t,x}) 
+\Pi_0 ({\mathcal I}[R_i X^{\ve} \cdot \partial_j X^{\ve}]_{t,x}).
\]
Now we calculate the second order term
on the right-hand side of (\ref{eq.0406_6}).
It is easy to see that
\[
\Pi_2 ({\mathcal I}[R_i X^{\ve} \cdot \partial_j X^{\ve}]_{t,x}) 
=
{\mathcal J}_2 
\Bigl(
f_{(t,x)}^{\ve}
\Bigr),
\]
where
\begin{eqnarray}\label{eq.0406_7A}\nonumber
f_{(t,x)}^{\ve} ((y_1,s_1),(y_2,s_2))
&=&
\sum\limits_{k, l \in {\bf Z}^2}
{\bf e}_{k} (x- y_1) {\bf e}_{l} (x- y_2) \chi^{\ve} (k) \chi^{\ve} (l)
H_t ((s_1, k), (s_2, l))
\end{eqnarray}
with
\begin{eqnarray}\label{eq.0406_7B}\nonumber
H_t ((s_1, k), (s_2, l))
=
 \int_{- \infty}^{\infty}
du
h (t-u, k+l)
\Bigl[ \frac{2\pi \sqrt{-1}k^i}{2\pi |k|} {\bf 1}_{k \neq 0} h(u-s_1, k) \Bigr]
\Bigl[
 (2\pi \sqrt{-1}l^j)h(u-s_2, l) \Bigr]
 \nonumber
 \end{eqnarray}
 for
 $(s_1,k), (s_2,l) \in \hat{E}$.

It is straightforward to check that
\begin{eqnarray}\label{eq.0406xyz}
Q_t ((\sigma_1, k), (\sigma_2, l))
&=&
\frac{e^{- 2\pi \sqrt{-1} (\sigma_1 +\sigma_2) t}}
{ -2\pi \sqrt{-1} (\sigma_1 +\sigma_2) + (2\pi |k+l|)^\theta +1 }
\\
&&
\times
\frac{\sqrt{-1} k^i |k|^{-1} {\bf 1}_{k \neq 0}}{- 2\pi \sqrt{-1} \sigma_1 + (2\pi |k|)^\theta +1 }
\cdot
\frac{2\pi \sqrt{-1}l^j }{ -2\pi \sqrt{-1} \sigma_2 + (2\pi |l|)^\theta +1 }.
\nonumber
\end{eqnarray}
These show that 
$\Pi_2 ({\mathcal I}[R_1 X^{\ve} \cdot \partial_2 X^{\ve}]) $ has a good kernel
given by 
\begin{equation}\label{eq:180221-3}
Q^{\ve}_t ((\sigma_1, k), (\sigma_2, l)) = \chi^{\ve} (k)\chi^{\ve} (l)Q_t ((\sigma_1, k), (\sigma_2, l)).
\end{equation}
It will turn out that $Q_0$ above satisfies 
the assumption of Lemma \ref{lm.0405_2B} 
and hence it defines a Besov space-valued process 
$\Pi_2 ({\mathcal I}[R_i X \cdot \partial_j X])$.

In fact, the zeroth order term vanishes.
By the contraction rule
${\mathbf E} [ \hat{\xi}_{s_1} (k) \hat{\xi}_{s_2} (l) ] 
= \delta (s_1- s_2) \delta_{k+l, 0}$, 
we see from
(\ref{eq:180419-1}),
(\ref{eq:180510-2}) and (\ref{eq.0406_6}) that
\begin{eqnarray}
\Pi_0 ({\mathcal I}[R_i X^{\ve} \cdot \partial_j X^{\ve}]_{t,x}) 
&=&
{\mathbf E} [ {\mathcal I}[R_i X^{\ve} \cdot \partial_j X^{\ve}]_{t,x} ] 
\nonumber\\
&=&
 \sum\limits_{k \in {\bf Z^2}}
 \chi^{\ve} (k)^2
\frac{2\pi \sqrt{-1}k^i}{2\pi |k|} {\bf 1}_{k \neq 0} \cdot 2\pi \sqrt{-1}(-k^j)
\nonumber\\
&&
\times
 \int_{- \infty}^{\infty}
du
h (t-u, 0) 
\int_{- \infty}^{\infty}
h(u-s_1, k)^2 ds_1,
\label{eq.0606_1}
\end{eqnarray}
from which 
$\Pi_0 ({\mathcal I}[R_2 X^{\ve} \cdot \partial_1 X^{\ve}]_{t,x} 
- {\mathcal I}[R_1 X^{\ve} \cdot \partial_2 X^{\ve}]_{t,x}) =0$ immediately follows.
Thanks to this cancellation, 
we do not need
any renormalization for $Y$ given by (\ref{eq:180430-2}).

We give a main result of this subsection.
Set 
\begin{equation}\label{eq:180430-2}
Y 
:= 
\Pi_2 ({\mathcal I}[R_2 X \cdot \partial_1 X])
- 
\Pi_2 ({\mathcal I}[R_1 X \cdot \partial_2 X]),
\end{equation}
which is independent of $\chi$ by definition.

\begin{lemma}\label{lm.0406_c}
Let $\theta \in (3/2, 2]$.
Then, for every $0<\alpha < 2\theta -3$ and $1 < p <\infty$, it holds that
\[ {\mathbf E}
 [\|Y\|_{C_T {\mathcal C}^{ \alpha } }^p 
 +
 \|Y\|_{C_T^{ \alpha/\theta} {\mathcal C}^{ 0 } }^p
 ]
 <\infty,
\qquad
\lim_{\ve \searrow 0} {\mathbf E}
 [\|Y-Y^{\ve}\|_{C_T {\mathcal C}^{ \alpha } }^p 
 +
 \|Y -Y^{\ve}\|_{C_T^{ \alpha/\theta} {\mathcal C}^{ 0 } }^p
 ]
=0.
\]
\end{lemma}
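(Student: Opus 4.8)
The plan is to exhibit $Y$ and each $Y^{\ve}$ as $\mathcal D'$-valued processes with a good kernel in the sense of Definition \ref{def.gk2}, to verify the decay condition \eqref{eq:180223-10} with an exponent $\gamma>1$, and then to read off both regularity statements from Lemma \ref{lm.0405_2B}. Recall from \eqref{eq.0606_1} that the zeroth Wiener chaos of ${\mathcal I}[R_2X^{\ve}\cdot\partial_1X^{\ve}]-{\mathcal I}[R_1X^{\ve}\cdot\partial_2X^{\ve}]$ vanishes identically, so $Y^{\ve}$ is purely of second chaos, with $Q$-function equal to $\chi^{\ve}(k)\chi^{\ve}(l)$ times the sum over $(i,j)=(2,1),(1,2)$ of the functions in \eqref{eq.0406xyz}, and likewise $Y$ (see \eqref{eq:180430-2}) has the corresponding $\chi$-free $Q$-function; in particular no renormalization is needed. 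I would freeze notation by writing $\mu=(\sigma,m)$ with $m=k+l$, $\mu_1=(\sigma_1,k)$ and $\mu-\mu_1=(\sigma_2,l)$ with $\sigma_2=\sigma-\sigma_1$, so that the first (parabolic) resolvent in \eqref{eq.0406xyz} has frozen argument $\sigma_1+\sigma_2=\sigma$ and factors out of the $\mu_1$-integral.

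The key estimate is the elementary resolvent bound $|-2\pi\sqrt{-1}\tau+(2\pi|k|)^{\theta}+1|^{-1}\lesssim|(\tau,k)|_*^{-2}$. Applying it to the three resolvents in \eqref{eq.0406xyz} and using $|k^i|/|k|\le1$ together with $|l^j|\le|l|\lesssim|l|_*$ and $|l|_*^2\lesssim|\mu-\mu_1|_*^{4/\theta}$, I get
\[
|Q_0^{Y}(\mu_1,\mu-\mu_1)|^2
\lesssim
\frac{|l|_*^{2}}{|\mu|_*^{4}\,|\mu_1|_*^{4}\,|\mu-\mu_1|_*^{4}}
\lesssim
\frac{1}{|\mu|_*^{4}\,|\mu_1|_*^{4}\,|\mu-\mu_1|_*^{4-4/\theta}}.
\]
Integrating in $\mu_1$ over $\hat{E}$ and applying Lemma \ref{lm.0405_1} with $d=2$ and exponents $4$ and $4-4/\theta$ (whose hypotheses hold for $\theta\in(4/3,2]$; at the endpoint $\theta=2$ the convolution is only logarithmically divergent, which costs an arbitrarily small loss in the exponent below) yields
\[
\int_{\hat{E}}|Q_0^{Y}(\mu_1,\mu-\mu_1)|^2\,d\mu_1
\lesssim|\mu|_*^{-4}\cdot|\mu|_*^{-6+8/\theta}=|\mu|_*^{-10+8/\theta}.
\]
Hence \eqref{eq:180223-10} holds with $\gamma=5-\tfrac4\theta$ (minus an arbitrarily small $\ve$ when $\theta=2$) and $\delta=0$, and $\gamma>1$ since $\theta>1$. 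The same computation applies verbatim to each summand $\Pi_2({\mathcal I}[R_iX\cdot\partial_jX])$ separately, so no cancellation among the two terms is needed here.

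Lemma \ref{lm.0405_2B} then gives ${\mathbf E}[\|Y\|^{2p}_{C_T^{\kappa}{\mathcal C}^{\alpha-\theta\kappa}}]\lesssim1$ whenever $\alpha<-\tfrac{\theta+2}{2}+\tfrac{\gamma\theta}{2}=2\theta-3$ and $\kappa\in[0,\tfrac{\gamma-1}{2}\wedge1)=[0,(2-\tfrac2\theta)\wedge1)$. Taking $\kappa=0$ gives the $C_T{\mathcal C}^{\alpha}$ bound; taking $\kappa=\alpha/\theta$, which is admissible because $0<\alpha<2\theta-3<2\theta-2=\theta(2-\tfrac2\theta)$ and $\alpha<\theta$, gives the $C_T^{\alpha/\theta}{\mathcal C}^{0}$ bound. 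For the convergence, $Y-Y^{\ve}$ again has a good kernel, with $Q$-function $(1-\chi^{\ve}(k)\chi^{\ve}(l))Q_0^{Y}$; since $\chi$ is smooth with $\chi(0)=1$ one has $|1-\chi^{\ve}(k)\chi^{\ve}(l)|\lesssim(\ve|k|)^{b}+(\ve|l|)^{b}$ for any $b\in(0,1]$, so distributing $|k|_*^{2b}\lesssim|\mu_1|_*^{4b/\theta}$ and $|l|_*^{2b}\lesssim|\mu-\mu_1|_*^{4b/\theta}$ into the denominators above verifies \eqref{eq:180223-10} for $Q_0^{Y}-Q_0^{Y,\ve}$ with $M\lesssim\ve^{b}$, $\delta=0$ and $\gamma$ lowered to $5-\tfrac4\theta-\tfrac{2b}{\theta}$, still larger than $1$ for $b$ small. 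Lemma \ref{lm.0405_2B} then gives ${\mathbf E}[\|Y-Y^{\ve}\|^{2p}_{C_T^{\kappa}{\mathcal C}^{\alpha-\theta\kappa}}]\lesssim\ve^{2bp}$ provided $\alpha<2\theta-3-b$; given $\alpha<2\theta-3$ one picks $b<2\theta-3-\alpha$ and lets $\ve\searrow0$. The one point that requires genuine care is the endpoint $\theta=2$, where the relevant space-time convolution is logarithmically rather than power divergent; one must check that this logarithmic loss is harmless, which it is because $\gamma$ stays comfortably above $1$.
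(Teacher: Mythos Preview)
Your proof is correct and follows essentially the same route as the paper: bound the $Q$-function pointwise by $|\mu|_*^{-4}|\mu_1|_*^{-4}|\mu-\mu_1|_*^{-(4-4/\theta)}$, apply Lemma~\ref{lm.0405_1} to get the decay exponent $\gamma=5-4/\theta$, and then invoke Lemma~\ref{lm.0405_2B}. You supply a bit more detail than the paper (the explicit choice $\kappa=\alpha/\theta$ to extract the H\"older-in-time norm, and the convergence argument via $|1-\chi^{\ve}(k)\chi^{\ve}(l)|\lesssim (\ve|k|)^b+(\ve|l|)^b$), and your flag on the endpoint $\theta=2$---where $\alpha=4$ hits the boundary of the hypothesis of Lemma~\ref{lm.0405_1} and one must sacrifice an arbitrarily small amount of the exponent---matches the paper's remark that this case ``is just a slight modification.''
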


\begin{proof}
We prove the lemma for $\theta \in (3/2, 2)$.
The proof for the case $\theta =2$ is just a slight modification.

It is sufficient to estimate $\Pi_2 ({\mathcal I}[R_i X^{\ve} \cdot \partial_j X^{\ve}]_{t,x})$ 
and $\Pi_2 ({\mathcal I}[R_i X \cdot \partial_j X]_{t,x})$ 
for $i \neq j$.
Let $Q_t^{\ve}$ be the corresponding $Q$-function given by \eqref{eq.0406xyz}
 and \eqref{eq:180221-3}.
We use Lemma \ref{lm.0405_2B} with
\[
(\gamma,\delta)=\left(5-\frac{4}{\theta},0\right).
\]
Let
$(\sigma_1,k), (\sigma_2,l) \in E$.
It is easy to see from (\ref{eq.0406xyz}) and Lemma \ref{lm.0405_1} that 
\[
|Q^{\ve}_0 ((\sigma_1, k), (\sigma_2, l)) |^2 
\lesssim
|Q_0 ((\sigma_1, k), (\sigma_2, l)) |^2 
\lesssim 
|(\sigma_1 +\sigma_2, k+l)|_*^{-4}
|(\sigma_1, k)|_*^{-4}
|(\sigma_2, l)|_*^{-4 (1 -\frac{1}{\theta})}.
\]
Then, if $\theta \in (3/2, 2)$, we can use Lemma \ref{lm.0405_1} to obtain that
\[
\int_{\hat{E}}
 |Q^{\ve}_0 ( (\sigma_1, k), (\tau-\sigma_1, m-k)) |^2
\, d(\sigma_1, k)
\lesssim
 |(\tau, m) |_*^{- 2\gamma},
\]
where the implicit constant is independent of $\ve$.
Now we use Lemma \ref{lm.0405_2B}.
The rest is similar and is omitted.
\end{proof}


\subsection{Convergence of $\hat{W}:=R^{\perp} X \reso \nabla {\mathcal I} [\nabla X]$}

In this subsection we prove that 
\[\hat{W}_i^{\ve}=
R_2 X^{\ve} \reso \partial_1 {\mathcal I}[\partial_i X^{\ve} ]
- R_1 X^{\ve} \reso \partial_2 {\mathcal I}[\partial_i X^{\ve} ]
\qquad
\mbox{($i=1,2$).}
\]
is convergent 
as $\ve \searrow 0$ if the Besov regularity 
 is smaller than $2\theta -4$.

It is straightforward to see that
$ \partial_j {\mathcal I}[\partial_i X^{\ve} ]$ has the following expression ($i,j =1,2$):
\begin{eqnarray}\label{eq.0407_1}
\partial_j {\mathcal I} [\partial_i X^{\ve} ]_{t,x}
&=&
 \sum\limits_{l \in {\bf Z^2}}
 \chi^{\ve} (l) {\bf e}_{l} (x) (2\pi \sqrt{-1} l^j)
 \\
 && \qquad
\times
 \int_{- \infty}^{\infty}
 du h(t-u, l)
 \int_{- \infty}^{\infty} H_u^{ \partial_i X} (s,l) 
\hat{\xi}_{s} (l) ds,
 \nonumber\
 \end{eqnarray}
where 
$H_u^{ \partial_i X}$ is the $H$-function for $ \partial_i X$ given by (\ref{eq.0406_5}).

Hence, we can easily see from (\ref{eq.0406_3}) that
\begin{eqnarray}\label{eq.0407_2}
\lefteqn{
[R_2 X^{\ve}]_{t,x} \reso
\partial_1 {\mathcal I} [\partial_i X^{\ve} ]_{t,x}
}
\\
&=&
 \sum\limits_{k, l \in {\bf Z^2}}
 \chi^{\ve} (k) \chi^{\ve} (l) 
 \psi_{\circ} (k,l) {\bf e}_{k+l} (x) 
 \int_{- \infty}^{\infty} H_t^{ R_2 X} (s_1,k) 
\hat{\xi}_{s_1} (k) ds_1 
 \nonumber\\
 && 
\times
 (2\pi \sqrt{-1} l^1)
 \int_{- \infty}^{\infty}
 du h(t-u, l)
 \int_{- \infty}^{\infty} H_u^{ \partial_i X} (s_2,l) 
\hat{\xi}_{s_2} (l) ds_2
\nonumber\\
&=&
\Pi_2 ( [R_2 X^{\ve}]_{t,x} \reso
\partial_1 {\mathcal I} [\partial_i X^{\ve} ]_{t,x}) 
+\Pi_0 ([R_2 X^{\ve}]_{t,x} \reso
\partial_1 {\mathcal I} [\partial_i X^{\ve} ]_{t,x}).
 \nonumber\
 \end{eqnarray}

We calculate the second order term 
on the right-hand side of (\ref{eq.0407_2}).
It holds that
\[
 \Pi_2 ( [R_2 X^{\ve}]_{t,x} \reso
\partial_1 {\mathcal I} [\partial_i X^{\ve} ]_{t,x}) 
=
{\mathcal J}_2 
\Bigl(
f_{(t,x)}^{\ve}
\Bigr),
\]
where
\begin{eqnarray}\label{eq.0406_7C}\nonumber
f_{(t,x)}^{\ve} ((s_1, y_1),(s_2, y_2))
&=&
\sum\limits_{k, l \in {\bf Z}^2}
{\bf e}_{k} (x- y_1) {\bf e}_{l} (x- y_2) \chi^{\ve} (k) \chi^{\ve} (l)
H_t ((s_1, k), (s_2, l))
\end{eqnarray}
with
\begin{eqnarray}\label{eq.0406_7}
H_t ((s_1, k), (s_2, l))
=
 \psi_{\circ} (k,l) H_t^{ R_2 X} (s_1,k) (2\pi \sqrt{-1} l^1)
 \int_{- \infty}^{\infty}
 du h(t-u, l)
 H_u^{ \partial_i X} (s_2,l).
 \end{eqnarray}
Recall that
$H^{ R_i X}$ is the $H$-function for $ R_i X$ given by (\ref{eq.0406_3}).

By taking the Fourier transform with respect to the time variables, we have
\begin{eqnarray}\label{eq.0407_3}
Q_t ((\sigma_1, k), (\sigma_2, l))
&=&
e^{- 2\pi \sqrt{-1} (\sigma_1 +\sigma_2) t} 
\psi_{\circ} (k,l)
Q_0^{ R_2 X} (\sigma_1,k) (2\pi \sqrt{-1} l^1)
\\
&&
\qquad
\times
\frac{ Q_0^{ \partial_i X} (\sigma_2,l) }
{ - 2\pi \sqrt{-1} \sigma_2 + (2\pi |l|)^\theta +1 }
\nonumber\\
&=&
e^{- 2\pi \sqrt{-1} (\sigma_1 +\sigma_2) t} \psi_{\circ} (k,l)
\frac{\sqrt{-1} k^2 |k|^{-1} {\bf 1}_{k \neq 0}}
{- 2\pi \sqrt{-1} \sigma_1 + (2\pi |k|)^\theta +1 }
\nonumber\\
&&
\qquad
\times
\frac{ 2\pi \sqrt{-1} l^1 }
{ -2\pi \sqrt{-1} \sigma_2 + (2\pi |l|)^\theta +1 }
\cdot 
\frac{ 2\pi \sqrt{-1} l^i }
{- 2\pi \sqrt{-1} \sigma_2 + (2\pi |l|)^\theta +1 }.
\nonumber
\end{eqnarray}
This shows that 
$ \Pi_2 ( R_2 X^{\ve} \reso
\partial_1 {\mathcal I} [\partial_i X^{\ve} ]) $ has the following good kernel:
\begin{equation}\label{eq:180222-1}
Q_t^{\ve} ((\sigma_1, k), (\sigma_2, l))
=
 \chi^{\ve} (k) \chi^{\ve} (l) Q_t ((\sigma_1, k), (\sigma_2, l)).
\end{equation}
It will turn out that $Q_0$ above satisfies 
the assumption of Lemma \ref{lm.0405_2B} 
and hence it defines a Besov space-valued process 
$\Pi_2 ( R_2 X \reso
\partial_1 {\mathcal I} [\partial_i X ])$.
The computation for 
$ R_1 X^{\ve} \reso
\partial_2 {\mathcal I} [\partial_i X^{\ve} ]$
is essentially the same
and $\Pi_2 ( R_1 X \reso
\partial_2 {\mathcal I} [\partial_i X ])$
is also well defined.

Next we show the zeroth order term vanishes.
Using the contraction rule (\ref{eq:180510-2}) to (\ref{eq.0407_2})
and the fact that $ \psi_{\circ} (k, -k) = \psi_{\circ} (k,k)=1$,
\begin{eqnarray*} 
\Pi_0 ([R_2 X^{\ve}]_{t,x} \reso
\partial_1 {\mathcal I} [\partial_i X^{\ve} ]_{t,x})
&=&
\sum\limits_{l \in {\bf Z^2}}
\chi^{\ve} (l)^2
 (2\pi \sqrt{-1} l^1)
 \nonumber\\
 && 
\times \int_{- \infty}^{\infty} du h(t-u, l)
 \int_{- \infty}^{\infty} 
 H_t^{ R_2 X} (s, -l) 
 H_u^{ \partial_i X} (s,l) ds
 \nonumber\\
 &=& 
 \sum\limits_{l \in {\bf Z^2}}
\chi^{\ve} (l)^2 
 (2\pi \sqrt{-1} l^1) \frac{ \sqrt{-1} (-l^2) {\bf 1}_{l \neq 0}}{|l|} (2\pi \sqrt{-1} l^i)
 \nonumber\\
 && 
\times \int_{- \infty}^{\infty} du h(t-u, l)
 \int_{- \infty}^{\infty} ds h(t-s, -l)h(u-s, l)
 \nonumber\\ 
 &=& 
 \sum\limits_{l \in {\bf Z^2}}
\chi^{\ve} (l)^2 
 (2\pi \sqrt{-1} l^1) \frac{ \sqrt{-1} (- l^2) {\bf 1}_{l \neq 0}}{|l|} 
\frac{ 2\pi \sqrt{-1} l^i}
{ 4 \{ (2\pi |l|)^\theta +1\}^{2}}.
 \nonumber\\ 
 \end{eqnarray*}
Hence, 
$\Pi_0 ([R_2 X^{\ve}]_{t,x} \reso
\partial_1 {\mathcal I} [\partial_i X^{\ve} ]_{t,x} 
-[R_1 X^{\ve}]_{t,x} \reso
\partial_2 {\mathcal I} [\partial_i X^{\ve} ]_{t,x}) =0$.
Thanks to this cancellation, 
we do not need any renormalization for $\hat{W}$
 given by (\ref{eq:180430-3}) below. 


Now we set 
\begin{equation}\label{eq:180430-3}
\hat{W}_i =\Pi_2 ( R_2 X \reso
\partial_1 {\mathcal I} [\partial_i X ])-
\Pi_2( R_1 X \reso
\partial_2 {\mathcal I} [\partial_i X ])
\end{equation}
for $i=1,2$.

\begin{lemma}\label{lm.0407_b}
Let $\theta \in (3/2, 2]$.
Then, for every $\alpha < 2\theta -4$, $1 < p <\infty$ and $i=1,2$, 
\[
{\mathbf E}
 [\|\hat{W}_i\|_{C_T {\mathcal C}^{ \alpha } }^p 
 ]<\infty
,\qquad
\lim_{\ve \searrow 0} {\mathbf E}
 [\|\hat{W}_i- \hat{W}^{\ve}_i\|_{C_T {\mathcal C}^{ \alpha } }^p 
 ]
=0.
\]
\end{lemma}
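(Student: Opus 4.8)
The strategy is exactly the one used for $Y$ in Lemma \ref{lm.0406_c}: reduce everything to an application of the key estimate Lemma \ref{lm.0405_2B} applied to the good kernel of the second-chaos component, after noting that the zeroth-chaos component has already been shown to vanish. More precisely, it suffices to treat $\Pi_2 ( R_2 X^{\ve} \reso \partial_1 {\mathcal I}[\partial_i X^{\ve}])$ and its $\ve$-free counterpart $\Pi_2 ( R_2 X \reso \partial_1 {\mathcal I}[\partial_i X])$, together with the analogous pair with $R_1$ and $\partial_2$; the difference $\hat W_i^{\ve}$ is then a sum of four such terms, and since $\hat W_i$ is defined as the matching sum of the $\ve$-free limits, convergence of each piece yields convergence of $\hat W_i^{\ve} \to \hat W_i$ and finiteness of $\mathbf E[\|\hat W_i\|^p_{C_T {\mathcal C}^{\alpha}}]$ by the triangle inequality in $L^p(\Omega)$. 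The vanishing $\Pi_0(\,\cdots\,) = 0$ established above means no renormalization is needed, which is why the Wiener-chaos decomposition reduces to a single term.

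\textbf{Choice of parameters.} The $Q$-function $Q_t$ of $\Pi_2 ( R_2 X \reso \partial_1 {\mathcal I}[\partial_i X])$ is given explicitly by \eqref{eq.0407_3}. Bounding each of the three resolvent-type factors crudely gives
\[
|Q_0^{\ve}((\sigma_1,k),(\sigma_2,l))|^2
\lesssim
\psi_{\circ}(k,l)^2
\,|(\sigma_1,k)|_*^{-2\theta}
\,|(\sigma_2,l)|_*^{-2\theta}
\,|l|_*^{2}\,|(\sigma_2,l)|_*^{-2\theta},
\]
where I have used $|l^1|,|l^i| \lesssim |l|_* $ and $|k^2|/|k| \le 1$. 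Since $\psi_{\circ}(k,l)$ is supported on $|k|_* \approx |l|_*$ (Lemma \ref{lm.0405_3}(i)), on the support we may freely trade powers of $|k|_*$ and $|l|_*$. One then integrates in $(\sigma_1,\sigma_2)$ using \eqref{eq:180329-4} to turn each $|(\sigma_j,\cdot)|_*^{-2\theta}$ into a power of $|\cdot|_*$, and integrates the resulting summable expression in $k$ over the diagonal $|k|_* \approx |l|_*$, and finally over the hyperplane $k+l = m$ fixed, using Lemma \ref{lm.0405_1}, to obtain a bound of the form $|(\tau,m)|_*^{-2\gamma}|m|_*^{-2\delta}$. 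Carrying the bookkeeping out (this is the routine part, mirroring the computation behind \eqref{eq:180220-2}) one finds that one may take $\delta = 0$ and $\gamma$ so that $-\tfrac{\theta+2}{2} + \tfrac{\gamma\theta}{2}+\delta = 2\theta - 4$, i.e. $\gamma = 5 - \tfrac{6}{\theta}$; the condition $\gamma > 1$ of Lemma \ref{lm.0405_2B} is then precisely $\theta > 3/2$, matching the hypothesis. Then Lemma \ref{lm.0405_2B} gives $\mathbf E[\|\,\cdot\,\|^{2p}_{C_T {\mathcal C}^{\alpha}}] < \infty$ for all $\alpha < 2\theta - 4$ and all $p$.

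\textbf{Convergence in $\ve$.} As in the proof of Lemma \ref{lm.0406_a}, the process $\Pi_2(\cdots) - \Pi_2(\cdots)^{\ve}$ again has a good kernel, namely $(1 - \chi^{\ve}(k)\chi^{\ve}(l)) Q_0$; writing $1 - \chi^{\ve}(k)\chi^{\ve}(l) = (1-\chi^{\ve}(k)) + \chi^{\ve}(k)(1-\chi^{\ve}(l))$ and using $|1 - \chi(\ve m)| \lesssim \ve^{\theta a/2}|m|^{\theta a/2}$ for small $a > 0$ produces an extra factor $\ve^{\theta a}|(\sigma,k)|_*^{a}$ (or the same with $l$) at the cost of decreasing $\gamma$ to $\gamma - a/2$, which is still $>1$ for $a$ small. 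Applying Lemma \ref{lm.0405_2B} with this slightly worse $\gamma$ gives $\mathbf E[\|\Pi_2(\cdots) - \Pi_2(\cdots)^{\ve}\|^{2p}_{C_T {\mathcal C}^{\alpha}}] \lesssim \ve^{2p\theta a} \to 0$, since $a$ can be taken small enough that $\alpha < 2\theta - 4$ still lies below the threshold $-\tfrac{\theta+2}{2} + \tfrac{(\gamma-a/2)\theta}{2}$. Summing the four pieces completes the proof. The only genuine obstacle is the parameter bookkeeping in the crude bound on $Q_0$ — one must be careful about which of the repeated resolvent factors $(-2\pi\sqrt{-1}\sigma_2 + (2\pi|l|)^\theta + 1)^{-1}$ to spend on the $\sigma_2$-integral and which on generating decay in $|l|_*$, and to check that the leftover power of $|l|_*$ after all integrations is strictly summable over the diagonal; but this is entirely analogous to the $Y$ and $V$ cases already carried out.
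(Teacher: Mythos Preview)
Your approach is essentially the paper's: reduce to the second-chaos part, bound the $Q$-function, verify \eqref{eq:180223-10} with $\gamma = 5 - \tfrac{6}{\theta}$ and $\delta = 0$, and invoke Lemma~\ref{lm.0405_2B}; the convergence argument via $|1-\chi^{\ve}(k)| \lesssim \ve^{b}|k|^{b}$ is also the same.

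One correction: your displayed bound on $|Q_0|^2$ has the wrong exponents. Each resolvent factor $(-2\pi\sqrt{-1}\sigma + (2\pi|\cdot|)^\theta + 1)^{-1}$ has modulus $\sim |(\sigma,\cdot)|_*^{-2}$, not $|(\sigma,\cdot)|_*^{-\theta}$, and the two numerators $l^1,l^i$ contribute $|l|^2$, not $|l|$. The correct crude bound is
\[
|Q_0((\sigma_1,k),(\sigma_2,l))|^2 \lesssim |(\sigma_1,k)|_*^{-4}\,|l|^4\,|(\sigma_2,l)|_*^{-8}
\lesssim |(\sigma_1,k)|_*^{-4}\,|(\sigma_2,l)|_*^{-8(1-1/\theta)},
\]
after which the paper simply bounds $\psi_{\circ}$ by $1$ and applies Lemma~\ref{lm.0405_1} directly to obtain $|(\tau,m)|_*^{-2(5-6/\theta)}$. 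Your detour through \eqref{eq:180329-4} and the diagonal support of $\psi_{\circ}$ is unnecessary here; with the corrected exponents the hyperplane integral falls out of Lemma~\ref{lm.0405_1} in one step.
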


\begin{proof}
We prove the lemma for $\theta \in (3/2, 2)$.
The proof for the case $\theta =2$ is just a slight modification.

Let $Q_t$ and 
$Q_t^{\ve}$ be as in \eqref{eq.0407_3} and \eqref{eq:180222-1}, respectively.
As we have explained, they (should) correspond to
$ \Pi_2 ( R_2 X \reso \partial_1 {\mathcal I} [\partial_i X ])$
and its smooth approximation, respectively. 
We let
\[
\gamma=5-\frac{6}{\theta}, \quad \delta=0
\]
to use Lemma \ref{lm.0405_2B}.
(Note that $\gamma>1$ if and only if $\theta >3/2$.)
Since $ \psi_{\circ} (k,l)$ is bounded, we see from Lemma \ref{lm.0405_1} that
\[
|Q_0^{\ve} ((\sigma_1, k), (\sigma_2, l)) |^2 
\lesssim 
|Q_0 ((\sigma_1, k), (\sigma_2, l)) |^2 
\lesssim 
|(\sigma_1, k)|_*^{-4}
|(\sigma_2, l)|_*^{-8 (1 -\frac{1}{\theta})}
\]
and 
\[
\int_{\hat{E}}
 |Q_0 ( (\sigma_1, k), (\tau-\sigma_1, m-k)) |^2\,d(\sigma_1,k)
\lesssim
 |(\tau, m) |_*^{- 2(5 - \frac{6}{\theta}) }.
\]
%
%
\end{proof}


\subsection{Convergence of $W:= R^{\perp} {\mathcal I} [\nabla X] \reso \nabla X$}

In this subsection we prove that 
\[
W_i^{\ve}=
R_2 {\mathcal I} [\partial_i X^{\ve}] \reso \partial_1 X^{\ve}
-
R_1 {\mathcal I} [\partial_i X^{\ve}] \reso \partial_2 X^{\ve}
\qquad
\mbox{($i=1,2$).}
\]
is convergent 
as $\ve \searrow 0$ if the Besov regularity 
is smaller than $2\theta -4$.
For brevity we write $[R_2 {\mathcal I} [ \partial_i X^{\ve}] \reso
\partial_1 X^{\ve}]_{t,x}
$ for $[R_2 {\mathcal I} [ \partial_i X^{\ve}] ]_{t,x} \reso
[\partial_1 X^{\ve}]_{t,x}$.

It is straightforward to see that
$ R_j {\mathcal I}[\partial_i X^{\ve} ]$ has the following expression ($i,j =1,2$):
\begin{eqnarray}\label{eq.0601_1}
R_j {\mathcal I} [\partial_i X^{\ve} ]_{t,x}
&=&
 \sum\limits_{k \in {\bf Z^2}}
 \chi^{\ve} (k) {\bf e}_{k} (x) \frac{2\pi \sqrt{-1} k^j}{2 \pi |k|} {\bf 1}_{k \neq 0}
 \\
 && \qquad
\times
 \int_{- \infty}^{\infty}
 du h(t-u, k)
 \int_{- \infty}^{\infty} H_u^{ \partial_i X} (s,k) 
\hat{\xi}_{s} (k) ds,
 \nonumber\
 \end{eqnarray}
where 
$H^{ \partial_i X}$ is the $H$-function for $ \partial_i X$
explicitly given by (\ref{eq.0406_5}).

Hence, we can easily see from (\ref{eq.0406_5}) that
\begin{eqnarray}\label{eq.0601_2}
\lefteqn{
[ R_2 {\mathcal I} [ \partial_i X^{\ve}] \reso
\partial_1 X^{\ve} ]_{t,x}
}
\\
&=&
 \sum\limits_{k, l \in {\bf Z^2}}
 \chi^{\ve} (k) \chi^{\ve} (l) \psi_{\circ} (k,l)
 {\bf e}_{k+l} (x) 
 \nonumber\\
 && 
\times
 \frac{2\pi \sqrt{-1} k^2}{2 \pi |k|} {\bf 1}_{k \neq 0}
 \int_{- \infty}^{\infty}
 du h(t-u, k)
 \int_{- \infty}^{\infty} H_u^{ \partial_i X} (s_1,k) 
\hat{\xi}_{s_1} (k) ds_1
 \nonumber\\
 && 
\times
 \int_{- \infty}^{\infty} H_t^{ \partial_1 X} (s_2,l) 
\hat{\xi}_{s_2} (l) ds_2
\nonumber\\
&=&
\Pi_2 ( [ R_2 {\mathcal I} [ \partial_i X^{\ve}] \reso
\partial_1 X^{\ve} ]_{t,x})
+
\Pi_0 ( [ R_2 {\mathcal I} [ \partial_i X^{\ve}] \reso
\partial_1 X^{\ve} ]_{t,x}).
 \nonumber\
 \end{eqnarray}

Let us calculate the second order term 
on the right-hand side of (\ref{eq.0601_2}).
It holds that
\[
\Pi_2 ( [ R_2 I [ \partial_i X^{\ve}] \reso
\partial_1 X^{\ve} ]_{t,x})
=
{\mathcal J}_2 
\Bigl(
f_{(t,x)}^{\ve}
\Bigr),
\]
where
\begin{eqnarray}\label{eq.0406_7D}\nonumber
f_{(t,x)}^{\ve} ((s_1,y_1),(s_2,y_2))
&=&
\sum\limits_{k, l \in {\bf Z}^2}
{\bf e}_{k} (x- y_1) {\bf e}_{l} (x- y_2) \chi^{\ve} (k) \chi^{\ve} (l)
H_t ((s_1, k), (s_2, l))
\end{eqnarray}
with
\begin{eqnarray}\label{eq.0406_7E}
H_t ((s_1, k), (s_2, l))
=
 \psi_{\circ} (k,l)
 \frac{2\pi \sqrt{-1} k^2}{2 \pi |k|} {\bf 1}_{k \neq 0}
 \int_{- \infty}^{\infty}
 du h(t-u, k) H_u^{ \partial_i X} (s_1,k) 
 \cdot 
 H_t^{ \partial_1 X} (s_2,l).
 \nonumber
 \end{eqnarray}

By taking the Fourier transform with respect to the time variables, we have
\begin{eqnarray}\label{eq.0602_0}
Q_t ((\sigma_1, k), (\sigma_2, l))
&=&
e^{- 2\pi \sqrt{-1} (\sigma_1 +\sigma_2) t}
 \psi_{\circ} (k,l)
 \frac{2\pi \sqrt{-1} k^2}{2 \pi |k|} {\bf 1}_{k \neq 0}
\\
&&
\qquad
\times
\frac{ Q_0^{ \partial_i X} (\sigma_1, k) }
{ -2\pi \sqrt{-1} \sigma_1 + (2\pi |k|)^\theta +1 }
Q_0^{ \partial_1 X} (\sigma_2, l) 
\nonumber\\
&=&
e^{- 2\pi \sqrt{-1} (\sigma_1 +\sigma_2) t}
 \psi_{\circ} (k,l) \frac{2\pi \sqrt{-1} k^2}{2 \pi |k|} {\bf 1}_{k \neq 0}
\nonumber
 \\
&&
\quad
\times
\frac{ 2\pi \sqrt{-1} k^i }
{ \{ -2\pi \sqrt{-1} \sigma_1 + (2\pi |k|)^\theta +1 \}^2 }
\cdot
\frac{ 2\pi \sqrt{-1} l^1 }
{ - 2\pi \sqrt{-1} \sigma_2 + (2\pi |l|)^\theta +1 }.
\nonumber
\end{eqnarray}
Therefore, 
$\Pi_2 ( R_2 {\mathcal I} [ \partial_i X^{\ve}] \reso
\partial_1 X^{\ve})$ has the following good kernel:
\begin{equation}\label{eq:180222-2}
Q_t^{\ve} ((\sigma_1, k), (\sigma_2, l))
=
 \chi^{\ve} (k) \chi^{\ve} (l) Q_t ((\sigma_1, k), (\sigma_2, l)).
\end{equation}
It will turn out that $Q_0$ above satisfies 
the assumption of Lemma \ref{lm.0405_2B} 
and hence it defines a Besov space-valued process 
$\Pi_2(R_2 {\mathcal I} [ \partial_i X] \reso
\partial_1 X)$.
The computation for 
$
R_1 {\mathcal I} [ \partial_i X^{\ve}] \reso
\partial_2 X^{\ve}
$
is essentially the same and 
$\Pi_2(R_1 {\mathcal I} [ \partial_i X] \reso
\partial_2 X)$
is also well defined.

Next we show the zeroth order term 
on the right-hand side of (\ref{eq.0601_2}) vanishes.
Using $\psi_{\circ} (k, -k) = \psi_{\circ} (k, k)=1$
and applying the contraction rule
${\mathbf E} [ \hat{\xi}_{s_1} (k) \hat{\xi}_{s_2} (l) ] 
= \delta (s_1- s_2) \delta_{k+l, 0}$
to (\ref{eq.0601_2}),

\begin{eqnarray*}
\Pi_0 ( [ R_2 {\mathcal I} [ \partial_i X^{\ve}] \reso
\partial_1 X^{\ve}]_{t,x})
&=&
\sum\limits_{k \in {\bf Z^2}}
\chi^{\ve} (k)^2
 \frac{2\pi \sqrt{-1} k^2}{2 \pi |k|} {\bf 1}_{k \neq 0}
 \nonumber\\
 && 
\times \int_{- \infty}^{\infty} du h(t-u, k)
 \int_{- \infty}^{\infty} 
 H_t^{ \partial_i X} (s, k) 
 H_u^{ \partial_1 X} (s, -k) ds
 \nonumber\\
 &=& 
 \sum\limits_{k \in {\bf Z^2}}
\chi^{\ve} (k)^2 
 \frac{ \sqrt{-1} k^2 {\bf 1}_{k \neq 0}}{|k|} 
 \frac{
 (2\pi \sqrt{-1} k^i) (-2\pi \sqrt{-1} k^1) 
 }
 {
 [ 2 \{ (2 \pi |k|)^\theta +1 \} ]^{2}
 }.
 \end{eqnarray*}
Again, we have
$\Pi_0 ( [R_2 {\mathcal I} [ \partial_i X^{\ve}] \reso
\partial_1 X^{\ve}]_{t,x}
-
[ R_1 {\mathcal I} [ \partial_i X^{\ve}] \reso
\partial_2 X^{\ve}]_{t,x}
) =0$.
Due to this cancellation, 
we do not need any renormalization for $W$
 given by (\ref{eq:180430-4}) below.

We set 
\begin{equation}\label{eq:180430-4}
W_i= \Pi_2(R_2 {\mathcal I} [ \partial_i X] \reso
\partial_1 X)- \Pi_2(R_1 {\mathcal I} [ \partial_i X] \reso
\partial_2 X)
\end{equation} 
for $i=1,2$.

\begin{lemma}\label{lm.0602_a}
Let $\theta \in (3/2, 2]$.
Then, for every $\alpha < 2\theta -4$, $1 < p <\infty$ and $i=1,2$, 
\[
{\mathbf E}
 [\|W_i\|_{C_T {\mathcal C}^{ \alpha } }^p 
 ]<\infty
,\qquad
\lim_{\ve \searrow 0} {\mathbf E}
 [\|W_i- W^{\ve}_i\|_{C_T {\mathcal C}^{ \alpha } }^p 
 ]
=0.
\]
\end{lemma}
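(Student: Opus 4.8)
The plan is to follow exactly the same strategy used for the companion object $\hat{W}_i$ in Lemma \ref{lm.0407_b}, since the random field $W_i$ has the same Wiener-chaos structure: its $\Pi_0$-component vanishes by the cancellation just established, and its $\Pi_2$-component has a good kernel in the sense of Definition \ref{def.gk2}, whose $Q$-function is given by \eqref{eq.0602_0} and \eqref{eq:180222-2}. Thus the only real task is to verify the hypothesis \eqref{eq:180223-10} of the key Lemma \ref{lm.0405_2B} with an appropriate choice of the parameters $(\gamma,\delta)$, and then read off the regularity threshold from \eqref{eq:180510-1}.

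First I would reduce to estimating $\Pi_2(R_2{\mathcal I}[\partial_i X]\reso\partial_1 X)$ and $\Pi_2(R_1{\mathcal I}[\partial_i X]\reso\partial_2 X)$ separately, together with their smooth approximations; by linearity and the triangle inequality in $C_T{\mathcal C}^\alpha$ it suffices to bound each. For the first of these, its $Q$-function $Q_t$ from \eqref{eq.0602_0} satisfies, using $|\psi_\circ(k,l)|\lesssim 1$, $|k^2|/|k|\le 1$ and $|k^i|,|l^1|\le |k|,|l|$, the pointwise bound
\begin{align*}
|Q^{\ve}_0((\sigma_1,k),(\sigma_2,l))|^2
&\lesssim
|(\sigma_1,k)|_*^{-8(1-\frac{1}{\theta})}\,|(\sigma_2,l)|_*^{-4}.
\end{align*}
Writing $\mu_1=(\sigma_1,k)$, $\mu_2=(\sigma_2,l)$ and $\nu=(\tau,m)=\mu_1+\mu_2$, I would integrate out $\mu_1$ over the hyperplane $\mu_1+\mu_2=\nu$ and apply Lemma \ref{lm.0405_1} (with $d=2$) to the exponents $\alpha_0=8(1-\frac1\theta)$ and $\beta_0=4$; this requires $\alpha_0,\beta_0\in(0,2+\frac{4}{\theta})$ and $\alpha_0+\beta_0>2+\frac{4}{\theta}$, both of which hold precisely when $\theta>3/2$ (indeed $8(1-\frac1\theta)>0\iff\theta>1$ and $8(1-\frac1\theta)<2+\frac4\theta\iff\theta<2$ say, while the sum condition $12-\frac{8}{\theta}>2+\frac4\theta\iff\theta>\frac{12}{10}=\frac65$ is weaker; the binding constraint $\gamma>1$ is $5-\frac6\theta>1\iff\theta>3/2$). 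This yields
\[
\int_{\hat E}|Q^{\ve}_0((\sigma_1,k),(\tau-\sigma_1,m-k))|^2\,d(\sigma_1,k)
\lesssim |\nu|_*^{-2(5-\frac6\theta)},
\]
so \eqref{eq:180223-10} holds with $(\gamma,\delta)=(5-\frac6\theta,0)$ and an $\ve$-uniform constant. Then Lemma \ref{lm.0405_2B} gives $W_i\in L^p(\Omega;C_T{\mathcal C}^\alpha)$ for $\alpha<-\frac{\theta+2}{2}+\frac{\gamma\theta}{2}=2\theta-4$, which is the asserted range.

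For the convergence $W_i^{\ve}\to W_i$, I would observe that $W_i-W_i^{\ve}$ again has a good kernel with $Q$-function $(1-\chi^{\ve}(k)\chi^{\ve}(l))\,Q_0$; since $\chi$ is smooth with $\chi(0)=1$, for any small $a>0$ one has $|1-\chi(\ve k)\chi(\ve l)|\lesssim \ve^a(|k|^{a}+|l|^{a})\lesssim \ve^{a}(|\mu_1|_*^{2a/\theta}+|\mu_2|_*^{2a/\theta})$ on the support of the integrand, which costs at most $\ve^{a}$ times a polynomial factor that lowers $\gamma$ to $5-\frac6\theta - O(a)$, still $>1$ for $a$ small. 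Applying Lemma \ref{lm.0405_2B} with this slightly worse $\gamma$ and with $\alpha<2\theta-4$ fixed (possible since $a$ is arbitrarily small) yields ${\mathbf E}[\|W_i-W_i^{\ve}\|_{C_T{\mathcal C}^\alpha}^p]\lesssim \ve^{ap}\to 0$. The computations are entirely parallel to Lemmas \ref{lm.0406_a} and \ref{lm.0407_b}, so I would simply say ``the proof for $\theta\in(3/2,2)$ goes exactly as for Lemma \ref{lm.0407_b}, with $\theta=2$ a slight modification,'' and the only step needing care is the bookkeeping of the Riesz-transform and derivative factors to confirm the exponents fed into Lemma \ref{lm.0405_1}; this is the main (though minor) obstacle, and it is the same one already handled for $\hat W_i$.
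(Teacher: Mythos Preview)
Your approach is essentially the paper's own, and the argument is correct once one small gap in the justification of the pointwise bound is closed. You write that the bound
\[
|Q^{\ve}_0((\sigma_1,k),(\sigma_2,l))|^2
\lesssim
|(\sigma_1,k)|_*^{-8(1-\frac{1}{\theta})}\,|(\sigma_2,l)|_*^{-4}
\]
follows from $|\psi_\circ(k,l)|\lesssim 1$ together with $|k^2|/|k|\le 1$ and $|k^i|\le|k|$, $|l^1|\le|l|$. But those inequalities alone only give
\[
|Q_0|^2\lesssim \frac{|k|^2|l|^2}{|(\sigma_1,k)|_*^{8}\,|(\sigma_2,l)|_*^{4}}
\lesssim |(\sigma_1,k)|_*^{-8+4/\theta}\,|(\sigma_2,l)|_*^{-4+4/\theta},
\]
and this exponent pair $(8-\tfrac{4}{\theta},\,4-\tfrac{4}{\theta})$ \emph{fails} the hypothesis $\alpha_0<2+\tfrac{4}{\theta}$ of Lemma~\ref{lm.0405_1} as soon as $\theta\ge 4/3$. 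What is missing is the support property of $\psi_\circ$: on $\{\psi_\circ(k,l)\neq 0\}$ one has $|l|\approx|k|$, hence
\[
\psi_\circ(k,l)^2\,|k|^2|l|^2\lesssim |k|_*^{4}\le |(\sigma_1,k)|_*^{8/\theta},
\]
which is exactly how the paper reaches your stated bound. With this one-line fix the rest of your argument (application of Lemma~\ref{lm.0405_1} with $\alpha_0=8(1-\tfrac1\theta)$, $\beta_0=4$, then Lemma~\ref{lm.0405_2B} with $\gamma=5-\tfrac{6}{\theta}$, $\delta=0$, and the $\ve^a$-trick for the difference) is correct and matches the paper.
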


\begin{proof}
We let
\[
\gamma=4-\frac{4}{\theta}, \quad \delta=2.
\]
Let $Q_t$ and 
$Q_t^{\ve}$ be as in \eqref{eq.0602_0} and \eqref{eq:180222-2}, respectively.
They (should) correspond to
$\Pi_2 ( [ R_2 {\mathcal I} [ \partial_i X] \reso
\partial_1 X ])$ 
and its smooth approximation, respectively.
Since we have $\psi_{\circ} (k,l)^2 |k|^2 |l|^2 \lesssim |k|_*^4$,
we see that 
\begin{align*}
|Q_0^{\ve} ((\sigma_1, k), (\sigma_2, l)) |^2 
&\lesssim 
|Q_0 ((\sigma_1, k), (\sigma_2, l)) |^2 
\\
&\lesssim 
\psi_{\circ} (k,l)^2 |k|^2 |l|^2
|(\sigma_1, k)|_*^{-8}
|(\sigma_2, l)|_*^{-4}\\
&\le 
|(\sigma_1, k)|_*^{ -8 (1 -\frac{1}{\theta})}
|(\sigma_2, l)|_*^{-4}\\
&=
|(\sigma_1, k)|_*^{ -2\gamma}
|(\sigma_2, l)|_*^{-2\delta}.
\end{align*}
The rest is essentially the same as in the proof of Lemma \ref{lm.0407_b}.
\end{proof}


\subsection{Convergence of $Z:=R^{\perp} Y\reso \nabla X$}

In this subsection we prove that 
\begin{eqnarray}\label{def.0612}
Z^{\ve} := R^{\perp} Y^{\ve} \reso \nabla X^{\ve}
&=&
(R_2 Y^{\ve}) \reso (\partial_1 X^{\ve})- (R_1 Y^{\ve}) \reso (\partial_2 X^{\ve})
\\
&=&
R_2 {\mathcal I}[R_2 X^{\ve} \cdot \partial_1 X^{\ve} - R_1 X^{\ve} \cdot \partial_2 X^{\ve}]
\reso (\partial_1 X^{\ve})
\nonumber\\
&& \qquad
-
R_1 {\mathcal I}[R_2 X^{\ve} \cdot \partial_1 X^{\ve} - R_1 X^{\ve} \cdot \partial_2 X^{\ve}]
\reso (\partial_2 X^{\ve})
\nonumber
\end{eqnarray}
is convergent 
as $\ve \searrow 0$ if the Besov regularity 
is smaller than $\frac52 \theta - 5$.
We do not need any renormalization.

Let us fix notation.
For $j \in \{1,2\}$, $j'=3-j$ stands for the other element of $\{1,2\}$. 
Hence, if $j =1$ (resp. $j =2$), then $j' =2$ (resp. $j' =1$).

Let $i, j \in \{1,2\}$.
By straightforward computation, we have
\begin{eqnarray*}
R_i {\mathcal I} [R_j X^{\ve} \cdot \partial_{j'} X^{\ve} ]_{t,x}
&=&
\sum\limits_{k, l \in {\bf Z^2}}
 \chi^{\ve} (k) \chi^{\ve} (l) {\bf e}_{k+l} (x) 
 \frac{2\pi \sqrt{-1} (k^i + l^i)}{2 \pi |k+l|} {\bf 1}_{k +l \neq 0}
 \int_{- \infty}^{\infty} du h(t-u, k+l) \\
 &&
 \times 
 \int_{- \infty}^{\infty} H_u^{ R_j X} (s_1,k) 
\hat{\xi}_{s_1} (k) ds_1
 \int_{- \infty}^{\infty} H_u^{ \partial_{j'} X} (s_2,l) 
\hat{\xi}_{s_2} (l) ds_2
 \end{eqnarray*}
and
\[
\partial_{i'} X^{\ve}_{t,x}
=
\sum\limits_{m\in {\bf Z^2}}
\chi^{\ve} (m) {\bf e}_{m} (x) 
 \int_{- \infty}^{\infty} H_t^{ \partial_{i'} X} (s_3, m) 
\hat{\xi}_{s_3} (m) ds_3.
\]
Here, $H_u^{ R_j X}$ and $H_u^{ \partial_{j'} X}$
are given by \eqref{eq.0406_3} and \eqref{eq.0406_5}, respectively.
For simplicity we set $Z^{i,j;\ve}_{t,x} :=
R_i {\mathcal I} [R_j X^{\ve} \cdot \partial_{j'} X^{\ve} ]_{t,x} 
\reso \partial_{i'} X^{\ve}_{t,x}$.
Then, we have
\begin{eqnarray}\label{eq.0605_1}
Z^{i,j;\ve}_{t,x} 
&=&
\sum\limits_{k, l, m \in {\bf Z^2}}
 \chi^{\ve} (k) \chi^{\ve} (l) \chi^{\ve} (m) {\bf e}_{k+l+m} (x) \psi_{\circ} (k+l,m)
 \frac{2\pi \sqrt{-1} (k^i + l^i)}{2 \pi |k+l|} {\bf 1}_{k +l \neq 0}
 \\
 &&
 \times
 \int_{- \infty}^{\infty} du h(t-u, k+l) 
 \int_{- \infty}^{\infty} H_u^{ R_j X} (s_1,k) 
\hat{\xi}_{s_1} (k) ds_1
 \nonumber\\
 & &\times
 \int_{- \infty}^{\infty} H_u^{ \partial_{j'} X} (s_2,l) 
\hat{\xi}_{s_2} (l) ds_2 
 \int_{- \infty}^{\infty} H_t^{ \partial_{i'} X} (s_3, m) 
\hat{\xi}_{s_3} (m) ds_3
\nonumber\\
 &=& \Pi_3 ( Z^{i,j;\ve}_{t,x}) + \Pi_1 ( Z^{i,j;\ve}_{t,x}).
 \nonumber\
 \end{eqnarray}

Let us calculate the third order term 
on the right-hand side of (\ref{eq.0605_1}).
Note that the superindex $(i,j)$ will be omitted in many places
when no confusion seems likely.
It is easy to see that
\[
\Pi_3 ( Z^{i,j;\ve}_{t,x}) 
=
{\mathcal J}_3
\Bigl(
f_{(t,x)}^{\ve}
\Bigr),
\]
where
\begin{eqnarray}\nonumber
\lefteqn{
f_{(t,x)}^{\ve} ((s_1,y_1),(s_2,y_2),(s_3,y_3))
}\\
\nonumber
&=&
\sum\limits_{k, l, m \in {\bf Z}^2}
{\bf e}_{k} (x- y_1) {\bf e}_{l} (x- y_2) {\bf e}_{m} (x- y_3)
\chi^{\ve} (k) \chi^{\ve} (l) \chi^{\ve} (m)
H_t ((s_1, k), (s_2, l), (s_3, m))
\end{eqnarray}
with
\begin{eqnarray}\nonumber
H_t ((s_1, k), (s_2, l), (s_3,m))
&=&
 \psi_{\circ} (k+l, m) \frac{2\pi \sqrt{-1} (k^i + l^i)}{2 \pi |k+l|} {\bf 1}_{k +l \neq 0} 
\\
&& \times
 \int_{- \infty}^{\infty}
 du h(t-u, k+l) 
 H_u^{ R_j X} (s_1,k) H_u^{ \partial_{j'} X} (s_2,l) H_t^{ \partial_{i'} X} (s_3, m). \nonumber
 \end{eqnarray}

By taking the Fourier transform with respect to the time variables, we have
\begin{eqnarray}\label{eq.0602_1}
\lefteqn{
Q_t ((\sigma_1, k), (\sigma_2, l), (\sigma_3, m))
}
\\
&=&
e^{- 2\pi \sqrt{-1} (\sigma_1 +\sigma_2+ \sigma_3) t}
 \psi_{\circ} (k+l, m)\frac{2\pi \sqrt{-1} (k^i + l^i)}{2 \pi |k+l|} {\bf 1}_{k +l \neq 0} 
 \nonumber\\
 && \times
 \frac{ Q_0^{ R_j X} (\sigma_1, k) Q_0^{ \partial_{j'} X} (\sigma_2, l) }
{- 2\pi \sqrt{-1} (\sigma_1 + \sigma_2) + (2\pi |k+l|)^\theta +1 }
Q_0^{ \partial_{i'} X} (\sigma_3, m).
\nonumber
\end{eqnarray}
Therefore, 
$ \Pi_3 ( Z^{i,j;\ve}_{t,x})$ has a good kernel whose $Q$-function is given by
\[
Q_t^{\ve} ((\sigma_1, k), (\sigma_2, l), (\sigma_3, m))
=\chi^{\ve} (k)\chi^{\ve} (l)\chi^{\ve} (m)
Q_t ((\sigma_1, k), (\sigma_2, l), (\sigma_3, m)).
\]

As we will see below, 
$Q_0$ above satisfies 
the assumption of Lemma \ref{lm.0405_2B} 
and hence it defines a Besov space-valued process. 
We denote it by $\Pi_3 ( Z^{i,j})$.


\begin{lemma}\label{lm.0605_a}
Let the notation be as above and $\theta \in (8/5, 2]$. 
Then, for every $\alpha < \frac52 \theta -5$, $1 < p <\infty$ and $i,j =1, 2$, 
\begin{align*}
{\mathbf E}
 [
 \|\Pi_3 ( Z^{i,j;\ve}) \|_{C_T {\mathcal C}^{ \alpha } }^p 
 ]<\infty
,\qquad 
\lim_{\ve \searrow 0}
{\mathbf E}[
 \|
 \Pi_3 ( Z^{i,j;\ve})
 -
\Pi_3 ( Z^{i,j}) 
 \|_{C_T {\mathcal C}^{ \alpha } }^p 
 ]
=0.
\end{align*}
\end{lemma}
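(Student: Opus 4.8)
The plan is to deduce the lemma from the key criterion Lemma \ref{lm.0405_2B}, applied uniformly in $\ve$ to the $Q$-function $Q_0^\ve$ of $\Pi_3(Z^{i,j;\ve})$ read off from \eqref{eq.0602_1}, together with the cutoff argument from the proof of Lemma \ref{lm.0406_a}. Since $0\le\chi^\ve\le 1$ we have $|Q_0^\ve|\le|Q_0|$, where $Q_0$ is the formal $\ve\to0$ limit, and
\[
|Q_0-Q_0^\ve|\lesssim\bigl|1-\chi^\ve(k)\chi^\ve(l)\chi^\ve(m)\bigr|\,|Q_0|\lesssim\ve^{a}\bigl(|k|^{\theta a/2}+|l|^{\theta a/2}+|m|^{\theta a/2}\bigr)|Q_0|
\]
for arbitrarily small $a>0$; distributing the extra frequency power lowers the exponent $\gamma$ below by $a\theta/2$ while producing a factor $\ve^{a}$. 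Hence it suffices to verify condition \eqref{eq:180223-10} for $Q_0$ with some $\gamma>1$, the convergence then following from the same estimate applied to $Q_0-Q_0^\ve$.

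Here $d=2$, so the critical exponent in Lemma \ref{lm.0405_1} is $2+2d/\theta=2+4/\theta$, and the threshold in Lemma \ref{lm.0405_2B} is $-\tfrac{\theta+d}{2}+\tfrac{\gamma\theta}{2}+\delta$. I would take $(\gamma,\delta)=(6-\tfrac8\theta,0)$ (any point on the line $\tfrac{\gamma\theta}{2}+\delta=3\theta-4$ with $\gamma>1$ will do): then $-\tfrac{\theta+d}{2}+\tfrac{\gamma\theta}{2}+\delta=\tfrac52\theta-5$, exactly the claimed regularity threshold, and $\gamma>1$ is equivalent to $\theta>\tfrac85$ — this is where the hypothesis on $\theta$ enters. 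The pointwise kernel bound is assembled from boundedness of the Riesz symbols and of $\tfrac{k^i+l^i}{|k+l|}{\bf 1}_{k+l\neq0}$, from $|Q_0^{R_jX}(\sigma_1,k)|^2\lesssim|(\sigma_1,k)|_*^{-4}$, from $|Q_0^{\partial X}(\sigma,l)|^2\lesssim|l|_*^2|(\sigma,l)|_*^{-4}\lesssim|(\sigma,l)|_*^{-b}$ with $b:=4(1-\tfrac1\theta)\in(\tfrac32,2]$ (and likewise for the $m$-factor), from the resolvent bound $|{-2\pi\sqrt{-1}(\sigma_1+\sigma_2)+(2\pi|k+l|)^\theta+1}|^{-2}\lesssim|\mu_1+\mu_2|_*^{-4}$, and from retaining $\psi_\circ$ together with its localization ${\bf 1}_{|k+l|_*\approx|m|_*}$ from Lemma \ref{lm.0405_3}(i). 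Writing $\mu_1=(\sigma_1,k)$, $\mu_2=(\sigma_2,l)$, $\mu_3=(\sigma_3,m)$ this gives
\[
|Q_0(\mu_1,\mu_2,\mu_3)|^2\lesssim\psi_\circ(k+l,m)^2\,|\mu_1|_*^{-4}\,|\mu_2|_*^{-b}\,|\mu_1+\mu_2|_*^{-4}\,|\mu_3|_*^{-b}.
\]

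The core step is to integrate this bound over $\mu_1+\mu_2+\mu_3=\mu$ and show the result is $\lesssim|\mu|_*^{-2\gamma}=|\mu|_*^{-(12-16/\theta)}$. I would use $(\mu_1,\mu_3)$ as integration variables, so that the resolvent weight $|\mu_1+\mu_2|_*^{-4}=|\mu-\mu_3|_*^{-4}$ does not depend on $\mu_1$; integrating $\mu_1$ against $|\mu_1|_*^{-4}|\mu-\mu_3-\mu_1|_*^{-b}$ via Lemma \ref{lm.0405_1} (admissible precisely for $\tfrac43<\theta<2$) yields $|\mu-\mu_3|_*^{-(6-8/\theta)}$, leaving $\int|\mu_3|_*^{-b}|\mu-\mu_3|_*^{-(10-8/\theta)}\,d\mu_3$. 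The bare convolution bound fails here for $\theta>\tfrac32$, since $10-\tfrac8\theta>2+\tfrac4\theta$, so one must invoke the resonant localization $\psi_\circ$ — which forces $|m|_*\approx|k+l|_*$, hence $|p|_*\lesssim|m|_*$ and $|\mu_1+\mu_2|_*\gtrsim|m|_*^{\theta/2}$ — to split this integral into regions where either Lemma \ref{lm.0405_1} or the crude sub-multiplicativity $|\mu|_*\lesssim|\mu_1+\mu_2|_*\,|\mu_3|_*$ applies, along the lines of \cite{gp}. Organizing this two-step integration so that every intermediate exponent stays inside the admissible window $(0,2+\tfrac4\theta)$ while the final exponent reaches $12-\tfrac{16}{\theta}$ is the main obstacle, and it is exactly there that $b>\tfrac32$, i.e.\ $\theta>\tfrac85$, is indispensable; the borderline case $\theta=2$ of Lemma \ref{lm.0405_1} requires a minor separate modification, as announced in the statement. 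Once \eqref{eq:180223-10} is established, Lemma \ref{lm.0405_2B} with $\kappa=0$ gives ${\mathbf E}[\|\Pi_3(Z^{i,j;\ve})\|_{C_T{\mathcal C}^{\alpha}}^p]\lesssim1$ uniformly in $\ve$ for all $\alpha<\tfrac52\theta-5$ and $p\in(1,\infty)$, and the $\ve^a$-refinement gives ${\mathbf E}[\|\Pi_3(Z^{i,j;\ve})-\Pi_3(Z^{i,j})\|_{C_T{\mathcal C}^{\alpha}}^p]\to0$, completing the proof.
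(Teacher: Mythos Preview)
Your overall strategy is right and matches the paper's: the same choice $(\gamma,\delta)=(6-\tfrac{8}{\theta},0)$, the same pointwise bound on $|Q_0|^2$, and the same first integration over $\mu_1$ with $\mu_1+\mu_2=\mu-\mu_3$ fixed. The gap is in the second integration. By absorbing $|m|^2$ into $|\mu_3|_*^{-b}$ up front, you end up with $\int_{\hat E}\psi_\circ^2\,|\mu_3|_*^{-b}|\mu-\mu_3|_*^{-(10-8/\theta)}\,d\mu_3$, and your proposed fix (splitting by which of $|\mu-\mu_3|_*$, $|\mu_3|_*$ dominates, plus $|\mu|_*\lesssim|\mu-\mu_3|_*|\mu_3|_*$) does not close: on the region $|\mu_3|_*\ge|\mu-\mu_3|_*$ the exponent $10-\tfrac{8}{\theta}>2+\tfrac{4}{\theta}$ on $|\mu-\mu_3|_*$ still blocks Lemma~\ref{lm.0405_1}, and neither sub-multiplicativity nor $|\mu-\mu_3|_*\gtrsim|m|_*^{\theta/2}$ alone transfers weight in the right direction once $|m|^2$ has been swallowed into $|\mu_3|_*^{-b}$.

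The paper's argument avoids this entirely by \emph{not} absorbing $|m|^2$: keep the bound
\[
|Q_0|^2\lesssim\frac{\psi_\circ(k+l,m)^2}{|\mu_1+\mu_2|_*^{4}}\,\frac{1}{|\mu_1|_*^{4}}\,\frac{|l|^2}{|\mu_2|_*^{4}}\,\frac{|m|^2}{|\mu_3|_*^{4}},
\]
integrate in $\mu_1$ (absorbing only $|l|^2\lesssim|\mu_2|_*^{4/\theta}$) to get $\psi_\circ(\hat n,m)^2|\,(\hat\tau,\hat n)|_*^{-2(5-4/\theta)}\,|m|^2|\mu_3|_*^{-4}$, and \emph{then} use the $\psi_\circ$-localization $|m|_*\approx|\hat n|_*$ to replace $|m|^2$ by $|\hat n|^2\lesssim|(\hat\tau,\hat n)|_*^{4/\theta}$. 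This lowers the $(\hat\tau,\hat n)$-exponent to $2(5-\tfrac{6}{\theta})$, which \emph{is} below $2+\tfrac{4}{\theta}$ for $\theta<2$, while the $\mu_3$-exponent becomes $4$; Lemma~\ref{lm.0405_1} then applies directly and gives $|\mu|_*^{-2(6-8/\theta)}$. The one-line moral: the resonant symbol $\psi_\circ$ is used to move the explicit spatial weight $|m|^2$ from the $\mu_3$-factor to the $(\mu_1{+}\mu_2)$-factor, not to split the integration domain.
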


\begin{proof}
Using Lemma \ref{lm.0405_2} with $\gamma = 6 -\frac{8}{\theta}$ and $\delta=0$,
we prove the lemma.
Note that $\gamma >1$ if and only if $\theta > 8/5$.
For simplicity, we prove the case $\theta \in (8/5, 2)$.
(The case $\theta =2$ can be shown essentially in the same way.)
It is easy to see from \eqref{eq.0602_1} that 
\begin{eqnarray*}
|Q_0((\sigma_1, k), (\sigma_2, l), (\sigma_3, m)) |^2 
&\lesssim&
\frac{ \psi_{\circ} (k+l, m)^2}{ |(\sigma_1 + \sigma_2, k+l)|_*^4}
\frac{1}{|(\sigma_1 , k)|_*^4}
\frac{|l|^2}{|(\sigma_2 , l)|_*^4}\frac{|m|^2}{|(\sigma_3 , m)|_*^4}.
\end{eqnarray*}
First, take a sum of this quantity 
over $(\sigma_1 , k)$ and $(\sigma_2 , l)$ such that
$(\sigma_1 + \sigma_2, k+l) =(\hat\tau, \hat{n})$ for a given $(\hat\tau, \hat{n})$.
By Lemma \ref{lm.0405_1} and the same argument as in the proof of Lemma \ref{lm.0406_a}, 
it is dominated by a constant multiple of 
\[
\frac{ \psi_{\circ} (\hat{n}, m)^2}{ |(\hat\tau, \hat{n})|_*^{2 (5 -\frac{4}{\theta})} }
\frac{|m|^2}{|(\sigma_3 , m)|_*^4}
\lesssim 
\frac{ \psi_{\circ} (\hat{n}, m)^2}{ |(\hat\tau, \hat{n})|_*^{ 2 (5 -\frac{4}{\theta})} }
\frac{|\hat{n}|^2}{|(\sigma_3 , m)|_*^4}
\lesssim
\frac{1}{ |(\hat\tau, \hat{n})|_*^{ 2 (5 -\frac{6}{\theta}) } }
\frac{1}{|(\sigma_3 , m)|_*^4}.
\]
Note that, since $\theta >4/3$, we can use Lemma \ref{lm.0405_1} again.
The sum over 
$( \hat\tau, \hat{n})$ and $(\sigma_3 , m)$ such that
$(\hat\tau + \sigma_3, \hat{n} +m) =(\tau, n)$ for a given $(\tau, n)$
is dominated by a constant multiple of $|(\tau , n)|_*^{-2 ( 6 -\frac{8}{\theta})}$.

Summing up, we have shown that
\[
\int_{\hat{E}^2}
 |Q_0 ( (\sigma_1, k), (\sigma_2, l), (\tau-\sigma_1-\sigma_2, n-k-l)) |^2
\,d\sigma_1\,d\sigma_2\,dk\,dl
\lesssim 
 |(\tau , n)|_*^{-2 ( 6 -\frac{8}{\theta}) }. 
\]
\end{proof}


Next, we calculate the first order term 
on the right-hand side of (\ref{eq.0605_1}).

\begin{lemma}\label{lm.0605_b}
Let the notation be as above and $\theta \in (8/5, 2]$. 
Then, for every $\alpha < \frac52 \theta -5$ and $1 < p <\infty$, 
there exists 
$\Pi_1 (
 Z^{2,2} - Z^{2,1} - Z^{1,2} + Z^{1,1}) \in L^p (\Omega, {\mathbf P} ;C_T {\mathcal C}^{ \alpha}) 
 $
 which is independent of $\alpha, p, \chi$
 such that
\begin{align*}
\lim_{\ve \searrow 0}
{\mathbf E}
 [
 \|
 \Pi_1 ( Z^{2,2;\ve} - Z^{2,1;\ve} - Z^{1,2;\ve} + Z^{1,1;\ve})
 -
\Pi_1 (
 Z^{2,2} - Z^{2,1} - Z^{1,2} + Z^{1,1}
) 
 \|_{C_T {\mathcal C}^{ \alpha} }^p 
 ]
=0.
\end{align*}
\end{lemma}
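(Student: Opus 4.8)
The plan is to extract the first Wiener chaos component of $Z^{i,j;\ve}_{t,x}$ from the triple It\^o integral \eqref{eq.0605_1} by the contraction rule \eqref{eq:180510-2}, to identify the cancellation that makes renormalization unnecessary, and then to feed the surviving first--order kernel into Lemma \ref{lm.0405_2B} with $n=1$ and $d=2$.

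\emph{Contractions.} The integrand of \eqref{eq.0605_1} is a monomial of degree three in $\hat\xi_{s_1}(k)$, $\hat\xi_{s_2}(l)$, $\hat\xi_{s_3}(m)$, so $\Pi_1(Z^{i,j;\ve}_{t,x})$ is the sum of the three terms obtained by contracting one of the three pairs. The contraction of $\hat\xi_{s_1}(k)$ with $\hat\xi_{s_2}(l)$ (the two noises inside $\mathcal{I}[R_jX^\ve\cdot\partial_{j'}X^\ve]$) forces $l=-k$, so that the contracted quantity lives at the zero Fourier mode and is then annihilated by $R_i$ (which kills constants; equivalently, by the factor ${\bf 1}_{k+l\neq0}$ carried along). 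This is the cancellation announced at the start of the subsection, and it removes the only contribution that could have required a counterterm; hence no renormalization is needed. There remain the $(1,3)$--contraction ($m=-k$, output Fourier mode $l$) and the $(2,3)$--contraction ($m=-l$, output mode $k$), each a single Wiener--It\^o integral. Carrying out the $s$--integrals with the explicit $H$--functions \eqref{eq.0406_3}, \eqref{eq.0406_5}, \eqref{def.h} and the identity $\int_{\mathbf R} h(u-s,p)h(t-s,-p)\,ds=e^{-\{(2\pi|p|)^\theta+1\}|u-t|}/(2\{(2\pi|p|)^\theta+1\})$ gives, for each contraction, a $Q$--function of the good--kernel form $\chi^\ve(\cdot)^2 e^{-2\pi\sqrt{-1}\sigma t}Q^{i,j}_0(\sigma,n)$ with $Q^{i,j}_0$ an explicit, $\chi$--independent rational--trigonometric kernel carrying one summation over the contracted mode and a factor $\psi_{\circ}(\cdot,\cdot)$ localizing it.

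\emph{Kernel estimate and limit.} By linearity of $\Pi_1$, $\Pi_1(Z^{2,2;\ve}-Z^{2,1;\ve}-Z^{1,2;\ve}+Z^{1,1;\ve})$ is a single $\mathcal{J}_1$ whose $Q$--function is $\chi^\ve(\cdot)^2 e^{-2\pi\sqrt{-1}\sigma t}Q_0(\sigma,n)$ with $Q_0$ the signed sum (signs $+,-,-,+$) of the $Q^{i,j}_0$. Using $0\le h\le1$, the time--integral bound \eqref{eq:180329-4}, Lemma \ref{lm.0405_1}(i) to absorb $\psi_{\circ}$, and then Lemma \ref{lm.0405_1} to perform the remaining summation over the contracted mode, the target bound is $|Q_0(\sigma,n)|^2\lesssim|(\sigma,n)|_*^{-2\gamma}$ with $\gamma=6-\tfrac{8}{\theta}$, which exceeds $1$ exactly when $\theta>8/5$. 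With this, Lemma \ref{lm.0405_2B} applied with $n=1$, $d=2$, $\delta=0$, $\gamma=6-\tfrac{8}{\theta}$ produces a random field $\Pi_1(Z^{2,2}-Z^{2,1}-Z^{1,2}+Z^{1,1})$ having good kernel $Q_0$ and lying in $L^p(\Omega,{\mathbf P};C_T\mathcal{C}^{\alpha})$ for all $\alpha<-\tfrac{\theta+2}{2}+\tfrac{\gamma\theta}{2}=\tfrac52\theta-5$ and all $1<p<\infty$; since $Q_0$ depends on none of $\alpha$, $p$, $\chi$, neither does the limit.

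\emph{Convergence and main obstacle.} For convergence, the difference between $\Pi_1(Z^{2,2;\ve}-Z^{2,1;\ve}-Z^{1,2;\ve}+Z^{1,1;\ve})$ and its limit again has a good kernel, now bearing a factor $\chi^\ve(\cdot)^2-1$ (or $\chi^\ve(k)^2\chi^\ve(l)^2-1$); since $|1-\chi(\ve p)|\lesssim(\ve|p|)^{\theta a/2}$ for small $a>0$, one gains $\ve^{\theta a}$ at the price of replacing $\gamma$ by $\gamma-a>1$, so Lemma \ref{lm.0405_2B} delivers $L^p$--convergence to $0$ with rate $\ve^{\theta a}$ as $\ve\searrow0$; this is the mechanism already used in Lemma \ref{lm.0406_a}. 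I expect the main obstacle to be the power count in the kernel estimate, specifically in the $(2,3)$--contraction, where the summation over the contracted mode is constrained by $|k+l|\approx|l|$ from $\psi_{\circ}$ and carries two extra powers of that mode from the two $\nabla X$ factors; arranging the time integrals and the $(-\Delta)^{\theta/2}$ denominators so that the exponent is precisely $6-\tfrac{8}{\theta}$ is exactly where the restriction $\theta>8/5$ is forced, which is also what fixes the range of $\theta$ in Theorem \ref{pr:180220}. The $(1,3)$--contraction is analogous and milder; all remaining estimates repeat those in Lemmas \ref{lm.0406_a} and \ref{lm.0605_a}.
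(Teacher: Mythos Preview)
Your treatment of the $(1,2)$-contraction is fine (indeed cleaner than the paper's, since ${\bf 1}_{k+l\neq 0}$ kills it outright), and your plan for the $(1,3)$-contraction is essentially what the paper does. The genuine gap is in the $(2,3)$-contraction (the paper's $A^{i,j;\ve}(3)$, output mode $k$). There the $Q$-function carries an internal sum over $l$ constrained by $\psi_{\circ}(k+l,-l)$ to $|l|\approx|k+l|\gtrsim|k|$, and the summand is of order $|l|^{2-\theta}\cdot\bigl(|k+l|_*^\theta+|l|_*^\theta\bigr)^{-1}\sim|l|^{2-2\theta}$; in two dimensions this sum diverges for every $\theta\le 2$. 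Hence the individual $Q^{i,j}_0$ \emph{cannot} satisfy $|Q_0|^2\lesssim|(\sigma,k)|_*^{-2\gamma}$ for any $\gamma$, and your appeal to Lemma~\ref{lm.0405_1} (which in any case is about $\hat E$-integrals, not pure $\mathbf Z^2$-sums) breaks down here. Your diagnosis that ``the $(2,3)$-contraction is analogous and milder'' is exactly backwards.

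The paper's resolution is a second, more subtle cancellation that you do not use: the ``leading'' part of the $(2,3)$-kernel obtained by formally replacing $(k+l)$ by $l$ everywhere (display \eqref{eq.0612_01}) is symmetric in $i\leftrightarrow i'$ and therefore drops out of the signed combination $Z^{2,j}-Z^{1,j}$. One then works with the modified kernel $\tilde Q_0^{i,j}$ obtained by subtracting this piece, and the difference structure gains factors of $|k|$ over $|l|$ that make the $l$-sum converge. Even so, the estimate of $\tilde Q_0$ is not a single application of a convolution lemma: the paper splits the bracketed difference into four pieces $J_1,\ldots,J_4$ (coming from the increments of $(k^i+l^i)/|k+l|$, of $\psi_{\circ}$, and of $(-2\pi\sqrt{-1}\sigma_1+B_{k,l})^{-1}$), and each requires a separate argument using Lemma~\ref{lm.0405_3} and the fractional bound \eqref{ineq.0613_B}. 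This is precisely the ``main obstacle'' you anticipated, but it is not overcome by power-counting alone; the $i\leftrightarrow i'$ symmetry is essential.
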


\begin{proof}
First we calculate $\Pi_1 ( Z^{i,j;\ve}_{t,x})$.
Applying the contraction rule
\begin{eqnarray}\label{eq.0613_z}
\Pi_1 [ \hat{\xi}_{s_1} (k) \hat{\xi}_{s_2} (l) \hat{\xi}_{s_3} (m) ] 
&=& \delta (s_1- s_2) \delta_{k+l, 0} \hat{\xi}_{s_3} (m)
\\
&&
+ \delta (s_1- s_3) \delta_{k+m, 0} \hat{\xi}_{s_2} (l)
+ \delta (s_2 - s_3) \delta_{l+m, 0} \hat{\xi}_{s_1} (k)
\nonumber
\end{eqnarray}
to (\ref{eq.0605_1}), we have 
\begin{eqnarray}
\Pi_1 ( Z^{i,j;\ve}_{t,x})
=:
A^{i,j;\ve}_{t,x} (1)+ A^{i,j;\ve}_{t,x} (2)+A^{i,j;\ve}_{t,x} (3),
\nonumber
\end{eqnarray}
where $A^{i,j;\ve}_{t,x} (\nu)$, $\nu=1,2,3$, are given as follows:
\begin{eqnarray}
A^{ i,j;\ve}_{t,x} (1)
&=&
\sum\limits_{m \in {\bf Z^2}}
 \chi^{\ve} (m) {\bf e}_{m} (x) \psi_{\circ} (0,m)
 \int_{- \infty}^{\infty} H_t^{ \partial_{i'} X} (s_3, m) 
\hat{\xi}_{s_3} (m) ds_3
\nonumber \\
 &&
 \times
 \sum\limits_{k \in {\bf Z^2}}
 \chi^{\ve} (k)^2 \int_{- \infty}^{\infty} du h(t-u, 0) 
 \int_{- \infty}^{\infty} H_u^{ R_j X} (s_1,k) 
 H_u^{ \partial_{j'} X} (s_1, -k) ds_1. 
\nonumber
\end{eqnarray}
By \eqref{eq.0606_1},
the second factor (i.e. the sum over $k$) above equals
$\Pi_0 ({\mathcal I}[R_j X^{\ve} \cdot \partial_{j'} X^{\ve}]_{t,x})$. 
Therefore, $A^{ i,2;\ve}_{t,x} (1)- A^{ i,1;\ve}_{t,x} (1) =0$.

Similarly, we have
\begin{eqnarray}
A^{ i,j;\ve}_{t,x} (2)
&=&
\sum\limits_{k, l \in {\bf Z^2}}
 \chi^{\ve} (k)^2 \chi^{\ve} (l) {\bf e}_{l} (x) \psi_{\circ} (k+l, -k)
 \frac{2\pi \sqrt{-1} (k^i + l^i)}{2 \pi |k+l|} {\bf 1}_{k +l \neq 0}
 \label{eq.0606_02}\\
 &&
 \times
 \int_{- \infty}^{\infty} du h(t-u, k+l) 
 \int_{- \infty}^{\infty} H_u^{ R_j X} (s_1,k) H_t^{ \partial_{i'} X} (s_1, -k) ds_1
 \nonumber\\
 & &\times
 \int_{- \infty}^{\infty} H_u^{ \partial_{j'} X} (s_2,l) 
\hat{\xi}_{s_2} (l) ds_2
 \nonumber\\
 &=&
 \sum\limits_{l \in {\bf Z^2}}\chi^{\ve} (l) {\bf e}_{l} (x)
 \Bigl[
 \sum\limits_{k \in {\bf Z^2}}
 \chi^{\ve} (k)^2 \psi_{\circ} (k+l, -k)
 \frac{2\pi \sqrt{-1} (k^i + l^i)}{2 \pi |k+l|} {\bf 1}_{k +l \neq 0}
 \nonumber\\
 && 
 \quad \times 
 \frac{ - (2\pi \sqrt{-1} k^j)(2\pi \sqrt{-1} k^{i'})}{2 \pi |k|} {\bf 1}_{k \neq 0}
 \frac{1}{2 \{ ( 2 \pi |k|)^\theta +1\}}
 \nonumber\\
 && 
 \quad \times 
 \int_{- \infty}^{t} du e^{- (t-u) \{ ( 2 \pi |k+l|)^\theta+ ( 2 \pi |k|)^\theta+2\}}
 \int_{- \infty}^{\infty} H_u^{ \partial_{j'} X} (s_2,l) 
\hat{\xi}_{s_2} (l) ds_2 \Bigr]
 \nonumber
\end{eqnarray}
and 
\begin{eqnarray}
A^{i,j;\ve}_{t,x} (3)
&=&
\sum\limits_{k, l \in {\bf Z^2}}
 \chi^{\ve} (k) \chi^{\ve} (l)^2 {\bf e}_{k} (x) \psi_{\circ} (k+l, -l)
 \frac{2\pi \sqrt{-1} (k^i + l^i)}{2 \pi |k+l|} {\bf 1}_{k +l \neq 0}
 \label{eq.0606_03}\\
 &&
 \times
 \int_{- \infty}^{\infty} du h(t-u, k+l) 
 \int_{- \infty}^{\infty} H_u^{ R_j X} (s_1,k) 
\hat{\xi}_{s_1} (k) ds_1
 \nonumber\\
 & &\times
 \int_{- \infty}^{\infty} H_u^{ \partial_{j'} X} (s_2,l) 
 H_t^{ \partial_{i'} X} (s_2, -l) ds_2
 \nonumber\\
 &=& 
 \sum\limits_{k \in {\bf Z^2}}\chi^{\ve} (k) {\bf e}_{k} (x)
 \Bigl[
 \sum\limits_{l \in {\bf Z^2}}
 \chi^{\ve} (l)^2 \psi_{\circ} (k+l, -l)
 \frac{2\pi \sqrt{-1} (k^i + l^i)}{2 \pi |k+l|} {\bf 1}_{k +l \neq 0}
 \nonumber\\
 && 
 \quad \times 
 \frac{ - (2\pi \sqrt{-1} l^{j'})(2\pi \sqrt{-1} l^{i'})}{ 2 \{ (2\pi |l|)^\theta +1\} } 
 \nonumber\\
 && 
 \quad \times 
 \int_{- \infty}^{t} du e^{- (t-u) \{ ( 2 \pi |k+l|)^\theta+ ( 2 \pi |l|)^\theta +2 \} }
 \int_{- \infty}^{\infty} H_u^{ R_{j} X} (s_1,k) 
\hat{\xi}_{s_1} (k) ds_1 \Bigr].
\nonumber
\end{eqnarray}
In what follows, the superscript ``$i,j;$" in $A^{i,j;\ve}_{t,x} (\nu)$
(and in other corresponding quantities)
will be supressed 
when no confusion seems likely.

Now we calculate the $H$-function and the $Q$-function
associated with $A^{\ve}_{t,x} (2)$.
Let $(s_2,l) \in \hat{E}$.
From \eqref{eq.0606_02}, we see that
\begin{eqnarray*}
H_t^{\ve} (s_2, l) 
&=&
 \chi^{\ve} (l) \sum\limits_{k \in {\bf Z^2}}
 \chi^{\ve} (k)^2 \psi_{\circ} (k+l, -k)
 \frac{2\pi \sqrt{-1} (k^i + l^i)}{2 \pi |k+l|} {\bf 1}_{k +l \neq 0}
 \frac{ - (2\pi \sqrt{-1} k^j)(2\pi \sqrt{-1} k^{i'})}
 {4 \pi |k| \{ (2 \pi |k|)^\theta +1\} } {\bf 1}_{k \neq 0}
 \nonumber\\
 && 
 \quad \times 
 \int_{- \infty}^{t} du e^{- (t-u) \{ ( 2 \pi |k+l|)^\theta+ ( 2 \pi |k|)^\theta+2 \} }
 H_u^{ \partial_{j'} X} (s_2,l)
\end{eqnarray*}
and 
\begin{eqnarray*}
Q_t^{\ve} (\sigma_2, l) 
&=& 
e^{- 2\pi \sqrt{-1}\sigma_2 t}
\chi^{\ve} (l) 
 \sum\limits_{k \in {\bf Z^2}}
 \chi^{\ve} (k)^2 \psi_{\circ} (k+l, -k)
 \frac{2\pi \sqrt{-1} (k^i + l^i)}{2 \pi |k+l|} {\bf 1}_{k +l \neq 0}
 \nonumber\\
 && 
 \quad \times \frac{ - (2\pi \sqrt{-1} k^j)(2\pi \sqrt{-1} k^{i'})}
 {4 \pi |k| \{ ( 2 \pi |k|)^\theta +1 \} } {\bf 1}_{k \neq 0}
 \nonumber\\
&& 
 \quad \times 
 \frac{1}{-2\pi \sqrt{-1} \sigma_2 +( 2 \pi |k+l|)^\theta +( 2 \pi |k|)^\theta +2
 }
 \cdot
 \frac{2\pi \sqrt{-1} l^{j'}}{-2\pi \sqrt{-1} \sigma_2 +( 2 \pi |l|)^\theta +1 }, 
 \end{eqnarray*}
which shows $A^{\ve}_{t,x} (2)$ has a good kernel 
whose $Q$-function is given by $Q^{\ve}_t$.
We define $Q_t (\sigma_2, l) $ by formally setting $\ve =0$
(or replacing both $\chi^{\ve} (k)$ and $\chi^{\ve} (l)$ by $1$)
 in the above 
expression of $Q_t^{\ve} (\sigma_2, l)$.

Assume that $\theta \in (8/5,2)$. 
(The case $\theta =2$ can be done essentially in the same way.)
Take sufficiently small $a>0$ and set $\gamma := 6 - \frac{8 +2a}{\theta}$.
Then, under the condition on $\theta$, we may assume $0<\gamma <2$
and $0 <2\theta -3-a <1$.
Recall that 
 $\psi_{\circ} (k+l, k) \lesssim |k+l|_*^{ 2\theta -3-a} |l|_*^{-( 2\theta -3-a)}$
by Lemma \ref{lm.0405_3}. 
We can dominate $|Q_0 (\sigma_2, l)|^2$ as follows:
\begin{eqnarray}\label{eq.0606_04}
|Q_0 (\sigma_2, l) |^2
&\lesssim& 
\Bigl\{
\sum\limits_{k \in {\bf Z^2}}
\psi_{\circ} (k+l, -k)
\frac{1}{|k|_*^{\theta -1} }
\frac{|l|}{ |k+l|^\theta + |k|^\theta +1}
\frac{1}{ |(\sigma_2, l)|_*^{2- \gamma} }
\Bigr\}^2 
|(\sigma_2, l)|_*^{-2 \gamma}
\\
 &\lesssim& 
\Bigl\{
\sum\limits_{k \in {\bf Z^2}}
\frac{ |k+l|_*^{ 2\theta -3-a} }{ |l|_*^{ 2\theta -3-a}}
\frac{1}{|k|_*^{\theta -1} }
\frac{ |l| }{ |k+l|^\theta + |k|^\theta +1}
\frac{1}{ |l|_*^{ -2\theta +4 +a}}
\Bigr\}^2 |(\sigma_2, l)|_*^{-2 \gamma}
 \nonumber\\
 &\lesssim& 
\Bigl\{
\sum\limits_{k \in {\bf Z^2}}
|k|_*^{-2- a}
\Bigr\}^2 
|(\sigma_2, l)|_*^{-2\gamma}
\lesssim |(\sigma_2, l)|_*^{-2\gamma}.
\nonumber
 \end{eqnarray}
Here, the implicit constant may depend on the constant $a$. 
Since $6 - \frac{8}{\theta} >1$
and $a >0$ is arbitrarily small, 
we can apply Lemma \ref{lm.0405_2} with $\gamma = 6 - \frac{8 +2a}{\theta}$ and $\delta =0$ 
to
$A^{\ve}_{t,x} (2)$ to obtain the desired estimate.
We can also estimate $|Q_0 (\sigma_2, l) - Q^{\ve}_0 (\sigma_2, l) |^2$
in the same way using that 
$|1 - \chi^{\ve} (k)| \lesssim\ve^b |k|^b$ for every sufficiently small $b >0$.

The Besov space-valued process associated with this $Q_0$
will be denoted by $A^{i,j} (2)$.

Next we calculate the $H$-function and the $Q$-function
associated with $A^{\ve}_{t,x} (3)$.
Let $(s_1,k) \in E$.
In a similar way as above, we see that
\begin{eqnarray*}
H_t^{\ve} (s_1, k) &=& 
\chi^{\ve} (k)
\sum\limits_{l \in {\bf Z^2}}
 \chi^{\ve} (l)^2 \psi_{\circ} (k+l, -l)
 \frac{2\pi \sqrt{-1} (k^i + l^i)}{2 \pi |k+l|} {\bf 1}_{k +l \neq 0}
 \frac{ - (2\pi \sqrt{-1} l^{j'})(2\pi \sqrt{-1} l^{i'})}
 {2 \{ ( 2 \pi |l|)^\theta +1 \} } 
 \nonumber\\
 && 
 \quad \times 
 \int_{- \infty}^{t} du e^{- (t-u) \{( 2 \pi |k+l|)^\theta+ ( 2 \pi |l|)^\theta+2 \}}
 H_u^{ R_{j} X} (s_1,k)
\end{eqnarray*}
and 
\begin{eqnarray*}
Q_t^{\ve}(\sigma_1, k) 
&=& 
e^{- 2\pi \sqrt{-1}\sigma_1 t} \chi^{\ve} (k)
\sum\limits_{l \in {\bf Z^2}}
 \chi^{\ve} (l)^2 \psi_{\circ} (k+l, -l)
 \frac{2\pi \sqrt{-1} (k^i + l^i)}{2 \pi |k+l|} {\bf 1}_{k +l \neq 0}
 \nonumber\\
 && 
 \quad \times 
 \frac{C(l)}{-2\pi \sqrt{-1} \sigma_1 + B_{k,l} }
 \cdot
 \frac{D (k)}{-2\pi \sqrt{-1} \sigma_1 +(2 \pi |k|)^\theta +1}.
 \end{eqnarray*}
Here and in what folllows, we set 
\begin{align*}
C (l) &= - \frac{ (2\pi \sqrt{-1} l^{j'})(2\pi \sqrt{-1} l^{i'})}
 {2 \{ ( 2 \pi |l|)^\theta +1\} },
 \qquad 
D(k) = \frac{2\pi \sqrt{-1} k^j}{2 \pi |k|} {\bf 1}_{k \neq 0},
\\
B_{k,l} &= ( 2 \pi |k+l|)^\theta + ( 2 \pi |l|)^\theta +2,
\qquad
B^{\prime}_l =2\{ (2 \pi |l|)^\theta +1 \}.
\qquad 
\end{align*}
Thus, $A^{\ve}_{t,x} (3)= A^{i,j;\ve}_{t,x} (3)$ also has a good kernel and its $Q$-function is 
given by $Q^{\ve} (= Q^{i,j;\ve})$ above.

One should note here that 
\begin{eqnarray}
\sum\limits_{l \in {\bf Z^2}}
 \chi^{\ve} (l)^2 \psi_{\circ} (l, -l)
 \frac{2\pi \sqrt{-1} l^i}{2 \pi |l|} {\bf 1}_{l \neq 0}
 \frac{ C(l) }{-2\pi \sqrt{-1} \sigma_1 + B^{\prime}_l } 
 \label{eq.0612_01}
 \end{eqnarray}
is symmetric in $i$ and $i'$ for fixed $j$.
Therefore, 
if we modify the definition a little bit as 
\begin{eqnarray*}
\tilde{Q}_t^{\ve}(\sigma_1, k) 
&=& 
e^{- 2\pi \sqrt{-1}\sigma_1 t} \chi^{\ve} (k)
\sum\limits_{l \in {\bf Z^2}}
 \chi^{\ve} (l)^2 C(l)
 \\
 && \times
 \Bigl\{
 \frac{2\pi \sqrt{-1} (k^i + l^i)}{2 \pi |k+l|} {\bf 1}_{k +l \neq 0}
 \frac{ \psi_{\circ} (k+l, -l) }{-2\pi \sqrt{-1} \sigma_1 + 
 B_{k,l} } - 
 \frac{2\pi \sqrt{-1} l^i}{2 \pi |l|} {\bf 1}_{l \neq 0}
 \frac{ \psi_{\circ} (l, -l) }{-2\pi \sqrt{-1} \sigma_1 + B^{\prime}_l } 
 \Bigr\}
 \nonumber\\
 && 
 \qquad\qquad
 \cdot
 \frac{D (k)}{-2\pi \sqrt{-1} \sigma_1 +(2 \pi |k|)^\theta +1},
 \end{eqnarray*}
then we can easily see that 
$Q^{2,2;\ve} - Q^{2,1;\ve} - Q^{1,2;\ve} +Q^{1,1;\ve} 
=
\tilde{Q}^{2,2;\ve} - \tilde{Q}^{2,1;\ve} - \tilde{Q}^{1,2;\ve} +\tilde{Q}^{1,1;\ve}
$.
Hence, it is enough to estimate these modified $Q$-functions.
We define $\tilde{Q}_t (\sigma_1, k) = \tilde{Q}^{i,j}_t (\sigma_1, k)$ by formally setting $\ve =0$
(or replacing both $\chi^{\ve} (k)$ and $\chi^{\ve} (l)$ by $1$)
 in the above 
expression of $\tilde{Q}_t^{\ve} (\sigma_1, k)= \tilde{Q}^{i,j;\ve}_t (\sigma_1, k)$.

In the following we will prove that 
this $\tilde{Q}_0$ satisfies the assumption of Lemma \ref{lm.0405_2B} 
for the desired $\gamma >1$.
The Besov space-valued process associated with this $\tilde{Q}_0$
will be denoted by $\tilde{A}^{i,j} (3)$.

By the triangle inequality, we have
\begin{eqnarray}
\lefteqn{
\Bigl|
 \frac{k^i +l^i}{ |k+l|} {\bf 1}_{k +l \neq 0}
 \psi_{\circ} (k+l, -l)
 \frac{1}{ -2\pi \sqrt{-1} \sigma_1 + B_{k,l}}
 - 
 \frac{l^i}{ |l|} {\bf 1}_{l \neq 0}
 \psi_{\circ} (l, -l) 
 \frac{1}{ -2\pi \sqrt{-1} \sigma_1 + B_l^{\prime}}
 \Bigr|
}
\nonumber\\
&\le&
 \frac{|k|}{ |k+l|} {\bf 1}_{k +l \neq 0}
 \psi_{\circ} (k+l, -l)
\Bigl| \frac{1}{ -2\pi \sqrt{-1} \sigma_1 + B_{k,l}}
\Bigr|
\nonumber\\
&&
+
|l| \cdot
\Bigl|
 \frac{1}{ |k+l|} {\bf 1}_{k +l \neq 0}
 - \frac{1}{ |l|} {\bf 1}_{l \neq 0}
 \Bigr|
 \cdot
 \psi_{\circ} (k+l, -l) 
 \cdot
 \Bigl|
 \frac{1}{ -2\pi \sqrt{-1} \sigma_1 + B_{k,l}}
 \Bigr|
 \nonumber\\
&&
+
 |\psi_{\circ} (k+l, -l) -\psi_{\circ} (l, -l) | 
 \cdot
 \Bigl|
 \frac{1}{ -2\pi \sqrt{-1} \sigma_1 + B_{k,l}}
 \Bigr|
 \nonumber\\
&&
+
 \psi_{\circ} (l, -l) 
 \cdot
 \Bigl|
 \frac{1}{ -2\pi \sqrt{-1} \sigma_1 + B_{k,l}} - \frac{1}{ -2\pi \sqrt{-1} \sigma_1 + B_l^{\prime}}
 \Bigr|
 \nonumber\\
&=& 
 : J_1 +\cdots +J_4.
 \nonumber
 \end{eqnarray}

Contribution from $J_1$ is dominated as follows: 
Assume that $\theta \in (8/5,2)$. 
(The case $\theta =2$ can be done essentially in the same way.)
Take sufficiently small $a>0$ and set $\gamma := 6 - \frac{8 +2a}{\theta}$.
We use 
 $\psi_{\circ} (k+l, -l) \lesssim |k+l|_*^{ 2\theta -3-a} |k|_*^{-( 2\theta -3-a)}$
again. 
\begin{eqnarray}
\lefteqn{
\Bigl|
\sum\limits_{l \in {\bf Z^2}}
 \chi^{\ve} (l)^2
 \frac{\sqrt{-1} J_1 C(l)D(k)}{-2\pi \sqrt{-1} \sigma_1 +(2 \pi |k|)^\theta +1} 
 \Bigr|^2
 }
 \label{ineq.0612_02}
 \\
&\lesssim& 
\Bigl\{
\frac{ |k|}{ |k+l|} {\bf 1}_{k+l \neq 0}
\frac{ |k+l|_*^{ 2\theta -3-a} }{ |k|_*^{ 2\theta -3-a}}
\frac{|l|_*^{2-\theta }}{ |k+l|_*^\theta + |l|_*^\theta}
\frac{1}{ |(\sigma_1, k)|_*^{2- \gamma} }
\Bigr\}^2 
|(\sigma_1, k)|_*^{- 2\gamma}
\nonumber\\
 &\lesssim& 
\Bigl\{
\sum\limits_{l \in {\bf Z^2}}
|k+l|_*^{-2-a}
\Bigr\}^2 
|(\sigma_1, k)|_*^{- 2\gamma}
\lesssim |(\sigma_1, k)|_*^{-2\gamma}.
\nonumber
 \end{eqnarray}
Here we used that $1/ |(\sigma_1, k)|_*^{2- \gamma} \le 1/|k|_*^{-2\theta +4+a}$.

We estimate the contribution from $J_2$. 
By straightforward calculation, we have that
\begin{eqnarray*}
\Bigl|
 \frac{1}{ |k+l|} {\bf 1}_{k +l \neq 0}
 - \frac{1}{ |l|} {\bf 1}_{l \neq 0}
 \Bigr|
\le
\frac{\bigl| |l| -|k+l| \bigr|}
{|k+l|_* |l|_* }
\le
\frac{|k|}{|k+l|_* |l|_* }
\end{eqnarray*}
and therefore for any $\kappa \in (0,1)$
\begin{eqnarray*}
\Bigl|
 \frac{1}{ |k+l|} {\bf 1}_{k +l \neq 0}
 - \frac{1}{ |l|} {\bf 1}_{l \neq 0}
 \Bigr|
 \psi_{\circ} (k+l, -l)
 \lesssim
 \frac{|k|^{\kappa}}{|k+ l|_*^{\kappa} |l|_*}.
\end{eqnarray*}
Taking $1 -\kappa = 2\theta -3 -a$, we see that 
\begin{eqnarray}%
\lefteqn{
\Bigl|
\sum\limits_{l \in {\bf Z^2}}
 \frac{\sqrt{-1} J_2 C(l)D(k)}{-2\pi \sqrt{-1} \sigma_1 
 +(2 \pi |k|)^\theta +1} 
 \Bigr|^2
 }
 \label{ineq.0612_03}
 \\
&\lesssim& 
\Bigl\{
\sum\limits_{l \in {\bf Z^2}} 
 |l| \frac{|k|^{-2\theta +4+a}}{|k+ l|_*^{-2\theta +4+a} |l|_*}.
\frac{ |l|_*^{2- \theta} }{ |k+l|_*^\theta + |l|_*^\theta }
\frac{1}{ |(\sigma_1, k)|_*^{2- \gamma} }
\Bigr\}^2
 |(\sigma_1, k)|_*^{-2\gamma}
\nonumber\\
 &\lesssim& 
\Bigl\{
\sum\limits_{l \in {\bf Z^2}}
|k+l|_*^{-2-\kappa}
\Bigr\}^2 
|(\sigma_1, k)|_*^{-2\gamma}
\lesssim |(\sigma_1, k)|_*^{-2\gamma}.
\nonumber
 \end{eqnarray}

We estimate the contribution from $J_3$. 
Due to Lemma \ref{lm.0405_3} (ii) and the boundedness of $\psi_{\circ}$,
we have for any $\lambda, \kappa \in (0,1)$ that
\begin{eqnarray*}
 |\psi_{\circ} (k+l, -l) -\psi_{\circ} (l, -l) | 
 &\lesssim&
 |\psi_{\circ} (k+l, -l) -\psi_{\circ} (l, -l) |^{\kappa}
 \nonumber\\
 &\lesssim&
 |(k+l) -l |^{\kappa} 
 |l|_*^{ (-1+\lambda) \kappa}\\
& \le& |k |^{\kappa} 
 |l|_*^{ (-1+\lambda) \kappa}.
 \end{eqnarray*}

We take $1 -\kappa = 2\theta -3 -a$ for sufficiently small $a>0$
and then we choose $\lambda$ so small that 
$(1-\lambda)( -2\theta +4+a)+2\theta -2 >2$ holds. 
Then, we have
\begin{eqnarray}
\lefteqn{
\Bigl|
\sum\limits_{l \in {\bf Z^2}}
 \frac{\sqrt{-1} J_3 C(l)D(k)}{-2\pi \sqrt{-1} \sigma_1 + (2 \pi |k|)^\theta +1} 
 \Bigr|^2
 }
 \label{ineq.0612_04}
 \\
&\lesssim& 
\Bigl\{
\sum\limits_{l \in {\bf Z^2}} 
 \frac{|k|^{\kappa}}{| l|_*^{(1 -\lambda)\kappa} }.
\frac{|l|_*^{2- \theta} }{ |k+l|_*^\theta + |l|_*^\theta}
\frac{1}{ |(\sigma_1, k)|_*^{2- \gamma} }
\Bigr\}^2 
|(\sigma_1, k)|_*^{- 2\gamma}
\nonumber\\
 &\lesssim& 
\Bigl\{
\sum\limits_{l \in {\bf Z^2}}
|l|_*^{- \{ (1-\lambda)( -2\theta +4+a)+2\theta -2\}}
\Bigr\}^2 
|(\sigma_1, k)|_*^{-2\gamma}
\lesssim |(\sigma_1, k)|_*^{-2\gamma}.
\nonumber
 \end{eqnarray}

We estimate the contribution from $J_4$. 
For any $\kappa \in (0,1)$,
\begin{eqnarray}\label{ineq.0613_B}
 \Bigl|
 \frac{1}{ -2\pi \sqrt{-1} \sigma_1 + B_{k,l}} - \frac{1}{ -2\pi \sqrt{-1} \sigma_1 + B_l^{\prime}}
 \Bigr|
&\le&
\frac{ | B_{k,l} - B_l^{\prime} |^{\kappa} ( B_{k,l} + B_l^{\prime})^{1- \kappa } }
{B_{k,l} B_l^{\prime}}
\\
&\lesssim&
\frac{ \bigl| |k+l|^\theta - |l|^\theta \bigr|^{\kappa} 
 (|k+l|_*^\theta+ |l|_*^\theta)^{ 1- \kappa}}
{|l|_*^\theta (|k+l|_*^\theta + |l|_*^\theta)}
 \nonumber
 \\
&\lesssim&
\frac{ \{ |k|^{\theta/2} |l|^{\theta/2}+ |k|^\theta \}^{\kappa} }
{|l|_*^\theta (|k+l|_*^\theta+ |l|_*^\theta)^{ \kappa}},
 \nonumber
 \end{eqnarray}
where we used that 
$(r + \tilde{r})^{\theta/2} - r^{\theta/2} \le \tilde{r}^{\theta/2}$
for all $r, \tilde{r} \ge 0$
since $0<\theta \le 2$.

We set $\kappa \theta /2 := -2\theta +4+a$
for sufficiently small $a>0$.
Since $\theta \in (8/5,2)$,
such a $\kappa \in (0,1)$ exists if $a$ is small enough.
In the same way as above,
\begin{eqnarray}\label{ineq.0612_05}
\lefteqn{
\Bigl|
\sum\limits_{l \in {\bf Z^2}}
 \frac{\sqrt{-1} J_4 C(l)D(k)}{-2\pi \sqrt{-1} \sigma_1 + (2\pi |k|)^\theta +1} 
 \Bigr|^2
 }
 \\
&\lesssim& 
\Bigl\{
\sum\limits_{l \in {\bf Z^2}} 
|l|_*^{2-\theta}
\frac{ |k|^{\kappa\theta/2} \{ |l|^{\kappa\theta/2}+ |k|^{\kappa\theta/2} \}}
{|l|_*^\theta (|k+l|_*^\theta+ |l|_*^\theta)^{ \kappa}}
\frac{1}{|(\sigma_1, k)|_*^{2- \gamma}}
\Bigr\}^2 
|(\sigma_1, k)|_*^{-2 \gamma}
 \nonumber \\
 &\lesssim& 
\Bigl\{
 \sum\limits_{l \in {\bf Z^2}} 
\frac{1}{
|l|_*^{2\theta-2}(|k+l|_*^\theta+ |l|_*^\theta)^{ \kappa/2}
} 
\Bigr\}^2 
|(\sigma_1, k)|_*^{-2 \gamma} \lesssim
|(\sigma_1, k)|_*^{-2\gamma}.
\nonumber
 \end{eqnarray}
Here, we used the fact that 
$(2\theta -2) + \kappa\theta/2 = 2+a >2$.

We plan to use Lemma \ref{lm.0405_2} with $\gamma =6 - (8+2a)/\theta$ 
 and $\delta=0$.
Combining \eqref{eq.0612_01}--\eqref{ineq.0612_05},
we show that $| \tilde{Q}_0 (\sigma_1, k) |^2\vee | \tilde{Q}_0^{\ve} (\sigma_1, k) |^2
 \lesssim
|(\sigma_1, k)|_*^{-2\gamma}$
for arbitrarily small $a>0$. 

In a similar way, 
we can also estimate $|\tilde{Q}_0 (\sigma_1, k) - \tilde{Q}^{\ve}_0 (\sigma_1, k) |^2$
using that 
$|1 - \chi^{\ve} (k)| \lesssim\ve^b |k|^b$ for every sufficiently small $b >0$. 

Finally, by setting 
\begin{eqnarray*}
\Pi_1 (
 Z^{2,2} - Z^{2,1} - Z^{1,2} + Z^{1,1}) 
 &=&
\{ A^{2,2}(2) - A^{2,1}(2) - A^{1,2} (2) + A^{1,1} (2) \}
\\
&& \quad +\{ \tilde{A}^{2,2}(3) - \tilde{A}^{2,1}(3) - \tilde{A}^{1,2} (3) + \tilde{A}^{1,1} (3) \},
 \end{eqnarray*}
 we prove the lemma.
\end{proof}

Summing up, we have shown the following lemma:
Set 
$$
Z := \sum\limits_{k=1,3} \Pi_k (
 Z^{2,2} - Z^{2,1} - Z^{1,2} + Z^{1,1}).
$$
For the definition of 
$Z^{\ve} :=R^{\perp} Y^{\ve}\reso \nabla X^{\ve}$,
see \eqref{def.0612}.

\begin{lemma}\label{lm.0612_a}
Let $\theta \in (8/5, 2]$. 
Then, for every $\alpha < \frac52 \theta -5$ and $1 < p <\infty$, 
\[
{\mathbf E}
 [
 \|Z \|_{C_T {\mathcal C}^{\alpha}}^p 
 ]
 <\infty,
\qquad
\lim_{\ve \searrow 0}
{\mathbf E}
 [
 \|
Z- Z^{\ve}
 \|_{C_T {\mathcal C}^{\alpha}}^p 
 ]
=0.
\]
\end{lemma}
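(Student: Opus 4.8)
The plan is to assemble Lemma~\ref{lm.0612_a} from the two preceding lemmas on the chaos components of $Z^{\ve}$. Recall from \eqref{def.0612} and the discussion preceding Lemmas~\ref{lm.0605_a} and \ref{lm.0605_b} that for each fixed $\ve$ the quantity $Z^{\ve} = R^{\perp} Y^{\ve} \reso \nabla X^{\ve}$ is a degree-three homogeneous polynomial in the (smooth) noise, hence admits the finite Wiener chaos decomposition $Z^{\ve} = \Pi_3(Z^{\ve}) + \Pi_1(Z^{\ve})$, with no zeroth or second order contributions because $Z^{\ve}$ is cubic in $\xi^{\ve}$. Writing out the sign-combination defining $Z^{\ve}$, namely $Z^{\ve} = Z^{2,2;\ve} - Z^{2,1;\ve} - Z^{1,2;\ve} + Z^{1,1;\ve}$ with the $Z^{i,j;\ve}$ of \eqref{eq.0605_1}, we have
\begin{align*}
Z^{\ve} = \Pi_3\big( Z^{2,2;\ve} - Z^{2,1;\ve} - Z^{1,2;\ve} + Z^{1,1;\ve} \big)
+ \Pi_1\big( Z^{2,2;\ve} - Z^{2,1;\ve} - Z^{1,2;\ve} + Z^{1,1;\ve} \big).
\end{align*}

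First I would invoke Lemma~\ref{lm.0605_a}, applied to each of the four pairs $(i,j) \in \{1,2\}^2$, to obtain that $\Pi_3(Z^{i,j;\ve})$ converges in every $L^p(\Omega,{\mathbf P};C_T{\mathcal C}^{\alpha})$ to $\Pi_3(Z^{i,j})$ for $\alpha < \tfrac52\theta - 5$ and $\theta \in (8/5,2]$, with uniformly bounded $L^p$-norms; summing the four terms with signs gives the $\Pi_3$-part. Then I would invoke Lemma~\ref{lm.0605_b}, which is stated precisely for the sign-combination $\Pi_1(Z^{2,2;\ve} - Z^{2,1;\ve} - Z^{1,2;\ve} + Z^{1,1;\ve})$, to get the convergence of the $\Pi_1$-part to $\Pi_1(Z^{2,2} - Z^{2,1} - Z^{1,2} + Z^{1,1})$ in the same topology. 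Adding, and defining $Z := \sum_{k=1,3} \Pi_k(Z^{2,2} - Z^{2,1} - Z^{1,2} + Z^{1,1})$ as in the statement, we conclude ${\mathbf E}[\|Z\|_{C_T{\mathcal C}^{\alpha}}^p] < \infty$ and ${\mathbf E}[\|Z - Z^{\ve}\|_{C_T{\mathcal C}^{\alpha}}^p] \to 0$; the triangle inequality in $L^p(\Omega;C_T{\mathcal C}^{\alpha})$ handles the splitting into two summands. The $\chi$-independence of $Z$ follows because each $\Pi_3(Z^{i,j})$ and the assembled $\Pi_1$-object were already noted to be independent of $\chi$ in Lemmas~\ref{lm.0605_a} and \ref{lm.0605_b}.

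The only genuine content here is the verification that Lemmas~\ref{lm.0605_a} and \ref{lm.0605_b} apply \emph{with the same regularity threshold and the same range of $\theta$}: both are stated for $\alpha < \tfrac52\theta - 5$ and $\theta \in (8/5,2]$, so no mismatch arises, and in particular the more delicate $\Pi_1$ estimate already requires $6 - 8/\theta > 1$, i.e.\ $\theta > 8/5$, which is exactly the standing hypothesis of Lemma~\ref{lm.0612_a}. Thus there is essentially nothing left to do beyond citing the two lemmas and taking a finite linear combination. The main obstacle, such as it is, is purely bookkeeping: ensuring that the superindex-symmetry cancellations inside the $\Pi_1$-term (the vanishing of the $A^{i,2}(1) - A^{i,1}(1)$ contribution and the use of the modified kernels $\tilde Q$) have already been absorbed into the statement of Lemma~\ref{lm.0605_b}, so that the sign combination appearing there matches the one forced by $R^{\perp} = (R_2,-R_1)$ in \eqref{def.0612}. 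Since the lemma is phrased for precisely that combination, the proof of Lemma~\ref{lm.0612_a} reduces to a one-line assembly argument.
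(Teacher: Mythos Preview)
Your proposal is correct and matches the paper's approach exactly: the paper presents Lemma~\ref{lm.0612_a} as a direct summary (``Summing up, we have shown the following lemma'') of Lemmas~\ref{lm.0605_a} and~\ref{lm.0605_b}, with $Z$ defined as the sum of the $\Pi_3$ and $\Pi_1$ limits, and gives no further argument. Your observation that both lemmas carry the same threshold $\alpha<\tfrac52\theta-5$ and range $\theta\in(8/5,2]$, together with the triangle inequality in $L^p(\Omega;C_T\mathcal C^{\alpha})$, is precisely the one-line assembly the paper intends.
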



\subsection{Convergence of $\hat{Z} = \nabla Y \cdot R^{\perp} X$}

In this subsection we prove that 
\begin{eqnarray}\label{def.0612_2}
\hat{Z}^{\ve} := \nabla Y^{\ve} \cdot R^{\perp} X^{\ve}
&=&
(\partial_1 Y^{\ve}) \cdot (R_2 X^{\ve})- (\partial_2 Y^{\ve}) \cdot (R_1 X^{\ve})
\\
&=&
\partial_1 {\mathcal I}[R_2 X^{\ve} \cdot \partial_1 X^{\ve} 
 - R_1 X^{\ve} \cdot \partial_2 X^{\ve}]
\cdot (R_2 X^{\ve})
\nonumber\\
&& \qquad
-
\partial_2 {\mathcal I}[R_2 X^{\ve} \cdot \partial_1 X^{\ve} 
 - R_1 X^{\ve} \cdot \partial_2 X^{\ve}]
\cdot (R_1 X^{\ve})
\nonumber
\end{eqnarray}
is convergent 
as $\ve \searrow 0$ if the Besov regularity 
is smaller than $\frac52 \theta - 5$.
We do not need any renormalization.
The proof is quite similar to, but somewhat simpler than the one for $Z$ in the previous subsection.

Let $i, j \in \{1,2\}$.
By straightforward computation, we have
\begin{eqnarray*}
\partial_i {\mathcal I} [R_j X^{\ve} \cdot \partial_{j'} X^{\ve} ]_{t,x}
&=&
\sum\limits_{k, l \in {\bf Z^2}}
 \chi^{\ve} (k) \chi^{\ve} (l) {\bf e}_{k+l} (x) 2\pi \sqrt{-1} (k^i + l^i)
 \int_{- \infty}^{\infty} du h(t-u, k+l) \\
 &&
 \times 
 \int_{- \infty}^{\infty} H_u^{ R_j X} (s_1,k) 
\hat{\xi}_{s_1} (k) ds_1
 \int_{- \infty}^{\infty} H_u^{ \partial_{j'} X} (s_2,l) 
\hat{\xi}_{s_2} (l) ds_2
 \end{eqnarray*}
and
\[
R_{i'} X^{\ve}_{t,x}
=
\sum\limits_{m\in {\bf Z^2}}
\chi^{\ve} (m) {\bf e}_{m} (x) 
 \int_{- \infty}^{\infty} H_t^{ R_{i'} X} (s_3, m) 
\hat{\xi}_{s_3} (m) ds_3.
\]
We set $\hat{Z}^{i,j;\ve}_{t,x} :=
\partial_i {\mathcal I} [R_j X^{\ve} \cdot \partial_{j'} X^{\ve} ]_{t,x} 
\cdot R_{i'} X^{\ve}_{t,x}$.
Then, we have
\begin{eqnarray}\label{eq.0613_1}
\hat{Z}^{i,j;\ve}_{t,x} 
&=&
\sum\limits_{k, l, m \in {\bf Z^2}}
 \chi^{\ve} (k) \chi^{\ve} (l) \chi^{\ve} (m) {\bf e}_{k+l+m} (x) 
 2\pi \sqrt{-1} (k^i + l^i)
 \\
 &&
 \times
 \int_{- \infty}^{\infty} du h(t-u, k+l) 
 \int_{- \infty}^{\infty} H_u^{ R_j X} (s_1,k) 
\hat{\xi}_{s_1} (k) ds_1
 \nonumber\\
 & &\times
 \int_{- \infty}^{\infty} H_u^{ \partial_{j'} X} (s_2,l) 
\hat{\xi}_{s_2} (l) ds_2 
 \int_{- \infty}^{\infty} H_t^{ R_{i'} X} (s_3, m) 
\hat{\xi}_{s_3} (m) ds_3
\nonumber\\
 &=& \Pi_3 ( \hat{Z}^{i,j;\ve}_{t,x}) + \Pi_1 ( \hat{Z}^{i,j;\ve}_{t,x}).
 \nonumber\
 \end{eqnarray}

Let us calculate the third order term 
on the right-hand side of (\ref{eq.0613_1}).
It is easy to see that
\[
\Pi_3 ( \hat{Z}^{i,j;\ve}_{t,x}) 
=
{\mathcal J}_3
\Bigl(
f_{(t,x)}^{\ve}
\Bigr),
\]
where
\begin{eqnarray}\nonumber
\lefteqn{
f_{(t,x)}^{\ve} ((s_1,y_1),(s_2, y_2),(s_3, y_3))
}\\
\nonumber
&=&
\sum\limits_{k, l, m \in {\bf Z}^2}
{\bf e}_{k} (x- y_1) {\bf e}_{l} (x- y_2) {\bf e}_{m} (x- y_3)
\chi^{\ve} (k) \chi^{\ve} (l) \chi^{\ve} (m)
H_t ((s_1, k), (s_2, l), (s_3, m))
\end{eqnarray}
with
\begin{eqnarray}\nonumber
\lefteqn{
H_t ((s_1, k), (s_2, l), (s_3,m))
}
\nonumber\\
&=&
2\pi \sqrt{-1} (k^i + l^i)
 \int_{- \infty}^{\infty}
 du h(t-u, k+l) 
 H_u^{ R_j X} (s_1,k) H_u^{ \partial_{j'} X} (s_2,l) H_t^{ R_{i'} X} (s_3, m). \nonumber
 \end{eqnarray}

By Fourier transform with respect to the time variables, we have
\begin{eqnarray}\label{eq.0613_2}
Q_t ((\sigma_1, k), (\sigma_2, l), (\sigma_3, m))
&=&
e^{- 2\pi \sqrt{-1} (\sigma_1 +\sigma_2+ \sigma_3) t}
2\pi \sqrt{-1} (k^i + l^i)
 \\
 && \times
 \frac{ Q_0^{ R_j X} (\sigma_1, k) Q_0^{ \partial_{j'} X} (\sigma_2, l) }
{- 2\pi \sqrt{-1} (\sigma_1 + \sigma_2) + (2\pi |k+l|)^\theta +1) }
Q_0^{ R_{i'} X} (\sigma_3, m).
\nonumber
\end{eqnarray}
Therefore, 
$ \Pi_3 ( \hat{Z}^{i,j;\ve}_{t,x})$ has a good kernel
whose $Q$-function is given by
\[
Q_t^{\ve} ((\sigma_1, k), (\sigma_2, l), (\sigma_3, m))
=\chi^{\ve} (k)\chi^{\ve} (l)\chi^{\ve} (m)
Q_t ((\sigma_1, k), (\sigma_2, l), (\sigma_3, m)).
\]

As we will see below, 
$Q_0$ above satisfies 
the assumption of Lemma \ref{lm.0405_2B} 
and hence it defines a Besov space-valued process. 
We denote it by $\Pi_3 ( \hat{Z}^{i,j})$.


\begin{lemma}\label{lm.0613_a}
Let the notation be as above and $\theta \in (8/5, 2]$. 
Then, for every $\alpha < \frac52 \theta -5$, $1 < p <\infty$ and $i,j =1, 2$, 
\begin{align*}
{\mathbf E}
 [
 \|\Pi_3 ( \hat{Z}^{i,j}) \|_{C_T {\mathcal C}^{ \alpha } }^p 
 ]<\infty
,\qquad 
\lim_{\ve \searrow 0}
{\mathbf E}
 [
 \|
 \Pi_3 ( \hat{Z}^{i,j;\ve})
 -
\Pi_3 ( \hat{Z}^{i,j}) 
 \|_{C_T {\mathcal C}^{ \alpha } }^p 
 ]
=0.
\end{align*}
\end{lemma}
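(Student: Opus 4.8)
\textbf{Proof plan for Lemma \ref{lm.0613_a}.}
The plan is to follow the now-standard template established by Lemmas \ref{lm.0406_a}, \ref{lm.0406_c}, \ref{lm.0602_a} and \ref{lm.0612_a}: show that the third-order term $\Pi_3(\hat{Z}^{i,j;\ve})$ has a good kernel in the sense of Definition \ref{def.gk2}, verify the pointwise-in-frequency square-integrability bound \eqref{eq:180223-10} with appropriate parameters $\gamma>1$ and $\delta$, and then invoke Lemma \ref{lm.0405_2B} to conclude both uniform $L^p$-boundedness in $C_T\HolBesSp{\alpha}$ and $L^p$-convergence as $\ve\searrow 0$. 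The $Q$-function of $\Pi_3(\hat{Z}^{i,j;\ve})$ has already been computed in \eqref{eq.0613_2}, so the analytic work is entirely an estimate on that kernel.

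First I would record the pointwise bound on $Q_0$. From \eqref{eq.0613_2} together with the explicit $Q$-functions $Q_0^{R_jX}$, $Q_0^{\partial_{j'}X}$, $Q_0^{R_{i'}X}$ from \eqref{eq.0406_3} and \eqref{eq.0406_5}, one gets, for $(\sigma_1,k),(\sigma_2,l),(\sigma_3,m)\in\hat{E}$,
\[
|Q_0((\sigma_1, k), (\sigma_2, l), (\sigma_3, m)) |^2
\lesssim
\frac{|k+l|^2}{|(\sigma_1+\sigma_2, k+l)|_*^4}
\cdot\frac{1}{|(\sigma_1, k)|_*^4}
\cdot\frac{|l|^2}{|(\sigma_2, l)|_*^4}
\cdot\frac{1}{|(\sigma_3, m)|_*^4}.
\]
Compared with the $Z$-computation in Lemma \ref{lm.0605_a}, the differences are: the outer Riesz symbol $(k^i+l^i)$ on the derivative $\partial_i$ now contributes $|k+l|^2$ instead of the bounded factor $\psi_\circ$, while the third factor is $Q_0^{R_{i'}X}$ (bounded Riesz symbol, decay $|(\sigma_3,m)|_*^{-4}$) rather than $Q_0^{\partial_{i'}X}$, so one $|m|^2$ is lost compared to $Z$; these two changes cancel in the counting. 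I would then integrate out variables in the order suggested by the structure of \eqref{eq.0613_2}: fix $(\sigma_1+\sigma_2,k+l)=(\hat\tau,\hat n)$ and integrate over $(\sigma_1,k)$ and $(\sigma_2,l)$ on that hyperplane, applying Lemma \ref{lm.0405_1} (legitimate since $\theta>4/3$ guarantees the exponents lie in the admissible range) to obtain a bound $\lesssim |(\hat\tau,\hat n)|_*^{-2(5-4/\theta)}|\hat n|^2 / (\text{something}) \lesssim |(\hat\tau,\hat n)|_*^{-2(5-6/\theta)}$ after absorbing $|\hat n|^2$ against two units of decay; then fix $(\hat\tau+\sigma_3,\hat n+m)=(\tau,n)$ and integrate over $(\hat\tau,\hat n)$ and $(\sigma_3,m)$, again by Lemma \ref{lm.0405_1}, landing at $\lesssim |(\tau,n)|_*^{-2(6-8/\theta)}$. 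This gives \eqref{eq:180223-10} with $\gamma=6-\tfrac{8}{\theta}$, $\delta=0$, which is exactly the pair used in Lemma \ref{lm.0605_a}; the constraint $\gamma>1$ is precisely $\theta>8/5$, matching the hypothesis, and $\alpha<-\tfrac{\theta+2}{2}+\tfrac{\gamma\theta}{2}=\tfrac52\theta-5$ matches \eqref{eq:180510-1}. For the convergence statement I would, as in Lemma \ref{lm.0406_a}, write $Q_0-Q_0^{\ve}$, bound the extra factors $|1-\chi^{\ve}(k)|^2$ etc.\ by $\ve^{\theta a}|k|^{\theta a}$ for arbitrarily small $a>0$, sacrifice a tiny bit of decay, apply Lemma \ref{lm.0405_2B} with $\gamma=6-\tfrac{8+2a}{\theta}>1$, and let $a\searrow 0$.

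The main obstacle — and it is a mild one, since the first-order chaos contributions are handled separately (presumably in the companion Lemma \ref{lm.0613_ab}) — is the bookkeeping of frequency exponents in the iterated application of Lemma \ref{lm.0405_1}: one must check that after pulling the factor $|k+l|^2$ out of the innermost integral and the factor $|l|^2$ into the $(\sigma_2,l)$-integral, the remaining exponents $\alpha,\beta$ fed to Lemma \ref{lm.0405_1} still satisfy $\alpha,\beta\in(0,2+\tfrac{2d}{\theta})$ with $\alpha+\beta>2+\tfrac{2d}{\theta}$ ($d=2$ here), and similarly at the outer step. Since $\theta>8/5$ this is exactly the same arithmetic that went through in Lemma \ref{lm.0605_a} (the $\hat Z$ kernel is genuinely no worse than the $Z$ kernel: it trades the inner $\psi_\circ$-regularization for an honest derivative on the already-smoothed variable $k+l$, and trades a derivative on $m$ for a Riesz transform), so I expect no new difficulty. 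The case $\theta=2$ is, as elsewhere in this section, a routine modification (the $|k|^\theta+1$ denominators become polynomials of the same degree), so I would prove $\theta\in(8/5,2)$ in detail and remark on the endpoint.
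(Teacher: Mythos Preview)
Your proposal is correct and follows essentially the same approach as the paper: the paper also bounds $|Q_0|^2$ by the product you wrote, absorbs $|k+l|^2$ and $|l|^2$ via $|k|^2\lesssim|(\sigma,k)|_*^{4/\theta}$ to obtain the factorized bound $|(\sigma_1+\sigma_2,k+l)|_*^{-4(1-1/\theta)}|(\sigma_1,k)|_*^{-4}|(\sigma_2,l)|_*^{-4(1-1/\theta)}|(\sigma_3,m)|_*^{-4}$, applies Lemma~\ref{lm.0405_1} twice exactly as in Lemma~\ref{lm.0605_a}, and lands on $\gamma=6-8/\theta$, $\delta=0$. Your comparison with the $Z$-kernel (the trade of $\psi_\circ$ for the honest derivative on $k+l$, and of $\partial_{i'}$ for $R_{i'}$ on $m$, cancelling in the exponent count) is precisely the point.
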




\begin{proof}
We let
\[
\gamma=6-\frac{8}{\theta}, \delta=0
\]
to use Lemma \ref{lm.0405_2}.
Note that $\gamma>1$ if and only if $\theta > 8/5$.
We prove the case $\theta \in (8/5, 2)$ only.
It is easy to see from \eqref{eq.0613_2} that 
\begin{eqnarray*}
|Q_0((\sigma_1, k), (\sigma_2, l), (\sigma_3, m)) |^2 
&\lesssim&
\frac{ |k+l|^2}{ |(\sigma_1 + \sigma_2, k+l)|_*^4}
\frac{1}{|(\sigma_1 , k)|_*^4}
\frac{|l|^2}{|(\sigma_2 , l)|_*^4}\frac{1}{|(\sigma_3 , m)|_*^4}
\nonumber\\
&\lesssim&
\frac{ 1}{ |(\sigma_1 + \sigma_2, k+l)|_*^{4(1 -1/\theta)} }
\frac{1}{|(\sigma_1 , k)|_*^4}
\frac{1}{|(\sigma_2 , l)|_*^{4(1 -1/\theta)}}
\frac{1}{|(\sigma_3 , m)|_*^4}.
\end{eqnarray*}
By using Lemma \ref{lm.0405_1} twice in the same way as in
the proof of Lemma \ref{lm.0605_a}, we have
\[
\int_{\hat{E}^2}
 |Q_0 ( (\sigma_1, k), (\sigma_2, l), (\tau-\sigma_1-\sigma_2, n-k-l)) |^2
 \,
 d(\sigma_1, k) d(\sigma_2, l)\lesssim 
 |(\tau , n)|_*^{- 2\gamma}.
\]
Using Lemma \ref{lm.0405_2},
we prove the lemma.
\end{proof}


Next, we calculate the first order term 
on the right-hand side of (\ref{eq.0613_1}).

\begin{lemma}\label{lm.0613_b}
Let the notation be as above and $\theta \in (8/5, 2]$. 
Then, for every $\alpha < \frac52 \theta -5$ and $1 < p <\infty$, 
there exists 
$\Pi_1 (
 \hat{Z}^{2,2} - \hat{Z}^{2,1} - \hat{Z}^{1,2} + \hat{Z}^{1,1}) \in L^p (\Omega, {\mathbf P} ;C_T {\mathcal C}^{ \alpha}) 
 $
 which is independent of $\alpha, p, \chi$
 such that
\begin{align*}
\lim_{\ve \searrow 0}
{\mathbf E}
 [
 \|
 \Pi_1 ( \hat{Z}^{2,2;\ve} - \hat{Z}^{2,1;\ve} - \hat{Z}^{1,2;\ve} + \hat{Z}^{1,1;\ve})
 -
\Pi_1 (
 \hat{Z}^{2,2} - \hat{Z}^{2,1} - \hat{Z}^{1,2} + \hat{Z}^{1,1}
) 
 \|_{C_T {\mathcal C}^{ \alpha} }^p 
 ]
=0.
\end{align*}
\end{lemma}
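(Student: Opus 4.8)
The plan is to mimic the structure already established for $\Pi_1(Z^{2,2}-Z^{2,1}-Z^{1,2}+Z^{1,1})$ in Lemma \ref{lm.0605_b}, but in the simpler situation of $\hat{Z}$, where the outermost factor is $R_{i'}X^{\ve}$ rather than $\partial_{i'}X^{\ve}$. First I would apply the contraction rule \eqref{eq.0613_z} to the degree-three polynomial \eqref{eq.0613_1} to decompose $\Pi_1(\hat{Z}^{i,j;\ve}_{t,x})$ into three pieces $\hat{A}^{i,j;\ve}_{t,x}(\nu)$, $\nu=1,2,3$, according to which pair of noises is paired off. The $\nu=1$ term pairs $k$ with $l$ (so $k+l=0$), and its inner sum over $k$ reproduces $\Pi_0({\mathcal I}[R_jX^{\ve}\cdot\partial_{j'}X^{\ve}]_{t,x})$ exactly as in \eqref{eq.0606_1}; by the cancellation $\Pi_0({\mathcal I}[R_2X^{\ve}\cdot\partial_1X^{\ve}]-{\mathcal I}[R_1X^{\ve}\cdot\partial_2X^{\ve}])=0$ already used for $Y$, the combination $\hat{A}^{i,2;\ve}(1)-\hat{A}^{i,1;\ve}(1)$ vanishes, and hence so does the full alternating sum over $(i,j)$. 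So only $\nu=2$ (pair $k$ with $m$) and $\nu=3$ (pair $l$ with $m$) survive.

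For $\nu=2$ and $\nu=3$ I would write down the $H$-function and $Q$-function of each surviving piece as finite sums over the remaining frequency variable, just as in \eqref{eq.0606_02}--\eqref{eq.0606_03}. The key structural observation, exactly paralleling the treatment of $A^{i,j;\ve}(3)$ for $Z$, is that in one of these terms the relevant inner sum (the analogue of \eqref{eq.0612_01}) is symmetric in $i$ and $i'$; this forces a telescoping cancellation when one forms $\hat{Z}^{2,2}-\hat{Z}^{2,1}-\hat{Z}^{1,2}+\hat{Z}^{1,1}$, allowing us to replace the $Q$-functions by modified $Q$-functions $\tilde{Q}^{i,j;\ve}$ in which a reference term has been subtracted. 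One then splits the resulting difference of resolvents and Fourier multipliers into four contributions $J_1,\dots,J_4$ exactly as in \eqref{ineq.0612_02}--\eqref{ineq.0612_05}, using the estimates of Lemma \ref{lm.0405_3} on $\psi_{\circ}$, the elementary bound $(r+\tilde r)^{\theta/2}-r^{\theta/2}\le\tilde r^{\theta/2}$, and Lemma \ref{lm.0405_1} for the frequency sums. In every case one arrives at $|\tilde{Q}_0(\sigma_1,k)|^2\vee|\tilde{Q}_0^{\ve}(\sigma_1,k)|^2\lesssim|(\sigma_1,k)|_*^{-2\gamma}$ with $\gamma=6-(8+2a)/\theta$ for arbitrarily small $a>0$, which is $>1$ precisely when $\theta>8/5$, and then Lemma \ref{lm.0405_2} (with $\delta=0$) gives membership in $L^p(\Omega,{\mathbf P};C_T{\mathcal C}^{\alpha})$ for every $\alpha<\frac{\gamma\theta}{2}-\frac{\theta+2}{2}=\frac52\theta-5-a$; letting $a\searrow0$ covers all $\alpha<\frac52\theta-5$. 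The convergence $\tilde{Q}_0-\tilde{Q}_0^{\ve}$ is handled identically using $|1-\chi^{\ve}(k)|\lesssim\ve^b|k|^b$ to gain a small power $\ve^b$, exactly as in the proof of Lemma \ref{lm.0605_b}.

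The main obstacle, as in the $Z$ case, is the $\nu=3$ term: there the outer frequency $k$ survives while $l$ is summed over, and the naive bound on each summand does not decay fast enough in $l$. The point that makes it work is the symmetry in $(i,i')$ of the subtracted reference sum, which is slightly more delicate to verify here because the outermost factor is $R_{i'}X^{\ve}$ (contributing a factor $\sqrt{-1}k^{i'}/|k|$ rather than $2\pi\sqrt{-1}k^{i'}$); one must check that after differencing over $(i,j)$ the Riesz multipliers on the paired variable combine into the symmetric object $C(l)=-\frac{(2\pi\sqrt{-1}l^{j'})(2\pi\sqrt{-1}l^{i'})}{2\{(2\pi|l|)^{\theta}+1\}}$ as before, so that the telescoping identity $Q^{2,2;\ve}-Q^{2,1;\ve}-Q^{1,2;\ve}+Q^{1,1;\ve}=\tilde{Q}^{2,2;\ve}-\tilde{Q}^{2,1;\ve}-\tilde{Q}^{1,2;\ve}+\tilde{Q}^{1,1;\ve}$ still holds. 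Once this bookkeeping is in place, the four contributions $J_1,\dots,J_4$ are estimated verbatim. Finally, setting
\[
\Pi_1(\hat{Z}^{2,2}-\hat{Z}^{2,1}-\hat{Z}^{1,2}+\hat{Z}^{1,1})
=\{\hat{A}^{2,2}(2)-\hat{A}^{2,1}(2)-\hat{A}^{1,2}(2)+\hat{A}^{1,1}(2)\}
+\{\tilde{\hat{A}}^{2,2}(3)-\tilde{\hat{A}}^{2,1}(3)-\tilde{\hat{A}}^{1,2}(3)+\tilde{\hat{A}}^{1,1}(3)\},
\]
with each $\hat{A}$ the Besov-space-valued process associated to the corresponding $Q_0$ via Lemma \ref{lm.0405_2B}, and noting that these $Q_0$'s do not involve $\chi$, proves the lemma and in particular the $\chi$-independence of the limit.
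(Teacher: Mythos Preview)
Your plan is correct and matches the paper's proof almost exactly: decompose $\Pi_1(\hat{Z}^{i,j;\ve})$ via the contraction rule into $\hat{A}(1),\hat{A}(2),\hat{A}(3)$, kill $\hat{A}(1)$ by the $Y$-cancellation, bound $\hat{A}(2)$ directly, and for $\hat{A}(3)$ subtract the $(i,i')$-symmetric reference sum to pass to $\tilde{Q}$, then apply Lemma~\ref{lm.0405_2} with $\gamma=6-(8+2a)/\theta$. One simplification you have overlooked: since $\hat{Z}=\nabla Y\cdot R^{\perp}X$ uses the full product rather than $\reso$, there is \emph{no} $\psi_{\circ}$ anywhere in the computation, so Lemma~\ref{lm.0405_3} is not needed and the $J_2,J_3$ pieces of your four-term split are vacuous (indeed $\partial_i$ contributes $(k^i+l^i)$ with no $|k+l|^{-1}$); the paper notes explicitly that this makes the $\tilde{Q}$ estimate ``less cumbersome'' than for $Z$. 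Also, the symmetric object in the $\nu=3$ term carries an extra $|l|^{-1}$ compared to your $C(l)$ (since the paired factor is $R_{i'}X$, not $\partial_{i'}X$), but the $(i,i')$-symmetry and the rest of the argument are unaffected.
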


\begin{proof}
First we calculate $\Pi_1 ( Z^{i,j;\ve}_{t,x})$.
Applying the contraction rule \eqref{eq.0613_z} to \eqref{eq.0613_1}, 
we have 
\begin{eqnarray}
\Pi_1 ( Z^{i,j;\ve}_{t,x})
=:
\hat{A}^{ i,j;\ve}_{t,x} (1)+ \hat{A}^{i,j;\ve}_{t,x} (2)+ \hat{A}^{i,j;\ve}_{t,x} (3),
\nonumber
\end{eqnarray}
where $\hat{A}^{\ve}_{t,x} (\nu)$, $\nu=2,3$, are given as follows:
(the contribution from
$\hat{A}^{i,j;\ve}_{t,x} (1)$ cancels out 
for the same reason as in the proof of Lemma \ref{lm.0605_b}.
So we do not write it down.)
\begin{eqnarray}
\hat{A}^{i,j;\ve}_{t,x} (2)
&=&
\sum\limits_{k, l \in {\bf Z^2}}
 \chi^{\ve} (k)^2 \chi^{\ve} (l) {\bf e}_{l} (x) 
 2\pi \sqrt{-1} (k^i + l^i)
 \label{eq.0613_3}\\
 &&
 \times
 \int_{- \infty}^{\infty} du h(t-u, k+l) 
 \int_{- \infty}^{\infty} H_u^{ R_j X} (s_1,k) H_t^{ R_{i'} X} (s_1, -k) ds_1
 \nonumber\\
 & &\times
 \int_{- \infty}^{\infty} H_u^{ \partial_{j'} X} (s_2,l) 
\hat{\xi}_{s_2} (l) ds_2
 \nonumber\\
 &=&
 \sum\limits_{l \in {\bf Z^2}}\chi^{\ve} (l) {\bf e}_{l} (x)
 \Bigl[
 \sum\limits_{k \in {\bf Z^2}}
 \chi^{\ve} (k)^2 2\pi \sqrt{-1} (k^i + l^i)
 \nonumber\\
 && 
 \quad \times 
 \frac{2\pi \sqrt{-1} k^j}{2 \pi |k|} {\bf 1}_{k \neq 0} \cdot 
 \frac{-2\pi \sqrt{-1} k^{i'}}{2 \pi |k|} {\bf 1}_{k \neq 0} \cdot 
 \frac{1}{2 \{ ( 2 \pi |k|)^\theta +1\}}
 \nonumber\\
 && 
 \quad \times 
 \int_{- \infty}^{t} du e^{- (t-u) \{ ( 2 \pi |k+l|)^\theta
 + ( 2 \pi |k|)^\theta +2\}
 }
 \int_{- \infty}^{\infty} H_u^{ \partial_{j'} X} (s_2,l) 
\hat{\xi}_{s_2} (l) ds_2 \Bigr]
 \nonumber
\end{eqnarray}
and 
\begin{eqnarray}
\hat{A}^{i,j;\ve}_{t,x} (3)
&=&
\sum\limits_{k, l \in {\bf Z^2}}
 \chi^{\ve} (k) \chi^{\ve} (l)^2 {\bf e}_{k} (x)2\pi \sqrt{-1} (k^i + l^i)
 \label{eq.0613_4}\\
 &&
 \times
 \int_{- \infty}^{\infty} du h(t-u, k+l) 
 \int_{- \infty}^{\infty} H_u^{ R_j X} (s_1,k) 
\hat{\xi}_{s_1} (k) ds_1
 \nonumber\\
 & &\times
 \int_{- \infty}^{\infty} H_u^{ \partial_{j'} X} (s_2,l) 
 H_t^{ R_{i'} X} (s_2, -l) ds_2
 \nonumber\\
 &=& 
 \sum\limits_{k \in {\bf Z^2}}\chi^{\ve} (k) {\bf e}_{k} (x)
 \Bigl[
 \sum\limits_{l \in {\bf Z^2}}
 \chi^{\ve} (l)^2 2\pi \sqrt{-1} (k^i + l^i)
 \frac{ (2\pi \sqrt{-1} l^{j'})(2\pi \sqrt{-1} l^{i'})}{ 2\pi |l| \cdot 2 \{ ( 2 \pi |l|)^\theta +1\} } 
 {\bf 1}_{l \neq 0} 
 \nonumber\\
 && 
 \quad \times 
 \int_{- \infty}^{t} du e^{- (t-u) \{ ( 2 \pi |k+l|)^\theta +( 2 \pi |l|)^\theta +2\}
  }
 \int_{- \infty}^{\infty} H_u^{ R_{j} X} (s_1,k) 
\hat{\xi}_{s_1} (k) ds_1 \Bigr].
\nonumber
\end{eqnarray}

Now we calculate the $H$-function and the $Q$-function
associated with $\hat{A}^{ i,j;\ve}_{t,x} (2)$.
Let $(s_2,l) \in E$,
From \eqref{eq.0613_3}, we see that
\begin{eqnarray*}
H_t^{\ve} (s_2, l) 
&=&
 \chi^{\ve} (l) \sum\limits_{k \in {\bf Z^2}}
 \chi^{\ve} (k)^2 2\pi \sqrt{-1} (k^i + l^i)
 \frac{2\pi \sqrt{-1} k^j}{2 \pi |k|} {\bf 1}_{k \neq 0} \cdot 
 \frac{-2\pi \sqrt{-1} k^{i'}}{2 \pi |k|} {\bf 1}_{k \neq 0} \cdot 
 \frac{1}{2 \{ ( 2 \pi |k|)^\theta +1 \}}
 \nonumber\\
 && 
 \quad \times 
 \int_{- \infty}^{t} du e^{- (t-u) \{ ( 2 \pi |k+l|)^\theta +( 2 \pi |k|)^\theta +2\}
 }
 H_u^{ \partial_{j'} X} (s_2,l) 
\end{eqnarray*}
and 
\begin{eqnarray*}
\lefteqn{
Q_t^{\ve}(\sigma_2, l) 
}
\\
&=& 
e^{- 2\pi \sqrt{-1}\sigma_2 t}
 \chi^{\ve} (l) \sum\limits_{k \in {\bf Z^2}}
 \chi^{\ve} (k)^2 2\pi \sqrt{-1} (k^i + l^i)
 \frac{2\pi \sqrt{-1} k^j}{2 \pi |k|} {\bf 1}_{k \neq 0} \cdot 
 \frac{-2\pi \sqrt{-1} k^{i'}}{2 \pi |k|} {\bf 1}_{k \neq 0} \cdot 
 \nonumber\\
 && 
 \times 
 \frac{1}{2 \{ ( 2 \pi |k|)^\theta +1)}
 \cdot
 \frac{1}{-2\pi \sqrt{-1} \sigma_2 + ( 2 \pi |k+l|)^\theta +( 2 \pi |k|)^\theta +2 }
 \cdot
 \frac{2\pi \sqrt{-1} l^{j'}}{-2\pi \sqrt{-1} \sigma_2 + (2 \pi |l|)^\theta +1}.
 \end{eqnarray*}
This shows $\hat{A}^{\ve}_{t,x} (2)$ has a good kernel 
whose $Q$-function is given by $Q^{\ve}_t$.
We define $Q_t (\sigma_2, l) $ by formally setting $\ve =0$
(or replacing both $\chi^{\ve} (k)$ and $\chi^{\ve} (l)$ by $1$)
 in the above 
expression of $Q_t^{\ve} (\sigma_2, l)$.

We proceed in a similar way to the proof of Lemma \ref{lm.0605_b}.
Take $a >0$ sufficiently small and set $\gamma = 6 - (8 +2a)/\theta$.
Since 
$
|l| \le |l|^{ -2\theta +4 +a} (|k+l| +|k|)^{2\theta -3 -a}
$,
we can dominate $|Q_0 (\sigma_2, l)|^2$ as follows:
\begin{eqnarray}\label{eq.0613_5}
|Q_0 (\sigma_2, l) |^2
&\lesssim& 
\Bigl\{
\sum\limits_{k \in {\bf Z^2}}
|k|_*^{-\theta}
\frac{|k+l|}{ |k+l|_*^{\theta} + |k|_*^{\theta}}
\frac{|l|}{|(\sigma_2, l)|_*^{2-\gamma}}
\Bigr\}^2
|(\sigma_2, l)|_*^{-2\gamma}
\\
 &\lesssim& 
\Bigl\{
\sum\limits_{k \in {\bf Z^2}}
|k|_*^{-\theta}
\frac{|k+l| (|k+l| + |k|)^{ 2\theta -3 -a} }{ |k+l|_*^{\theta} + |k|_*^{\theta}}
\Bigr\}^2 |(\sigma_2, l)|_*^{-2\gamma}
 \nonumber\\
 &\lesssim& 
 \Bigl\{
\sum\limits_{k \in {\bf Z^2}}
|k|_*^{-(2 + a)}
\Bigr\}^2 
|(\sigma_2, l)|_*^{- 2\gamma} 
\lesssim |(\sigma_2, l)|_*^{- 2\gamma}.
 \nonumber
 \nonumber
 \end{eqnarray}
 This estimate \eqref{eq.0613_5}
 is essentially the same as \eqref{eq.0606_04}.
Hence, we can apply Lemma \ref{lm.0405_2} to 
$\hat{A}^{\ve} (2)= \hat{A}^{i,j;\ve} (2)$
with $\gamma = 6 - (8 +2a)/\theta$ and $\delta =0$. 

The Besov space-valued process associated with this $Q_0$
will be denoted by $\hat{A}^{i,j} (2)$.

Next, we will calculate the $H$-function and the $Q$-function
associated with
the function $\hat{A}^{\ve}_{t,x} (3)$.
For
any $(s_1,k)$, 
$(\sigma_1,k) \in E$,
\begin{eqnarray*}
H_t^{\ve} (s_1, k) 
&=&
\chi^{\ve} (k) \sum\limits_{l \in {\bf Z^2}}
 \chi^{\ve} (l)^2 2\pi \sqrt{-1} (k^i + l^i)
 \frac{ (2\pi \sqrt{-1} l^{j'})(2\pi \sqrt{-1} l^{i'})}
 { 2\pi |l| \cdot 2 \{ ( 2 \pi |l|)^\theta +1\} } 
 {\bf 1}_{l \neq 0} 
 \nonumber\\
 && 
 \quad \times 
 \int_{- \infty}^{t} du e^{- (t-u) \{ (2 \pi |k+l|)^\theta + (2 \pi |l|)^\theta +2 \}}
 H_u^{ R_{j} X} (s_1,k) 
\end{eqnarray*}
and 
\begin{eqnarray*}
Q_t^{\ve} (\sigma_1, k) 
&=& 
e^{- 2\pi \sqrt{-1}\sigma_1 t}\chi^{\ve} (k)
 \sum\limits_{l \in {\bf Z^2}}
 \chi^{\ve} (l)^2 2\pi \sqrt{-1} (k^i + l^i)
 \frac{ (2\pi \sqrt{-1} l^{j'})(2\pi \sqrt{-1} l^{i'})}{ 2\pi |l| 
 B_l^{\prime}
 } 
 {\bf 1}_{l \neq 0} 
 \nonumber\\
 && 
 \quad \times 
 \frac{1}{-2\pi \sqrt{-1} \sigma_1
 + 
 B_{k,l}
 }
 \cdot
 \frac{2\pi \sqrt{-1} k^{j'} }{2\pi |k|} {\bf 1}_{k \neq 0}
 \frac{1}{-2\pi \sqrt{-1} \sigma_1 +(2 \pi |k|)^\theta +1}.
 \end{eqnarray*}
As in the proof of Lemma \ref{lm.0605_b}, 
we modify the $Q^{\ve}
=Q^{i,j;\ve}$ 
a little bit as follows:
\begin{eqnarray*}
\tilde{Q}_t^{\ve} (\sigma_1, k) 
=
\tilde{Q}_t^{i,j;\ve} (\sigma_1, k) 
&=& 
e^{- 2\pi \sqrt{-1}\sigma_1 t}\chi^{\ve} (k)
 \sum\limits_{l \in {\bf Z^2}}
 \chi^{\ve} (l)^2 
 \frac{ (2\pi \sqrt{-1} l^{j'})(2\pi \sqrt{-1} l^{i'})}{ 2\pi |l| 
 B_l^{\prime}
 } 
 {\bf 1}_{l \neq 0} 
 \nonumber\\
 && 
 \quad \times 
 \Bigl\{
 \frac{ 2\pi \sqrt{-1} (k^i + l^i) }{-2\pi \sqrt{-1} \sigma_1
 + 
 B_{k,l}
 }
 -
 \frac{ 2\pi \sqrt{-1} l^i }{-2\pi \sqrt{-1} \sigma_1
 + 
 B_{l}^{\prime}
 }
 \Bigr\}
 \nonumber\\
 &&
 \quad
 \times
 \frac{2\pi \sqrt{-1} k^{j'} }{2\pi |k|} {\bf 1}_{k \neq 0}
 \frac{1}{-2\pi \sqrt{-1} \sigma_1 +(2 \pi |k|)^\theta +1}.
 \end{eqnarray*}
We have again
$Q^{2,2;\ve} - Q^{2,1;\ve} - Q^{1,2;\ve} +Q^{1,1;\ve} 
=
\tilde{Q}^{2,2;\ve} - \tilde{Q}^{2,1;\ve} - \tilde{Q}^{1,2;\ve} +\tilde{Q}^{1,1;\ve}
$.
We define $\tilde{Q}_t (\sigma_1, k) = \tilde{Q}^{i,j}_t (\sigma_1, k)$ by formally setting $\ve =0$
(or replacing both $\chi^{\ve} (k)$ and $\chi^{\ve} (l)$ by $1$)
 in the above 
expression of $\tilde{Q}_t^{\ve} (\sigma_1, k)= \tilde{Q}^{i,j;\ve}_t (\sigma_1, k)$.

By
 the same argument as 
in the proof of Lemma \ref{lm.0605_b}, 
we can show 
that 
$$
| \tilde{Q}_0 (\sigma_1, k) |^2\vee | \tilde{Q}_0^{\ve} (\sigma_1, k) |^2\lesssim
|(\sigma_1, k)|_*^{- 2 \{6 - (8 +2a)/\theta\}}
$$
for every $a >0$ small enough.
(This time it is actually less cumbersome
since $\psi_{\circ}$ does not appear.)
By Lemma \ref{lm.0405_2}, 
there exists a Besov space-valued process $\hat{A}^{\sim, i,j}(3)$
which corresponds to $\tilde{Q}_0^{i,j}$. 
We can also show in the same way as before that, 
 for every $a >0$ small enough, there exists $b>0$ such that
$| \tilde{Q}_0 (\sigma_1, k) - \tilde{Q}_0^{\ve} (\sigma_1, k) |^2
\lesssim
\ve^b |(\sigma_1, k)|_*^{- 2 \{6 - (8 +2a)/\theta\}}$.
 By Lemma \ref{lm.0405_2}, this implies that
 the process corresponding to $\tilde{Q}_0^{i,j;\ve}$
 converges to $\hat{A}^{\sim, i,j}(3)$.
 
By setting 
\begin{eqnarray*}
\Pi_1 (
 \hat{Z}^{2,2} - \hat{Z}^{2,1} - \hat{Z}^{1,2} + \hat{Z}^{1,1}) 
 &=&
\{ \hat{A}^{2,2}(2) - \hat{A}^{2,1}(2) - 
 \hat{A}^{1,2} (2) + \hat{A}^{1,1} (2) \}
\\
&& \quad +\{ \hat{A}^{\sim, 2,2}(3) - \hat{A}^{\sim,2,1}(3) 
- \hat{A}^{\sim, 1,2} (3) + \hat{A}^{\sim, 1,1} (3) \},
 \end{eqnarray*}
 we finish our proof. 
\end{proof}

Summing up, we have shown the following Lemma in this subsection:
Set $$\hat{Z} = \sum\limits_{k=1,3} \Pi_k (
 \hat{Z}^{2,2} - \hat{Z}^{2,1} - \hat{Z}^{1,2} + \hat{Z}^{1,1}).
$$
For the definition of 
$\hat{Z}^{\ve} := \nabla Y^{\ve} \cdot R^{\perp} X^{\ve}$,
see \eqref{def.0612_2}.

\begin{lemma}\label{lm.0613_ab}
Let $\theta \in (8/5, 2]$. 
Then, for every $\alpha < \frac52 \theta -5$ and $1 < p <\infty$, 
\[
{\mathbf E}
 [
 \|\hat{Z} \|_{C_T {\mathcal C}^{\alpha}}^p 
 ]
 <\infty,
\qquad
\lim_{\ve \searrow 0}
{\mathbf E}
 [
 \|
\hat{Z}- \hat{Z}^{\ve}
 \|_{C_T {\mathcal C}^{\alpha}}^p 
 ]
=0.
\]
%
%
\end{lemma}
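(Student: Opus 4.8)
The plan is to follow the same template as the proofs of Lemmas \ref{lm.0406_c}, \ref{lm.0612_a} and their relatives, reducing everything to the key estimate Lemma \ref{lm.0405_2B} (and Lemma \ref{lm.0405_2}). Recall that $\hat{Z}^{\ve} = \nabla Y^{\ve} \cdot R^{\perp} X^{\ve}$ is a degree-three homogeneous polynomial in $\xi$, so it decomposes via $\Pi_3$ and $\Pi_1$. The statement to be proved, Lemma \ref{lm.0613_ab}, is the aggregation of the two component lemmas already stated in the excerpt: Lemma \ref{lm.0613_a} handles the third-order chaos $\Pi_3(\hat{Z}^{i,j})$, and Lemma \ref{lm.0613_b} handles the first-order chaos $\Pi_1(\hat{Z}^{2,2}-\hat{Z}^{2,1}-\hat{Z}^{1,2}+\hat{Z}^{1,1})$, where the key point is that after the antisymmetric combination $R^\perp\cdot\nabla$ the zeroth-order (would-be divergent) term $\hat{A}^{i,j;\ve}(1)$ cancels, so no renormalization is needed. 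Given those two lemmas, proving Lemma \ref{lm.0613_ab} is essentially bookkeeping.

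Concretely, I would argue as follows. First, use the expansion \eqref{eq.0613_1} of $\hat{Z}^{i,j;\ve}_{t,x}$ into $\Pi_3(\hat{Z}^{i,j;\ve}) + \Pi_1(\hat{Z}^{i,j;\ve})$ and the linearity of $Y^\ve = {\mathcal I}[R_2 X^\ve\cdot\partial_1 X^\ve - R_1 X^\ve\cdot\partial_2 X^\ve]$ to write, from \eqref{def.0612_2},
\[
\hat{Z}^{\ve} = \hat{Z}^{2,2;\ve} - \hat{Z}^{2,1;\ve} - \hat{Z}^{1,2;\ve} + \hat{Z}^{1,1;\ve}.
\]
Split this into its $\Pi_3$-part and its $\Pi_1$-part. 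For the $\Pi_3$-part, apply Lemma \ref{lm.0613_a} to each of the four terms $\Pi_3(\hat{Z}^{i,j;\ve})$ individually (the lemma is stated for all $i,j=1,2$), obtaining uniform $L^p(\Omega; C_T{\mathcal C}^\alpha)$-bounds and convergence to $\Pi_3(\hat{Z}^{i,j})$ for $\alpha < \frac52\theta - 5$; summing the four contributions with signs and using the triangle inequality in $L^p(\Omega; C_T{\mathcal C}^\alpha)$ gives the corresponding statement for $\sum_{i,j}\pm\Pi_3(\hat{Z}^{i,j})$. For the $\Pi_1$-part, apply Lemma \ref{lm.0613_b} directly to the antisymmetric combination (this is exactly the object that lemma treats, and the cancellation of $\hat{A}^{i,j;\ve}(1)$ is what makes the combination converge without renormalization). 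Then set $\hat{Z} = \sum_{k=1,3}\Pi_k(\hat{Z}^{2,2}-\hat{Z}^{2,1}-\hat{Z}^{1,2}+\hat{Z}^{1,1})$, which is well-defined, independent of $\chi$ (each $Q_0$ built above is $\chi$-free), and satisfies both ${\mathbf E}[\|\hat{Z}\|^p_{C_T{\mathcal C}^\alpha}] < \infty$ and $\lim_{\ve\searrow 0}{\mathbf E}[\|\hat{Z} - \hat{Z}^\ve\|^p_{C_T{\mathcal C}^\alpha}] = 0$, by adding the $\Pi_3$ and $\Pi_1$ contributions.

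The only genuine work hides inside Lemmas \ref{lm.0613_a} and \ref{lm.0613_b}, which I would prove first (as is done in the excerpt's surrounding subsections). For \ref{lm.0613_a}, the task is to verify condition \eqref{eq:180223-10} for the $Q$-function \eqref{eq.0613_2} with $\gamma = 6 - 8/\theta > 1$ (which forces $\theta > 8/5$) and $\delta = 0$: one bounds $|Q_0|^2$ by a product of $|(\sigma,k)|_*^{-4}$-type factors dressed with the $|k+l|^2$, $|l|^2$ momentum weights, distributes those weights using $|k+l|^2 \lesssim |(\sigma_1+\sigma_2,k+l)|_*^{2\cdot 2/\theta}$ etc., and then integrates out two pairs of variables by applying Lemma \ref{lm.0405_1} twice, exactly as in the proof of Lemma \ref{lm.0605_a}; crucially $\theta > 4/3$ is needed for the intermediate application of Lemma \ref{lm.0405_1}. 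For \ref{lm.0613_b}, after applying the contraction rule \eqref{eq.0613_z} one gets three pieces $\hat{A}^{i,j;\ve}(1),\hat{A}^{i,j;\ve}(2),\hat{A}^{i,j;\ve}(3)$; the first cancels under the antisymmetric combination, and for the surviving two one must estimate the resulting single-variable $Q$-functions. The main obstacle — and the place requiring care — is the $\hat{A}(3)$ term: as in Lemma \ref{lm.0605_b} one cannot bound it termwise but must first symmetrize/telescope (pass to a modified kernel $\tilde{Q}^{\ve}$ via a difference $\frac{k^i+l^i}{\ \cdot\ } - \frac{l^i}{\ \cdot\ }$ together with differences of $\psi_\circ$ and of the denominators $B_{k,l}, B'_l$), split into four pieces $J_1,\dots,J_4$, and bound each using Lemma \ref{lm.0405_3}(ii), Hölder-type interpolation of the differences, and a convergent sum over the momentum variable; the bookkeeping is somewhat lighter than in the $Z$ case because no $\psi_\circ$ survives in the final $\hat{A}(3)$ expression after telescoping. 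Throughout, the approximation estimates (convergence as $\ve\searrow 0$) are obtained by the same argument applied to $Q_0 - Q_0^\ve$, using $|1 - \chi^\ve(k)| \lesssim \ve^b |k|^b$ to produce a factor $\ve^b$ at the cost of shrinking $\gamma$ slightly, which is harmless since $6 - 8/\theta > 1$ strictly and $b$ can be taken arbitrarily small.
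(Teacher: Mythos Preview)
Your proposal is correct and matches the paper's approach exactly: the paper states Lemma \ref{lm.0613_ab} with the preamble ``Summing up, we have shown the following Lemma in this subsection,'' treating it as the direct combination of Lemmas \ref{lm.0613_a} and \ref{lm.0613_b}, and your sketch of those two component lemmas (the double application of Lemma \ref{lm.0405_1} for $\Pi_3$, the cancellation of $\hat{A}^{i,j;\ve}(1)$, and the telescoping argument for $\hat{A}^{i,j;\ve}(3)$ being simpler here since no $\psi_\circ$ appears) agrees with what the paper does.
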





\section{Acknowledgement}

The authors are grateful to 
Professors Ryo Takada and Masato Hoshino
at Kyushu University
for fruitful discussions on many issues 
including the histories of QGEs and paracontrolled calculus.

The first author is supported by Grant-in-Aid for Scientific Research (C) (15K04922), the
Japan Society for the Promotion of Science.
The second author is supported by Grant-in-Aid for Scientific Research (C) (16K05209), the
Japan Society for the Promotion of Science.
The second author is also supported by
Department of Mathematics Analysis and the Theory of functions,
Peoples' Friendship University of Russia, Moscow, Russia.


\vspace{10mm}

\begin{flushleft}
\begin{tabular}{ll}
Yuzuru \textsc{Inahama}
\\
Faculty of Mathematics,  
\\
Kyushu University,
\\
Motooka 744, Nishi-ku, Fukuoka 819-0395, JAPAN.
\\
Email: {\tt inahama@math.kyushu-u.ac.jp}
\end{tabular}
\end{flushleft}

\bigskip

\begin{flushleft}
\begin{tabular}{ll}
Yoshihiro \textsc{Sawano}
\\
Graduate School of Science and Engineering,
\\
Chuo University, 
\\
1-13-27 Kasuga, Bunkyo-Ku, Tokyo,
Japan\\
\\
+
Department of Mathematics Analysis and the Theory of functions,
\\
Peoples' Friendship University of Russia, Moscow, Russia.
\\
Email: {\tt ysawano@tmu.ac.jp}
\end{tabular}
\end{flushleft}
\end{document}